\let\OLDthebibliography\thebibliography
\renewcommand\thebibliography[1]{
	\OLDthebibliography{#1}
	\setlength{\parskip}{0pt}
	\setlength{\itemsep}{2pt} 
}
\theoremstyle{definition}
\newtheorem{df}{Definition}[section]
\newtheorem{rem}[df]{Remark}
\newtheorem{cv}[df]{Convention}
\theoremstyle{plain}
\newtheorem{thm}[df]{Theorem}
\newtheorem{pp}[df]{Proposition}
\newtheorem{co}[df]{Corollary}
\newtheorem{lm}[df]{Lemma}
\newtheorem{cond}{Condition}
\newtheorem{thmn}{Theorem}
\newcommand{\fk}{\mathfrak}
\newcommand{\mc}{\mathcal}
\newcommand{\wtd}{\widetilde}
\newcommand{\wht}{\widehat}
\newcommand{\ovl}{\overline}
\newcommand{\End}{\mathrm{End}} 
\newcommand{\id}{\mathbf{1}}
\newcommand{\Hom}{\mathrm{Hom}}
\newcommand{\ev}{\mathrm{ev}}
\newcommand{\coev}{\mathrm{coev}}
\newcommand{\Rep}{\mathrm{Rep}}
\newcommand{\Repu}{\mathrm{Rep}^{\mathrm u}}
\newcommand{\diag}{\mathrm{diag}}
\newcommand{\Dom}{\scr D}
\newcommand{\loc}{\mathrm{par}}
\newcommand{\Diffp}{\mathrm{Diff}^+}
\newcommand{\Diff}{\mathrm{Diff}}
\newcommand{\PSU}{\mathrm{PSU}(1,1)}
\newcommand{\UPSU}{\widetilde{\mathrm{PSU}}(1,1)}
\newcommand{\bk}[1]{\langle {#1}\rangle}
\newcommand{\GA}{\mathscr G_{\mathcal A}}
\newcommand{\GAV}{\mathscr G_{\mathcal A_V}}
\newcommand{\Vect}{\mathrm{Vec}}
\newcommand{\Vectc}{\mathrm{Vec}^{\mathbb C}}
\newcommand{\scr}{\mathscr}
\newcommand{\Jtd}{\widetilde{\mathcal J}}
\newcommand{\gk}{\mathfrak g}
\newcommand{\hk}{\mathfrak h}
\newcommand{\Repf}{\mathrm{Rep}^{\mathrm f}}
\newcommand{\im}{\mathbf{i}}
\newcommand{\RepA}{\mathrm{Rep}(\mc A)}
\newcommand{\RepfA}{\mathrm{Rep}^{\mathrm f}(\mc A)}
\newcommand{\RepV}{\mathrm{Rep}(V)}
\newcommand{\RepuV}{\mathrm{Rep}^{\mathrm u}(V)}
\newcommand{\mbb}{\mathbb}
\newcommand{\Obj}{\mathrm{Obj}}
\newcommand{\bpr}{{}^\backprime}
\newcommand{\pr}{\mathrm{pr}}
\newcommand{\Gr}{\mathrm{Gr}}
\newcommand{\rank}{\mathrm{rank}}
\numberwithin{equation}{section}
\title{Unbounded field operators in categorical extensions  of conformal nets}
\author{{\sc Bin Gui}
}
\date{}
\begin{document}\sloppy 
	\pagenumbering{arabic}
	
	\maketitle

\tableofcontents
	
\newpage

\begin{abstract}

In  rational conformal field theory, a main conjecture is that the (unitary) modular tensor category associated to a (unitary) vertex operator algebras (VOA) $V$ is equivalent to the one associated to the corresponding  conformal net $\mc A_V$ \cite{Kaw15,Kaw18}. In \cite{Gui21a}, we gave a systematic treatment of this conjecture by introducing the notion of \emph{categorical extensions of conformal nets}. One major difficulty encountered in that article is to prove the strong braiding of smeared intertwining operators of $V$. 

In this article, we develop a theory of unbounded operators in categorical extensions of conformal nets, and show that strong braiding follows from some other conditions which are much easier to verify. As an application, we prove the equivalence of unitary modular tensor categories of $V$ and $\mc A_V$ for the following examples: all WZW models, all even lattice VOAs, all parafermion VOAs with positive integer levels,  type $ADE$ discrete series $W$-algebras,  their tensor products, and their regular coset VOAs.  

In the case of WZW models, our work together with \cite{Fin96} solve the longstanding conjecture that given a simple Lie algebra with positive integer level, the unitary modular tensor categories associated to the corresponding affine Lie algebra, quantum group at certain roots of unity, and loop group conformal net are equivalent.
\end{abstract}

\section{Introduction}

Conformal nets and vertex operator algebras (VOAs) are two major mathematical axiomatizations of two dimensional chiral conformal field theories (CFTs). In the conformal net approach, chiral CFTs are formulated and studied under the framework of algebraic quantum field theory \cite{HK64,Haag96} which relies heavily on the methods of operator algebras and especially  Jones subfactor theory \cite{Jon83,Lon89,Lon90}. VOAs originate from the study of Moonshine conjecture and infinite dimensional Lie algebras \cite{FLM89,Bor92}, and are deeply related to modular forms \cite{Zhu96} and the moduli spaces of algebraic curves \cite{TUY89,FB04,DGT19a,DGT19b}. The reason for such  differences in the two approaches can be explained partially by the fact that the full-boundary CFTs extending conformal nets live on Minkowskian spacetimes, whereas those extending VOAs live on Euclidean spacetimes (e.g. compact Riemann surfaces with boundaries); see \cite{KR08} for a detailed discussion. Despite such differences, one has many similar and parallel results in the two approaches; see  \cite{Kaw15,Kaw18} for an overview.

Tensor categories (cf. for example \cite{BK01,EGNO15}), which relate rational CFTs with 3d topological quantum field theories (first observed in physics by Witten \cite{Wit89}) in a mathematically rigorous way \cite{Tur94}, play an important role in the representation theories of conformal nets \cite{FRS89,FRS92,KLM01} and VOAs \cite{Hua05a,NT05,Hua08b}. Moreover, many similar results in the two approaches can be formulated in terms of tensor categories. Therefore, it is desirable to establish a framework under which one can systematically relate conformal nets and VOAs and their representation tensor categories.

The first important progress on this problem  was made by Carpi-Kawahigashi-Longo-Weiner in \cite{CKLW18}. Using smeared vertex operators, the authors of \cite{CKLW18} gave a natural construction of a conformal net $\mc A_V$ from a sufficiently nice (i.e. strongly local) unitary simple VOA $V$, and proved that many familiar unitary VOAs satisfy these nice conditions. Their ideas were later generalized by the author to the intertwining operators of VOAs to prove the unitarity of the braided tensor category $\RepuV$ of unitary $V$-modules \cite{Gui19a,Gui19b,Gui19c}, and to prove the unitary equivalence of $\RepuV$ with the braided $C^*$-tensor category $\Repf(\mc A_V)$ of dualizable (i.e. finite-dimensional) $\mc A_V$-modules \cite{Gui21a}. As an alternative to smeared vertex operators, in \cite{Ten19a,Ten19b,Ten24}, Tener used Segal CFT \cite{Seg04,Ten17}  to relate $V$ and $\mc A_V$, and proved many similar results as well as new ones. Most remarkably, he gave in \cite{Ten24}  the first systematic and complete proof of the complete rationality of the conformal nets associated to unitary affine VOAs and type $ADE$ discrete series $W$-algebras.

\subsection{Equivalence of braided $C^*$-tensor categories}

In this article, we continue our study of the equivalence of braided  $C^*$-tensor categories $\RepuV\simeq\Repf(\mc A_V)$ initiated in \cite{Gui21a}. The techniques in this article allow us to finish proving the long-standing conjecture that unitary affine VOAs and their conformal nets have equivalent modular tensor categories. Moreover, our approach is general enough to be applied to many other examples  including lattice VOAs, parafermion VOAs, and type $ADE$ discrete series $W$-algebras.

One of the main unsolved problems in \cite{Gui21a} is proving the \emph{strong braiding of smeared intertwining operators}. To be more precise, we have shown in \cite{Gui21a}  that if $V$ has sufficiently many intertwining operators satisfying 

(a) polynomial energy bounds, 

(b) strong intertwining property,

(c) strong braiding,\\
then $\RepuV$ is equivalent to a braided $C^*$-tensor subcategory of $\Rep(\mc A_V)$. Although these three conditions are expected to hold for all rational unitary chiral CFTs, the condition of strong braiding is much more difficult to prove than the other two. In \cite{Gui21a}, the only method for proving strong braiding is to show that sufficiently many intertwining operators satisfy linear energy bounds (see section \ref{lb69} for the precise definition), which is NOT expected to hold even for all unitary affine VOAs. As a consequence, in \cite{Gui21a} we were not able to prove  the equivalence of tensor categories for type $BDEF$ affine VOAs. This difficulty is completely resolved in this article: we show that strong braiding follows from polynomial energy bounds and the strong intertwining property, i.e. that (a) and (b) imply (c). See theorem \ref{lb73} for more details. Consequently, we are able to prove $\Repu(V)\simeq\Repf(\mc A_V)$ for a huge number of examples including all WZW models and their regular cosets. The more precise statement is the following:

\begin{thmn}\label{lb94}
	Let $V$ be a (finite) tensor product of the following unitary VOAs:
	\begin{itemize}
		\item Unitary affine VOAs $L_l(\gk)$.
		\item Discrete series $W$-algebras $\mc W_{l'}(\gk)$ of type $ADE$.
		\item Unitary parafermion VOAs $K_l(\gk)$.
		\item Lattice VOAs $V_{\Lambda}$.
	\end{itemize}
Then  the following are true:
	
	\begin{enumerate}[label=(\alph*)]
		\item $V$ is completely unitary (cf. section \ref{lb87}), and  $\RepuV$ is therefore a unitary modular tensor category. 
		\item Any $W_i\in\Obj(\RepuV)$ is strongly integrable, the $*$-functor $\fk F:\RepuV\rightarrow\Repf(\mc A_V)$ (defined in section \ref{lb87}) implements an isomorphism of braided $C^*$-tensor categories, and the conclusion in corollary \ref{lb65} (about strong braiding) holds verbatim for $V$.
		\item $\mc A_V$ is completely rational.
	\end{enumerate}

	Moreover if $V$ is a unitary sub-VOA of $U$ which   is also a tensor product of the above unitary VOAs, and if  $V^c$ (the commutant of $V$ in $U$) is regular, then $V^c$ satisfies  (a) (b) (c).
\end{thmn}

\subsection{Strong braiding of smeared intertwining operators}

The strong braiding of intertwining operators is  a generalization of the strong locality of vertex operators \cite{CKLW18} and the strong intertwining property of intertwining operators \cite{Gui19b}. We now explain the meaning of these notions.

Let us assume that $V$ is a unitary simple VOA.  Let $Y(v,z)$ be the vertex operator associated to the vector $v\in V$. For any $f\in C^\infty(\mbb S^1)$ one can define the smeared vertex operator
\begin{align}
Y(v,f)=\oint_{\mbb S^1} Y(v,z)f(z)\frac{dz}{2\im\pi}=\int_0^{2\pi}Y(v,e^{\im\theta})f(e^{\im\theta})\cdot\frac{e^{\im\theta}}{2\pi}d\theta
\end{align}
which is indeed a closable/closed operator on $\mc H_0$, the Hilbert space completion of $V$. (Note that since $V$ is unitary, as a vector space $V$ has an inner product.) Moreover, $V$ is inside the domain of any (finite) product of smeared vertex operators. \footnote{This fact, together with the weak locality mentioned below, relies only on the fact that the $n$-point correlation functions formed by products of vertex operators are rational functions. I thank Sebastiano Carpi for pointing out this to me.}

Smeared vertex operators always satisfy weak locality, which means that if $I,J$ are disjoint (non-dense and non-empty) open intervals of the unit circle $\mbb S^1$, and if $u,v\in V,f\in C_c^\infty(I),g\in C_c^\infty(g)$, then $[Y(u,f),Y(v,g)]=0$ when acting on any vector inside $V$. However, to construct a conformal net $\mc A_V$ from $V$, one needs the \textbf{strong locality} (of vertex operators), which means that for the above $u,v,f,g$, the von Neumann algebras generated by $Y(u,f)$ and by $Y(v,g)$ commute. (In this case, we say that the closable/closed operators $Y(u,f)$ and $Y(v,g)$ commute strongly. Weak commutativity does not always imply strong commutativity by the celebrated example of Nelson \cite{Nel59}.) 

One can generalize the notion of strong locality to smeared intertwining operators. Assume $V$ is regular (equivalently, rational and $C_2$-cofinite \cite{ABD04}), which corresponds to rational chiral CFTs in physics. We also assume that $V$ is strongly unitary, i.e. any $V$-module admits a unitary structure. An intertwining operator of $V$ is a ``charged field" of $V$. More precisely, if $W_i,W_j,W_k$ are unitary $V$-modules, then a type $W_k\choose W_iW_j$ (written as $k\choose i~j$ for short) intertwining operator $\mc Y$ associates linearly to each vector $w^{(i)}\in W_i$  a multivalued function $\mc Y(w^{(i)},z)$ on $z\in\mbb C^\times=\mbb C-\{0\}$ whose values are linear maps from $W_j$ to the ``algebraic completion" of $W_k$.(See \cite{FHL93} for the rigorous definition.)  We say that $W_i$ is the charge space of $\mc Y$.  Let $\mc H_i,\mc H_j,\mc H_k,\dots$ be the Hilbert space completions of $W_i,W_j,W_k,\dots$. Then one can define the smeared intertwining operator in a similar way as smeared vertex operator, except that the smeared intertwining operator depends not only on the smooth function $f\in C_c^\infty(I)$ supported in an interval $I$, but also on a choice of (continuous) argument function $\arg_I$ of $I$ (equivalently, a branch of $I$ in the universal cover of $\mbb S^1$). We call $\wtd f=(f,\arg_I)$ an arg-valued function. Then for any $w^{(i)}\in W_i$ and for any $\wtd f$, we have a closable/closed smeared intertwining operator $\mc Y(w^{(i)},\wtd f)$ mapping from (a dense subspace of) $\mc H_j$ to $\mc H_k$.  Note that a vertex operator is a special kind of intertwining operator. Then the  smeared intertwining operator $\mc Y(w^{(i)},\wtd f)$ and the smeared vertex operators $Y_j(v,g),Y_k(v,g)$ associated to the vertex operators $Y_j,Y_k$ of the unitary $V$-modules $W_j,W_k$ satisfy the weak locality: if the supports of $\wtd f$ and $g$ are disjoint, then the weak intertwining property
\begin{align*}
\mc Y(w^{(i)},f)Y_j(v,g)=Y_k(v,g)\mc Y(w^{(i)},f)
\end{align*}
holds when acting on $W_j$. If for any $\wtd f,g,v$ this commutative relation holds ``strongly" in the sense of the commutativity of the associated von Neumann algebras, then we say that the charged field $\mc Y(w^{(i)},z)$ satisfies the \textbf{strong intertwining property}. (See section \ref{lb67} for details.) More generally, one can study the weak and strong locality of two smeared intertwining operators, which are called the weak and \textbf{strong braiding} of intertwining operators in our paper. (See section \ref{lb68}). Moreover,  weak braiding is always true and not hard to show,\footnote{We have only proved this when the intertwining operators are energy-bounded (see theorem \ref{lb60} or \cite{Gui21a} theorem 4.8). Although all intertwining operators of unitary regular VOAs are expected to be energy-bounded, so far we do not have a general proof of this fact. However, we will prove the weak braiding in a future work without using the energy bounds condition.} while strong braiding requires more difficult techniques to be proved.

By saying that $\mc Y(w^{(i)},z)$ is \textbf{energy-bounded}, we mean roughly that there exists $n\in\mbb N$ such that the smeared intertwining operator $\mc Y(w^{(i)},\wtd f)$ is bounded by $L_0^n$ for each $\wtd f$. (See section \ref{lb69} for details.) If $\mc Y(w^{{(i)}},z)$ is energy-bounded for each vector $w^{(i)}$, we say that $\mc Y$ is an energy-bounded intertwining operator. We say that $V$ is strongly energy-bounded if the vertex operators associated to all unitary $V$-modules  are energy-bounded. We say that $V$ is strongly unitary if each $V$-module admits a unitary structure. The following main theorem of this article implies theorem \ref{lb94}.

\begin{thmn}[Main theorem]\label{lb73}	Let $V$ satisfy condition \ref{cd2} of section \ref{lb87}, which is equivalent to the following: Let $V$ be a strongly unitary and strongly energy-bounded simple regular VOA. Assume that $V$ is strongly local. Assume that $\mc F^V$ is a set of irreducible unitary $V$-modules tensor-generating the category $\RepV$ of $V$-modules. Suppose that for each $W_i\in\mc F$ there is a non-zero quasi-primary vector $w^{(i)}_0\in W_i$ such that for any unitary intertwining operator $\mc Y$ with charge space $W_i$, $\mc Y(w^{(i)}_0,z)$ is energy-bounded and satisfies the strong intertwining property. Then the following are true.
	
(1) The category $\RepuV$ of unitary $V$-modules is a unitary modular tensor category (Thm. \ref{lb51}).
	
(2) There is a (naturally defined) fully faithful $*$-functor $\fk F:\RepuV\rightarrow\Rep(\mc A_V)$ (Thm. \ref{lb55}) such that $\RepuV$ is equivalent to a braided $C^*$-tensor subcategory of $\Rep(\mc A_V)$ under $\fk F$ (Cor. \ref{lb71}).
	
(3) Energy-bounded smeared intertwining operators satisfy strong braiding (Cor. \ref{lb65}). In particular, they satisfy the strong intertwining property (Rem. \ref{lb72}).
\end{thmn}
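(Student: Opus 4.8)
The plan is to reduce all three assertions to a single analytic fact and then feed it into the categorical-extension machinery of \cite{Gui18}. To each vector of the form $\xi=\mc L^V(w^{(i)},\wtd f)\Omega=\mc R^V(w^{(i)},\wtd f)\Omega\in\mc H_i$ one attaches unbounded operators $\scr L(\xi,\wtd I)$ and $\scr R(\xi,\wtd I)$ (from dense subspaces of $\mc H_j$ into $\mc H_i\boxtimes\mc H_j$ and $\mc H_j\boxtimes\mc H_i$) whose domains are built from the action on the vacuum of the algebras localized in the complementary arg-valued interval $\wtd I'$; once these operators are known to be preclosed, they automatically satisfy strong locality and their closures coincide with $\mc L^V(w^{(i)},\wtd f)$ and $\mc R^V(w^{(i)},\wtd f)$. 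For part (1), strong locality of $V$ gives the net $\mc A$ and makes $\RepA$ a braided $C^*$-tensor category; by \cite{Gui19b,Gui18} it then suffices to know that \emph{sufficiently many} intertwining operators satisfy the strong intertwining property in order to deduce that $\RepuV$ is a unitary modular tensor category together with a fully faithful $*$-functor $\fk F:\RepuV\to\RepA$. The hypothesis supplies this only at the distinguished quasi-primary vectors $w^{(i)}_0$ of the tensor-generating modules $W_i\in\mc F^V$, so the first genuine step is a propagation argument generalizing \cite{CKLW18} theorem 8.1 (recalled as theorem \ref{lb43}): using that $\mc F^V$ tensor-generates $\RepV$ and that intertwining operators compose, the strong intertwining property spreads from the $w^{(i)}_0$'s to $\mc Y_\alpha(w^{(i)},\wtd f)$ for arbitrary vectors in arbitrary unitary $V$-modules. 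This yields (1) and the functor in (2).

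Next I would establish preclosedness of $\scr L(\xi,\wtd I),\scr R(\xi,\wtd I)$ when $\xi$ arises from one of the energy-bounded intertwining operators satisfying the strong intertwining property — method (a), section \ref{lb70}. By \cite{Gui19d} theorem 6.4 (theorem \ref{lb12}) it is enough to verify preclosedness on the vacuum module $\mc H_0$. There one uses that such a smeared intertwining operator is smooth and localizable: localizability, via \cite{CKLW18} lemma 7.2 applied to a QRI core of the form $\mc Q\Omega$ (with $\mc Q$ generated by smeared intertwining operators localized in $\wtd I'$), forces the action of $\scr L(\xi,\wtd I)$ on $\mc H_0$ to agree with that of $\mc L^V(w^{(i)},\wtd f)|_{\mc H_0}$, which is preclosed; a smoothing argument then produces a core inside $\mc H_0^\infty$. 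Hence the closures of $\scr L(\xi,\wtd I),\scr R(\xi,\wtd I)$ are $\mc L^V(w^{(i)},\wtd f),\mc R^V(w^{(i)},\wtd f)$, and since these operators are by construction affiliated to the appropriate von Neumann algebras, Haag duality yields their strong braiding. With strong braiding available for sufficiently many intertwining operators, the categorical-extension formalism of \cite{Gui18} upgrades $\fk F$ to an equivalence of $\RepuV$ with a braided $C^*$-tensor subcategory of $\RepA$, which is (2).

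For (3) I would drop the strong intertwining hypothesis by method (b), section \ref{lb48}. Let $\mc Y_\alpha$ be an arbitrary energy-bounded intertwining operator with charge space $W_i$, put $\xi=\mc L^V(w^{(i)},\wtd f)\Omega$, and form $\scr L(\xi,\wtd I)$ inside the categorical extension of $\mc A$ already constructed in (2). The issue is preclosedness without knowing that $\mc Y_\alpha(w^{(i)},\wtd f)$ commutes strongly with vertex operators; here one invokes the Bisognano-Wichmann property for categorical extensions from \cite{Gui19d}, which makes the modular operators of $\mc A(I)$ act geometrically on the categorical extension, so that the graph of $\scr L(\xi,\wtd I)$ is invariant under the associated modular one-parameter group, and a Tomita-Takesaki/Nelson-type analytic-vectors argument then gives preclosedness. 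Energy bounds and localizability identify the closure of $\scr L(\xi,\wtd I)$ with $\mc Y_\alpha(w^{(i)},\wtd f)$ — both act identically on the QRI core $\mc Q\Omega$ — so $\mc Y_\alpha(w^{(i)},\wtd f)$ is affiliated to the categorical extension and thus satisfies strong braiding; the strong intertwining property then follows as the special case in which one of the two intertwining operators is a vertex operator (Rem. \ref{lb72}).

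The main obstacle throughout is exactly the preclosedness of $\scr L(\xi,\wtd I)$ and $\scr R(\xi,\wtd I)$: weak locality does not imply strong locality by Nelson's example, so no purely formal manipulation can work and modular theory must be brought in. The essential new difficulty relative to \cite{CKLW18} theorem 8.1 is that $\xi$ lives in a charged module $\mc H_i$ rather than in the vacuum, which is why the plain Tomita-Takesaki input must be replaced by the Bisognano-Wichmann property for categorical extensions, and why the bookkeeping of argument functions on the arg-valued intervals and of the multivaluedness of $\mc Y_\alpha$ becomes delicate. The subsidiary technical points — that the smoothing argument really delivers cores inside $\mc H^\infty$, and that $\mc Q\Omega$ is simultaneously QRI and a core for the relevant products of smeared intertwining operators — are precisely where the energy-bounds and localizability hypotheses get consumed.
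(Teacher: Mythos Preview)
Your outline of methods (a) and (b) matches the paper's, and your treatment of (3) is essentially correct. But there is a structural misconception in how you handle (1) and (2). You claim that the first genuine step is a propagation argument, generalizing theorem~\ref{lb43}, that spreads the strong intertwining property from the $w^{(i)}_0$'s to \emph{arbitrary} homogeneous vectors in arbitrary modules, and that (1) and the functor in (2) follow from this. No such propagation is carried out, needed, or available at this stage: theorem~\ref{lb51} (complete unitarity, hence (1)) and theorem~\ref{lb55} (strong integrability of all unitary modules, hence the fully faithful functor $\fk F$) require only condition~\ref{cd1}, which is exactly the hypothesis---strong intertwining at the distinguished $w^{(i)}_0$'s alone. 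The \cite{CKLW18} argument works because $E$ generates $V$ \emph{algebraically} via modes of vertex operators; there is no analogous mechanism that produces an arbitrary $w^{(i)}$ from $w^{(i)}_0$ inside a fixed charge module, so a direct generalization in the direction you describe does not go through. The extension to arbitrary energy-bounded vectors is precisely the content of (3), obtained later via method (b).

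For the braided equivalence in (2) the paper's route also differs from what you sketch. It does not first establish strong braiding and then invoke \cite{Gui18}; instead it packages the operators $\mc L^V(w^{(i)}_0,\wtd f)$, multiplied by products $Y_k(u_1,f_1)\cdots Y_k(u_n,f_n)$ of smeared vertex operators (needed for Reeh--Schlieder density, since a single $w^{(i)}_0$ does not suffice), into a \emph{weak categorical local extension} $\scr E^w_\loc$ (definition~\ref{lb31}), extends it to a weak categorical extension $\scr E^w$ by taking products of the $\mc L$-operators along chains in $\mc F$ (theorem~\ref{lb40}), and only then applies the abstract theorem~\ref{lb34} to build the functorial unitary $\Psi:\boxdot\simeq\boxtimes$. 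The preclosedness input here is indeed your method (a) (proposition~\ref{lb27}), but the strong locality of the $\scr L,\scr R$ field operators (theorem~\ref{lb13}) is proved by approximating $\scr L(\xi,\wtd I)$ with the bounded operators $L(p_t(\xi,\wtd I)\xi,\wtd I)$ via the bounding projections, not via Haag duality. Finally, your method-(b) argument for (3) silently consumes the M\"obius covariance of $\scr E^w_\loc$, which is exactly where the quasi-primary hypothesis on the $w^{(i)}_0$'s enters (condition~\ref{cd2} rather than~\ref{cd1}); the paper's lemma~\ref{lb37} needs $\delta_{I'}(t)$-invariance of $\fk H_{\ovl i}(\wtd I')$ to get a core for $S_{\wtd I}^*$, and this fails for merely homogeneous $w^{(i)}_0$.
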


Roughly speaking, the assumption on intertwining operators is that $V$ has sufficiently many intertwining operators that are energy-bounded and satisfy the strong intertwining property. Part (1) of this theorem is not new and follows essentially from \cite{Gui19a,Gui19b}. Part (2) claims the equivalence of braided $C^*$-tensor structures. (Note that the unitary ribbon structure is uniquely determined by the  braided $C^*$-tensor structure.) Part (3) claims the strong braiding, which follows immediately from the method used to prove (2).

\subsection{Main idea of the proof}

Let us explain the main idea of proving parts (2) and (3) of theorem \ref{lb73}. We first compare this theorem with the main result of \cite{Gui21a}, which says roughly that the assumptions of theorem \ref{lb73}, together with (3), imply (2). Thus, the main improvement of theorem \ref{lb73} is that part (3) (strong braiding) is now a consequence but no  longer an assumption. 

The main idea in \cite{Gui21a} is the following. For the conformal net $\mc A_V$ and any representations $\mc H_i,\mc H_j$, for suitable $\xi\in\mc H_i$ and $\eta\in\mc H_j$ ``localized" in disjoint arg-valued intervals $\wtd I,\wtd J$, one has fusion product $\xi\boxtimes\eta$, written as $L(\xi,\wtd I)\eta$ and also as $R(\eta,\wtd J)\xi$. $L(\xi,\wtd I)$ and $R(\eta,\wtd J)$ describe the left and right actions of $\xi,\eta$ on $\mc H_j$ and $\mc H_i$ respectively. Moreover, for the vectors $\xi,\eta$ considered in \cite{Gui21a}, the left and the right (linear) operators $L(\xi,\wtd I),R(\eta,\wtd J)$ are \emph{bounded} operators and satisfy braid relations which capture the braided $C^*$-tensor structure of $\Rep(\mc A_V)$. On the VOA side, the (smeared) intertwining operators of $V$ also record the braided $C^*$-tensor structure of $\Repu(V)$. To show that $\Repu(V)$ and $\Rep(\mc A_V)$ are compatible, we relate the smeared intertwining operators of $V$ with the left and right operators. Unfortunately, the smeared intertwining operators are often not bounded. \cite{Gui21a} addresses this issue by taking polar decompositions of smeared intertwining operators and relating the partial isometry parts (the phases) with the bounded $L$ and $R$ operators. To establish such a relation, one has to first prove that the phases of smeared intertwining operators also satisfy nice braid relations, which relies on  the strong braiding of intertwining operators.

Different from the above approach which first takes polar decompositions and then relates bounded linear operators,  in this article we relate (possibly) unbounded closed operators directly. To be more precise, we loosen the conditions on $\xi,\eta$ which yield unbounded closed left and right operators $\scr L(\xi,\wtd I),\scr R(\eta,\wtd J)$. These $\scr L$ and $\scr R$ operators satisfy the strong intertwining property and (more generally) the strong braiding, and the phases of these operators give (bounded) $L$ and $R$ operators. Using the polynomial energy bounds and the strong intertwining properties of $V$-intertwining operators, one can relate the smeared intertwining operators of $V$ with the $\scr L$ and $\scr R$ operators of $\mc A_V$ since  weak braiding is satisfied on both sides. Therefore, as $\scr L$ and $\scr R$ satisfy strong braiding, so do the smeared intertwining operators  of $V$.

Note that part (3) of theorem \ref{lb73} also claims that if sufficiently many intertwining operators of $V$ are energy-bounded and satisfy the strong intertwining operators, then the strong braiding holds not only for these intertwining operators, but also for any other ones satisfying only polynomial energy bounds. In particular, the strong intertwining property will be a consequence of polynomial energy bounds. This result generalizes \cite{CKLW18} theorem 8.1, which says that all the vertex operators of $V$ satisfy strong locality if sufficiently many quasi-primary fields satisfy polynomial energy bounds and strong locality. Similar to that theorem, the proof of our result relies heavily on the Bisognano-Wichmann theorem. See section \ref{lb48}. This result can be applied to the following situation: Let $V$ and $V'$ satisfy the assumptions of theorem \ref{lb73}. Assume that all the intertwining operators of $V$ and $V'$ are energy-bounded and satisfy the strong intertwining property. Then any intertwining operator of $V\otimes V'$, which is clearly energy-bounded since it is a sum of tensor products of intertwining operators of $V$ and $V'$, also satisfies  strong braiding and (in particular) the strong intertwining property. (See theorem \ref{lb83} for the precise statement.)

\subsection{Outline of the paper}

This article is organized as follows. In chapter 2, we prove theorem \ref{lb73} under the general framework of operator algebras and algebraic quantum field theories. (In particular, theorem \ref{lb34} and corollary \ref{lb41} prove theorem \ref{lb73}-(2) and  (3) respectively.) We are able to do so since the main techniques of proving this theorem are operator algebraic. We hope that people with little background in VOA can still understand the main ideas of the proofs.

In section 2.1 we review the basic facts about the  (bounded) $L$ and $R$ operators. Then, in section 2.2, we define  the unbounded closed $\scr L$ and $\scr R$ operators, and prove that they satisfy many important properties including the strong braiding. The symbol $\mc H_i^\pr(I)$ denotes the subspace  of all vectors in $\mc H_i$ with closable/closed $\scr L$ and $\scr R$ operators "localized" in the interval $I$. To relate $\scr L,\scr R$ with the smeared intertwining operators, one also needs to calculate the $4$-point correlation functions defined by $\scr L,\scr R$ as in proposition \ref{lb26}. For that purpose, we consider in section 2.3 a nice subspace $\mc H_i^w(I)\subset \mc H_i^\pr(I)$ and compute the $4$-point functions for the vectors in this kind of subspaces.

In section 2.4, we introduce the notion of a weak categorical extension, which consists of a list of axioms satisfied by the $\mc L$ and $\mc R$ operators. These $\mc L$ and $\mc R$ operators are the abstraction of \emph{products of} smeared intertwining operators satisfying the polynomial energy bounds and the strong intertwining property. Taking products  of smeared intertwining operators is necessary for  constructing enough operators that could be related to $\scr L$ and $\scr R$, and for extending the left and right actions from $\mc F^V$ (see the statement of theorem \ref{lb73}) to all the objects of $\Repu(V)$. The process of taking such products is described, in section 2.7 and especially in the proof of theorem \ref{lb40}, as the construction of a weak categorical extension from a weak categorical \emph{local} extension, in which the $\mc L$ and $\mc R$ operators describe single smeared intertwining operators. Note that in both sections, polynomial energy bounds are not explicitly mentioned in the axioms; two crucial consequences of energy bounds, the smoothness and the localizability, are required instead. Section 2.7 can be read immediately after section 2.4.

Section 2.5 is the climax of this article: we show that the existence of weak categorical (local) extensions implies the equivalence of braided $C^*$-tensor categories and the strong braiding. This proves theorem \ref{lb73}-(2) and part of (3). To finish proving (3), we need to show that polynomial energy bounds imply  the strong intertwining property and (hence) strong braiding. This is discussed in section 2.6 where the weak left and right operators $A,B$ are the abstraction of energy-bounded smeared intertwining operators.

In chapter 3 we prove theorem \ref{lb73} in the (original) language of VOA by verifying that the smeared intertwining operators satisfy the requirements  in chapter 2. A large part of this work has been done in \cite{Gui19a} and \cite{Gui21a} chapter 4, and is reviewed here for the readers' convenience. We remark that theorem \ref{lb43} and proposition \ref{lb47} are somewhat special cases of the main results of chapter 2. We give independent proofs of these two results, which contain many key ideas in  chapter 2. We hope that these  proofs are helpful for the readers to understand the arguments in chapter 2. Note that theorem \ref{lb43} is exactly \cite{CKLW18} theorem 8.1, which states that $V$ is strongly local if sufficiently many quasi-primary fields are. Our proof  differs from that of \cite{CKLW18} in that we use proposition \ref{lb38} to prove $Y(v,f)\Omega\in\Dom(S_I)$ for any $v\in V$, while \cite{CKLW18} uses the theory of real Hilbert subspaces developed in \cite{Lon08}. In section 3.6, we prove that tensor products or cosets of VOAs satisfying condition \ref{cd2} of section \ref{lb87} also satisfy condition \ref{cd2}. Here, condition \ref{cd2} is equivalent to the assumptions of theorem \ref{lb73}. With the help of these methods, we prove theorem \ref{lb94} by checking that the examples in this theorem satisfy condition \ref{cd2}. We also prove that all the intertwining operators of type $ADE$ unitary affine VOAs, discrete series $W$-algebras, and unitary parafermion VOAs are energy-bounded and satisfy strong braiding.

\subsubsection*{Acknowledgment}

I would like to thank Sebastiano Carpi, Yoh Tanimoto, James Tener, and Mih\'aly Weiner for helpful discussions. I also thank the referees for their valuable comments. I am grateful to Zhipeng Yang for his warm hospitality and the support of Yunnan Key Laboratory of Modern Analytical Mathematics and Applications. Special thanks go to my friends Jing An, Yiwang Chen, Ning Guo, Siqi He, Xuelun Hou, Yuhang Liu, Huadi Qu, Shi Wang, Mingchen Xia, Zetian Yan, Zhipeng Yang, Bingyu Zhang, Hong-Wei Zhang, Yu Zhao, Fan Zheng for their long-term support since the pandemic.

This work was supported by NSFC Grant 12401159.

\section{General theory}

\subsection{Categorical extensions of conformal nets}\label{lb29}

In this article, we let $\mbb N=\{0,1,2,3,\dots\}$ and $\mbb Z_+=\{1,2,3,\dots\}$. Set $\im=\sqrt{-1}$. If $\mc H,\mc K$ are Hilbert spaces, we say that  $A:\mc H\rightarrow\mc K$ is an unbounded operator on $\mc H$ if $A$ is a linear map from a subspace of $\mc H$ to $\mc K$. We let $\Dom(A)$ denote the domain of $A$. If $\Dom(A)$ is dense in $\mc H$ and $A$ is closable, we let $\ovl A$ be its closure. We say that $A:\mc H\rightarrow\mc K$ is bounded if $A$ is a continuous unbounded operator which is  defined everywhere (i.e. $\Dom(A)=\mc H$). A continuous unbounded operator is densely defined but not necessarily everywhere defined.

Let $\mc J$ be the set of non-empty non-dense open intervals in the unit circle $\mbb S^1$. If $I\in\mc J$, then $I'$ denotes the interior of the complement of $I$, which is also an element in $\mc J$. Let $\Diffp(\mbb S^1)$ be the group of orientation-preserving diffeomorphisms of $\mbb S^1$, which contains the subgroup $\PSU$ of M\"obius transforms of $\mbb S^1$. If $I\in\mc J$, we let $\Diff_I(\mbb S^1)$ be the subgroup of all $g\in\Diffp(\mbb S^1)$ such that $gx=x$ whenever $x\in I'$. Throughout this article, we let $\mc A$ be an (irreducible) conformal net. This means that each $I\in\mathcal J$ is associated with a von Neumann algebra $\mathcal A(I)$ acting on a fixed separable Hilbert space $\mathcal H_0$, such that the following conditions hold:\\
(a) (Isotony) If $I_1\subset I_2\in\mathcal J$, then $\mathcal A(I_1)$ is a von Neumann subalgebra of $\mathcal A(I_2)$.\\
(b) (Locality) If $I_1,I_2\in\mathcal J$ are disjoint, then $\mathcal A(I_1)$ and $\mathcal A(I_2)$ commute.\\
(c) (Conformal covariance) We have a strongly continuous projective unitary representation $U$ of $\Diffp(\mbb S^1)$ on $\mathcal H_0$ which restricts to a strongly continuous unitary representation of $\PSU$, such that for any $g\in \Diffp(\mbb S^1),I\in\mathcal J$, and any representing element $V\in\mathcal U(\mathcal H_0)$ of $U(g)$,
\begin{align*}
V\mathcal A(I)V^*=\mathcal A(gI).
\end{align*}
Moreover, if $g\in\Diff_I(S^1)$, then $V\in\mc A(I)$.\\
(d) (Positivity of energy) The action of the rotation subgroup $\varrho$ of $\PSU$ on $\mc H_0$ has positive generator.\\
(e) There exists a  unique (up to scalar) unit vector $\Omega\in\mathcal H_0$ fixed by $\PSU$. ($\Omega$ is called the vacuum vector.) Moreover, $\Omega$ is  cyclic under the action of $\bigvee_{I\in\mathcal J}\mathcal M(I)$ (the von Neumann algebra generated by all $\mathcal M(I)$).

$\mathcal A$ satisfies the following well-known properties (cf. for example \cite{GL96} and the reference therein):\\
(1) (Additivity) $\mathcal A(I)=\bigvee_\alpha\mathcal A(I_\alpha)$ if $\{I_\alpha\}$ is a set of open intervals whose union is $I$.\\
(2) (Haag duality) $\mathcal A(I)'=\mathcal A(I')$.\\ 
(3) (Reeh-Schlieder theorem) $\mathcal A(I)\Omega$ is dense in $\mathcal H_0$ for any $I\in\mathcal J$\\
(4) For each $I\in\mathcal J$, $\mathcal A(I)$ is a type III factor.

A pair $(\pi_i,\mc H_i)$ (or $\mc H_i$ for short) is called a (normal) representation of $\mc A$ (or $\mc A$-module) if $\mc H_i$ is a separable Hilbert space, and for each $I\in\mc J$, $\pi_{i,I}$ is a (normal $*$-) representation of $\mc A(I)$ on $\mc H_i$ such that $\pi_{i,I}|_{\mc A(I_0)}=\pi_{i,I_0}$ when $I_0\subset I$. If $x\in\mc A(I)$, we sometimes write $\pi_{i,I}(x)$ as $x$ for brevity. By \cite{Hen19}, any $\mc A$-module $\mc H_i$ is \textbf{conformal covariant}, which means the following: Let $\scr G$ be the universal cover of $\Diffp(\mbb S^1)$, and let $\GA$ be the central extension of $\scr G$ defined by 
\begin{align}
\GA=\{(g,V)\in\mathscr G\times \mathcal U(\mathcal H_0)| V \textrm{ is a representing element of } U(g) \}.\label{eq35}
\end{align}
The topology of $\GA$ inherits from that of $\scr G\times\mc U(\mc H_0)$. So we have exact sequence
\begin{align}
1\rightarrow \mbb T\rightarrow\GA\rightarrow\scr G\rightarrow 1
\end{align}
where $\mbb T$ is the subgroup of $\scr G\times\mc U(\mc H_0)$ consisting of all $(1,\lambda\id_{\mc H_0})$ where $\lambda\in\mbb C$ and $|\lambda|=1$. Then there exists a (unique) strongly continuous unitary representation $U_i$ of $\GA$ on $\mc H_i$ such that for any $I\in\mc J$ and $g\in\GA(I)$,
\begin{align}
U_i(g)=\pi_i(U(g)).\label{eq34}
\end{align}
As a consequence, for any $x\in\mc A(I)$ and $g\in\GA$, one has
\begin{align}
U_i(g)\pi_{i,I}(x)U_i(g)^*=\pi_{i,gI}(U(g)xU(g)^*).
\end{align}
See \cite{Gui21a} for more details. We  write $U_i(g)$ as $g$ when no confusion arises. Homomorphisms of $\mc A$-modules intertwine the representations of $\GA$.

Let $\RepA$ be the $C^*$-category of $\mc A$-modules whose objects are denoted by $\mc H_i,\mc H_j,\mc H_k,\dots$. The vector space of (bounded) homomorphisms between any $\Rep A$-modules  $\mc H_i,\mc H_j$ is written as $\Hom_{\mc A}(\mc H_i,\mc H_j)$. Then one can equip $\RepA$ with a structure of braided $C^*$-tensor category via Doplicher-Haag-Roberts (DHR) superselection theory \cite{FRS89,FRS92} or, equivalently, via Connes fusion \cite{Gui21a}. The unit of $\RepA$ is $\mc H_0$. We write the tensor (fusion) product of two $\mc A$-modules $\mc H_i,\mc H_j$ as $\mc H_i\boxtimes\mc H_j$. We assume without loss of generality that $\RepA$ is strict, which means that we will not distinguish between $\mc H_0,\mc H_0\boxtimes\mc H_i,\mc H_i\boxtimes\mc H_0$, or $(\mc H_i\boxtimes \mc H_j)\boxtimes\mc H_k$ and $\mc H_i\boxtimes(\mc H_j\boxtimes\mc H_k)$ (abbreviated to $\mc H_i\boxtimes\mc H_j\boxtimes \mc H_k$). In the following, we review the definition and the basic properties of closed vector-labeled categorical extensions of $\mc A$  introduced in \cite{Gui21a}.

To begin with, if $\mc H_i,\mc H_j$ are $\mc A$-modules and $I\in\mc J$, then $\Hom_{\mc A(I')}(\mc H_i,\mc H_j)$ denotes the vector space of bounded linear operators $T:\mc H_i\rightarrow\mc H_j$ such that $T\pi_{i,I'}(x)=\pi_{j,I'}(x)T$ for any $x\in\mc A(I')$. We then define $\mc H_i(I)=\Hom_{\mc A(I')}(\mc H_0,\mc H_i)\Omega$, which is a dense subspace of $\mc H_i$. Note that $I\subset J$ implies $\mc H_i(I)\subset\mc H_i(J)$. Moreover, if $G\in\Hom_{\mc A}(\mc H_i,\mc H_j)$, then $G\mc H_i(I)\subset\mc H_j(I)$.

If $I\in\mc J$, an arg-function of $I$ is, by definition, a continuous function $\arg_I:I\rightarrow\mbb R$ such that for any $e^{\im t}\in I$, $\arg_I(e^{\im t})-t\in 2\pi\mbb Z$. $\wtd I=(I,\arg_I)$ is called an \textbf{arg-valued interval}. Equivalently, $\wtd I$ is a branch of $I$ in the universal cover of $\mbb S^1$. We let $\wtd{\mc J}$ be the set of arg-valued intervals. If $\wtd I=(I,\arg_I)$ and $\wtd J=(J,\arg_J)$ are in $\Jtd$, we say that $\wtd I$ and $\wtd J$ are disjoint if $I$ and $J$ are so. Suppose moreover that for any $z\in I,\zeta\in J$ we have $\arg_J(\zeta)<\arg_I(z)<\arg_J(\zeta)+2\pi$, then we say that $\wtd I$ is \textbf{anticlockwise} to $\wtd J$ (equivalently, $\wtd J$ is \textbf{clockwise} to $\wtd I$). We write $\wtd I\subset\wtd J$ if $I\subset J$ and $\arg_J|_I=\arg_I$. Given $\wtd I\in\Jtd$, we also define $\wtd I'=(I',\arg_{I'})\in\Jtd$ such that $\wtd I$ is anticlockwise to $\wtd I'$. We say that $\wtd I'$ is the \textbf{clockwise complement} of $\wtd I$. We let $\wtd{\mbb S^1_+}\in\Jtd$ where $\mbb S^1_+=\{a+\im b\in\mbb S^1:b>0 \}$ is the upper semi-circle, and $\arg_{\mbb S^1_+}$ takes values in $(0,\pi)$. We set $\wtd{\mbb S^1_-}$ to be the clockwise complement of $\wtd{\mbb S^1_+}$.

\begin{df}
	A closed and  vector-labeled \textbf{categorical extension} $\scr E=(\mc A,\RepA,\boxtimes,\mc H)$ of $\mc A$ associates, to any  $\mc H_i,\mc H_k\in\Obj(\RepA)$ and any $\wtd I\in\Jtd,\fk \xi\in\mc H_i(I)$, bounded linear operators
	\begin{gather*}
	L(\xi,\wtd I)\in\Hom_{\mc A(I')}(\mc H_k,\mc H_i\boxtimes\mc H_k),\\
	R(\xi,\wtd I)\in\Hom_{\mc A(I')}(\mc H_k,\mc H_k\boxtimes\mc H_i),
	\end{gather*}
	such that the following conditions are satisfied:\\
	(a) (Isotony) If $\wtd I_1\subset\wtd I_2\in\Jtd$, and $\xi\in\mc H_i(I_1)$, then $L(\xi,\wtd I_1)=L(\xi,\wtd I_2)$, $R(\xi,\wtd I_1)=R(\xi,\wtd I_2)$ when acting on any  $\mc H_k\in\Obj(\RepA)$.\\
	(b) (Naturality) If $\mc H_{i},\mc H_k,\mc H_{k'}\in\Obj(\RepA)$, $F\in\Hom_{\mc A}(\mc H_k,\mc H_{k'})$,  the following diagrams commute for any $\wtd I\in\Jtd,\xi\in\mc H_i(I)$.
	\begin{gather}
	\begin{CD}
	\mc H_k @>F>> \mc H_{k'}\\
	@V L(\xi,\wtd I)  VV @V L(\xi,\wtd I)  VV\\
	\mc H_i\boxtimes\mc H_k @> \id_i\boxtimes F>> \mc H_i\boxtimes\mc H_{k'}
	\end{CD}\qquad\qquad
	\begin{CD}
	\mc H_k @> R(\xi,\wtd I)  >> \mc H_k\boxtimes\mc H_i\\
	@V F VV @V F\boxtimes\id_i  VV\\
	\mc H_{k'} @>R(\xi,\wtd I) >> \mc H_{k'}\boxtimes\mc H_i
	\end{CD}.
	\end{gather}
	(c) (State-field correspondence\footnote{For general (i.e., non-necessarily closed or vector-labeled) categorical extensions, this axiom is replaced by the neutrality and the Reeh-Schlieder property; see \cite{Gui21a} section 3.1.}) For any $\mc H_i\in\Obj(\RepA)$, under the identifications $\mc H_i=\mc H_i\boxtimes\mc H_0=\mc H_0\boxtimes\mc H_i$, the relations
	\begin{align}
	L(\xi,\wtd I)\Omega=R(\xi,\wtd I)\Omega=\xi
	\end{align}
	hold for any $\wtd I\in\Jtd,\xi\in\mc H_i(I)$. It follows immediately that when acting on $\mc H_0$, $L(\xi,\wtd I)$ equals $R(\xi,\wtd I)$ and is independent of $\arg_I$.\\
	(d) (Density of fusion products) If $\mc H_i,\mc H_k\in\Obj(\RepA),\wtd I\in\Jtd$, then the set $L(\mc H_i(I),\wtd I)\mc H_k$ spans a dense subspace of $\mc H_i\boxtimes\mc H_k$, and $R(\mc H_i(I),\wtd I)\mc H_k$ spans a dense subspace of $\mc H_k\boxtimes\mc H_i$.\\
	(e) (Locality) For any $\mc H_k\in\Obj(\RepA)$, disjoint $\wtd I,\wtd J\in\Jtd$ with $\wtd I$ anticlockwise to $\wtd J$, and any $\xi\in\mc H_i(I),\eta\in\mc H_j(J)$, the following diagram \eqref{eq5}  commutes adjointly.
	\begin{align}\label{eq5}
	\begin{CD}
	\mc H_k @> \quad R(\eta,\wtd J)\quad   >> \mc H_k\boxtimes\mc H_j\\
	@V L(\xi,\wtd I)   V  V @V L(\xi,\wtd I) VV\\
	\mc H_i\boxtimes\mc H_k @> \quad R(\eta,\wtd J) \quad  >> \mc H_i\boxtimes\mc H_k\boxtimes\mc H_j
	\end{CD}
	\end{align}
	Here, the \textbf{adjoint commutativity} of diagram \eqref{eq5} means that $R(\eta,\wtd J)L(\xi,\wtd I)=L(\xi,\wtd I)R(\eta,\wtd J)$ when acting on $\mc H_k$, and $R(\eta,\wtd J)L(\xi,\wtd I)^*=L(\xi,\wtd I)^*R(\eta,\wtd J)$ when acting on $\mc H_i\boxtimes\mc H_k$. (Cf. \cite{Gui21a} section 3.1.)\\
	(f) (Braiding) There is a unitary linear map $\mbb B_{i,j}:\mc H_i\boxtimes\mc H_j\rightarrow\mc H_j\boxtimes \mc H_i$  for any $\mc H_i,\mc H_j\in\Obj(\RepA)$, such that  
	\begin{align}
	\mbb B_{i,j} L(\xi,\wtd I)\eta=R(\xi,\wtd I)\eta
	\end{align}
	whenever $\wtd I\in\Jtd,\xi\in\mc H_i(I)$, $\eta\in\mc H_j$.
\end{df}

Note that $\mbb B_{i,j}$ is unique by the density of fusion products. Moreover, $\mbb B_{i,j}$ commutes with the actions of $\mc A$, and  is the same as the braid operator of $\RepA$; see \cite{Gui21a} sections 3.2, 3.3. The existence of $\scr E$ was also proved in \cite{Gui21a} sections 3.2. A sketch of the construction can be found in \cite{Gui21b} section A. 

Note that the categorical extensions of $\mc A$ are unique up to unitary isomorphisms, cf. Thm. 3.10 and 3.12 of \cite{Gui21a}. In Sec. \ref{lb33} of this paper, we will extend the uniqueness to weak categorical extensions. As mentioned in the Introduction of \cite{Gui21a} and described more rigorously in \cite{Gui21c}, $\scr E$ can be viewed as a universal non-local extension of $\mc A$ such that any extension of $\mc A$ is a subquotient of $\scr E$ described by $Q$-systems. Therefore, the uniqueness of $\scr E$ says that $\mc A$ has a unique universal non-local extension of $\scr E$. (The relation between $\scr E$ and a non-local extension is similar to that between a free group and a group determined by some relations. Therefore, the $Q$-systems play the same role as the group relations.) Since $\scr E$ does not live on a fixed Hilbert space but on a category of Hilbert spaces, we call it a categorical extension.

We collect some useful formulas. By the locality and the state-field correspondence, it is  easy to see that
\begin{align}
L(\xi,\wtd I)\eta=R(\eta,\wtd J)\xi
\end{align}
whenever $\xi\in\mc H_i(I)$, $\eta\in\mc H_j(J)$, and $\wtd I$ is anticlockwise to $\wtd J$. Moreover, if $F\in\Hom_{\mc A}(\mc H_i,\mc H_{i'})$, $G\in\Hom_{\mc A}(\mc H_j,\mc H_{j'})$,  $\xi\in\mc H_i(I)$, and $\eta\in\mc H_j$, then
\begin{align}
(F\boxtimes G)L(\xi,\wtd I)\eta=L(F\xi,\wtd I)G\eta,\qquad (G\boxtimes F)R(\xi,\wtd I)\eta=R(F\xi,\wtd I)G\eta;\label{eq14}
\end{align}
see \cite{Gui21b} section 2. We also recall the following fusion relations proved in \cite{Gui21b} proposition 2.3.
\begin{pp}\label{lb10}
	Let $\mc H_i,\mc H_j,\mc H_k\in\Obj(\RepA)$, $\wtd I\in\Jtd$, and $\xi\in\mc H_i(I)$.\\
	(a) If $\eta\in\mc H_j(I)$, then $L(\xi,\wtd I)\eta\in(\mc H_i\boxtimes\mc H_j)(I)$, $R(\xi,\wtd I)\eta\in(\mc H_j\boxtimes\mc H_i)(I)$, and
	\begin{gather}
	L(\xi,\wtd I)L(\eta,\wtd I)|_{\mc H_k}=L(L(\xi,\wtd I)\eta,\wtd I)|_{\mc H_k},\label{eq9}\\
	R(\xi,\wtd I)R(\eta,\wtd I)|_{\mc H_k}=R(R(\xi,\wtd I)\eta,\wtd I)|_{\mc H_k}.\label{eq10}
	\end{gather}
	(b) If $\psi\in(\mc H_i\boxtimes H_j)(I)$ and $\phi\in (\mc H_j\boxtimes H_i)(I)$, then $L(\xi,\wtd I)^*\psi\in\mc H_j(I)$, $R(\xi,\wtd I)^*\phi\in\mc H_j(I)$, and
	\begin{gather}
	L(\xi,\wtd I)^*L(\psi,\wtd I)|_{\mc H_k}=L(L(\xi,\wtd I)^*\psi,\wtd I)|_{\mc H_k},\label{eq11}\\
	R(\xi,\wtd I)^*R(\phi,\wtd I)|_{\mc H_k}=R(R(\xi,\wtd I)^*\phi,\wtd I)|_{\mc H_k}.\label{eq12}
	\end{gather}
\end{pp}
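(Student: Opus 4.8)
The plan is to derive all four fusion relations from the defining axioms of the categorical extension, using the functoriality axiom (b) together with the state-field correspondence (c) and the locality axiom (e), and then to bootstrap from (a) to (b) by taking adjoints. First I would prove part (a). The key observation is that $L(\xi,\wtd I)\in\Hom_{\mc A(I')}(\mc H_j,\mc H_i\boxtimes\mc H_j)$, so by functoriality (axiom (b), applied with $F=L(\xi,\wtd I)$ but read as a morphism that commutes with the $\mc A(I')$-action) one expects $L(\xi,\wtd I)\eta\in(\mc H_i\boxtimes\mc H_j)(I)$ provided $\eta\in\mc H_j(I)$; more precisely, if $\eta=T\Omega$ with $T\in\Hom_{\mc A(I')}(\mc H_0,\mc H_j)$, then $L(\xi,\wtd I)T\in\Hom_{\mc A(I')}(\mc H_0,\mc H_i\boxtimes\mc H_j)$ and applying it to $\Omega$ gives $L(\xi,\wtd I)\eta\in(\mc H_i\boxtimes\mc H_j)(I)$. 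For the identity \eqref{eq9}, I would apply both sides to a vector $\eta'\in\mc H_k(J)$ for some $\wtd J$ clockwise to $\wtd I$ (these are dense in $\mc H_k$ by Reeh-Schlieder, hence suffice); using $L(\zeta,\wtd I)\eta' = R(\eta',\wtd J)\zeta$ repeatedly, the identity becomes a statement about $R(\eta',\wtd J)$ acting on various fusion products, and functoriality of $R(\eta',\wtd J)$ in the tensor factor (together with isotony) should close the argument. Equivalently, one can invoke \eqref{eq14}: $L(L(\xi,\wtd I)\eta,\wtd I)=L(\xi,\wtd I)L(\eta,\wtd I)$ because both sides are intertwiners for $\mc A(I')$ agreeing on $\Omega$-generated vectors, and then appeal to density. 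The statement for $R$ is entirely symmetric.

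For part (b), the cleanest route is to take adjoints in part (a). Given $\psi\in(\mc H_i\boxtimes\mc H_j)(I)$, write $\psi=S\Omega$ with $S\in\Hom_{\mc A(I')}(\mc H_0,\mc H_i\boxtimes\mc H_j)$; then $L(\xi,\wtd I)^*S\in\Hom_{\mc A(I')}(\mc H_0,\mc H_j)$, so $L(\xi,\wtd I)^*\psi=L(\xi,\wtd I)^*S\Omega\in\mc H_j(I)$. For \eqref{eq11}, I would pair both sides against vectors of the form $L(\eta,\wtd J)\eta'$ (dense in the appropriate fusion product when $\wtd J$ is suitably positioned), convert everything to statements involving $L(\xi,\wtd I)$ and its adjoint using the locality axiom (e) — which is precisely the tool that lets $L(\xi,\wtd I)^*$ commute past operators labelled by data in the clockwise complement — and then reduce to \eqref{eq9} already proved. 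Again the $R$-version follows by the same argument with the roles of clockwise/anticlockwise and $L$/$R$ interchanged, or by conjugating with the braiding operator $\mbb B$.

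The step I expect to be the main obstacle is the careful bookkeeping in \eqref{eq11} and \eqref{eq12}: one must check that $L(\xi,\wtd I)^*\psi$ is again an $\mc A(I')$-intertwiner between the \emph{right} pair of modules (the domain and codomain shift when one takes adjoints and composes), and that the core/density arguments used to promote an identity verified on a dense subspace to an identity of bounded operators are applied to the correct dense set — here the set $L(\mc H_j(J),\wtd J)\mc H_k$ for $\wtd J$ clockwise to $\wtd I$, whose density comes from axiom (d) combined with Reeh-Schlieder. Everything else is a routine, if slightly tedious, manipulation of the axioms; no hard analysis is involved since all operators in sight are bounded, and the entire proposition is essentially a formal consequence of isotony, functoriality, state-field correspondence, and locality. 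I would therefore organize the write-up as: (1) the membership statements, proved by the $\Omega$-generation trick; (2) \eqref{eq9} and \eqref{eq10} via \eqref{eq14} and density; (3) \eqref{eq11} and \eqref{eq12} by taking adjoints and invoking locality, reducing to (2).
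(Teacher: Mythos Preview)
The paper does not actually prove this proposition; it is quoted verbatim from \cite{Gui19d} proposition 2.3 and simply recalled for later use. So there is no in-paper proof to compare against.

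Your approach is correct and is essentially the standard one. The membership statements follow exactly as you say, by composing $\mc A(I')$-intertwiners with $L(\xi,\wtd I)$ or $L(\xi,\wtd I)^*$ and applying the result to $\Omega$. For \eqref{eq9} your first argument is the right one: test on $\eta'\in\mc H_k(J)$ with $\wtd J$ clockwise to $\wtd I$, rewrite $L(\zeta,\wtd I)\eta'=R(\eta',\wtd J)\zeta$ on both sides, and use the commutativity half of locality axiom (e) to pass $L(\xi,\wtd I)$ through $R(\eta',\wtd J)$; density of $\mc H_k(J)$ finishes it. Your parenthetical reference to \eqref{eq14} is misplaced, though --- that equation concerns functoriality under genuine $\mc A$-module homomorphisms $F,G$, not fusion of $L$-operators --- so drop that sentence.

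For part (b) you are making it harder than necessary. There is no need to pair against test vectors or to ``reduce to \eqref{eq9}'': the same direct argument works. For $\eta'\in\mc H_k(J)$ with $\wtd J$ clockwise to $\wtd I$,
\[
L(\xi,\wtd I)^*L(\psi,\wtd I)\eta' = L(\xi,\wtd I)^*R(\eta',\wtd J)\psi = R(\eta',\wtd J)L(\xi,\wtd I)^*\psi = L\big(L(\xi,\wtd I)^*\psi,\wtd I\big)\eta',
\]
where the middle equality is exactly the \emph{adjoint} commutativity built into axiom (e). So (b) is not a consequence of (a) via taking adjoints; it is a parallel one-line computation using the other half of the locality axiom. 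Your anticipated ``main obstacle'' (domain/codomain bookkeeping for adjoints) therefore does not arise.
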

As a special case, we see that if $\xi\in\mc H_i(I)$ and $x\in\mc A(I)$, then $x\xi\in\mc H_i(I)$, and
\begin{gather}
L(x\xi,\wtd I)=xL(\xi,\wtd I),\qquad R(x\xi,\wtd I)=xR(\xi,\wtd I).\label{eq13}
\end{gather}
Set $\xi=\Omega$ and notice that $L(\Omega,\wtd I)=\id$. Then we have
\begin{align}
L(x\Omega,\wtd I)|_{\mc H_k}= R(x\Omega,\wtd I)|_{\mc H_k}=\pi_{k,I}(x)
\end{align}
for any $\mc H_k\in\Obj(\RepA)$.

Next, we discuss the conformal covariance of $\scr E$. For any $\wtd I=(I,\arg_I)\in\Jtd$ and $g\in\GA$, we have $gI$ defined by the action of $\Diffp(\mbb S^1)$ on $\mbb S^1$. We now set $g\wtd I=(gI,\arg_{gI})$, where $\arg_{gI}$ is defined as follows. Choose any map $\gamma:[0,1]\rightarrow\GA$ satisfying $\gamma(0)=1,\gamma(1)=g$ such that $\gamma$ descends to a (continuous) path in $\scr G$. Then for any $z\in I$ there is a path $\gamma_z:[0,1]\rightarrow \mbb S^1$ defined by $\gamma_z(t)=\gamma(t)z$. The argument $\arg_I(z)$ of $z$ changes continuously along the path $\gamma_z$ to an argument of $gz$, whose value is denoted by $\arg_{gI}(gz)$.  

\begin{thm}[\cite{Gui21a} theorem 3.13]\label{lb100}
	$\scr E=(\mc A,\RepA,\boxtimes,\mc H)$ is \textbf{conformal covariant}, which means that for any $g\in\GA,\wtd I\in\wtd{\mc J},\mc H_i\in\Obj(\RepA),\xi\in\mc H_i(I)$, there exists an element $g\xi g^{-1}\in\mc H_i(g I)$ such that
	\begin{align}
	L(g\xi g^{-1},g\wtd I)=gL(\xi,\wtd I)g^{-1},\qquad R(g\xi g^{-1},g\wtd I)=gR(\xi,\wtd I)g^{-1}\label{eq15}
	\end{align}
	when acting on any $\mc H_j\in\Obj(\RepA)$.
\end{thm}
The element $g\xi g^{-1}$ satisfying the requirement of Thm. \ref{lb100} is uniquely determined by $\wtd I,\xi,g$, since we clearly have
\begin{align}
g\xi g^{-1}=gL(\xi,\wtd I)g^{-1}\Omega=gR(\xi,\wtd I)g^{-1}\Omega.
\end{align}

The following density result is easy but useful.

\begin{pp}\label{lb2}
Let $\wtd I\in\Jtd$. Assume that  $\mc H_i^0(I)$ and $\mc H_j^0$ are dense subspaces of $\mc H_i(I)$ and $\mc H_j$ respectively. Then vectors of the form $L(\xi,\wtd I)\eta$ (resp. $R(\xi,\wtd I)\eta$)  span a dense subspace of $\mc H_i\boxtimes\mc H_j$ (resp. $\mc H_j\boxtimes\mc H_i$), where $\xi\in\mc H_i^0(I),\eta\in\mc H_j^0$.
\end{pp}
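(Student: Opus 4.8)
The plan is to reduce the general density statement to the density axiom (d) in the definition of a categorical extension by a routine approximation argument, using only the boundedness and continuity of the operators $L(\xi,\wtd I)$ and $R(\xi,\wtd I)$ together with their linear dependence on $\xi$. I will treat the $L$-case; the $R$-case is identical with the obvious notational changes.

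First I would fix $\wtd I\in\Jtd$ and set $\mc K$ to be the closure in $\mc H_i\boxtimes\mc H_j$ of the linear span of all vectors $L(\xi,\wtd I)\eta$ with $\xi\in\mc H_i^0(I)$ and $\eta\in\mc H_j^0$. The goal is $\mc K=\mc H_i\boxtimes\mc H_j$. By axiom (d) it suffices to show that $L(\xi,\wtd I)\eta\in\mc K$ for \emph{every} $\xi\in\mc H_i(I)$ and \emph{every} $\eta\in\mc H_j$, since such vectors span a dense subspace. So let $\xi\in\mc H_i(I)$ and $\eta\in\mc H_j$ be arbitrary. By density of $\mc H_i^0(I)$ in $\mc H_i(I)$ pick $\xi_n\in\mc H_i^0(I)$ with $\xi_n\to\xi$ in $\mc H_i$, and by density of $\mc H_j^0$ in $\mc H_j$ pick $\eta_n\in\mc H_j^0$ with $\eta_n\to\eta$ in $\mc H_j$.

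The key estimate is that $L(\xi_n,\wtd I)\eta_n\to L(\xi,\wtd I)\eta$. Write
\begin{align*}
L(\xi_n,\wtd I)\eta_n-L(\xi,\wtd I)\eta=L(\xi_n,\wtd I)(\eta_n-\eta)+L(\xi_n-\xi,\wtd I)\eta,
\end{align*}
using that $\xi\mapsto L(\xi,\wtd I)$ is linear. For the second term, $\|L(\xi_n-\xi,\wtd I)\eta\|\le\|L(\xi_n-\xi,\wtd I)\|\cdot\|\eta\|$, and one needs $\|L(\xi_n-\xi,\wtd I)\|\to 0$; for the first term one needs the norms $\|L(\xi_n,\wtd I)\|$ to stay bounded. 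The cleanest way to get both is to recall the standard norm bound $\|L(\zeta,\wtd I)\|\le C\|\zeta\|$ valid for all $\zeta\in\mc H_i(I)$ (this follows from the construction of $\scr E$, e.g. via $L(\zeta,\wtd I)\mc H_j(J)\ni L(\zeta,\wtd I)\Theta=R(\Theta,\wtd J)\zeta$ and Reeh–Schlieder, or simply from $\|L(\zeta,\wtd I)\Omega\|=\|\zeta\|$ together with the fact that $L(\zeta,\wtd I)$ commutes with the type III factor $\mc A(I')$ and is hence determined by its value on the cyclic vector $\Omega$). Granting $\|L(\zeta,\wtd I)\|\le C\|\zeta\|$, the second term is $\le C\|\xi_n-\xi\|\,\|\eta\|\to 0$ and the first term is $\le C\|\xi_n\|\,\|\eta_n-\eta\|\to 0$. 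Hence $L(\xi_n,\wtd I)\eta_n\to L(\xi,\wtd I)\eta$; since each $L(\xi_n,\wtd I)\eta_n\in\mc K$ and $\mc K$ is closed, $L(\xi,\wtd I)\eta\in\mc K$, which is what we wanted.

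The only genuinely non-formal point, and hence the step I expect to be the main obstacle, is establishing the uniform operator-norm bound $\|L(\zeta,\wtd I)\|\le C\|\zeta\|$ (equivalently, the joint continuity of $(\zeta,\eta)\mapsto L(\zeta,\wtd I)\eta$ in $\zeta$); everything else is a two-epsilon argument. If one prefers not to invoke a separate norm bound, an alternative is to run the approximation in two stages — first approximate $\eta$ by $\eta_n$ with $\xi$ fixed, using only boundedness of the single operator $L(\xi,\wtd I)$, to reduce to showing $L(\xi,\wtd I)\eta'\in\mc K$ for $\xi\in\mc H_i(I)$ and $\eta'\in\mc H_j^0$; then for fixed $\eta'$ approximate $\xi$ by $\xi_n\in\mc H_i^0(I)$ and note $L(\xi_n,\wtd I)\eta'=R(\eta',\wtd J)\xi_n\to R(\eta',\wtd J)\xi=L(\xi,\wtd I)\eta'$ for a suitable $\wtd J$ anticlockwise to $\wtd I$, where now the continuity used is that of the single bounded operator $R(\eta',\wtd J)$. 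This sidesteps the uniform bound entirely and reduces the proposition to axiom (d) plus boundedness of individual $L$'s and $R$'s.
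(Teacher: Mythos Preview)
Your first approach rests on a bound that is false: there is no constant $C$ with $\lVert L(\zeta,\wtd I)\rVert\le C\lVert\zeta\rVert$ for all $\zeta\in\mc H_i(I)$. Already for $\mc H_i=\mc H_0$ this would say $\lVert x\rVert\le C\lVert x\Omega\rVert$ for every $x\in\mc A(I)$, which is impossible for a type III factor. Neither of your two proposed justifications delivers such a bound: the identity $L(\zeta,\wtd I)\Theta=R(\Theta,\wtd J)\zeta$ only gives $\lVert L(\zeta,\wtd I)\Theta\rVert\le\lVert R(\Theta,\wtd J)\rVert\cdot\lVert\zeta\rVert$ with a $\Theta$-dependent constant, and the cyclic-vector argument only shows $\zeta\mapsto L(\zeta,\wtd I)$ is injective, not continuous.

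Your alternative two-stage argument is very close to the paper's, but it has a gap in step two: the switching identity $L(\xi_n,\wtd I)\eta'=R(\eta',\wtd J)\xi_n$ requires $\eta'\in\mc H_j(J)$ for some $\wtd J$ clockwise to $\wtd I$ (not anticlockwise), whereas your $\eta'$ lies only in the arbitrary dense subspace $\mc H_j^0$, which has no reason to sit inside any $\mc H_j(J)$. The paper avoids this by reorganizing the two approximations. First, for $\xi\in\mc H_i^0(I)$ already fixed (not arbitrary in $\mc H_i(I)$), boundedness of $L(\xi,\wtd I)$ gives that $L(\mc H_i^0(I),\wtd I)\mc H_j^0$ is dense in $L(\mc H_i^0(I),\wtd I)\mc H_j$. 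In particular the closure $\mc K$ contains $L(\mc H_i^0(I),\wtd I)\mc H_j(J)=R(\mc H_j(J),\wtd J)\mc H_i^0(I)$ for any $\wtd J$ clockwise to $\wtd I$; here the switch is legitimate because the second argument is drawn from $\mc H_j(J)$, not from $\mc H_j^0$. Then, for each fixed $\eta\in\mc H_j(J)$, boundedness of $R(\eta,\wtd J)$ and density of $\mc H_i^0(I)$ in $\mc H_i$ give that $\mc K\supset\overline{R(\mc H_j(J),\wtd J)\mc H_i}$, and axiom (d) applied to $R$ finishes the proof. So the fix is simply to perform the $L$--$R$ switch with $\eta$ ranging over $\mc H_j(J)$ rather than over $\mc H_j^0$.
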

\begin{proof}
By the boundedness of $L(\xi,\wtd I)$ when $\xi\in\mc H_i^0(I)$, $L(\mc H_i^0(I),\wtd I)\mc H_j^0$ is dense in 	$L(\mc H_i^0(I),\wtd I)\mc H_j$. Thus, for any $\wtd J$ clockwise to $\wtd I$, vectors in $L(\mc H_i^0(I),\wtd I)\mc H_j(J)=R(\mc H_j(J),\wtd J)\mc H_i^0$ can be approximated by those in $L(\mc H_i^0(I),\wtd I)\mc H_j^0$. On the other hand, $R(\mc H_j(J),\wtd J)\mc H_i^0$ is dense in $R(\mc H_j(J),\wtd J)\mc H_i$, and the latter spans a dense subspace of $\mc H_i\boxtimes\mc H_j$. Therefore $L(\mc H_i^0(I),\wtd I)\mc H_j^0$ also spans a dense subspace of $\mc H_i\boxtimes\mc H_j$.
\end{proof}

We are going to prove a weak version of additivity property for $\scr E$. First, we need a lemma.

\begin{lm}\label{lb6}
	Suppose that $\xi\in\mc H_i(I)$ and $L(\xi,\wtd I)|_{\mc H_0}$ is unitary, then for any $\mc H_j\in\Obj(\RepA)$, $L(\xi,\wtd I)|_{\mc H_j}$ and $R(\xi,\wtd I)|_{\mc H_j}$ are also unitary.
\end{lm}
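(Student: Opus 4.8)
The plan is to reduce the statement to the claim that $L(\xi,\wtd I)|_{\mc H_j}$ is unitary for every $\mc H_j\in\Obj(\RepA)$; granting this, $R(\xi,\wtd I)|_{\mc H_j}=\mbb B_{i,j}\,L(\xi,\wtd I)|_{\mc H_j}$ is unitary as a composite of the unitary $\mbb B_{i,j}$ with a unitary, by the braiding axiom. Set $u:=L(\xi,\wtd I)|_{\mc H_0}$; under the identification $\mc H_i\boxtimes\mc H_0=\mc H_i$ this is a map $\mc H_0\to\mc H_i$, unitary by hypothesis, so $u^*u=\id_{\mc H_0}$ and $uu^*=\id_{\mc H_i}$. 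The one observation that makes the hypothesis bite is that, under the same identification, $L(\xi,\wtd I)^*$ restricted to $\mc H_i=\mc H_i\boxtimes\mc H_0$ is exactly $u^*$; in particular $L(\xi,\wtd I)^*\xi=u^*u\Omega=\Omega$ (using $L(\xi,\wtd I)\Omega=\xi$) and $L(\xi,\wtd I)\,L(\xi,\wtd I)^*\psi=uu^*\psi=\psi$ for $\psi\in\mc H_i$. Everything else is transporting these two facts through the fusion relations of Proposition \ref{lb10}.

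First I would show that $L(\xi,\wtd I)|_{\mc H_j}$ is isometric, i.e.\ $L(\xi,\wtd I)^*L(\xi,\wtd I)|_{\mc H_j}=\id_{\mc H_j}$. Since $\xi\in\mc H_i(I)=(\mc H_i\boxtimes\mc H_0)(I)$, the fusion relation \eqref{eq11} applies with $\xi$ in the role of the vector there called $\psi$ and with $\mc H_j$ as the test module, giving
\[
L(\xi,\wtd I)^*L(\xi,\wtd I)\big|_{\mc H_j}=L\big(L(\xi,\wtd I)^*\xi,\wtd I\big)\big|_{\mc H_j},\qquad L(\xi,\wtd I)^*\xi\in\mc H_0(I).
\]
By the key observation $L(\xi,\wtd I)^*\xi=\Omega$, so the right-hand side is $L(\Omega,\wtd I)|_{\mc H_j}=\id_{\mc H_j}$, as wanted.

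Next I would show that $L(\xi,\wtd I)|_{\mc H_j}$ is surjective, equivalently $L(\xi,\wtd I)\,L(\xi,\wtd I)^*|_{\mc H_i\boxtimes\mc H_j}=\id_{\mc H_i\boxtimes\mc H_j}$. Fix $\psi\in\mc H_i(I)$. By \eqref{eq11} one has $L(\xi,\wtd I)^*\psi\in\mc H_0(I)$ together with $L(\xi,\wtd I)^*L(\psi,\wtd I)|_{\mc H_j}=L(L(\xi,\wtd I)^*\psi,\wtd I)|_{\mc H_j}$; left-multiplying by $L(\xi,\wtd I)$ and then applying \eqref{eq9} on the right (legitimate because $L(\xi,\wtd I)^*\psi$ is localized in $I$) gives
\[
L(\xi,\wtd I)\,L(\xi,\wtd I)^*\big|_{\mc H_i\boxtimes\mc H_j}\circ L(\psi,\wtd I)\big|_{\mc H_j}=L\big(L(\xi,\wtd I)L(\xi,\wtd I)^*\psi,\wtd I\big)\big|_{\mc H_j}=L(\psi,\wtd I)\big|_{\mc H_j},
\]
the last step by the key observation $L(\xi,\wtd I)L(\xi,\wtd I)^*\psi=\psi$. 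Thus $L(\xi,\wtd I)L(\xi,\wtd I)^*$ acts as the identity on $L(\psi,\wtd I)\mc H_j$ for every $\psi\in\mc H_i(I)$; since such vectors span a dense subspace of $\mc H_i\boxtimes\mc H_j$ by the density of fusion products, and the operator is bounded, $L(\xi,\wtd I)L(\xi,\wtd I)^*|_{\mc H_i\boxtimes\mc H_j}=\id$. Together with the previous step this shows $L(\xi,\wtd I)|_{\mc H_j}$, hence also $R(\xi,\wtd I)|_{\mc H_j}$, is unitary.

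I do not anticipate a conceptual obstacle: the real content is Proposition \ref{lb10}, which is already available, and the hypothesis enters only through the unitarity of $u$. The one delicate point is bookkeeping — keeping careful track of which Hilbert space each occurrence of $L(\xi,\wtd I)$ or $L(\xi,\wtd I)^*$ acts on under the strictness identifications $\mc H_i\boxtimes\mc H_0=\mc H_i$ and $\mc H_0\boxtimes\mc H_j=\mc H_j$, and checking that each invocation of Proposition \ref{lb10} only involves vectors localized in the common interval $I$. This is why one writes $\mc H_i=\mc H_i\boxtimes\mc H_0$: it is precisely this identification that turns $L(\xi,\wtd I)^*|_{\mc H_i}$ into $u^*$ and so lets the hypothesis about $\mc H_0$ be fed in. Since all operators involved are bounded, there is no analytic difficulty.
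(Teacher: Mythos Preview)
Your proof is correct, but it takes a different route from the paper's. The paper argues via the locality axiom: choosing $\wtd J$ clockwise to $\wtd I$, it writes $\eta\in\mc H_j(J)$ as $R(\eta,\wtd J)\Omega$ and uses the adjoint commutativity of $L(\xi,\wtd I)$ with $R(\eta,\wtd J)$ to slide $L(\xi,\wtd I)^*L(\xi,\wtd I)$ past $R(\eta,\wtd J)$ and reduce to the action on $\Omega$; the coisometry part is handled analogously on vectors of the form $R(\cdot,\wtd J)\cdot$ spanning $\mc H_i\boxtimes\mc H_j$. You instead stay entirely in the interval $I$ and use the fusion relations of Proposition~\ref{lb10}, reading $\xi\in\mc H_i(I)$ as an element of $(\mc H_i\boxtimes\mc H_0)(I)$ and collapsing $L(\xi,\wtd I)^*L(\xi,\wtd I)$ to $L(\Omega,\wtd I)=\id$ directly via \eqref{eq11}, then handling surjectivity with \eqref{eq11} followed by \eqref{eq9}. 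Your approach avoids introducing an auxiliary interval and is arguably cleaner in its bookkeeping; the paper's approach is perhaps more in the spirit of how the categorical extension is used elsewhere (transporting along $R$-operators), and it generalizes immediately to settings where one only has locality but not the full fusion relations. Both are equally short.
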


\begin{proof}
	Assume that $L(\xi,\wtd I)^*L(\xi,\wtd I)|_{\mc H_0}=\id_0$ and  $L(\xi,\wtd I)L(\xi,\wtd I)^*|_{\mc H_i}=\id_i$. Choose any $\mc H_j$ and any $\wtd J$ clockwise to $\wtd I$. Then for any $\eta\in\mc H_j(\wtd J)$ and $\mu\in\mc H_i(\wtd J)$,
	\begin{align*}
	L(\xi,\wtd I)^*L(\xi,\wtd I)\eta=L(\xi,\wtd I)^*L(\xi,\wtd I)R(\eta,\wtd J)\Omega=R(\eta,\wtd J)L(\xi,\wtd I)^*L(\xi,\wtd I)\Omega=R(\eta,\wtd J)\Omega=\eta,
	\end{align*}
	and 
	\begin{align*}
	L(\xi,\wtd I)L(\xi,\wtd I)^*R(\mu,\wtd J)\eta=R(\mu,\wtd J)L(\xi,\wtd I)L(\xi,\wtd I)^*\eta=R(\mu,\wtd J)\eta.
	\end{align*}
\end{proof}

For any $\xi\in\mc H_i(I)$, we define $\lVert L(\xi,\wtd I) \lVert\equiv \lVert R(\xi,\wtd I) \lVert:=\lVert L(\xi,\wtd I)|_{\mc H_0} \lVert=\lVert R(\xi,\wtd I)|_{\mc H_0} \lVert$ to be the norms of $L(\xi,\wtd I)$ and $R(\xi,\wtd I)$.  \index{Lxi@$\lVert L(\xi,\wtd I) \lVert=\lVert R(\xi,\wtd I) \lVert$} The following proposition is also needed.

\begin{pp}\label{lb20}
We have $\lVert L(\xi,\wtd I)|_{\mc H_j} \lVert=\lVert R(\xi,\wtd I)|_{\mc H_j} \lVert=\lVert L(\xi,\wtd I)\lVert=\lVert R(\xi,\wtd I)\lVert$ for any non-trivial $\mc A_j$-module $\mc H_j$.
\end{pp}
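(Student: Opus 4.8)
The plan is to reduce the claim for an arbitrary nontrivial $\mc A_j$-module $\mc H_j$ to the already-established equality of the norms on $\mc H_0$, namely $\lVert L(\xi,\wtd I)\rVert=\lVert L(\xi,\wtd I)|_{\mc H_0}\rVert$. One inequality is free: since $\mc H_j$ contains (a dense subspace of) vectors of the form $R(\eta,\wtd J)\Omega$ with $\eta\in\mc H_j(J)$ and $\wtd J$ clockwise to $\wtd I$, and since $L(\xi,\wtd I)R(\eta,\wtd J)=R(\eta,\wtd J)L(\xi,\wtd I)$ on $\mc H_0$ by locality, applying $L(\xi,\wtd I)$ to such vectors shows $\lVert L(\xi,\wtd I)|_{\mc H_j}\rVert\geq$ something comparable to the norm on $\mc H_0$; but to get this cleanly I would rather argue the reverse direction first and then use a symmetric trick.

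First I would handle the case where $L(\xi,\wtd I)|_{\mc H_0}$ is a (nonzero) scalar multiple of an isometry, or more precisely exploit polar-decomposition-type reasoning: write $T=L(\xi,\wtd I)|_{\mc H_0}$ and note $\lVert T\rVert^2=\lVert T^*T\rVert$, where $T^*T=L(\xi,\wtd I)^*L(\xi,\wtd I)|_{\mc H_0}\in\mc A(I)$ by the fusion relations of Proposition~\ref{lb10}(b) (indeed $L(\xi,\wtd I)^*L(\xi,\wtd I)=L(L(\xi,\wtd I)^*L(\xi,\wtd I)\Omega,\wtd I)|_{\mc H_0}=\pi_{0,I}(x)$ with $x=L(\xi,\wtd I)^*\xi\in\mc A(I)$). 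The key point is then that for any $\mc A$-module $\mc H_k$ and any $x\in\mc A(I)$ one has $\lVert\pi_{k,I}(x)\rVert=\lVert\pi_{0,I}(x)\rVert$: this is the standard fact that $\pi_{0,I}$ is a faithful (hence isometric) normal representation of the type III factor $\mc A(I)$, and every normal representation of a factor has the same norm. Applying this with $x=L(\xi,\wtd I)^*\xi$ and using $L(\xi,\wtd I)^*L(\xi,\wtd I)|_{\mc H_j}=\pi_{j,I}(x)$ (again Proposition~\ref{lb10}(b), valid after replacing $\mc H_k$ by $\mc H_j$) gives
\begin{align*}
\lVert L(\xi,\wtd I)|_{\mc H_j}\rVert^2=\lVert\pi_{j,I}(x)\rVert=\lVert\pi_{0,I}(x)\rVert=\lVert L(\xi,\wtd I)|_{\mc H_0}\rVert^2,
\end{align*}
and the identical computation with $R$ in place of $L$ (using \eqref{eq12}) settles $R$ as well; note $R(\xi,\wtd I)^*R(\xi,\wtd I)\Omega=R(\xi,\wtd I)^*\xi=L(\xi,\wtd I)^*\xi=x$ by state-field correspondence, so the same $x$ appears.

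The main obstacle is justifying that $L(\xi,\wtd I)^*L(\xi,\wtd I)\Omega$ actually lies in $\mc A(I)$ (not just that it acts as an element of $\mc A(I)$), i.e.\ confirming the hypotheses of Proposition~\ref{lb10}(b) apply: one needs $\xi\in\mc H_i(I)$ (given) and $L(\xi,\wtd I)^*$ applied to $L(\xi,\wtd I)\Omega=\xi\in\mc H_i(I)=(\mc H_i\boxtimes\mc H_0)(I)$, which is exactly the setting of part (b) with $\mc H_j=\mc H_0$, yielding $L(\xi,\wtd I)^*\xi\in\mc H_0(I)$; and $\mc H_0(I)=\mc A(I)\Omega$, so $L(\xi,\wtd I)^*\xi=x\Omega$ for a unique $x\in\mc A(I)$. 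The only remaining subtlety is the hypothesis "nontrivial": one needs $\mc H_j\neq 0$ so that $\pi_{j,I}$ is genuinely a normal representation of the factor $\mc A(I)$ and hence isometric — on the zero module the norm would be $0$. With that caveat the proof is complete; I expect the write-up to be short once the identification $L(\xi,\wtd I)^*L(\xi,\wtd I)|_{\mc H_j}=\pi_{j,I}(x)$ is in hand.
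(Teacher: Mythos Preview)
Your argument is correct, and it takes a slightly different route than the paper's. The paper first produces a vector $\mu\in\mc H_i(I)$ with $L(\mu,\wtd I)$ unitary on $\mc H_0$ (using that $\mc A(I')$ is type III), invokes Lemma~\ref{lb6} to get unitarity of $L(\mu,\wtd I)$ on every module, and then factors $L(\xi,\wtd I)|_{\mc H_j}=L(\mu,\wtd I)\,\pi_{j,I}(x)$ with $x=L(\mu,\wtd I)^*L(\xi,\wtd I)|_{\mc H_0}\in\mc A(I)$; the norm is then $\lVert x\rVert$, independent of $j$. You instead go directly through $L(\xi,\wtd I)^*L(\xi,\wtd I)$: by Proposition~\ref{lb10}(b) with $\psi=\xi\in(\mc H_i\boxtimes\mc H_0)(I)$ this equals $L(x\Omega,\wtd I)|_{\mc H_j}=\pi_{j,I}(x)$ where $x\Omega=L(\xi,\wtd I)^*\xi$, and then the same factor argument finishes. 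Both proofs rest on the fact that a nonzero normal representation of the factor $\mc A(I)$ is isometric; your version is a bit more economical in that it bypasses Lemma~\ref{lb6} and the construction of the auxiliary unitary $L(\mu,\wtd I)$. (Your opening paragraph about ``one inequality is free'' is unnecessary and can be dropped; the second paragraph already gives both inequalities at once.)
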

\begin{proof}
Since $\mc A(I')$ is a type III factor, the representations of $\mc A(I')$ on $\mc H_0$ and on $\mc H_i$ are equivalent. Thus we can choose a unitary $U\in\Hom_{\mc A(I')}(\mc H_0,\mc H_i)$. Let $\mu=U\Omega$. Then $L(\mu,\wtd I)|_{\mc H_0}=U$ is unitary. Notice that   $x:=L(\mu,\wtd I)^*L(\xi,\wtd I)|_{\mc H_0}\in\mc A(I)$, and $\xi=L(\mu,\wtd I)x\Omega$. By \eqref{eq9}, we have $L(\xi,\wtd I)|_{\mc H_j}=L(\mu,\wtd I)\pi_{i,I}(x)$. By lemma \ref{lb6}, $L(\mu,\wtd I)$ is unitary on  $\mc H_j$. Therefore, $\lVert L(\xi,\wtd I)|_{\mc H_j} \lVert=\lVert x\lVert$, which is independent of $\mc H_j$.
\end{proof}

We now prove the weak additivity property for $\scr E$. For $I_0,I\in\mc J$, we write $I_0\Subset I$ if the closure of $I_0$ is a subset of $I$.

\begin{pp}\label{lb7}
Choose $\wtd I\in\Jtd$. Then for any $\xi\in\mc H_i(I)$, there exists a sequence of vectors $\xi_n$ in $\mc H_i$ satisfying the following properties:\\
	(a) For each $n$, $\xi_n\in\mc H_i(I_n)$ for some $I_n\subset\joinrel\subset I$.\\
	(b) For each $\mc H_j\in\Obj(\RepA)$, we have $\sup_{n\in\mbb Z_+}\big\lVert L(\xi_n,\wtd I) \big\lVert\leq\big\lVert L(\xi,\wtd I) \big\lVert$, and $L(\xi_n,\wtd I)|_{\mc H_j}$ converges $*$-strongly to $L(\xi,\wtd I)|_{\mc H_j}$.\footnote{A sequence of bounded operators $x_n$ is said to converge $*$-strongly to $x$ if $x_n$ and $x_n^*$ converge strongly to $x^*$ and $x_n^*$ respectively.}
\end{pp}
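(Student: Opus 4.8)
The plan is to produce the approximating vectors $\xi_n$ by a smearing / averaging procedure over conformal transformations, using the representation $U_i$ of $\GA$ together with the covariance of $\scr E$ established in the previous theorem. First I would fix $\wtd I$ and pick an increasing exhaustion $I_1\sjs I_2\sjs\cdots$ of $I$ by intervals whose closures sit inside $I$, with $\bigcup_n I_n=I$. For each $n$ I would choose a suitable neighborhood $G_n$ of the identity in $\GA$ (shrinking with $n$), consisting of diffeomorphisms that move $I_n$ only slightly and in particular keep $\ovl{I_n}$ inside $I$, together with a nonnegative smooth probability density $\varphi_n$ supported on $G_n$, and set
\begin{align}
\xi_n=\int_{G_n} U_i(g)\,\xi\,\varphi_n(g)\,dg,
\end{align}
the integral taken in the norm topology of $\mc H_i$ (Bochner integral). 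The point of averaging over a neighborhood of $1$ is that $\xi_n\to\xi$ in norm as $n\to\infty$, since $g\mapsto U_i(g)\xi$ is norm-continuous and $\varphi_n$ concentrates at the identity.

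The crucial structural claim is that $\xi_n\in\mc H_i(I_n)$ for $n$ large. Here I would use the covariance relation $g\xi g^{-1}=gL(\xi,\wtd I)g^{-1}\Omega\in\mc H_i(gI)$ from the theorem of \cite{Gui18}: for $g\in G_n$ we have $gI\supset I_n$ (by the choice of $G_n$), hence $g\xi g^{-1}\in\mc H_i(gI)$, but this is not literally $U_i(g)\xi$ unless $g$ fixes $I'$-data appropriately. The clean way around this is to instead choose $G_n\subset\GA(J)$ for a fixed slightly larger interval $J$ with $\ovl I\subset J$; then for $g\in\GA(J)$ one has $g\xi g^{-1}\in\mc H_i(gI)$ and, since $L(\xi,\wtd I)\in\Hom_{\mc A(I')}(\mc H_0,\mc H_i)$ and $g$ acts trivially enough, $g\xi g^{-1}$ agrees with the operator-image of $g$; intersecting the localizations $\mc H_i(gI)$ over $g\in G_n$ against the fact that each $gI\supset I_n$ gives $\xi_n\in\mc H_i(I_n)$ once one checks that an integral of vectors in $\mc H_i(gI)$, each localized in an interval containing the fixed $I_n$, lands in $\mc H_i(I_n)$ — this uses that $\Hom_{\mc A(I_n')}(\mc H_0,\mc H_i)$ is norm-closed and that $\mc H_i(gI)\subset\mc H_i(I_n')^{\perp\text{-relative}}$-type containment fails in general, so more carefully: one shows $\xi_n\in\Hom_{\mc A(I_n')}(\mc H_0,\mc H_i)\Omega$ by exhibiting the intertwiner $\int_{G_n}U_i(g)L(\xi,\wtd I)U_0(g)^{-1}\varphi_n(g)\,dg$, which commutes with $\mc A(I_n')$ because each $U_i(g)L(\xi,\wtd I)U_0(g)^{-1}\in\Hom_{\mc A((gI)')}(\mc H_0,\mc H_i)\subset\Hom_{\mc A(I_n')}(\mc H_0,\mc H_i)$ as $(gI)'\subset I_n'$, and the space of such intertwiners is norm-closed. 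Applying this intertwiner to $\Omega$ and using $U_0(g)^{-1}\Omega=\Omega$ (as $\Omega$ is Möbius- and in fact $\GA$-fixed only up to the central $\mbb T$, so one absorbs a phase into $\varphi_n$, or works in $\PSU$ to avoid this) recovers $\xi_n$. This gives property (a).

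For property (c), the norm bound is immediate: by Proposition \ref{lb20} the norm $\lVert L(\xi_n,\wtd I)\rVert$ is independent of the module and can be computed on $\mc H_0$, and there $L(\xi_n,\wtd I)=\int_{G_n}U_i(g)L(\xi,\wtd I)U_0(g)^{-1}\varphi_n(g)\,dg$ (using the covariance relation \eqref{eq15} on $\mc H_0$, where $L$ and $R$ coincide and the arg-data is irrelevant), so $\lVert L(\xi_n,\wtd I)\rVert\leq\int_{G_n}\lVert L(\xi,\wtd I)\rVert\varphi_n(g)\,dg=\lVert L(\xi,\wtd I)\rVert$. For $*$-strong convergence on a fixed $\mc H_j$: again by covariance \eqref{eq15}, $L(\xi_n,\wtd I)|_{\mc H_j}=\int_{G_n}U_j(g)\cdots$? — this is where one must be slightly careful because $g\wtd I\neq\wtd I$, so I would instead argue directly that $L(\cdot,\wtd I)\eta$ is norm-continuous in its vector argument (it is bounded linear in $\xi\in\mc H_i(I)$ with norm controlled uniformly via Proposition \ref{lb20} once the $\xi$'s stay in a fixed $\mc H_i(I_n)$), hence $L(\xi_n,\wtd I)\eta\to L(\xi,\wtd I)\eta$ for each $\eta\in\mc H_j$ from $\xi_n\to\xi$ in norm; combined with the uniform norm bound this upgrades to strong convergence on all of $\mc H_j$, and the same argument applied to adjoints (using $L(\xi,\wtd I)^*$ and Proposition \ref{lb10}(b), or that $\xi\mapsto L(\xi,\wtd I)^*$ is also norm-continuous) gives $*$-strong convergence.

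The main obstacle I anticipate is the bookkeeping in property (a): making the localization $\xi_n\in\mc H_i(I_n)$ airtight, i.e. correctly handling the interplay between the geometric action $g\mapsto gI$, the arg-functions, the central phase ambiguity of $\GA$ acting on $\Omega$, and the norm-closedness of the relevant intertwiner spaces. The convergence statement (c) is comparatively routine once one has the key observation that $L(\xi,\wtd I)$ depends norm-continuously on $\xi$ within a fixed localization interval, which itself follows from Proposition \ref{lb20}; the phase and arg-function subtleties in (a) are the real content.
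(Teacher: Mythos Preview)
Your approach to property~(a) has a genuine gap: averaging over a neighborhood of the identity in $\GA$ enlarges the localization interval rather than shrinking it. The conjugated operator $U_i(g)L(\xi,\wtd I)U_0(g)^{-1}=L(g\xi g^{-1},g\wtd I)$ lies in $\Hom_{\mc A((gI)')}(\mc H_0,\mc H_i)$, and for this to sit inside $\Hom_{\mc A(I_n')}(\mc H_0,\mc H_i)$ one needs $I_n'\subset (gI)'$, i.e.\ $gI\subset I_n$---the \emph{opposite} of the inclusion $(gI)'\subset I_n'$ you wrote. For $g$ close to $1$ the interval $gI$ is close to $I$, hence cannot be contained in any fixed $I_n\sjs I$. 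Your integral is therefore only localized in $\bigcup_{g\in G_n}gI\supsetneq I$. A covariance idea \emph{can} be salvaged, but not by smearing: take instead a single sequence of M\"obius elements $g_n\to 1$ with $g_nI\sjs I$ (such exist) and set $\xi_n=g_n\xi$; then $\xi_n\in\mc H_i(g_nI)$ since $g_n\Omega=\Omega$, and $L(\xi_n,\wtd I)=g_nL(\xi,\wtd I)g_n^{-1}$ gives the norm bound with equality and $*$-strong convergence immediately.

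The paper's argument is entirely different and avoids covariance: it fixes $I_0\sjs I$, chooses $\mu\in\mc H_i(I_0)$ with $L(\mu,\wtd I)$ unitary (Lemma~\ref{lb6}), writes $x=L(\mu,\wtd I)^*L(\xi,\wtd I)|_{\mc H_0}\in\mc A(I)$ so that $L(\xi,\wtd I)=L(\mu,\wtd I)x$, and then uses \emph{additivity} of the net $\mc A$ together with Kaplansky density to find $x_n\in\mc A(I_n)$ with $I_0\subset I_n\sjs I$, $\lVert x_n\rVert\le\lVert x\rVert$, and $x_n\to x$ $*$-strongly; then $\xi_n=L(\mu,\wtd I)x_n\Omega$ does the job. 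A secondary issue in your sketch of (c): Proposition~\ref{lb20} asserts $\lVert L(\xi,\wtd I)|_{\mc H_j}\rVert=\lVert L(\xi,\wtd I)|_{\mc H_0}\rVert$ but does \emph{not} bound this in terms of $\lVert\xi\rVert$, so the map $\xi\mapsto L(\xi,\wtd I)$ is not norm-continuous in the sense you need. What is true (and suffices once a uniform norm bound is in hand) is that for each fixed $\eta\in\mc H_j(I')$ one has $L(\xi,\wtd I)\eta=R(\eta,\wtd I')\xi$, which is continuous in $\xi$.
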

In particular, $\xi_n$ is converging to $\xi$ since $\xi_n=L(\xi_n,\wtd I)\Omega$ and $\xi=L(\xi,\wtd I)\Omega$.

\begin{proof}
Fix $I_0\Subset I$. As in the proof of proposition \ref{lb20}, we can choose $\mu\in\mc H_i(I_0)$ such that $L(\mu,\wtd I)$ is unitary on any $\mc A$-module. Choose any $\xi\in\mc H_i(I)$ and set $x=L(\mu,\wtd I)^*L(\xi,\wtd I)|_{\mc H_0}\in\mc A(I)$. Then $\xi=L(\mu,\wtd I)x\Omega$.  By the additivity of $\mc A$, there exist a sequence  $x_n$ of operators converging $*$-strongly to $x$, such that each $x_n$ belongs to $\mc A(I_n)$ for some $I_n\in\mc J$ satisfying $I_0\subset I_n\Subset I$. Moreover, by the Kaplansky density theorem, we may assume that $\lVert x_n\lVert\leq\lVert x\lVert$. Set $\xi_n=L(\mu,\wtd I)x_n\Omega$. Then $\xi_n\in\mc H_i(I_n)$, and by \eqref{eq9}, we have $L(\xi_n,\wtd I)=L(\mu,\wtd I)x_n$, which converges $*$-strongly to $L(\mu,\wtd I)x=L(\xi,\wtd I)$ on any $\mc H_j$, and $\big\lVert L(\xi_n,\wtd I)\big\lVert= \lVert x_n\lVert\leq \lVert x\lVert=\big\lVert L(\xi,\wtd I)\big\lVert$.
\end{proof}

\subsection{Closable field operators}\label{lb11}

For any $\wtd I\in\Jtd$, recall that $\wtd I'$ is the clockwise complement of $\wtd I$. We define ${\bpr\wtd I}\in\Jtd$ such that $(\bpr\wtd I)'=\wtd I$, and call ${\bpr\wtd I}$ the \textbf{anticlockwise complement of $\wtd I$}.  For any $\xi\in\mc H_i$, we let $\scr L(\xi,\wtd I)$ (resp. $\scr R(\xi,\wtd I)$) act on any $\mc H_j\in\Obj(\RepA)$ as an unbounded operator $\mc H_j\rightarrow\mc H_i\boxtimes\mc H_j$ (resp. $\mc H_j\rightarrow\mc H_j\boxtimes\mc H_i$) with domain $\mc H_j(I')$ such that for any $\eta\in\mc H_j(I')$,
\begin{align}
\scr L(\xi,\wtd I)\eta=R(\eta,\wtd I')\xi,\quad \text{resp.}  \quad \scr R(\xi,\wtd I)\eta=L(\eta,\bpr\wtd I)\xi.\label{eq7}
\end{align}
It is clear that $\Omega$ is inside the domains of $\scr L(\xi,\wtd I)|_{\mc H_0}$ and $\scr R(\xi,\wtd I)|_{\mc H_0}$, and the state-field correspondence
\begin{align}
\scr L(\xi,\wtd I)\Omega=\scr R(\xi,\wtd I)\Omega=\xi
\end{align}
is satisfied. We also have that
\begin{align}
\scr L(\xi,\wtd I)|_{\mc H_0}=\scr R(\xi,\wtd I)|_{\mc H_0},
\end{align}
and that they depend only on $I$ but not on the choice of $\arg_I$. Indeed, both operators send any $y\Omega\in\mc A(I')\Omega$ to $y\xi$.

\begin{df}
For any $\mc H_i\in\Obj(\RepA)$ we let $\mc H_i^\pr(I)$ \index{Hi@$\mc H_i^\pr(I)$} be the set of all $\xi\in\mc H_i$ such that $\scr L(\xi,\wtd I)|_{\mc H_0}=\scr R(\xi,\wtd I)|_{\mc H_0}$ is closable. Clearly $\mc H_i(I)\subset\mc H_i^\pr(I)$.
\end{df}

Observe the following easy fact:

\begin{pp}
	If $I\subset J\in\mc J$ then $\mc H_i^\pr(I)\subset\mc H_i^\pr(J)$.
\end{pp}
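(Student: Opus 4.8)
The plan is to read off the statement from the elementary fact that a restriction of a preclosed operator is again preclosed. The essential observation is that enlarging the interval from $I$ to $J$ \emph{shrinks} the domain of the ``vacuum'' field operator, so that $\scr L(\xi,\wtd J)|_{\mc H_0}$ is literally a restriction of $\scr L(\xi,\wtd I)|_{\mc H_0}$.

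Concretely, I would fix $\xi\in\mc H_i$ and $\wtd I,\wtd J\in\Jtd$ with underlying intervals $I\subset J$; the choice of arg-functions is irrelevant here, since (as recorded right after the definition of $\scr L,\scr R$) the restrictions $\scr L(\cdot,\wtd I)|_{\mc H_0}$, $\scr R(\cdot,\wtd I)|_{\mc H_0}$ coincide and depend only on $I$. By that same remark, $\scr L(\xi,\wtd I)|_{\mc H_0}$ has domain $\mc H_0(I')=\mc A(I')\Omega$ (dense by Reeh--Schlieder, since $I'\in\mc J$) and sends $y\Omega\mapsto y\xi$ for $y\in\mc A(I')$, while $\scr L(\xi,\wtd J)|_{\mc H_0}$ has domain $\mc A(J')\Omega$ and sends $y\Omega\mapsto y\xi$ for $y\in\mc A(J')$. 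Since $I\subset J$ forces $J'\subset I'$, isotony of $\mc A$ gives $\mc A(J')\subset\mc A(I')$, hence $\mc A(J')\Omega\subset\mc A(I')\Omega$; and for $y\in\mc A(J')$ the two operators assign the same value $y\xi$ (using the compatibility $\pi_{i,I'}|_{\mc A(J')}=\pi_{i,J'}$ of the representation $(\pi_i,\mc H_i)$). Therefore $\scr L(\xi,\wtd J)|_{\mc H_0}\subset\scr L(\xi,\wtd I)|_{\mc H_0}$ as operators $\mc H_0\to\mc H_i$.

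Finally, if $\xi\in\mc H_i^\pr(I)$ then $\scr L(\xi,\wtd I)|_{\mc H_0}$ is preclosed, so $\scr L(\xi,\wtd J)|_{\mc H_0}\subset\ovl{\scr L(\xi,\wtd I)|_{\mc H_0}}$ is a restriction of a closed operator, hence itself preclosed; this yields $\xi\in\mc H_i^\pr(J)$, as desired. I do not expect any genuine obstacle in this argument — the only subtlety worth flagging is the direction of the domain inclusion $\mc A(J')\Omega\subset\mc A(I')\Omega$, which runs opposite to the inclusion $I\subset J$ of intervals.
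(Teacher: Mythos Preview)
Your argument is correct and is exactly the reasoning the paper has in mind: the paper states this as an ``easy fact'' with no proof, and your observation that $I\subset J$ gives $\scr L(\xi,\wtd J)|_{\mc H_0}\subset\scr L(\xi,\wtd I)|_{\mc H_0}$ (via $\mc A(J')\Omega\subset\mc A(I')\Omega$), so that preclosedness is inherited by restriction, is the intended content.
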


The following theorem was proved in \cite{Gui21b} section 7. Here we give a different but (hopefully) more conceptual proof.

\begin{thm}\label{lb12}
Choose any $\mc H_i\in\Obj(\RepA)$, $\wtd I\in\Jtd$, and $\xi\in\mc H_i^\pr(I)$. Then $\scr L(\xi,\wtd I)|_{\mc H_j}$ and $\scr R(\xi,\wtd I)|_{\mc H_j}$ are closable for any $\mc H_j\in\Obj(\RepA)$.
\end{thm}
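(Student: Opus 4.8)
The plan is to reduce the preclosedness of $\scr L(\xi,\wtd I)|_{\mc H_j}$ and $\scr R(\xi,\wtd I)|_{\mc H_j}$ for an arbitrary module $\mc H_j$ to the hypothesis that it holds on the vacuum module $\mc H_0$, by using the categorical structure $\scr E$ to ``transport'' the relevant operators from $\mc H_0$ to $\mc H_j$ via bounded intertwiners coming from $\scr E$. Concretely, recall $\scr L(\xi,\wtd I)\eta = R(\eta,\wtd I')\xi$ for $\eta\in\mc H_j(I')$. The idea is that $R(\eta,\wtd I')$ is a bounded operator intertwining the $\mc A(I)$-actions, and when $\eta$ ranges over a dense subspace of $\mc H_j$ of the form $\mc H_j(I')$, the operators $R(\eta,\wtd I')$ let us realize $\scr L(\xi,\wtd I)|_{\mc H_j}$ as being ``built from'' $\scr L(\xi,\wtd I)|_{\mc H_0}$. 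So the first step is to pick a fixed auxiliary arg-valued interval $\wtd J$ clockwise to $\wtd I$ (so $\wtd J\subset\wtd I'$, say, after shrinking), and to fix $\eta_0\in\mc H_j(J)$ (actually a total family of such $\eta_0$); then for $\mu\in\mc H_0(I')$ one has, by the locality/state-field relations collected after the definition of $\scr E$, the identity $\scr L(\xi,\wtd I)\big(R(\eta_0,\wtd J)\mu\big) = R(\eta_0,\wtd J)\big(\scr L(\xi,\wtd I)\mu\big)$, i.e. $R(\eta_0,\wtd J)$ intertwines the two unbounded operators. Indeed $R(\eta_0,\wtd J)\mu\in\mc H_j(I')$ by Proposition \ref{lb10}(a), so it lies in the domain, and expanding both sides via \eqref{eq7} and the fusion relations \eqref{eq10}, \eqref{eq12} gives the claimed intertwining.

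Granting this, the second step is the standard functional-analytic fact: if $T$ is a densely defined operator on a Hilbert space $\mc K_1$ whose closure exists, and $B:\mc K_1\to\mc K_2$, $B':\mc L_1\to\mc L_2$ are bounded operators with $B' T\subset \widehat T B$ for some densely defined operator $\widehat T$ on $\mc K_2$ (here with a compatible family of such $B$'s whose ranges are jointly dense), then preclosedness of $T$ forces preclosedness of $\widehat T$ — more precisely, one checks that the adjoint $\widehat T^{\,*}$ is densely defined. So I would take $\widehat T = \scr L(\xi,\wtd I)|_{\mc H_j}$ restricted to $\mc H_j(I')$, and verify that its adjoint is densely defined: given $\zeta$ in the domain of $\widehat T^{\,*}$-candidate, one pairs against vectors $R(\eta_0,\wtd J)\mu$, transports the computation to $\mc H_0$ using the intertwining identity and $R(\eta_0,\wtd J)^* = $ (bounded), and uses that $\scr L(\xi,\wtd I)|_{\mc H_0}^{\,*}$ is densely defined (since $\scr L(\xi,\wtd I)|_{\mc H_0}$ is preclosed by hypothesis $\xi\in\mc H_i^\pr(I)$) to conclude. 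The density of fusion products (axiom (d), or Proposition \ref{lb2}) guarantees that the vectors $R(\eta_0,\wtd J)\mu$ with $\eta_0\in\mc H_j(J)$, $\mu\in\mc H_0(I')$ span a dense subspace of $\mc H_j$, which is what makes the transported adjoint densely defined. The $\scr R$ statement is entirely symmetric, using $\scr R(\xi,\wtd I)\eta = L(\eta,\bpr\wtd I)\xi$ and left operators in place of right ones, or alternatively deduced from the $\scr L$ statement by applying the braiding unitary $\mbb B$ of axiom (f).

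I expect the main obstacle to be bookkeeping the arg-valued intervals and the adjoint-commutativity subtleties rather than anything deep: one must choose $\wtd J$ so that it is simultaneously clockwise to $\wtd I$ and contained in $\wtd I'$ (possible after shrinking $\wtd I$ slightly, then using isotony of $\scr L$), check that all the fusion relations in Proposition \ref{lb10} apply in the configuration at hand, and be careful that ``$B'T\subset\widehat T B$'' is genuinely an inclusion of operators with the right domains. A secondary subtlety is that one wants not a single $B = R(\eta_0,\wtd J)$ but the whole family indexed by $\eta_0\in\mc H_j(J)$, and needs their joint ranges to be dense; this is exactly the content of Proposition \ref{lb2} applied with $\mc H_i^0(I)\rightsquigarrow\mc H_j(J)$, $\mc H_j^0\rightsquigarrow\mc H_0(I')$. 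Once these points are arranged, the proof is a clean transport-of-structure argument, which is presumably what the author means by ``more conceptual'' than the computation in \cite{Gui19d}.
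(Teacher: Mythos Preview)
Your transport-of-structure idea is the right one and is what the paper does, but the paper's execution is sharper and your adjoint step has a gap.

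The paper does not use a family of intertwiners: it fixes a \emph{single} $\mu\in\mc H_j(J)$ with $R(\mu,\wtd J)$ \emph{unitary} on every module (Lemma~\ref{lb6}, which relies on $\mc A(I')$ being type III), and proves the exact operator equality $\scr L(\xi,\wtd I)R(\mu,\wtd J)|_{\mc H_0}=R(\mu,\wtd J)\scr L(\xi,\wtd I)|_{\mc H_0}$, including equality of domains. The domain check is that $R(\mu,\wtd J)\mc A(I')\Omega=\mc H_j(I')$, which holds precisely because $R(\mu,\wtd J)$ is a unitary in $\Hom_{\mc A(I)}(\mc H_0,\mc H_j)$ and by Haag duality. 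So $\scr L(\xi,\wtd I)|_{\mc H_j}$ is unitarily equivalent to $\scr L(\xi,\wtd I)|_{\mc H_0}$ via $R(\mu,\wtd J)$ and hence preclosed. This is the ``more conceptual'' point.

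Your route via a family of non-unitary $R(\eta_0,\wtd J)$ can be made to work, but not as you outlined. To show $\widehat T=\scr L(\xi,\wtd I)|_{\mc H_j}$ has densely defined adjoint you must exhibit a dense set of $\zeta$ in the \emph{target} $\mc H_i\boxtimes\mc H_j$ with $\eta\mapsto\bk{\widehat T\eta|\zeta}$ bounded for \emph{all} $\eta\in\mc H_j(I')$; your sketch only tests $\eta$ of the form $R(\eta_0,\wtd J)\mu$, and the density you invoke is in the source $\mc H_j$, not the target. The fix: take $\zeta=R(\eta_0,\wtd J)\nu$ with $\eta_0\in\mc H_j(J)$ and $\nu\in\Dom(T^*)\subset\mc H_i$ (here $R(\eta_0,\wtd J):\mc H_i\to\mc H_i\boxtimes\mc H_j$). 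For arbitrary $\eta\in\mc H_j(I')$ use \eqref{eq12} to get
$\bk{\widehat T\eta|\zeta}=\bk{R(\eta_0,\wtd I')^*R(\eta,\wtd I')\xi|\nu}=\bk{T(R(\eta_0,\wtd I')^*\eta)|\nu}=\bk{\eta|R(\eta_0,\wtd I')T^*\nu}$,
which is bounded in $\eta$. These $\zeta$ span a dense subspace of $\mc H_i\boxtimes\mc H_j$ by Proposition~\ref{lb2}, since $\Dom(T^*)$ is dense in $\mc H_i$. This avoids the type III input but is longer; the paper's unitary shortcut is what buys the one-line conclusion.
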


\begin{proof}
Let $\wtd J$ be clockwise to $\wtd I$. Then by lemma \ref{lb6}, there exists $\mu\in\mc H_j(\wtd J)$ such that $R(\mu,\wtd J)$ is unitary when acting on any $\mc A$-module. We claim that
\begin{align}
\scr L(\xi,\wtd I)R(\mu,\wtd J)|_{\mc H_0}= R(\mu,\wtd J)\scr L(\xi,\wtd I)|_{\mc H_0}.\label{eq26}
\end{align}
If this is proved, then $\scr L(\xi,\wtd I)|_{\mc H_j}$ is unitarily equivalent to $\scr L(\xi,\wtd I)|_{\mc H_0}$ through the unitary operators $R(\mu,\wtd J)|_{\mc H_0}$ and $R(\mu,\wtd J)|_{\mc H_i}$. Therefore $\scr L(\xi,\wtd I)|_{\mc H_j}$ is closable since $\scr L(\xi,\wtd I)|_{\mc H_0}$ is so.

The domain of the right hand side of \eqref{eq26} is the same as that of $\scr L(\xi,\wtd I)|_{\mc H_0}$, which is $\mc H_0(I')=\mc A(I')\Omega$. Clearly $R(\mu,\wtd J)\mc A(I')\Omega$ is a subspace of $\mc H_j(I')$ (which is the domain of $\scr L(\xi,\wtd I)|_{\mc H_j}$). On the other hand, we have $R(\mu,\wtd J)^*\mc H_j(I')=R(\mu,\wtd J)^*\Hom_{\mc A(I)}(\mc H_0,\mc H_j)\Omega$, which is a subspace of $\Hom_{\mc A(I)}(\mc H_0,\mc H_0)\Omega=\mc A(I')\Omega$ by Haag duality. Therefore $R(\mu,\wtd J)\mc A(I')\Omega$ equals $\mc H_j(I')$, which shows that both sides of \eqref{eq26} have the same domain $\mc A(I')\Omega$. Now, we choose any $\chi\in\mc A(I')\Omega=\mc H_0(I')$ and use proposition \ref{lb10} to compute that
\begin{align*}
\scr L(\xi,\wtd I)R(\mu,\wtd J)\chi=R(R(\mu,\wtd J)\chi,\wtd J)\xi=R(\mu,\wtd J)R(\chi,\wtd J)\xi=R(\mu,\wtd J)\scr L(\xi,\wtd I)\chi,
\end{align*}
which proves equation \eqref{eq26} and hence the closability of $\scr L(\xi,\wtd I)|_{\mc H_j}$. That $\scr R(\xi,\wtd I)|_{\mc H_j}$ is closable follows from a similar argument.
\end{proof}

\begin{cv}
Whenever $\xi\in\mc H_i^\pr(I)$, we will always understand $\scr L(\xi,\wtd I)$ and $\scr R(\xi,\wtd I)$ as closed operators, which are the closures of those defined by \eqref{eq7}.
\end{cv}

Note that when $\xi\in\mc H_i(I)$, it is clear  that our definition of the two closed operators agree with the original bounded ones. The following proposition shows that $\scr L(\xi,\wtd I)$ and $\scr R(\xi,\wtd I)$ intertwine the actions of $\mc A(I')$.

\begin{pp}\label{lb18}
	For any $\mc H_i,\mc H_j\in\Obj(\RepA)$, $\xi\in\mc H_i^\pr(I)$, and $x\in\mc A(I')$, the following diagrams commute strongly:
	\begin{gather}
	\begin{CD}
	\mc H_j @>~\pi_{j,I'}(x)~>> \mc H_j\\
	@V \scr L(\xi,\wtd I)  VV @VV \scr L(\xi,\wtd I) V\\
	\mc H_i\boxtimes\mc H_j @> \pi_{i\boxtimes j,I'}(x)>> \mc H_i\boxtimes\mc H_j
	\end{CD}\qquad\qquad\qquad
	\begin{CD}
	\mc H_j @> \scr R(\xi,\wtd I)  >> \mc H_j\boxtimes\mc H_i\\
	@V \pi_{j,I'}(x) VV @VV \pi_{j\boxtimes i,I'}(x) V\\
	\mc H_j @>\scr R(\xi,\wtd I) >> \mc H_j\boxtimes\mc H_i
	\end{CD}
	\end{gather}
\end{pp}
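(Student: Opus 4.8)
\emph{Proof proposal.}
The plan is to check the intertwining relation on the distinguished core $\mc H_j(I')$, push it through to the closures, and then use that $\mc A(I')$ is closed under $\ast$ to upgrade the resulting commutativity to strong commutativity. Fix $\xi\in\mc H_i^\pr(I)$ and $x\in\mc A(I')$. By Theorem \ref{lb12} the operators $\scr L(\xi,\wtd I)|_{\mc H_j}$ and $\scr R(\xi,\wtd I)|_{\mc H_j}$ are preclosed, and, as agreed in the Convention above, I write $\scr L(\xi,\wtd I)$, $\scr R(\xi,\wtd I)$ for their closures. Recall that for a closed operator $T:\mc H\rightarrow\mc K$ and bounded operators $a$ on $\mc H$, $b$ on $\mc K$, the square with horizontal edges $a,b$ and vertical edges $T$ commuting strongly is equivalent to the pair of inclusions $bT\subseteq Ta$ and $b^*T\subseteq Ta^*$; since $\mc A(I')$ is $\ast$-closed and $\pi_{j,I'}$, $\pi_{i\boxtimes j,I'}$ are $\ast$-representations, it therefore suffices to prove $\pi_{i\boxtimes j,I'}(x)\scr L(\xi,\wtd I)\subseteq\scr L(\xi,\wtd I)\pi_{j,I'}(x)$ for every $x\in\mc A(I')$, and likewise for $\scr R$.

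The core computation is immediate from what is already available. Let $\eta\in\mc H_j(I')$, which is the domain of the pre-closure of $\scr L(\xi,\wtd I)|_{\mc H_j}$. By the special case of Proposition \ref{lb10} applied to the interval $I'$, the vector $\pi_{j,I'}(x)\eta$ again lies in $\mc H_j(I')$. Using the defining formula $\scr L(\xi,\wtd I)\eta=R(\eta,\wtd I')\xi$ from \eqref{eq7} together with formula \eqref{eq13} for the $R$-operators attached to $\wtd I'$ (whose underlying interval is $I'$), one computes
\begin{align*}
\scr L(\xi,\wtd I)\pi_{j,I'}(x)\eta=R\big(\pi_{j,I'}(x)\eta,\wtd I'\big)\xi=\pi_{i\boxtimes j,I'}(x)\,R(\eta,\wtd I')\xi=\pi_{i\boxtimes j,I'}(x)\,\scr L(\xi,\wtd I)\eta .
\end{align*}
The parallel calculation with $\scr R(\xi,\wtd I)\eta=L(\eta,\bpr\wtd I)\xi$ and the $L$-version of \eqref{eq13} for $\bpr\wtd I$ (also with underlying interval $I'$) gives $\scr R(\xi,\wtd I)\pi_{j,I'}(x)\eta=\pi_{j\boxtimes i,I'}(x)\,\scr R(\xi,\wtd I)\eta$.

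To pass to the closures, view the pre-closed operator $\scr L(\xi,\wtd I)|_{\mc H_j}$ through its graph $G\subseteq\mc H_j\oplus(\mc H_i\boxtimes\mc H_j)$. The previous paragraph says exactly that the bounded operator $\pi_{j,I'}(x)\oplus\pi_{i\boxtimes j,I'}(x)$ maps $G$ into $G$; being continuous, it then maps $\ovl G$ — the graph of $\scr L(\xi,\wtd I)$ — into itself. Hence $\pi_{j,I'}(x)$ preserves $\Dom\big(\scr L(\xi,\wtd I)\big)$ and $\pi_{i\boxtimes j,I'}(x)\scr L(\xi,\wtd I)\subseteq\scr L(\xi,\wtd I)\pi_{j,I'}(x)$; the same argument applied to the $\scr R$-computation yields $\pi_{j\boxtimes i,I'}(x)\scr R(\xi,\wtd I)\subseteq\scr R(\xi,\wtd I)\pi_{j,I'}(x)$. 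Since these hold for all $x\in\mc A(I')$, the reduction in the first paragraph shows that both diagrams commute strongly.

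Since \eqref{eq13} is already in hand, the only content of the proof is the domain bookkeeping of the last two paragraphs: checking that graph-invariance survives the passage to the closure, and that ranging over the whole $\ast$-algebra $\mc A(I')$ genuinely yields strong — not merely weak — commutativity. I do not expect any obstacle beyond this routine functional-analytic care; in particular nothing here uses the Bisognano–Wichmann input needed for the deeper results of the section.
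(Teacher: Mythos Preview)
Your proof is correct and follows essentially the same route as the paper: both verify $\scr L(\xi,\wtd I)x\eta=x\scr L(\xi,\wtd I)\eta$ and its $x^*$-analogue on the core $\mc H_j(I')$ via \eqref{eq13}, and then conclude strong commutativity. The only cosmetic difference is that you spell out the graph-closure step explicitly, whereas the paper packages that passage (core relation $\Rightarrow$ strong commutativity) into an appeal to Proposition~\ref{lb15}(c).
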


\begin{proof}
	Choose any $\eta\in\mc H_j(I')$. Then
	\begin{align*}
	\scr L(\xi,\wtd I)x\eta=R(x\eta,\wtd I')\xi \xlongequal{\eqref{eq13}} xR(\eta,\wtd I')\xi=x\scr L(\xi,\wtd I)\eta.
	\end{align*}
	Similarly we have $\scr L(\xi,\wtd I)x^*\eta=x^*\scr L(\xi,\wtd I)\eta$. This proves the strong commutativity of the first diagram. (Notice proposition \ref{lb15}.) The second diagram can be proved similarly.
\end{proof}

In the above proposition, we have actually used the following definition.

\begin{df}\label{lb77}
	Let $\mc P_0,\mc Q_0, \mc R_0,\mc S_0$ be pre-Hilbert spaces with completions $\mc P,\mc Q,\mc R,\mc S$ respectively. Let  $A:\mc P\rightarrow\mc R,B:\mc Q\rightarrow\mc S,C:\mc P\rightarrow\mc Q,D:\mc R\rightarrow\mc S$ be closable operators whose domains are subspaces of $\mc P_0,\mc Q_0, \mc P_0,\mc R_0$ respectively, and whose ranges are inside $\mc R_0,\mc S_0,\mc Q_0,\mc S_0$ respectively. By saying that the diagram of closable operators
	\begin{align}
	\begin{CD}
	\mc P_0 @>C>> \mc Q_0\\
	@V A VV @V B VV\\
	\mc R_0 @>D>> \mc S_0
	\end{CD}
	\end{align}
	\textbf{commutes strongly}, we mean the following: Let $\mc H=\mc P\oplus\mc Q\oplus\mc R\oplus\mc S$. Define unbounded closable operators $R,S$ on $\mc H$ with domains $\Dom(R)=\Dom(A)\oplus\Dom(B)\oplus\mc R\oplus \mc S$, $\Dom(S)=\Dom(C)\oplus\mc Q\oplus\Dom(D)\oplus \mc S$, such that
	\begin{gather*}
	R(\xi\oplus\eta\oplus\chi\oplus\varsigma)=0\oplus 0\oplus A\xi\oplus B\eta\qquad(\forall \xi\in\Dom(A),\eta\in\Dom(B),\chi\in\mc R,\varsigma\in \mc S),\\
	S(\xi\oplus\eta\oplus\chi\oplus\varsigma)=0\oplus C\xi\oplus 0\oplus D\chi   \qquad(\forall \xi\in\Dom(C),\eta\in\mc Q,\chi\in \Dom(D),\varsigma\in\mc S).
	\end{gather*}
	(Such construction is called the \textbf{extension} from $A,B$ to $R$, and from $C,D$ to $S$.) Then (the closures of) $R$ and $S$ commute strongly (cf. Ch. \ref{lb9}).
\end{df}

We now state the main result of this section.

\begin{thm}\label{lb13}
	For any $\mc H_i,\mc H_j,\mc H_k,\mc H_{k'}\in\Obj(\RepA)$, the following are satisfied.
	
	(a) (Isotony) If $\wtd I_1\subset\wtd I_2\in\Jtd$, and $\xi\in\mc H_i^\pr(I_1)$, then $\scr L(\xi,\wtd I_1)\supset \scr L(\xi,\wtd I_2)$, $\scr R(\xi,\wtd I_1)\supset \scr R(\xi,\wtd I_2)$ when acting on   $\mc H_k$.
	
	(b) (Naturality) If $G\in\Hom_{\mc A}(\mc H_k,\mc H_{k'})$, then for any $\wtd I\in\Jtd,\xi\in\mc H_i^\pr(I)$, the following diagrams of closed operators commute strongly.
	\begin{gather}
	\begin{CD}
	\mc H_k @>G>> \mc H_{k'}\\
	@V \scr L(\xi,\wtd I)  VV @V \scr L(\xi,\wtd I)  VV\\
	\mc H_i\boxtimes\mc H_k @> \id_i\boxtimes G>> \mc H_i\boxtimes\mc H_{k'}
	\end{CD}\qquad\qquad
	\begin{CD}
	\mc H_k @> \scr R(\xi,\wtd I)  >> \mc H_k\boxtimes\mc H_i\\
	@V G VV @V G\boxtimes\id_i  VV\\
	\mc H_{k'} @>\scr R(\xi,\wtd I) >> \mc H_{k'}\boxtimes\mc H_i
	\end{CD}.\label{eq45}
	\end{gather}

	(c) (Locality) For any  disjoint $\wtd I,\wtd J\in\Jtd$ with $\wtd I$ anticlockwise to $\wtd J$, and any $\xi\in\mc H_i^\pr(I),\eta\in\mc H_j^\pr(J)$, the following diagram \eqref{eq48}  commutes strongly.
	\begin{align}
	\begin{CD}
	\mc H_k @> \quad \scr R(\eta,\wtd J)\quad   >> \mc H_k\boxtimes\mc H_j\\
	@V \scr L(\xi,\wtd I)   V  V @V \scr L(\xi,\wtd I) VV\\
	\mc H_i\boxtimes\mc H_k @> \quad \scr R(\eta,\wtd J) \quad  >> \mc H_i\boxtimes\mc H_k\boxtimes\mc H_j
	\end{CD}\label{eq48}
	\end{align}

	(d) (Braiding) For any $\wtd I\in\Jtd,\xi\in\mc H_i^\pr(I)$, we have
	\begin{align}
	\mbb B_{i,j} \scr L(\xi,\wtd I)|_{\mc H_j}=\scr R(\xi,\wtd I)|_{\mc H_j}.\label{eq49}
	\end{align}
	
	(e) (M\"obius covariance) For any $g\in\UPSU,\wtd I\in\wtd{\mc J},\xi\in\mc H_i^\pr(I)$, we have $g\xi\in\mc H_i^\pr(g I)$, and
	\begin{align}
	\scr L(g\xi,g\wtd I)=g\scr L(\xi,\wtd I)g^{-1},\qquad \scr R(g\xi,g\wtd I)=g\scr R(\xi,\wtd I)g^{-1}\label{eq46}
	\end{align}
	when acting on $\mc H_j$.
\end{thm}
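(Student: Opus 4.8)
The plan is to deduce each part from the bounded case together with the preclosedness results already in hand. The common mechanism is the following: for a clockwise-adjacent arg-valued interval $\wtd J$ (or anticlockwise-adjacent, depending on whether we work with $\scr L$ or $\scr R$) we may, by Lemma \ref{lb6} and the type III property, choose $\mu$ in the appropriate dense subspace so that $L(\mu,\wtd J)$ or $R(\mu,\wtd J)$ is unitary on every $\mc A$-module; composing with such a unitary lets us transport statements between modules, exactly as in the proof of Theorem \ref{lb12}. For part (a), isotony, I would first observe the corresponding statement for bounded operators (axiom (a) of $\scr E$), then note that the domains $\mc H_k(I_1')\supset\mc H_k(I_2')$ nest the right way, and that on the common domain $\mc H_0(I_2')$ both $\scr L(\xi,\wtd I_1)$ and $\scr L(\xi,\wtd I_2)$ send $y\Omega\mapsto y\xi$; taking closures gives the containment $\scr L(\xi,\wtd I_1)\supset\scr L(\xi,\wtd I_2)$. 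The argument for $\scr R$ is identical.

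For part (b), functoriality: on the vacuum module the diagram for bounded operators commutes by axiom (b) and equation \eqref{eq14}. For a general $\mc H_k$, I would use the unitary-transport trick to reduce to $\mc H_0$: pick $\wtd J$ clockwise to $\wtd I$ and $\mu\in\mc H_k(J)$ with $R(\mu,\wtd J)$ unitary, identify $\scr L(\xi,\wtd I)|_{\mc H_k}$ with $R(\mu,\wtd J)\scr L(\xi,\wtd I)|_{\mc H_0}R(\mu,\wtd J)^*$ as in \eqref{eq26}, and check the diagram on a core $\mc A(I')\Omega$ using \eqref{eq14} and Proposition \ref{lb10}; strong commutativity of the diagram of closed operators then follows from the bounded case on $\mc H_0$ plus the abstract notion of strong commutativity recorded in the definition after Proposition \ref{lb18} (one works on the direct sum $\mc H_k\oplus\mc H_{k'}\oplus(\mc H_i\boxtimes\mc H_k)\oplus(\mc H_i\boxtimes\mc H_{k'})$). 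The key point is that $G$ is bounded and intertwines all the relevant $\mc A(I')$-actions and commutes with the $R(\mu,\wtd J)$'s by functoriality, so checking on the core suffices.

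For part (d), braiding: this is the easiest; on the bounded subspace $\mc H_j(I')$ we have $\mbb B_{i,j}R(\eta,\wtd I')\xi$, and by the defining property of $\mbb B_{i,j}$ together with the identity $L(\xi,\wtd I)\eta=R(\eta,\wtd J)\xi$ one gets $\mbb B_{i,j}\scr L(\xi,\wtd I)\eta=\scr R(\xi,\wtd I)\eta$ on that dense domain; since $\mbb B_{i,j}$ is unitary, applying it to the graph of the closure gives the graph of the closure of the right side, so \eqref{eq49} holds for the closed operators. Part (e), M\"obius covariance, follows from the conformal covariance of $\scr E$ (the theorem of \cite{Gui18} quoted above): on $\mc H_0(I')$ one computes $\scr L(g\xi,g\wtd I)g\eta = R(g\eta g^{-1}\cdot g,\, g\wtd I')(g\xi) = gR(\eta,\wtd I')\xi = g\scr L(\xi,\wtd I)\eta$ using \eqref{eq15}, and then conjugation by the unitary $g$ carries the closure of one side to the closure of the other, which also shows $g\xi\in\mc H_i^\pr(gI)$.

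The main obstacle is part (c), locality of the two closed unbounded operators. Weak commutativity on the joint core $\mc H_k(I'\cap J')$ is immediate from the bounded locality axiom \eqref{eq5} and the state-field relations, but upgrading this to strong commutativity of closed operators is exactly the subtlety the whole paper is about (cf. Nelson's example). My plan is to again use the unitary-transport trick to move to the vacuum module: choose $\wtd J_1$ clockwise to $\wtd I$ inside $J'$ and $\wtd I_1$ anticlockwise to $\wtd J$ inside $I'$, pick unitaries $R(\mu,\wtd J_1)$, $L(\nu,\wtd I_1)$ on all modules, and use these to identify $\scr L(\xi,\wtd I)|_{\mc H_k}$ and $\scr R(\eta,\wtd J)|_{\mc H_k}$ with operators on $\mc H_0$ conjugated by these unitaries; the two unitaries commute strongly with each other and with the $\scr L$, $\scr R$ operators on $\mc H_0$ by the bounded locality (they are localized in $I'$ resp. $J'$), so the problem reduces to strong commutativity of $\scr L(\xi,\wtd I)|_{\mc H_0}$ and $\scr R(\eta,\wtd J)|_{\mc H_0}$. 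For these, since both act on $\mc H_0$ and depend only on the underlying intervals, one identifies $\scr L(\xi,\wtd I)|_{\mc H_0}$ with the operator $y\Omega\mapsto y\xi$ affiliated with (a subspace of operators affiliated with) $\mc A(I)$ acting on $\mc H_0$, and similarly $\scr R(\eta,\wtd J)|_{\mc H_0}$ with $\mc A(J)$; since $\mc A(I)$ and $\mc A(J)$ commute by locality of the net and these closed operators are built from the Tomita–Takesaki / Bisognano–Wichmann data of $\mc A$, their strong commutativity reduces to the strong commutativity of affiliated operators of commuting von Neumann algebras — this is the content anticipated in the introduction's discussion of $\scr L(\xi,I)$ being preclosed via Bisognano–Wichmann, and I would invoke (or reprove along the lines indicated there, using that $\mc A(I')\Omega$ is a core and the KMS condition) that preclosed operators affiliated in this sense with $\mc A(I)$ and $\mc A(J)$ commute strongly when $I,J$ are disjoint. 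Assembling the direct-sum operators $R,S$ of the definition after Proposition \ref{lb18} and transporting back through the unitaries then yields strong commutativity of \eqref{eq48}.
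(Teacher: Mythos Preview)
Your treatments of (a), (b), and (e) are essentially fine and close to the paper's; in fact the paper handles (b) more simply, just verifying the intertwining relation and its adjoint on the core $\mc H_k(I')$ via \eqref{eq14} and invoking Proposition \ref{lb15}, with no transport trick. For (d) you skip a genuine subtlety: by definition $\scr L(\xi,\wtd I)\eta = R(\eta,\wtd I')\xi$ while $\scr R(\xi,\wtd I)\eta = L(\eta,\bpr\wtd I)\xi$, and $\wtd I'$ and $\bpr\wtd I$ share the underlying interval $I'$ but have arg-functions differing by $2\pi$, so one cannot simply apply the braiding axiom of $\scr E$. The paper first proves M\"obius covariance (so the order of (d) and (e) is reversed), reduces to $\wtd I=\wtd{\mbb S^1_+}$, and then computes explicitly using \eqref{eq15} and \eqref{eq14} to obtain $\mbb B\scr L(\xi,\wtd I)\eta = u\,\scr R(\xi,\wtd I)\eta$ with $u=\mbb B^2\varrho(-2\pi)(\varrho(2\pi)\otimes\varrho(2\pi))$; specializing to bounded $\xi\in\mc H_i(I)$ forces $u=1$.

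The real gap is in (c). Your plan is to reduce to $\mc H_0$ and then argue that $\scr L(\xi,\wtd I)|_{\mc H_0}$ and $\scr R(\eta,\wtd J)|_{\mc H_0}$ are affiliated with the commuting algebras $\mc A(I)$ and $\mc A(J)$. But $\scr L(\xi,\wtd I)|_{\mc H_0}$ maps $\mc H_0$ to $\mc H_i$, so ``affiliated with $\mc A(I)$'' has no direct meaning. If you pass to the four-term direct sum required by the strong-commutativity definition, Proposition \ref{lb18} only tells you the extended operator commutes with $\pi(\mc A(I'))$; you would need it to be affiliated with $\pi(\mc A(I'))'$, but on non-vacuum summands Haag duality $\pi(\mc A(I'))'=\pi(\mc A(I))$ is not available, and there is no reason $\pi(\mc A(I'))'$ and $\pi(\mc A(J'))'$ commute. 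The appeal to Bisognano--Wichmann does not repair this.

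The paper's argument for (c) is quite different and stays inside the categorical extension. One first proves a mixed case (Lemma \ref{lb17}): when one of the vectors, say $\eta\in\mc H_j(J)$, is bounded, direct calculation on $\mc H_k(I')$ via Proposition \ref{lb10} shows $\scr L(\xi,\wtd I)$ commutes adjointly with $R(\eta,\wtd I')$. Next, take the polar decomposition $\scr L(\xi,\wtd I)=UH$ and its bounding projections $q_t=\chi_{[0,t]}(H)$, $p_t=Uq_tU^*$; by Lemma \ref{lb17} these commute with every $R(\eta,\wtd I')$. The crucial step is that the bounded truncation $\ovl{p_t\scr L(\xi,\wtd I)}=\scr L(\xi,\wtd I)q_t$ intertwines $\mc A(I')$ (Proposition \ref{lb18}), hence $p_t\xi\in\mc H_i(I)$ and in fact $\ovl{p_t\scr L(\xi,\wtd I)}=L(p_t\xi,\wtd I)$, a bounded operator of $\scr E$. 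Doing the same for $\scr R(\eta,\wtd J)$, the bounded locality axiom \eqref{eq5} gives adjoint commutativity of all truncations $L(p_s\xi,\wtd I)$ and $R(e_t\eta,\wtd J)$; since the unbounded operators are affiliated with the von Neumann algebras generated by their own truncations, strong commutativity of \eqref{eq48} follows.
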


Note that in the axiom of M\"obius covariance, $\UPSU$ is the universal covering of $\PSU$, which is regarded as a subgroup of $\GA$. This is possible since, by \cite{Bar54}, the restriction of any strongly continuous projective representation $\scr G$ to $\UPSU$ can be lifted to a unique strongly continuous unitary representation of $\UPSU$. 

Isotony is obvious; note that the inclusion relations are reversed since $I_1'\supset I_2'$. To check the strong commutativity of the first of \eqref{eq45}, one just need to show that for any $\eta\in\mc H_k(I'),\eta'\in\mc H_{k'}(I')$, we have $\scr L(\xi,\wtd I)G\eta=(\id_i\boxtimes G)\scr L(\xi,\wtd I)\eta$ and  $\scr L(\xi,\wtd I)G^*\eta'=(\id_i\boxtimes G^*)\scr L(\xi,\wtd I)\eta'$. Indeed, by \eqref{eq14}, we have
\begin{align*}
\scr L(\xi,\wtd I)G\eta=R(G\eta,\wtd I')\xi=(\id_i\boxtimes G)R(\eta,\wtd I')\xi=(\id_i\boxtimes G)\scr L(\xi,\wtd I)\eta.
\end{align*}
The other equation is proved similarly. For the second diagram of \eqref{eq45}, one uses a similar argument. 

\begin{proof}[Proof of M\"obius covariance]
Choose any $\mc H_j\in\Obj(\RepA)$ and $\eta\in\mc H_j(gI')$. Then $g^{-1}\eta\in\mc H_j(I')$, and 
	\begin{align*}
	g\scr L(\xi,\wtd I)g^{-1}\eta=gR(g^{-1}\eta,\wtd I')\xi \xlongequal{\eqref{eq15}} R(\eta,g\wtd I')g\xi=\scr L(g\xi,g\wtd I)\eta.
	\end{align*}
This proves the first equation of \eqref{eq46}. In particular, $\scr L(g\xi,g\wtd I)$ is closable since $g\scr L(\xi,\wtd I)g^{-1}$ is so. Therefore $\xi=g\scr L(\xi,\wtd I)g^{-1}\Omega\in\mc H_i^\pr(I)$. The second half follows similarly.	
\end{proof}

\begin{proof}[Proof of braiding]
By M\"obius covariance, it suffices to prove \eqref{eq49} assuming $\wtd I=\wtd{\mbb S^1_+}$. Then we have $\wtd I'=\varrho(-2\pi)\cdot\bpr\wtd I$. Note that before taking closures, the domains of $\scr L(\xi,\wtd I)|_{\mc H_j}$ and $\scr R(\xi,\wtd I)|_{\mc H_j}$ are both $\mc H_j(I')$. Therefore, it suffices to show that $\mbb B \scr L(\xi,\wtd I)\eta=\scr R(\xi,\wtd I)\eta$ for any $\eta\in \mc H_j(I')$. We set $u=\mbb B^2\varrho(-2\pi)(\varrho(2\pi)\boxtimes\varrho(2\pi))$ on $\mc H_i\boxtimes\mc H_j$, and compute
\begin{align}
&~~~~~~ \mbb B \scr L(\xi,\wtd I)\eta=\mbb B R(\eta,\wtd I')\xi=\mbb B^2L(\eta,\wtd I')\xi=\mbb B^2 L(\eta,\varrho(-2\pi)\cdot\bpr\wtd I)\xi\nonumber\\ &\xlongequal{\eqref{eq15}} \mbb B^2\varrho(-2\pi)\cdot L(\varrho(2\pi)\eta,\bpr\wtd I)\cdot \varrho(2\pi)\xi\xlongequal{\eqref{eq14}}uL(\eta,\bpr\wtd I)\xi =u\scr R(\xi,\wtd I)\eta.\label{eq16}
\end{align}
When $\xi\in\mc H_i(I)$, we actually have $\mbb B\scr L(\xi,\wtd I)\eta=\scr R(\xi,\wtd I)\eta$. Thus $u$ must be $1$. Therefore, when $\xi\in\mc H_i^\pr(I)$, \eqref{eq16} is also true with $u=1$.
\end{proof}

To prove the locality, we first prove the following weaker version:
\begin{lm}\label{lb17}
	Diagram \eqref{eq48} commutes strongly when $\xi\in\mc H_i(I)$ or $\eta\in\mc H_j(J)$.
\end{lm}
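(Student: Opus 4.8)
The statement to be proved, Lemma \ref{lb17}, asserts that diagram \eqref{eq48} commutes strongly under the extra hypothesis that \emph{at least one} of $\xi,\eta$ is a bounded vector (i.e., lies in $\mc H_i(I)$ or $\mc H_j(J)$, so that the corresponding $L$ or $R$ operator is bounded). The plan is to reduce to the axiomatic locality of $\scr E$ (axiom (e) of the categorical extension) by a density argument on the \emph{other} variable. By symmetry (and the braiding relation (d) already established, which swaps $\scr L$ and $\scr R$ up to unitaries), it suffices to treat the case $\eta\in\mc H_j(J)$, so that $R(\eta,\wtd J)$ is a genuine bounded operator; the unbounded operator in the picture is then only $\scr L(\xi,\wtd I)$ with $\xi\in\mc H_i^\pr(I)$.

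\textbf{Key steps.} First I would unwind what ``commutes strongly'' means here via the Definition preceding this lemma: form the big Hilbert space $\mc H=\mc H_k\oplus(\mc H_k\boxtimes\mc H_j)\oplus(\mc H_i\boxtimes\mc H_k)\oplus(\mc H_i\boxtimes\mc H_k\boxtimes\mc H_j)$, build the extension $R$ from the two copies of $\scr L(\xi,\wtd I)$ and the extension $S$ from the two (bounded) copies of $R(\eta,\wtd J)$, and show $\ovl R$ and $\ovl S$ commute strongly. Since $S$ is built from a bounded operator it extends to a bounded operator on all of $\mc H$; so the claim is just that the bounded operator (essentially $\id\oplus R(\eta,\wtd J)$-type blocks, together with its adjoint $R(\eta,\wtd J)^*$) maps $\Dom(\ovl R)$ into itself and commutes with $\ovl R$ there. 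The core of $\scr L(\xi,\wtd I)$ before closure consists of vectors in $\mc H_k(I')$ (and on $\mc H_i\boxtimes\mc H_k$ the adjoint $\scr L(\xi,\wtd I)^*$ has a dense domain handled the same way). So the second step is: for $\chi\in\mc H_k(I')$, verify directly that $R(\eta,\wtd J)$ maps $\mc H_k(I')$ into $(\mc H_k\boxtimes\mc H_j)(I')$ — which holds because $R(\eta,\wtd J)$ is a homomorphism of $\mc A(I')$-modules and sends $\mc H_k(I')$ into $(\mc H_k\boxtimes\mc H_j)(I')$ by the fusion relations (Proposition \ref{lb10}) — and that
\[
\scr L(\xi,\wtd I)R(\eta,\wtd J)\chi = R(\eta,\wtd J)\scr L(\xi,\wtd I)\chi .
\]
This I would compute using \eqref{eq7}, which turns $\scr L(\xi,\wtd I)\chi$ into $R(\chi,\wtd I')\xi$, then the fusion relation \eqref{eq10} for products of $R$-operators (with $\wtd J$ clockwise to $\wtd I$, hence $\wtd J$ appropriately positioned relative to $\wtd I'$), together with the elementary identity $L(a,\wtd I)b=R(b,\wtd J)a$ for bounded vectors, to slide $R(\eta,\wtd J)$ past. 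Then the adjoint identity: $\scr L(\xi,\wtd I)^*R(\eta,\wtd J)\psi=R(\eta,\wtd J)\scr L(\xi,\wtd I)^*\psi$ for $\psi$ in the (dense, $\mc A(I')$-generated) domain of $\scr L(\xi,\wtd I)^*$, proved the same way using Proposition \ref{lb10}(b). Finally, appeal to the standard criterion (the ``Proposition \ref{lb15}'' referenced in the proof of Proposition \ref{lb18}): if a bounded operator $b$ and its adjoint both map a core of a closed operator $T$ into $\Dom(T)$ and intertwine $T$ there, then $b$ commutes strongly with $T$; apply this in $\mc H$ to $\ovl R$ and the bounded $S$.

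\textbf{Main obstacle.} The routine part is the algebra of sliding $R(\eta,\wtd J)$ through, but the genuine subtlety — and where I'd be most careful — is the \emph{core/domain bookkeeping}: making sure that $\mc H_k(I')$ (resp. the natural dense domain of $\scr L(\xi,\wtd I)^*$ on $\mc H_i\boxtimes\mc H_k$) is actually a core for the closed operator $\scr L(\xi,\wtd I)$ (resp. its adjoint) in the sense needed, and that $R(\eta,\wtd J)$ genuinely preserves these domains with the argument functions lined up correctly (the hypothesis ``$\wtd I$ anticlockwise to $\wtd J$'' must be used to guarantee $\wtd J$ is clockwise to $\wtd I$ so that the $R$–$R$ and $L$–$R$ exchange identities apply with consistent branches). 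The other place to be honest about is the reduction ``by symmetry'': passing from the $\eta\in\mc H_j(J)$ case to the $\xi\in\mc H_i(I)$ case should be done either by repeating the argument with the roles of $\scr L,\scr R$ and $I,J$ swapped, or by invoking M\"obius covariance (part (e)) plus braiding (part (d)) to convert one into the other — I would spell out whichever is shorter rather than wave at it.
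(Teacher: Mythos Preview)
Your overall plan is correct and matches the paper's: when $\eta\in\mc H_j(J)$ the operator $R(\eta,\wtd J)$ is bounded, so strong commutativity reduces via Proposition~\ref{lb15}(c) to checking two intertwining identities on the core $\mc H_k(I')\oplus(\mc H_k\boxtimes\mc H_j)(I')$ of the extension built from $\scr L(\xi,\wtd I)$. Your first identity $\scr L(\xi,\wtd I)R(\eta,\wtd J)\chi=R(\eta,\wtd J)\scr L(\xi,\wtd I)\chi$ for $\chi\in\mc H_k(I')$ is exactly what the paper computes, using \eqref{eq7} and the fusion relation \eqref{eq10} (after noting $R(\eta,\wtd J)=R(\eta,\wtd I')$ since $J\subset I'$).

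Your ``adjoint identity'', however, is the wrong one. Proposition~\ref{lb15}(c) requires $x^*A|_{\Dom}\subset Ax^*$: you must commute $R(\eta,\wtd J)^*$ with $\scr L(\xi,\wtd I)$ on the core $(\mc H_k\boxtimes\mc H_j)(I')$ of $\scr L(\xi,\wtd I)$, \emph{not} $R(\eta,\wtd J)$ with $\scr L(\xi,\wtd I)^*$. Your version cannot be carried out ``the same way'': Proposition~\ref{lb10}(b) gives formulas for $L(\xi,\wtd I)^*$ and $R(\xi,\wtd I)^*$ only when $\xi\in\mc H_i(I)$ is a bounded vector, and for $\xi\in\mc H_i^\pr(I)$ there is no explicit description of $\Dom(\scr L(\xi,\wtd I)^*)$, let alone a tractable core on which to compute. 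The paper instead applies \eqref{eq12} with the \emph{bounded} vector $\eta$: for $\phi\in(\mc H_k\boxtimes\mc H_j)(I')$ one has $R(\eta,\wtd I')^*\phi\in\mc H_k(I')$ and
\[
\scr L(\xi,\wtd I)R(\eta,\wtd I')^*\phi=R\big(R(\eta,\wtd I')^*\phi,\wtd I'\big)\xi=R(\eta,\wtd I')^*R(\phi,\wtd I')\xi=R(\eta,\wtd I')^*\scr L(\xi,\wtd I)\phi.
\]
Once you redirect the adjoint to the bounded side, the argument goes through. The case $\xi\in\mc H_i(I)$ is then genuinely symmetric (no need to invoke braiding or M\"obius covariance): repeat verbatim with $L(\xi,\wtd I)$ bounded and use \eqref{eq9}, \eqref{eq11} in place of \eqref{eq10}, \eqref{eq12}.
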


\begin{proof}
	Let us assume $\eta\in\mc H_j(J)$ and prove the strong commutativity of \eqref{eq48}. Notice that $R(\eta,\wtd J)=R(\eta,\wtd I')$ as bounded operators. Choose any $\chi\in\mc H_k(I')$ and $\phi\in(\mc H_k\boxtimes\mc H_j)(I')$. By proposition \ref{lb10}, we have that $R(\eta,\wtd I)\chi\in(\mc H_k\boxtimes\mc H_j)(I')$, $R(\eta,\wtd I')^*\phi\in\mc H_k(I')$, that
	\begin{align*}
	\scr L(\xi,\wtd I)R(\eta,\wtd I')\chi=R(R(\eta,\wtd I')\chi,\wtd I')\xi=R(\eta,\wtd I')R(\chi,\wtd I')\xi=R(\eta,\wtd I')\scr L(\xi,\wtd I)\chi,
	\end{align*}
	and that
	\begin{align*}
	\scr L(\xi,\wtd I)R(\eta,\wtd I')^*\phi=R(R(\eta,\wtd I')^*\phi,\wtd I')\xi=R(\eta,\wtd I')^*R(\phi,\wtd I')\xi=R(\eta,\wtd I')^*\scr L(\xi,\wtd I)\phi.
	\end{align*}
\end{proof}

We now want to approximate $\scr L(\xi,\wtd I)$ and $\scr R(\xi,\wtd I)$ using bounded $L$ and $R$ operators. Again we assume $\xi\in\mc H_i^\pr(I)$. Consider the polar decomposition $\scr L(\xi,\wtd I)=UH$ where $U$ is a partial isometry from each $\mc H_j$ to $\mc H_i\boxtimes\mc H_j$, and $H$ is a positive closed operator on each $\mc H_j$. Consider the spectral decomposition $H=\int_0^{+\infty}tdq_t$, where for each $\mc H_j\in\Obj(\RepA)$,  $q_t$ is a projection on $\mc H_j$ defined by $q_t=\chi_{[0,t]}(H)$, and $p_t=Uq_tU^*$  is a projection on $\mc H_i\boxtimes\mc H_j$. Then, by the spectral theory, we have the following equation of \emph{bounded} operators:
\begin{align*}
\ovl {p_t\scr L(\xi,\wtd I)}=\scr L(\xi,\wtd I)q_t,
\end{align*}
and for any $\eta\in\mc H_j$,  $\eta$ is inside the domain of $\scr L(\xi,\wtd I)$ if and only if the limit
\begin{align*}
\lim_{t\rightarrow+\infty}\lVert {p_t\scr L(\xi,\wtd I)}\eta \lVert^2=\lim_{t\rightarrow+\infty}\lVert {\scr L(\xi,\wtd I)}q_t\eta \lVert^2
\end{align*}
is finite, in which case we have
\begin{align*}
\scr L(\xi,\wtd I)\eta=\lim_{t\rightarrow+\infty} {p_t\scr L(\xi,\wtd I)}\eta =\lim_{t\rightarrow+\infty}{\scr L(\xi,\wtd I)}q_t\eta. 
\end{align*}
We call $\{p_t:t\geq0\}$ (resp. $\{q_t:t\geq0\}$) the \textbf{left (resp. right) bounding projections} of $\scr L(\xi,\wtd I)|_{\mc H_j}$.
Similarly, for each $\mc H_j$ we have left and right bounding projections $\{e_t\}$ (on $\mc H_j\boxtimes \mc H_i$)  and $\{f_t\}$ (on $\mc H_j$)  of $\scr R(\xi,\wtd I)$. We write $p_t,q_t,e_t,f_t$ respectively as $p_t(\xi,\wtd I),q_t(\xi,\wtd I),e_t(\xi,\wtd I),f_t(\xi,\wtd I)$ \index{pt@$p_t(\xi,\wtd I),q_t(\xi,\wtd I),e_t(\xi,\wtd I),f_t(\xi,\wtd I)$} to emphasize the dependence of the projections on the vectors and the arg-valued intervals.

\begin{lm}\label{lb19}
	For each $t$ and $\eta\in\mc H_j(I')$, $R(\eta,\wtd I')$ commutes with $p_t(\xi,\wtd I)$ and $q_t(\xi,\wtd I)$, and $L(\eta,\bpr\wtd I)$ commutes with $e_t(\xi,\wtd I)$ and $f_t(\xi,\wtd I)$.
\end{lm}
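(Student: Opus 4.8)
The statement asserts that the bounding projections of $\scr L(\xi,\wtd I)$ and $\scr R(\xi,\wtd I)$ commute with the bounded operators $R(\eta,\wtd I')$ (resp. $L(\eta,\bpr\wtd I)$) for $\eta\in\mc H_j(I')$. The natural strategy is to first establish the strong commutativity of the relevant unbounded operators, and then appeal to the general principle that bounding projections (equivalently, spectral projections of the modulus together with the partial isometry in the polar decomposition) of a closed operator lie in any von Neumann algebra which strongly commutes with that operator. So the plan has two stages: (1) show $\scr L(\xi,\wtd I)|_{\mc H_j}$ strongly commutes with $R(\eta,\wtd I')$ in the appropriate $2\times 2$-matrix sense; (2) invoke the commutation theorem for strongly commuting closed operators to transfer this to $p_t,q_t$, and correspondingly for $\scr R$ and $e_t,f_t$.

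For stage (1), I would work before taking closures. Fix $\mc H_j\in\Obj(\RepA)$; the domain of $\scr L(\xi,\wtd I)|_{\mc H_j}$ is $\mc H_j(I')$, on which $\scr L(\xi,\wtd I)\zeta=R(\zeta,\wtd I')\xi$. For $\eta\in\mc H_j(I')$ the bounded operator $R(\eta,\wtd I')\in\Hom_{\mc A(I)}(\mc H_j,\mc H_j\boxtimes\mc H_i)$ — wait, more precisely $R(\eta,\wtd I')$ should be compared against $\scr L$ acting into $\mc H_i\boxtimes\mc H_j$; the relevant diagram is the one in Lemma \ref{lb17} with the roles arranged so that $R(\eta,\wtd I')$ (an operator associated to $\wtd I'$, which is clockwise to $\wtd I$) commutes with $\scr L(\xi,\wtd I)$. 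Indeed, taking $\wtd J = \wtd I'$ in the locality diagram \eqref{eq48}, Lemma \ref{lb17} already gives that $\scr L(\xi,\wtd I)$ strongly commutes with $\scr R(\eta,\wtd I')$ whenever $\eta\in\mc H_j(I')\subset\mc H_j^\pr(I')$; since $\eta\in\mc H_j(I')$ makes $\scr R(\eta,\wtd I')$ a \emph{bounded} operator equal to $R(\eta,\bpr(\wtd I'))=R(\eta,\wtd I')$ acting appropriately, this is the strong commutativity we need. (One should be careful to line up $\bpr\wtd I'$ versus $\wtd I$; since $(\bpr\wtd{I'})' = \wtd I'$ and $(\wtd I)'=\wtd I'$, we get $\bpr\wtd{I'}=\wtd I$, so the bounded $\scr R(\eta,\wtd I')$ is precisely $L(\eta,\wtd I)$-type — I will double check orientations against the definition \eqref{eq7} when writing the details.)

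For stage (2), once I know that the closed operator $\scr L(\xi,\wtd I)|_{\mc H_j}:\mc H_j\to\mc H_i\boxtimes\mc H_j$ strongly commutes (in the matrix sense of the definition following Proposition \ref{lb18}) with the bounded operator $R(\eta,\wtd I')$, I apply the standard fact: if a closed densely-defined operator $T$ strongly commutes with a bounded operator $b$ (on the respective Hilbert spaces, with $b$ intertwining the source and target), then $b$ commutes with every bounded Borel function of $|T|$ and with the partial isometry $U$ in the polar decomposition $T=U|T|$; in particular $b$ commutes with $q_t=\chi_{[0,t]}(|T|)$ and with $p_t = Uq_tU^*$. This is exactly the reference to ``section \ref{lb9}'' on strongly commuting operators; I would cite that section. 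The argument for $\scr R(\xi,\wtd I)$ and $e_t,f_t$ against $L(\eta,\bpr\wtd I)$ is identical after swapping the roles of $L$ and $R$ and replacing $\wtd I'$ by $\bpr\wtd I$, using the right-hand version of the locality lemma.

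**Main obstacle.** The only real subtlety is bookkeeping with the arg-valued intervals: verifying that $\eta\in\mc H_j(I')$ really puts $R(\eta,\wtd I')$ (or $L(\eta,\bpr\wtd I)$) into the position where Lemma \ref{lb17} applies, i.e. that the orientation hypotheses ``$\wtd I$ anticlockwise to $\wtd J$'' are met with $\wtd J=\wtd I'$, and that the bounded operator appearing in the lemma's conclusion is literally $R(\eta,\wtd I')$ rather than some $\mbb B$-twisted variant. Once the orientations are pinned down, everything reduces to Lemma \ref{lb17} plus the spectral/polar-decomposition commutation theorem, so I expect the proof to be short.
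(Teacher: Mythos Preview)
Your approach is correct and matches the paper's proof, which is literally the single line ``Use lemma \ref{lb17}.'' The content you spell out---that Lemma \ref{lb17} gives strong commutativity of $\scr L(\xi,\wtd I)$ with the bounded $R(\eta,\wtd I')$ (taking $\wtd J=\wtd I'$, case $\eta\in\mc H_j(J)$), and that the bounding projections $p_t,q_t$ then lie in the von Neumann algebra generated by $\scr L(\xi,\wtd I)$ and hence commute with $R(\eta,\wtd I')$---is exactly the intended unpacking. For the second half, apply Lemma \ref{lb17} with the roles swapped (diagram's anticlockwise interval $=\bpr\wtd I$, diagram's clockwise interval $=\wtd I$, diagram's $\xi=\eta\in\mc H_j(I')$ bounded, diagram's $\eta=\xi$), landing in the case ``$\xi\in\mc H_i(I)$'' of that lemma; your orientation worries dissolve once you note that both $\wtd I'$ and $\bpr\wtd I$ have underlying interval $I'$.
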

\begin{proof}
	Use lemma \ref{lb17}.
\end{proof}

\begin{pp}
	For each $t$, we have $p_t(\xi,\wtd I)\xi\in\mc H_i(I),e_t(\xi,\wtd I)\xi\in\mc H_i(I)$, and 
	\begin{gather}
	L(p_t(\xi,\wtd I)\xi,\wtd I)=\ovl{p_t(\xi,\wtd I)\scr L(\xi,\wtd I)}=\scr L(\xi,\wtd I)q_t(\xi,\wtd I),\\
	R(e_t(\xi,\wtd I)\xi,\wtd I)=\ovl{e_t(\xi,\wtd I)\scr R(\xi,\wtd I)}=\scr R(\xi,\wtd I)f_t(\xi,\wtd I).
	\end{gather}
\end{pp}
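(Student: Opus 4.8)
The plan is to prove the three claimed identities for the left operators $\scr L$; the statements for $\scr R$ follow by the symmetric argument. Fix $\xi\in\mc H_i^\pr(I)$ and let $\{p_t\},\{q_t\}$ be the left and right bounding projections of $\scr L(\xi,\wtd I)$, which by definition are obtained from the polar decomposition $\scr L(\xi,\wtd I)=UH$ with $q_t=\chi_{[0,t]}(H)$ and $p_t=Uq_tU^*$. The identity $\ovl{p_t\scr L(\xi,\wtd I)}=\scr L(\xi,\wtd I)q_t$ is already recorded in the discussion preceding the proposition (it is the standard spectral-theory statement about bounded truncations of a closed operator), so the real content is (i) that $p_t\xi\in\mc H_i(I)$, i.e. $p_t\xi$ lies in the dense subspace $\Hom_{\mc A(I')}(\mc H_0,\mc H_i)\Omega$, and (ii) that the bounded operator $L(p_t\xi,\wtd I)$ provided by the categorical extension coincides with $\scr L(\xi,\wtd I)q_t$.

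First I would establish that $p_t\xi\in\mc H_i(I)$. Recall $p_t$ lives on $\mc H_i$, so $p_t\xi=p_t(\xi,\wtd I)|_{\mc H_i}\,\xi$ makes sense. By Lemma \ref{lb19}, for every $\eta\in\mc H_j(I')$ the bounded operator $R(\eta,\wtd I')$ commutes with $p_t$ (and $q_t$); taking $\mc H_j=\mc H_0$ this says $p_t$ commutes with $R(y\Omega,\wtd I')=\pi_{i,I'}(y)$ for all $y\in\mc A(I')$, hence $p_t\in\mc A(I')'$-intertwiner sense. More precisely, the truncation identity gives $\ovl{p_t\scr L(\xi,\wtd I)}|_{\mc H_0}$ bounded with domain $\mc A(I')\Omega$, and since $\scr L(\xi,\wtd I)|_{\mc H_0}$ intertwines $\mc A(I')$ (Proposition \ref{lb18}) and commuting with $p_t$ preserves this, the bounded operator $T:=\ovl{p_t\scr L(\xi,\wtd I)}|_{\mc H_0}:\mc H_0\to\mc H_i$ lies in $\Hom_{\mc A(I')}(\mc H_0,\mc H_i)$. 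Then $p_t\xi=p_t\scr L(\xi,\wtd I)\Omega=T\Omega\in\Hom_{\mc A(I')}(\mc H_0,\mc H_i)\Omega=\mc H_i(I)$, as desired.

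Next I would identify $L(p_t\xi,\wtd I)$ with $\scr L(\xi,\wtd I)q_t$ on each $\mc H_j$. Both are bounded operators $\mc H_j\to\mc H_i\boxtimes\mc H_j$. Pick $\wtd J$ clockwise to $\wtd I$; by Proposition \ref{lb2} (density) vectors $R(\eta,\wtd J)\chi$ with $\eta\in\mc H_j(J)$, $\chi\in\mc H_j$... more directly, since $\mc H_j(I')$ is dense in $\mc H_j$ and both operators are bounded, it suffices to check equality on $\eta\in\mc H_j(I')$. For such $\eta$, using the defining relation \eqref{eq7}, $\scr L(\xi,\wtd I)\eta=R(\eta,\wtd I')\xi$, so
\begin{align*}
\scr L(\xi,\wtd I)q_t\eta=\ovl{p_t\scr L(\xi,\wtd I)}\,\eta=p_t R(\eta,\wtd I')\xi=R(\eta,\wtd I')p_t\xi=R(\eta,\wtd I')\,L(p_t\xi,\wtd I)\Omega=L(p_t\xi,\wtd I)R(\eta,\wtd I')\Omega=L(p_t\xi,\wtd I)\eta,
\end{align*}
where the third equality is Lemma \ref{lb19}, the fourth is state-field correspondence, and the fifth is the locality/adjoint-commutativity relation $R(\eta,\wtd I')L(p_t\xi,\wtd I)=L(p_t\xi,\wtd I)R(\eta,\wtd I')$ (both acting on $\mc H_0$, where it is the bare commutation of the extension). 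This gives the claimed identity on the dense subspace $\mc H_j(I')$, hence everywhere.

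The step I expect to be the main obstacle is (i) — verifying $p_t\xi\in\mc H_i(I)$ — because it requires knowing that the spectral projection $p_t$, a priori just a bounded operator on $\mc H_i$ built from the polar decomposition, genuinely lies in $\Hom_{\mc A(I')}(\mc H_i,\mc H_i)$ (equivalently commutes with $\pi_{i,I'}(\mc A(I'))$), and that conjugating the $\mc A(I')$-intertwiner $\scr L(\xi,\wtd I)|_{\mc H_0}$ by $p_t$ again yields an $\mc A(I')$-intertwiner into $\mc H_i$. The cleanest route is via Lemma \ref{lb19} together with the fact that the strong commutation in Proposition \ref{lb18} passes to the bounding projections of the closed operator $\scr L(\xi,\wtd I)$ (projections in the von Neumann algebra generated by $\scr L(\xi,\wtd I)$ commute with anything strongly commuting with it); once this is in place the rest is routine bookkeeping with the axioms of $\scr E$.
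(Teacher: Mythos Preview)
Your proof is correct and follows essentially the same approach as the paper's. Both arguments use Proposition \ref{lb18} to see that $\ovl{p_t\scr L(\xi,\wtd I)}|_{\mc H_0}\in\Hom_{\mc A(I')}(\mc H_0,\mc H_i)$ (hence $p_t\xi\in\mc H_i(I)$), and then verify $L(p_t\xi,\wtd I)\eta=p_t\scr L(\xi,\wtd I)\eta$ on the dense set $\mc H_j(I')$ via Lemma \ref{lb19} and the locality/state-field correspondence axioms of $\scr E$; your chain of equalities is the paper's computation run in the opposite direction.
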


\begin{proof}
	By proposition \ref{lb18}, the bounded operator $\ovl{p_t(\xi,\wtd I)\scr L(\xi,\wtd I)}$ intertwines the actions of $\mc A(I')$. Therefore $p_t(\xi,\wtd I)\xi=p_t(\xi,\wtd I)\scr L(\xi,\wtd I)\Omega\in\mc H_i(I)$.  Choose any $\mc H_j\in\Obj(\RepA)$ and $\eta\in\mc H_j(I')$. Then
	\begin{align*}
	&L(p_t(\xi,\wtd I)\xi,\wtd I)\eta=L(p_t(\xi,\wtd I)\xi,\wtd I)R(\eta,\wtd I')\Omega=R(\eta,\wtd I')L(p_t(\xi,\wtd I)\xi,\wtd I)\Omega\\
	=&R(\eta,\wtd I')p_t(\xi,\wtd I)\xi \xlongequal{\text{Lem. } \ref{lb19}} p_t(\xi,\wtd I)R(\eta,\wtd I')\xi=p_t(\xi,\wtd I)\scr L(\xi,\wtd I)\eta.
	\end{align*}
	A similar argument proves the second equation.
\end{proof}

\begin{proof}[Proof of locality]
	By the above proposition, we know that for each $s,t\geq 0$, diagram \eqref{eq48} commutes adjointly when the vertical lines are multiplied by $p_s(\xi,\wtd I)$, and the horizontal lines are multiplied by $e_t(\eta,\wtd J)$. Since $\scr L(\xi,\wtd I)$ and $\scr R(\eta,\wtd J)$ are affiliated with the von Neumann algebras generated by $\{\ovl{p_s(\xi,\wtd I)\scr L(\xi,\wtd I)}:s\geq 0 \}$ and by $\{\ovl {e_t(\eta,\wtd J)\scr R(\eta,\wtd J)}:t\geq 0\}$ respectively, they must commute strongly.
\end{proof} 

\begin{co}\label{lb25}
	Let $\mc H_i,\mc H_j,\mc H_k\in\Obj(\RepA)$, and let $\wtd J$ be clockwise to $\wtd I$. Suppose that $\xi\in\mc H_i^\pr(I),\eta\in\mc H_j^\pr(J),\chi\in\mc H_k$, and that $\chi$ is in the domains of $\scr L(\xi,\wtd I)\scr R(\eta,\wtd J)|_{\mc H_k}$ and $\scr R(\eta,\wtd J)\scr L(\xi,\wtd I)|_{\mc H_k}$. Then
	\begin{align}
	\scr L(\xi,\wtd I)\scr R(\eta,\wtd J)\chi=\scr R(\eta,\wtd J)\scr L(\xi,\wtd I)\chi.
	\end{align}
\end{co}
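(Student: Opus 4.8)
The plan is to read the corollary off from the strong commutativity of diagram \eqref{eq48}, which is precisely part (c) of Theorem \ref{lb13}. Recall that the assertion ``\eqref{eq48} commutes strongly'' unravels, by the very definition of strong commutativity of a diagram of preclosed operators, into the following statement: setting
\begin{align*}
\mc H:=\mc H_k\oplus(\mc H_k\boxtimes\mc H_j)\oplus(\mc H_i\boxtimes\mc H_k)\oplus(\mc H_i\boxtimes\mc H_k\boxtimes\mc H_j),
\end{align*}
the extension $\mathsf L$ assembled from the two vertical arrows $\scr L(\xi,\wtd I)|_{\mc H_k}$ and $\scr L(\xi,\wtd I)|_{\mc H_k\boxtimes\mc H_j}$, and the extension $\mathsf R$ assembled from the two horizontal arrows $\scr R(\eta,\wtd J)|_{\mc H_k}$ and $\scr R(\eta,\wtd J)|_{\mc H_i\boxtimes\mc H_k}$, are two strongly commuting closed operators on $\mc H$. (All four field operators involved are preclosed by Theorem \ref{lb12}, using $\xi\in\mc H_i^\pr(I)$ and $\eta\in\mc H_j^\pr(J)$; and Theorem \ref{lb13}(c) applies because $\wtd I$ is anticlockwise to $\wtd J$, which is exactly the hypothesis ``$\wtd J$ clockwise to $\wtd I$''.)

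Next I would track the vector $v:=\chi\oplus 0\oplus 0\oplus 0\in\mc H$ through the two compositions. By the definition of $\mathsf L$, one has $v\in\Dom(\mathsf L)$ iff $\chi\in\Dom(\scr L(\xi,\wtd I)|_{\mc H_k})$, in which case $\mathsf L v=0\oplus 0\oplus\scr L(\xi,\wtd I)\chi\oplus 0$; and then $\mathsf L v\in\Dom(\mathsf R)$ iff $\scr L(\xi,\wtd I)\chi\in\Dom(\scr R(\eta,\wtd J)|_{\mc H_i\boxtimes\mc H_k})$, in which case $\mathsf R\mathsf L v=0\oplus 0\oplus 0\oplus\scr R(\eta,\wtd J)\scr L(\xi,\wtd I)\chi$. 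Those two conditions together are precisely the hypothesis $\chi\in\Dom\big(\scr R(\eta,\wtd J)\scr L(\xi,\wtd I)|_{\mc H_k}\big)$. Symmetrically, $\chi\in\Dom\big(\scr L(\xi,\wtd I)\scr R(\eta,\wtd J)|_{\mc H_k}\big)$ says exactly that $v\in\Dom(\mathsf R)$, that $\mathsf R v\in\Dom(\mathsf L)$, and that $\mathsf L\mathsf R v=0\oplus 0\oplus 0\oplus\scr L(\xi,\wtd I)\scr R(\eta,\wtd J)\chi$. Hence $v$ lies in the domains of both $\mathsf R\mathsf L$ and $\mathsf L\mathsf R$.

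Finally I would invoke the elementary fact that two strongly commuting closed operators agree on every vector lying in the intersection of the domains of their two products (cf. section \ref{lb9}): applied to $\mathsf L,\mathsf R,v$ this gives $\mathsf R\mathsf L v=\mathsf L\mathsf R v$, and comparing the last summands yields $\scr R(\eta,\wtd J)\scr L(\xi,\wtd I)\chi=\scr L(\xi,\wtd I)\scr R(\eta,\wtd J)\chi$, which is the claim. I do not expect a genuine obstacle here — the analytic work was already spent in Theorem \ref{lb13}(c), and this corollary is purely a matter of unwinding the definition of strong commutativity of a diagram into a pointwise statement. If a self-contained argument for the last step is wanted, let $\{p_m\}$ and $\{q_m\}$ be the left and right bounding projections of $\mathsf R$; each $p_m$ commutes strongly with $\mathsf L$ and one has $\mathsf R q_m v=\ovl{\mathsf R q_m}\,v=p_m\mathsf R v$, so
\begin{align*}
\mathsf R\mathsf L v=\lim_{m\to\infty}\mathsf L\big(\mathsf R q_m v\big)=\lim_{m\to\infty}\mathsf L\big(p_m\mathsf R v\big)=\lim_{m\to\infty}p_m\big(\mathsf L\mathsf R v\big)=\mathsf L\mathsf R v.
\end{align*}
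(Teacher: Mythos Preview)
Your proof is correct and is exactly the paper's approach made explicit: the paper's one-line proof ``Use proposition \ref{lb3} and the locality in theorem \ref{lb13}'' is precisely your argument of passing to the extensions $\mathsf L,\mathsf R$ on the direct sum (which is the definition of strong commutativity of diagram \eqref{eq48}) and then applying Proposition \ref{lb3}(a). One small remark on your optional self-contained sketch: the final equality $\lim_m p_m(\mathsf L\mathsf R v)=\mathsf L\mathsf R v$ is not automatic, since $p_m$ converges only to the range projection of $\mathsf R$, not to the identity; it does follow, however, from the closedness of $\mathsf L$ together with $\mathsf R q_m v\to\mathsf R v\in\Dom(\mathsf L)$, which you have already established.
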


\begin{proof}
	Use proposition \ref{lb3} and the locality in theorem \ref{lb13}.
\end{proof}

\subsection{The dense subspaces $\mc H_i^\infty$, $\mc H_i^\infty(I)$, and $\mc H_i^w(I)$}

We now introduce for each $\mc H_i\in\RepA$ some important dense subspaces. Let $L_0$ be the generator of the one parameter rotation subgroup $\varrho$ of $\UPSU$ which is positive by \cite[Thm. 3.8]{Wei06}. ($\varrho$ is the lift of the one parameter rotation subgroup of $\PSU$ to $\UPSU$.) Set \index{Hi@$\mc H_i^\infty$}
\begin{align}
\mc H_i^\infty=\bigcap_{n\in\mbb Z_+}\Dom(L_0^n).
\end{align}
Vectors in $\mc H_i^\infty$ are called smooth. For each $\xi\in\mc H_i$,  we have $\xi_h\in\mc H_i^\infty$ where $h\in C_c^\infty(\mbb R)$ and
\begin{align}
\xi_h=\int_{\mbb R}h(t)\varrho(t)\xi dt.
\end{align}
Since $\xi_h\rightarrow\xi$ as $h$ converges to the $\delta$-function at $0$, we conclude that $\mc H_i^\infty$ is dense in $\mc H_i$. 

A closable operator $T$ from $\mc H_i$ to $\mc H_j$ is called \textbf{smooth} if
\begin{gather}
\mc H_i^\infty\subset\Dom(T),\qquad \mc H_j^\infty\subset\Dom(T^*),\qquad  T\mc H_i^\infty\subset \mc H_j^\infty,\qquad T^*\mc H_j^\infty\subset \mc H_i^\infty.
\end{gather}

\begin{rem}\label{lb102}
Any homomorphism of $\mc A$-modules is smooth since it commutes  with $\varrho(t)$. 
\end{rem}
If $T:\mc H_i\rightarrow\mc H_j$ is  bounded, then it is a routine check that for each $h\in C_c^\infty(\mbb R)$ satisfying $\int_{\mbb R}h(t)dt=1$,
\begin{align}
T_h:=\int_{\mbb R}h(t)\varrho(t)T\varrho(-t)dt
\end{align}
is bounded and smooth, and $T_h$  converges $*$-strongly to $T$  as $h$ converges to the $\delta$-function at $0$. See for example \cite{Gui19b} Prop. 4.2 for details.

\begin{df}
We let $\mc H_i^\infty(I)$  be the set of all $\xi\in\mc H_i(I)$ such that for any $\mc H_j\in\Obj(\RepA)$ and any $\arg_I$, the bounded operator $L (\xi,\wtd I)|_{\mc H_j}$ from $\mc H_j$ to $\mc H_i\boxtimes\mc H_j$ is smooth. In that case, $R(\xi,\wtd I)$ is also smooth since the braid operator $\mbb B$ is smooth.
\end{df}

\begin{pp}\label{lb8}
For each $I\in\mc J$, $\mc H_i^\infty(I)$ is a dense subspace of $\mc H_i(I)$. Moreover, for any $\xi\in\mc H_i(I)$, there is a sequence $\{\xi_n\}$ in $\mc H_i^\infty(I)$ such that $\sup_{n\in\mbb Z_+}\big\lVert  L(\xi_n,\wtd I) \big\lVert\leq\big\lVert  L(\xi,\wtd I) \big\lVert$,  and $L(\xi_n,\wtd I)|_{\mc H_j}$ converges $*$-strongly to $L(\xi,\wtd I)|_{\mc H_j}$ for any $\mc H_j\in\Obj(\RepA)$.
\end{pp}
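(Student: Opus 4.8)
The plan is to smooth out $\xi\in\mc H_i(I)$ by averaging with the rotation group, just as one does for bounded operators at the start of this subsection. Recall from Proposition \ref{lb7} that, after replacing $\xi$ by $L(\mu,\wtd I)x\Omega$ with $\mu\in\mc H_i(I_0)$ giving a unitary $L(\mu,\wtd I)$ and $x\in\mc A(I)$, we have $L(\xi,\wtd I)=L(\mu,\wtd I)x$ on every $\mc H_j$. So the first step is to reduce the problem to the following: given $x\in\mc A(I)$, find $x_n\in\mc A(I)$ with $\lVert x_n\lVert\leq\lVert x\lVert$, $x_n\to x$ $*$-strongly, and such that $L(\mu,\wtd I)x_n$ is a smooth bounded operator on every $\mc H_j$ (equivalently, that $L(L(\mu,\wtd I)x_n\Omega,\wtd I)$ is smooth, i.e. $L(\mu,\wtd I)x_n\Omega\in\mc H_i^\infty(I)$). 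Since $L(\mu,\wtd I)$ is itself smooth (it is a homomorphism of $\mc A$-modules after fixing $\mu$? — no, it intertwines only $\mc A(I')$, but it does commute with $\varrho(t)$ up to a cocycle; more precisely by Möbius covariance Eq.\ \eqref{eq15}, conjugating $L(\mu,\wtd I)$ by $\varrho(t)$ gives $L(\varrho(t)\mu\varrho(-t),\varrho(t)\wtd I)$), it will suffice to smooth $x$ inside $\mc A(I)$ while keeping the smeared-rotation average localized in a slightly larger interval.

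Concretely, the second step is: fix $I_0\sjs I$ with $\mu\in\mc H_i(I_0)$, and choose $\delta>0$ so small that $\varrho(t)I_0\subset I$ for all $|t|<\delta$. For $h\in C_c^\infty(-\delta,\delta)$ with $\int h=1$, set
\begin{align}
\xi_h=\int_{\mbb R}h(t)\,\varrho(t)\xi\,dt.
\end{align}
Using Eq.\ \eqref{eq13} and Möbius covariance one computes, for $\eta\in\mc H_j(I')$,
\begin{align}
L(\xi_h,\wtd I)\eta=\int_{\mbb R}h(t)\,\varrho(t)L(\xi,\wtd I)\varrho(-t)\eta\,dt,
\end{align}
so that $L(\xi_h,\wtd I)=\bigl(L(\xi,\wtd I)\bigr)_h$ in the notation introduced above; by the cited fact (e.g.\ \cite{Gui19b} Prop.\ 4.2) this is bounded and smooth, with norm $\leq\lVert L(\xi,\wtd I)\lVert$ and converging $*$-strongly to $L(\xi,\wtd I)$ as $h\to\delta_0$. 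The one thing that needs care is that $\xi_h$ really lies in $\mc H_i(I)$ and not merely in $\mc H_i$: here one writes $\varrho(t)\xi=\varrho(t)L(\mu,\wtd I)x\Omega=L(\varrho(t)\mu\varrho(-t),\varrho(t)\wtd I)\,\varrho(t)x\varrho(-t)\Omega$; since $\varrho(t)I_0\subset I$ and $\varrho(t)x\varrho(-t)\in\mc A(\varrho(t)I)$, a short argument with isotony and Prop.\ \ref{lb10} shows each integrand lies in $\mc H_i(I)$, and taking the integral (a norm limit of convex combinations of vectors in the closed subspace $\mc H_i(I)$) keeps us in $\mc H_i(I)$. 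Then to get a sequence, pick $h_n$ a $\delta$-sequence supported in $(-1/n,1/n)$ and set $\xi_n=\xi_{h_n}$.

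The only real obstacle I anticipate is the smoothness claim for $L(\xi_h,\wtd I)$ when acting on an arbitrary module $\mc H_j$, not just $\mc H_0$: one must know that $\varrho(t)$ on $\mc H_i\boxtimes\mc H_j$ implements the rotation compatibly on all three modules so that the averaging argument of \cite{Gui19b} applies uniformly in $j$. This follows from the conformal covariance of $\scr E$ (the displayed theorem \cite{Gui18} Thm.\ 3.13 and Eq.\ \eqref{eq15}): conjugation of $L(\xi,\wtd I)|_{\mc H_j}$ by $\varrho(t)$ (acting on source $\mc H_j$ and target $\mc H_i\boxtimes\mc H_j$) equals $L(\varrho(t)\xi\varrho(-t),\varrho(t)\wtd I)|_{\mc H_j}$, so the $t\mapsto$ conjugate is strongly continuous, the average is well-defined, and the standard estimate gives $L(\xi_h,\wtd I)\mc H_j^\infty\subset(\mc H_i\boxtimes\mc H_j)^\infty$ and likewise for the adjoint. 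Density of $\mc H_i^\infty(I)$ in $\mc H_i(I)$ is then immediate from the $*$-strong convergence together with $\xi_n=L(\xi_n,\wtd I)\Omega\to L(\xi,\wtd I)\Omega=\xi$.
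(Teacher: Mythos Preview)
Your argument has a genuine gap at the point where you claim $\xi_h\in\mc H_i(I)$. You write $\varrho(t)\xi=L(\varrho(t)\mu\varrho(-t),\varrho(t)\wtd I)\cdot\varrho(t)x\varrho(-t)\Omega$ and assert that ``a short argument with isotony and Prop.\ \ref{lb10}'' gives $\varrho(t)\xi\in\mc H_i(I)$. But $\varrho(t)x\varrho(-t)\in\mc A(\varrho(t)I)$, and $\varrho(t)I\not\subset I$ for $t\neq0$, so neither isotony nor Prop.\ \ref{lb10} applies: you cannot conclude $L(\varrho(t)\mu,\wtd I)\bigl(\varrho(t)x\varrho(-t)\Omega\bigr)\in\mc H_i(I)$. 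In fact $\varrho(t)\xi$ lies in $\mc H_i(\varrho(t)I)$, not $\mc H_i(I)$, so the integrand has already left $\mc H_i(I)$. You also call $\mc H_i(I)$ a ``closed subspace''; it is not---it is dense in $\mc H_i$---so even if each $\varrho(t)\xi$ were in $\mc H_i(I)$, a norm-limit argument would not keep you there.

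The fix, which is precisely what the paper does, is to \emph{first} invoke Prop.\ \ref{lb7} to approximate $\xi$ by vectors $\xi'_n\in\mc H_i(I_n)$ with $I_n\sjs I$ (with the correct norm bound and $*$-strong convergence on $\mc H_0$), and \emph{then} smooth each $\xi'_n$. For $\xi'\in\mc H_i(I_0)$ with $I_0\sjs I$ and $\varepsilon>0$ chosen so that $\varrho(t)I_0\subset I$ for $|t|\leq\varepsilon$, one does have $\varrho(t)\xi'\in\mc H_i(\varrho(t)I_0)\subset\mc H_i(I)$; then your computation $L(\xi',\wtd I)_h\eta=R(\eta,\wtd J)\xi'_h=\scr L(\xi'_h,\wtd I)\eta$ (for $\eta\in\mc H_j(J)$, $\wtd J$ clockwise to $\wtd I$) is valid and shows $\scr L(\xi'_h,\wtd I)$ is bounded, whence $\xi'_h\in\mc H_i(I)$ by definition---no closedness needed. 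A diagonal argument combines the two approximations into a single sequence. Your remaining points (the norm bound $\lVert L(\xi,\wtd I)_h\lVert\leq\lVert L(\xi,\wtd I)\lVert$, smoothness of the averaged operator, and covariance giving uniformity in $\mc H_j$) are correct; it is only the order of operations---shrink the interval before smoothing---that is missing.
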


\begin{proof}
Choose  $I_0\in\mc J$ such that $I_0\subset\joinrel\subset I$. Then there exists $\varepsilon>0$ such that $\varrho(t)I_0\subset I$ whenever $-\varepsilon\leq t\leq \varepsilon$. Then for any $\xi\in\mc H_i(I_0)$ and $h\in C_c^\infty(-\varepsilon,\varepsilon)$,  we have $\xi_h\in\mc H_i^\infty(I)$, and
\begin{align}
L(\xi,\wtd I)_h=L(\xi_h,\wtd I)\label{eq44}
\end{align}
when acting on any $\mc H_j\in\Obj(\RepA)$. Indeed, by the conformal covariance of $\scr E$, we have for any $\wtd J$ clockwise to $\wtd I$ and $\eta\in\mc H_j(J)$ that
\begin{align*}
&L(\xi,\wtd I)_h\cdot \eta=\int_{\mbb R}h(t)\varrho(t)L(\xi,\wtd I)\varrho(-t)\eta\cdot dt=\int_{\mbb R}h(t)L(\varrho(t)\xi,\wtd I)\eta\cdot dt\\
=&\int_{\mbb R}h(t)R(\eta,\wtd J)\varrho(t)\xi\cdot dt=R(\eta,\wtd J)\xi_h=\scr L(\xi_h,\wtd I)\eta.
\end{align*}
So $\scr L(\xi_h,\wtd I)$ is continuous when acting on (a dense subspace of) $\mc H_0$. This shows $\xi_h\in\mc H_i(I)$. By the above computation, we conclude \eqref{eq44} and hence $\xi_h\in\mc H_i^\infty(I)$. Assuming $\int h(t)dt=1$, then the norms of $L(\xi,\wtd I)_h$ are bounded by that of $L(\xi,\wtd I)$. Since we know that $L(\xi,\wtd I)_h$ converges $*$-strongly to $L(\xi,\wtd I)$ as $h$ converges to the $\delta$-function at $0$, we have actually proved the statement when $\xi\in\mc H_i(I_0)$.

Now assume $\xi\in\mc H_i(I)$. Then by proposition \ref{lb7}, there exists a sequence $\{\xi_n'\}$ in $\mc H_i$ such that each $\xi_n'$ belongs to $\mc H_i(I_n)$ for some $I_n\subset\joinrel\subset I$, that $\lVert L(\xi_n',\wtd I) \lVert\leq \lVert L(\xi,\wtd I) \lVert$, and that $L(\xi_n',\wtd I)|_{\mc H_0}$ converges $*$-strongly to $L(\xi,\wtd I)|_{\mc H_0}$. By the first paragraph,  and by the fact that $\mc H_0$ and $\mc H_i$ are separable, we can replace each $\xi_n'$ with some $\xi_n\in\mc H_i^\infty(I)$ such that $\lVert L(\xi_n,\wtd I) \lVert\leq \lVert L(\xi,\wtd I) \lVert$ and $L(\xi_n,\wtd I)|_{\mc H_0}$ converges $*$-strongly to $L(\xi,\wtd I)|_{\mc H_0}$. Now, choose any non-trivial $\mc H_j\in\Obj(\RepA)$, and let $\wtd J=\wtd I'$. By lemma \ref{lb6}, we can choose $\mu\in\mc H_j(J)$ such that $R(\mu,\wtd J)$ is unitary. Then $L(\xi_n,\wtd I)|_{\mc H_j}$ equals $R(\mu,\wtd J)L(\xi_n,\wtd I)R(\mu,\wtd J)^*|_{\mc H_j}$, which converges $*$-strongly to $R(\mu,\wtd J)L(\xi,\wtd I)R(\mu,\wtd J)^*|_{\mc H_j}=L(\xi,\wtd I)|_{\mc H_j}$.
\end{proof}	

Let $\mc A^\infty(I)$ be the set of all $x\in\mc A(I)$ such that $\pi_{i,I}(x)$ is smooth for any $\mc H_i\in\Obj(\RepA)$. Then clearly $\mc H^\infty_0(I)=\mc A^\infty(I)\Omega$. As a special case of proposition \ref{lb8}, we have:

\begin{co}\label{lb24}
$\mc A^\infty(I)$ is a strongly dense $*$-subalgebra of $\mc A(I)$.
\end{co}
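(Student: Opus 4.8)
The plan is to treat the statement in two parts: first that $\mc A^\infty(I)$ is a $*$-subalgebra of $\mc A(I)$, and then that it is strongly dense. The first part is purely algebraic. Stability under adjoints is immediate, since the four conditions $\mc H_i^\infty\subset\Dom(T)$, $\mc H_j^\infty\subset\Dom(T^*)$, $T\mc H_i^\infty\subset\mc H_j^\infty$, $T^*\mc H_j^\infty\subset\mc H_i^\infty$ defining a smooth preclosed operator are symmetric under $T\leftrightarrow T^*$; applied to $\pi_{i,I}(x)$ acting on a single $\mc H_i$, they show $x\in\mc A^\infty(I)\Rightarrow x^*\in\mc A^\infty(I)$. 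Stability under sums is clear. For products, if $x,y\in\mc A^\infty(I)$ then on each $\mc H_i$ one has $\pi_{i,I}(xy)=\pi_{i,I}(x)\pi_{i,I}(y)$, and since $\pi_{i,I}(y)\mc H_i^\infty\subset\mc H_i^\infty$ followed by $\pi_{i,I}(x)\mc H_i^\infty\subset\mc H_i^\infty$ gives $\pi_{i,I}(xy)\mc H_i^\infty\subset\mc H_i^\infty$; the same computation applied to $(xy)^*=y^*x^*$ finishes it. Hence $\mc A^\infty(I)$ is a $*$-subalgebra.

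For strong density the key is the identification $\mc H_0^\infty(I)=\mc A^\infty(I)\Omega$ recorded just above the corollary, which rests on the formula $L(x\Omega,\wtd I)|_{\mc H_j}=\pi_{j,I}(x)$ (the displayed equation following Proposition \ref{lb10}): a vector $x\Omega$ with $x\in\mc A(I)$ lies in $\mc H_0^\infty(I)$ precisely when every $\pi_{j,I}(x)$ is smooth, i.e.\ when $x\in\mc A^\infty(I)$. Now, given an arbitrary $x\in\mc A(I)$, set $\xi=x\Omega\in\mc H_0(I)$ and apply Proposition \ref{lb8} with $\mc H_i=\mc H_0$. This yields a sequence $\xi_n\in\mc H_0^\infty(I)$, necessarily of the form $\xi_n=x_n\Omega$ with $x_n\in\mc A^\infty(I)$, such that $L(\xi_n,\wtd I)|_{\mc H_0}$ converges $*$-strongly to $L(\xi,\wtd I)|_{\mc H_0}$. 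Since $L(x_n\Omega,\wtd I)|_{\mc H_0}=x_n$ and $L(x\Omega,\wtd I)|_{\mc H_0}=x$, this is exactly the statement that $x_n\to x$ $*$-strongly, hence strongly; so $\mc A^\infty(I)$ is strongly dense in $\mc A(I)$.

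I do not anticipate a genuine obstacle here: the corollary really is a transcription of Proposition \ref{lb8} into operator-algebra language. The only point requiring attention is keeping the dictionary between the ``vector'' picture of the categorical extension $\scr E$ and the ``operator'' picture of $\mc A(I)$ straight, i.e.\ consistently invoking $L(x\Omega,\wtd I)|_{\mc H_j}=\pi_{j,I}(x)$ both to match $\mc H_0^\infty(I)$ with $\mc A^\infty(I)$ and to read the $*$-strong convergence of the $x_n$ off that of the bounded operators $L(\xi_n,\wtd I)|_{\mc H_0}$. As a minor bonus, the uniform bound $\sup_n\lVert x_n\rVert\le\lVert x\rVert$ supplied by Proposition \ref{lb8} shows the approximating sequence can be taken in the closed ball of radius $\lVert x\rVert$, a Kaplansky-type refinement not needed for the bare density claim.
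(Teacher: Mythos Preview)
Your proposal is correct and follows exactly the approach the paper intends: the corollary is stated as a special case of Proposition \ref{lb8}, and you have correctly spelled out how to specialize it using the identification $\mc H_0^\infty(I)=\mc A^\infty(I)\Omega$ and the formula $L(x\Omega,\wtd I)|_{\mc H_j}=\pi_{j,I}(x)$. The paper gives no proof beyond the phrase ``as a special case of proposition \ref{lb8}'', so you have simply supplied the details that were left implicit.
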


\begin{pp}\label{lb21}
Suppose that $\xi\in\mc H_i^\pr(I)$. Then for each $\mc H_j\in\Obj(\RepA)$, $\mc H_j^\infty(I')$ is a core for $\scr L(\xi,\wtd I)|_{\mc H_j}$ and $\scr R(\xi,\wtd I)|_{\mc H_j}$.
\end{pp}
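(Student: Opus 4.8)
The plan is to reduce the two assertions to a single one and then prove it by an approximation in the graph norm. First I would observe that by the braiding relation in Theorem~\ref{lb13}(d), namely $\mbb B_{i,j}\scr L(\xi,\wtd I)|_{\mc H_j}=\scr R(\xi,\wtd I)|_{\mc H_j}$ with $\mbb B_{i,j}$ a fixed unitary (and with matching domains, since both closed operators are closures of operators whose common domain is $\mc H_j(I')$), a subspace of $\mc H_j$ is a core for $\scr L(\xi,\wtd I)|_{\mc H_j}$ if and only if it is a core for $\scr R(\xi,\wtd I)|_{\mc H_j}$. Hence it suffices to show that $\mc H_j^\infty(I')$ is a core for $\scr L(\xi,\wtd I)|_{\mc H_j}$.

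Next, recall that before closure $\scr L(\xi,\wtd I)|_{\mc H_j}$ has domain $\mc H_j(I')$, so $\mc H_j(I')$ is already a core; it therefore suffices to show that every $\eta\in\mc H_j(I')$ can be approximated, in the graph norm of $\scr L(\xi,\wtd I)$, by vectors of $\mc H_j^\infty(I')$. Given such an $\eta$, I would apply Proposition~\ref{lb8} with the module $\mc H_j$ and the arg-valued interval $\wtd I'$ to obtain a sequence $\eta_n\in\mc H_j^\infty(I')$ with $\eta_n\to\eta$ and with $L(\eta_n,\wtd I')|_{\mc H_i}$ converging $*$-strongly, hence in particular strongly, to $L(\eta,\wtd I')|_{\mc H_i}$.

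It then remains to transfer this to the $R$-operators and to the vector $\xi$. Using the braiding axiom of the categorical extension, $R(\eta_n,\wtd I')|_{\mc H_i}=\mbb B_{j,i}L(\eta_n,\wtd I')|_{\mc H_i}$, so composing with the fixed unitary $\mbb B_{j,i}$ gives $R(\eta_n,\wtd I')|_{\mc H_i}\to R(\eta,\wtd I')|_{\mc H_i}$ strongly; evaluating at $\xi$ and using the defining formula \eqref{eq7} of $\scr L$ yields
\begin{align*}
\scr L(\xi,\wtd I)\eta_n=R(\eta_n,\wtd I')\xi\longrightarrow R(\eta,\wtd I')\xi=\scr L(\xi,\wtd I)\eta.
\end{align*}
Together with $\eta_n\to\eta$, this shows that $\eta$ lies in the domain of the closure of $\scr L(\xi,\wtd I)|_{\mc H_j^\infty(I')}$ and that this closure agrees with $\scr L(\xi,\wtd I)$ on $\eta$. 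Since $\eta\in\mc H_j(I')$ was arbitrary and $\mc H_j(I')$ is a core, $\mc H_j^\infty(I')$ is a core as well, and hence so is it for $\scr R(\xi,\wtd I)|_{\mc H_j}$.

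I do not expect a genuine obstacle here: the argument is powered entirely by Proposition~\ref{lb8} (approximation by smooth vectors localized in $I'$ with uniformly bounded, $*$-strongly convergent field operators) together with the continuity of $L$ and $R$ in their vector labels and the fact that they differ by the fixed braiding unitary. The only points needing care are the standard fact that composition with a bounded invertible operator preserves cores, and the bookkeeping of which arg-valued interval ($\wtd I'$ in the reduction above) the approximation is run with, bearing in mind that $\mc H_j^\infty(I')$ is by definition independent of the choice of argument.
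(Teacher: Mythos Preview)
Your proof is correct and follows essentially the same approach as the paper's: both use Proposition~\ref{lb8} to approximate an arbitrary $\eta\in\mc H_j(I')$ by $\eta_n\in\mc H_j^\infty(I')$ with $*$-strongly convergent field operators, evaluate at $\xi$ via \eqref{eq7}, and then invoke the braiding to pass from $\scr L$ to $\scr R$. The only difference is cosmetic: you make the passage from $L$ to $R$ (via composition with the unitary $\mbb B_{j,i}$) explicit, whereas the paper states directly that $R(\eta_n,\wtd I')|_{\mc H_i}$ converges $*$-strongly, leaving this step implicit.
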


\begin{proof}
Choose any $\eta\in\mc H_j(I')$. By proposition \ref{lb8}, there exist $\eta_n\in\mc H_j^\infty(I')$ such that $R(\eta_n,\wtd I')|_{\mc H_i}$ converges $*$-strongly to $R(\eta,\wtd I')|_{\mc H_i}$. Therefore, as $n\rightarrow\infty$, $\scr L(\xi,\wtd I)\eta_n=R(\eta_n,\wtd I')\xi$ converges to $R(\eta,\wtd I')\xi$. This proves that $\mc H_j^\infty(I')$ is a core for $\scr L(\xi,\wtd I)|_{\mc H_j}$ and for $\scr R(\xi,\wtd I)|_{\mc H_j}=\mbb B_{i,j}\scr L(\xi,\wtd I)|_{\mc H_j}$.
\end{proof}

\begin{df}
Define $\mc H_i^w(I)$ \index{Hi@$\mc H_i^w(I)$} to be the set of all $\xi\in\mc H_i$ such that $\scr L(\xi,\wtd I)|_{\mc H_0}$ (which equals $\scr R(\xi,\wtd I)|_{\mc H_0}$) is closable and has smooth closure. 
\end{df}

The following relations are obvious:
\begin{align*}
\mc H_i^\infty(I)\subset&\mc H_i^w(I)\subset\mc H_i^\infty\\
&~~~\cap\\
&\mc H_i^\pr(I)
\end{align*}
Therefore, $\mc H_i^w(I)$ is a dense subspace of $\mc H_i$.

Using the  results in section \ref{lb11} and chapter \ref{lb9}, we are able to prove many important results, for example:

\begin{pp}\label{lb28}
Let $\wtd J$ be clockwise to $\wtd I$. Suppose that $\xi\in\mc H_i^w(I),\eta\in\mc H_j^w(J),\chi\in\mc H_0^\infty$. Then $\scr L(\eta,\wtd J)\chi=\scr R(\eta,\wtd J)\chi$ are inside the domains of $\scr L(\xi,\wtd I)|_{\mc H_j}$ and $\scr R(\xi,\wtd I)|_{\mc H_j}$, and 
\begin{align}
\scr L(\xi,\wtd I)\scr R(\eta,\wtd J)\chi=\scr R(\eta,\wtd J)\scr L(\xi,\wtd I)\chi.
\end{align}
\end{pp}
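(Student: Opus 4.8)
The plan is to reduce the claimed identity to Corollary \ref{lb25}, whose hypotheses are exactly that $\chi$ lies in the domains of the two composite operators $\scr L(\xi,\wtd I)\scr R(\eta,\wtd J)$ and $\scr R(\eta,\wtd J)\scr L(\xi,\wtd I)$ acting on $\mc H_0$. So the entire content of the proof is a domain-tracking argument: I must show that $\chi\in\mc H_0^\infty$, combined with $\xi\in\mc H_i^w(I)$ and $\eta\in\mc H_j^w(J)$, forces $\chi$ into both composite domains.

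First I would use $\eta\in\mc H_j^w(J)$: by definition $\scr L(\eta,\wtd J)|_{\mc H_0}=\scr R(\eta,\wtd J)|_{\mc H_0}$ is preclosed with \emph{smooth} closure, so $\mc H_0^\infty\subset\Dom\big(\overline{\scr R(\eta,\wtd J)}\big)$ and $\scr R(\eta,\wtd J)\mc H_0^\infty\subset\mc H_j^\infty$. Hence $\chi$ is in the domain of $\scr R(\eta,\wtd J)$ and the vector $\scr R(\eta,\wtd J)\chi=\scr L(\eta,\wtd J)\chi$ lands in $\mc H_j^\infty$. Next I would use $\xi\in\mc H_i^w(I)$. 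The closure of $\scr L(\xi,\wtd I)|_{\mc H_0}$ is smooth, but I need smoothness of $\scr L(\xi,\wtd I)|_{\mc H_j}$ on the larger module $\mc H_j$; this is where I would invoke the proof machinery of Theorem \ref{lb12} (the unitary $R(\mu,\wtd J')$ with $\wtd J'$ clockwise to $\wtd I$ conjugating $\scr L(\xi,\wtd I)|_{\mc H_j}$ to $\scr L(\xi,\wtd I)|_{\mc H_0}$) together with smoothness of the braid operator and of module homomorphisms, to conclude that $\scr L(\xi,\wtd I)|_{\mc H_j}$ and $\scr R(\xi,\wtd I)|_{\mc H_j}$ are themselves smooth, so that $\mc H_j^\infty\subset\Dom\big(\scr L(\xi,\wtd I)|_{\mc H_j}\big)$. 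Applying this to the vector $\scr R(\eta,\wtd J)\chi\in\mc H_j^\infty$ shows $\chi\in\Dom\big(\scr L(\xi,\wtd I)\scr R(\eta,\wtd J)|_{\mc H_0}\big)$.

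For the other composite, I run the symmetric argument with the roles of $\wtd I$ and $\wtd J$ interchanged (note $\wtd I$ is then anticlockwise to $\wtd J$, matching the hypothesis of Corollary \ref{lb25} and Theorem \ref{lb13}(c)): using $\xi\in\mc H_i^w(I)$ I get $\scr L(\xi,\wtd I)\chi\in\mc H_i^\infty$, and using $\eta\in\mc H_j^w(J)$ together with the smoothness-on-$\mc H_i$ upgrade I get $\scr R(\eta,\wtd J)|_{\mc H_i}$ smooth, hence $\chi\in\Dom\big(\scr R(\eta,\wtd J)\scr L(\xi,\wtd I)|_{\mc H_0}\big)$. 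Once both domain conditions are verified, Corollary \ref{lb25} gives the equality. Finally, the asserted equalities $\scr L(\eta,\wtd J)\chi=\scr R(\eta,\wtd J)\chi$ hold because on $\mc H_0$ the two operators literally coincide (as recorded after \eqref{eq7}), so no extra work is needed there.

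The main obstacle I anticipate is the smoothness upgrade for $\scr L(\xi,\wtd I)$ and $\scr R(\xi,\wtd I)$ from the vacuum module $\mc H_0$ to an arbitrary module $\mc H_j$: the definition of $\mc H_i^w(I)$ only postulates smoothness of the closure on $\mc H_0$, and I must carefully check that the unitary intertwiner $R(\mu,\wtd J')$ constructed via Lemma \ref{lb6}, being a bounded module map and hence smooth, conjugates the $\mc H_0$-picture to the $\mc H_j$-picture in a way that genuinely transports smoothness (including control of both the operator and its adjoint on the relevant $\mc H^\infty$ subspaces, via the identity $\scr L(\xi,\wtd I)R(\mu,\wtd J')=R(\mu,\wtd J')\scr L(\xi,\wtd I)$ of \eqref{eq26}). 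The rest is bookkeeping with the inclusions $\mc H_i^\infty(I)\subset\mc H_i^w(I)\subset\mc H_i^\infty\cap\mc H_i^\pr(I)$ and the locality already established in Theorem \ref{lb13}(c).
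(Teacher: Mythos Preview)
Your reduction to Corollary~\ref{lb25} is natural, but the step you yourself flag as the ``main obstacle'' is a genuine gap. You claim that the unitary $R(\mu,\wtd J')$ from the proof of Theorem~\ref{lb12} is ``a bounded module map and hence smooth''. This is false: $R(\mu,\wtd J')\in\Hom_{\mc A(J'')}(\mc H_0,\mc H_j)$ intertwines only the local algebra $\mc A(J'')$, not all of $\mc A$, so it need not commute with the rotation group and there is no reason for it to preserve $\mc H^\infty$. Nor is there any guarantee that one can choose $\mu\in\mc H_j^\infty(J')$ with $L(\mu,\wtd J')$ exactly unitary (Proposition~\ref{lb8} only gives smooth approximants). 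Without this smoothness upgrade, you cannot conclude $\mc H_j^\infty\subset\Dom(\scr L(\xi,\wtd I)|_{\mc H_j})$, and your route through Corollary~\ref{lb25}/Proposition~\ref{lb3}(a) stalls.

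The paper sidesteps the issue entirely by invoking Proposition~\ref{lb3}(b) rather than (a). The point is that (b) requires only $\chi\in\Dom(A^*A)\cap\Dom(B^*B)$, and for $\chi\in\mc H_0$ these are conditions on $\scr L(\xi,\wtd I)|_{\mc H_0}$ and $\scr R(\eta,\wtd J)|_{\mc H_0}$ alone. Smoothness of those operators on $\mc H_0$ is precisely the definition of $\mc H_i^w(I)$ and $\mc H_j^w(J)$, so the domain hypotheses of (b) are immediate. Combined with the strong commutativity from Theorem~\ref{lb13}(c), Proposition~\ref{lb3}(b) then \emph{delivers} the domain inclusion $\scr R(\eta,\wtd J)\chi\in\Dom(\scr L(\xi,\wtd I)|_{\mc H_j})$ as part of its conclusion, rather than requiring it as input. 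No smoothness upgrade to $\mc H_j$ is ever needed.
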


\begin{proof}
By theorem \ref{lb13}, $\scr L(\xi,\wtd I)$ and $\scr R(\eta,\wtd J)$ commute strongly. Moreover, since $\xi\in\mc H_i^w(I)$ and $\eta\in\mc H_i^w(J)$, the smooth vector $\chi$ is inside the domains of $\scr L(\xi,\wtd I)^*\scr L(\xi,\wtd I)|_{\mc H_0}$ and $\scr R(\eta,\wtd J)^*\scr R(\eta,\wtd J)|_{\mc H_0}$. This finishes the proof by proposition \ref{lb3}-(b).
\end{proof}

\begin{pp}\label{lb26}
	Suppose that $\wtd J$ is clockwise to $\wtd I$. If $\xi_1,\xi_2\in\mc H_i^w(I)$, $\eta_1,\eta_2\in\mc H_j^w(J)$, and $\chi_1,\chi_2\in\mc H_0^\infty$, then 
	\begin{align}
	&\bk{\scr L(\xi_1,\wtd I)\scr R(\eta_1,\wtd J)\chi_1|\scr L(\xi_2,\wtd I)\scr R(\eta_2,\wtd J)\chi_2}\nonumber\\
	=&\bk{\scr L(\xi_2,\wtd I)^*\scr L(\xi_1,\wtd I)\chi_1|\scr R(\eta_1,\wtd J)^*\scr R(\eta_2,\wtd I)\chi_2}\nonumber\\
	=&\bk{\scr R(\eta_2,\wtd J)^*\scr R(\eta_1,\wtd J)\chi_1|\scr L(\xi_1,\wtd I)^*\scr L(\xi_2,\wtd I)\chi_2}.
	\end{align}
\end{pp}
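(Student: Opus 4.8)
The plan is to reduce the three-term identity to a statement about the strongly commuting pair $\scr L(\xi,\wtd I)$ and $\scr R(\eta,\wtd J)$ produced by the locality in Theorem \ref{lb13}(c), and then to invoke the abstract lemma on strongly commuting closed operators referenced as proposition \ref{lb3}. First I would observe, exactly as in the proof of proposition \ref{lb28}, that since $\xi_1,\xi_2\in\mc H_i^w(I)$ and $\eta_1,\eta_2\in\mc H_j^w(J)$, the smooth vectors $\chi_1,\chi_2$ lie in the domains of all the relevant products: $\chi_m$ is in $\Dom\big(\scr L(\xi_2,\wtd I)^*\scr L(\xi_1,\wtd I)\big)$, in $\Dom\big(\scr R(\eta_2,\wtd J)^*\scr R(\eta_1,\wtd J)\big)$, and (by proposition \ref{lb28} applied with each pair $\xi_a,\eta_b$) the vectors $\scr R(\eta_b,\wtd J)\chi_m$ are in $\Dom(\scr L(\xi_a,\wtd I))$ and $\scr L(\xi_a,\wtd I)\chi_m$ is in $\Dom(\scr R(\eta_b,\wtd J))$, with the two orders of composition agreeing. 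This legitimizes every expression appearing in the claimed equalities and lets me freely move adjoints across.

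Granting this, the computation is formal. For the first equality I would write
\begin{align*}
&\bk{\scr L(\xi_1,\wtd I)\scr R(\eta_1,\wtd J)\chi_1\,|\,\scr L(\xi_2,\wtd I)\scr R(\eta_2,\wtd J)\chi_2}\\
=&\bk{\scr R(\eta_1,\wtd J)\chi_1\,|\,\scr L(\xi_1,\wtd I)^*\scr L(\xi_2,\wtd I)\scr R(\eta_2,\wtd J)\chi_2}\\
=&\bk{\scr R(\eta_1,\wtd J)\chi_1\,|\,\scr R(\eta_2,\wtd J)\scr L(\xi_1,\wtd I)^*\scr L(\xi_2,\wtd I)\chi_2}\\
=&\bk{\scr R(\eta_2,\wtd J)^*\scr R(\eta_1,\wtd J)\chi_1\,|\,\scr L(\xi_1,\wtd I)^*\scr L(\xi_2,\wtd I)\chi_2},
\end{align*}
where the first and last steps are just moving a closed operator to the other side of the inner product (legal because the relevant vectors are in the appropriate domains), and the middle step is the strong commutativity of $\scr L(\xi_1,\wtd I)^*$ with $\scr R(\eta_2,\wtd J)$ — here I would cite Theorem \ref{lb13}(c), which gives strong commutativity of the diagram \eqref{eq48} and hence (via the definition of strong commutativity of a diagram of preclosed operators, i.e. commutation of the $R$ and $S$ extensions) strong commutativity of $\scr L(\xi_1,\wtd I)$, its adjoint, $\scr R(\eta_2,\wtd J)$, and its adjoint pairwise — together with proposition \ref{lb3}. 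This already yields the third of the three expressions. The second expression is obtained by the same maneuver carried out in the opposite order: move $\scr L(\xi_2,\wtd I)^*$ to the left first, then commute it past $\scr R(\eta_1,\wtd J)$, getting $\bk{\scr L(\xi_2,\wtd I)^*\scr L(\xi_1,\wtd I)\chi_1\,|\,\scr R(\eta_1,\wtd J)^*\scr R(\eta_2,\wtd J)\chi_2}$.

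The only genuinely nontrivial point — and hence the main obstacle — is the bookkeeping of domains that makes each "move the operator across the inner product" step rigorous, since unbounded operators do not move freely. The cleanest way to handle this is exactly the device used in the definition of strong commutativity of a diagram: form the direct sum Hilbert space, assemble the four operators $\scr L(\xi_1,\wtd I),\scr L(\xi_2,\wtd I),\scr R(\eta_1,\wtd J),\scr R(\eta_2,\wtd J)$ (and their adjoints) into two strongly commuting closed operators, and then appeal to proposition \ref{lb3}-(b) (the part already used in proposition \ref{lb28}), which guarantees both that the iterated products are defined on smooth vectors and that the adjoints behave as expected on them. Once that structural input is in place, all three expressions in the proposition are manifestly equal because each is $\bk{\,\cdot\,|\,\cdot\,}$ of the single unambiguous vector obtained by applying the commuting family to $\chi_1$, paired against the one obtained from $\chi_2$; I would simply remark this and close the proof.
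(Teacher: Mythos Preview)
Your setup is exactly right: the identity is driven by the strong commutativity of each $\scr L(\xi_a,\wtd I)$ with each $\scr R(\eta_b,\wtd J)$ from Theorem~\ref{lb13}(c), together with the smoothness on $\mc H_0$ coming from $\xi_a\in\mc H_i^w(I)$, $\eta_b\in\mc H_j^w(J)$. But the displayed chain of equalities has a real gap that Proposition~\ref{lb3} does not close. Already the first step needs $\scr L(\xi_2,\wtd I)\scr R(\eta_2,\wtd J)\chi_2\in\Dom\big(\scr L(\xi_1,\wtd I)^*\big)$, and the middle step needs $\scr L(\xi_1,\wtd I)^*\scr L(\xi_2,\wtd I)\scr R(\eta_2,\wtd J)\chi_2=\scr R(\eta_2,\wtd J)\scr L(\xi_1,\wtd I)^*\scr L(\xi_2,\wtd I)\chi_2$. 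Neither is supplied by the hypotheses: membership in $\mc H_i^w(I)$ only tells you $\scr L(\xi_a,\wtd I)|_{\mc H_0}$ is smooth, not that $\scr L(\xi_1,\wtd I)^*$ has $\mc H_{i\boxtimes j}^\infty$ in its domain. Proposition~\ref{lb3}-(b) lets you swap $A$ and $B$ on a vector in $\Dom(A^*A)\cap\Dom(B^*B)$; it says nothing about placing $A_1^*$ on a vector of the form $A_2B_2\chi$.

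The paper instead invokes Lemma~\ref{lb14}, which is designed precisely for this: it proves $\bk{A_1B_1\xi_1|A_2B_2\xi_2}=\bk{A_2^*A_1\xi_1|B_1^*B_2\xi_2}=\bk{B_2^*B_1\xi_1|A_1^*A_2\xi_2}$ directly, assuming only that each $A_i$ commutes strongly with each $B_j$, that $\xi_m\in\Dom(A_m^*A_m)\cap\Dom(B_m^*B_m)$, and that the right-hand expressions make sense. Its proof works with bounding projections throughout and passes to the limit only at the end, thereby never needing the intermediate domain claims your chain requires. With $A_a=\scr L(\xi_a,\wtd I)$, $B_b=\scr R(\eta_b,\wtd J)$ extended to the direct sum $\mc H_0\oplus\mc H_i\oplus\mc H_j\oplus\mc H_{i\boxtimes j}$, all hypotheses of Lemma~\ref{lb14} follow from Theorem~\ref{lb13}(c) and the definition of $\mc H^w$, and the proposition drops out in one line. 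So the missing ingredient is Lemma~\ref{lb14}, not Proposition~\ref{lb3}.
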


\begin{proof}
	Use lemma \ref{lb14} and the locality in theorem \ref{lb13}.
\end{proof}

\begin{co}\label{lb75}
Let $\wtd J$ be clockwise to $\wtd I$, and choose $\xi\in\mc H_i^w(I),\eta\in\mc H_j^w(J)$. Then $\xi,\eta$ are inside the domains of $\scr R(\eta,\wtd J)|_{\mc H_i},\scr L(\xi,\wtd I)|_{\mc H_j}$ respectively, and
\begin{align}\label{eq18}
\scr L(\xi,\wtd I)\eta=\scr R(\eta,\wtd J)\xi.
\end{align}
\end{co}
\begin{proof}
Apply proposition \ref{lb28} to the case that  $\chi=\Omega$.
\end{proof}

\begin{co}\label{lb5}
	Suppose that $\wtd J$ is clockwise to $\wtd I$. If $\xi_1,\xi_2\in\mc H_i^w(I)$ and $\eta_1,\eta_2\in\mc H_j^w(J)$, then 
	\begin{align}
	&\bk{\scr L(\xi_1,\wtd I)\eta_1|\scr L(\xi_2,\wtd I)\eta_2 }=\bk{\scr R(\eta_1,\wtd J)\xi_1|\scr R(\eta_2,\wtd J)\xi_2}\nonumber\\
	=&\bk{\scr L(\xi_2,\wtd I)^*\xi_1|\scr R(\eta_1,\wtd J)^*\eta_2}=\bk{\scr R(\eta_2,\wtd J)^*\eta_1|\scr L(\xi_1,\wtd I)^*\xi_2}.\label{eq6}
	\end{align}
\end{co}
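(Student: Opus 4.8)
The plan is to apply Proposition \ref{lb26} with a suitable choice of the auxiliary smooth vectors $\chi_1,\chi_2$. The natural choice is $\chi_1=\chi_2=\Omega$, which is indeed smooth since $\Omega$ is fixed by $\varrho$. With this choice the state-field correspondence gives $\scr R(\eta_1,\wtd J)\Omega=\eta_1$ and $\scr R(\eta_2,\wtd J)\Omega=\eta_2$, so the middle term $\scr L(\xi_1,\wtd I)\scr R(\eta_1,\wtd J)\Omega$ of Proposition \ref{lb26} becomes $\scr L(\xi_1,\wtd I)\eta_1$ — and this makes sense precisely because the Corollary just before (equation \eqref{eq18}) tells us $\eta_1$ is in the domain of $\scr L(\xi_1,\wtd I)|_{\mc H_j}$. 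Similarly $\scr L(\xi_1,\wtd I)\scr R(\eta_1,\wtd J)\Omega=\scr R(\eta_1,\wtd J)\xi_1$ by \eqref{eq18}, which handles the first claimed equality $\bk{\scr L(\xi_1,\wtd I)\eta_1|\scr L(\xi_2,\wtd I)\eta_2}=\bk{\scr R(\eta_1,\wtd J)\xi_1|\scr R(\eta_2,\wtd J)\xi_2}$ at once.

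For the remaining two equalities, I would read off directly from Proposition \ref{lb26} with $\chi_1=\chi_2=\Omega$: the second line of that proposition becomes $\bk{\scr L(\xi_2,\wtd I)^*\scr L(\xi_1,\wtd I)\Omega\,|\,\scr R(\eta_1,\wtd J)^*\scr R(\eta_2,\wtd J)\Omega}$. Here $\scr L(\xi_1,\wtd I)\Omega=\xi_1$ and $\scr R(\eta_2,\wtd J)\Omega=\eta_2$ by state-field correspondence; what requires a word of justification is that $\xi_1$ then lies in the domain of $\scr L(\xi_2,\wtd I)^*$ and $\eta_2$ in the domain of $\scr R(\eta_1,\wtd J)^*$, but this is exactly the content of the (a priori well-definedness of the) right-hand side of Proposition \ref{lb26}, so nothing new is needed. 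This yields $\bk{\scr L(\xi_2,\wtd I)^*\xi_1|\scr R(\eta_1,\wtd J)^*\eta_2}$, the third term of \eqref{eq6}; the fourth term $\bk{\scr R(\eta_2,\wtd J)^*\eta_1|\scr L(\xi_1,\wtd I)^*\xi_2}$ comes identically from the third line of Proposition \ref{lb26} with the same substitutions.

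There is essentially no obstacle here: the corollary is a direct specialization of Proposition \ref{lb28}/\ref{lb26} to $\chi=\Omega$, combined with the state-field correspondence $\scr L(\xi,\wtd I)\Omega=\scr R(\xi,\wtd I)\Omega=\xi$. The only point to be slightly careful about is that $\mc H_i^w(I)\subset\mc H_i^\infty$, so in particular $\xi_1,\xi_2,\eta_1,\eta_2$ are smooth and $\Omega$ is smooth, which is what lets us invoke Proposition \ref{lb26} (whose hypothesis asks for $\chi\in\mc H_0^\infty$) and guarantees all the domain memberships implicit in the displayed inner products. The proof is therefore one line: apply Proposition \ref{lb26} with $\chi_1=\chi_2=\Omega$ and simplify using \eqref{eq18} and the state-field correspondence.
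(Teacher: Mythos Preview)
Your proposal is correct and is exactly the paper's approach: the paper's proof is the single line ``In proposition \ref{lb26}, set $\chi_1=\chi_2=\Omega$.'' Your additional remarks about domain memberships and the use of \eqref{eq18} for the first equality are accurate elaborations of what is implicit in that one-line proof.
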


\begin{proof}
In proposition \ref{lb26}, set $\chi_1=\chi_2=\Omega$.
\end{proof}

\subsection{Weak categorical extensions}\label{lb33}

Let $\scr C$ be a full ($C^*$-)subcategory of $\Rep(\mc A)$ containing $\mc H_0$ and closed under taking submodules and finite (orthogonal) direct sums. 
Equip $\scr C$  with a tensor $*$-bifunctor $\boxdot$, natural unitary associativity isomorphisms $(\mc H_i\boxdot\mc H_j)\boxdot H_k\xrightarrow\simeq\mc H_i\boxdot(\mc H_j\boxdot\mc H_k)$, and unitary isomorphisms $\mc H_i\boxdot\mc H_0\xrightarrow\simeq\mc H_i,\mc H_0\boxdot\mc H_i\xrightarrow\simeq \mc H_i$, so that $\scr C$ becomes a $C^*$-tensor category with unit $\mc H_0$. We treat $\scr C$ as if it is strict by identifying $(\mc H_i\boxdot\mc H_j)\boxdot H_k$, $\mc H_i\boxdot(\mc H_j\boxdot\mc H_k)$ as $\mc H_i\boxdot\mc H_j\boxdot \mc H_k$, and $\mc H_i\boxdot \mc H_0$, $\mc H_0\boxdot\mc H_i$ as $\mc H_i$. Our main example of $\scr C$ arises from VOAs, cf. Def. \ref{lb101}.

Assume that $T:\mc H_i\rightarrow\mc H_j$ is a closable and smooth operator with domain $\mc H_i^\infty$. Unlike previous sections, we will differentiate between $T$  and its closure $\ovl T$. We define $T^\dagger$ to be $T^*|_{\mc H_j^\infty}$, \index{T@$T^\dagger$} called the \textbf{formal adjoint} of $T$. Then $T^\dagger:\mc H_j\rightarrow\mc H_i$ has  dense domain $\mc H_j^\infty$. A dense subspace $\Dom_0$ of $\mc H_i^\infty$ is called \textbf{quasi-rotation invariant} (\textbf{QRI} for short), if there exists $\delta>0$ and a dense subspace $\Dom_\delta\subset\Dom_0$ such that $\varrho(t)\Dom_\delta\subset\Dom_0$ for any $t\in(-\delta,\delta)$. For example, $\mc H_i^\infty(I)$ is QRI for any $I\in\mc J$. We say that $T$ is \textbf{localizable} if any dense and QRI subspace $\Dom_0$ of $\mc H_i^\infty$ is a core for $T$. By saying that $T:\mc H_i\rightarrow\mc H_j$ is smooth and localizable, we always assume that $T$ is closable and has domain $\mc H_i^\infty$.

Suppose that in the following diagrams, $A,B,C,D$ are smooth. (Namely, the formal adjoints of these operators should also map smooth spaces to smooth ones.) 
\begin{align}
\begin{CD}
\mc H_i^\infty @>C>> \mc H_j^\infty\\
@V A VV @V B VV\\
\mc H_k^\infty @>D>> \mc \mc H_l^\infty.
\end{CD}
\end{align}
We say that this diagram \textbf{commutes adjointly} if $DA=BC$ on $\mc H_i^\infty$, and if $CA^\dagger=B^\dagger D$ on $\mc H_k^\infty$ (equivalently, $AC^\dagger=D^\dagger B$ on $\mc H_j^\infty$).

\begin{df}\label{lb30}
Let $\fk H$ assign, to each $\wtd I\in\Jtd$ and $\mc H_i\in\Obj(\scr C)$, a set $\fk H_i(\wtd I)$  such that $\fk H_i(\wtd I_1)\subset\fk H_i(\wtd I_2)$  whenever $\wtd I_1\subset\wtd I_2$. A \textbf{weak categorical extension} $\scr E^w=(\mc A,\scr C,\boxdot,\fk H)$ of $\mc A$ associates, to any $\mc H_i,\mc H_k\in\Obj(\scr C),\wtd I\in\Jtd,\fk a\in\fk H_i(\wtd I)$, smooth and localizable operators
\begin{gather*}	
\mc L(\fk a,\wtd I):\mc H_k\rightarrow\mc H_i\boxdot\mc H_k,\\
\mc R(\fk a,\wtd I):\mc H_k\rightarrow\mc H_k\boxdot\mc H_i,
\end{gather*}
such that for any $\mc H_i,\mc H_j,\mc H_k,\mc H_{k'}\in\Obj(\scr C)$ and any $\wtd I,\wtd J,\wtd I_1,\wtd I_2\in\Jtd$ with $\wtd I_1\subset\wtd I_2$, the following conditions are satisfied:

	(a) (Isotony) If  $\fk a\in\fk H_i(\wtd I_1)$, then $\mc L(\fk a,\wtd I_1)=\mc L(\fk a,\wtd I_2)$, $\mc R(\fk a,\wtd I_1)=\mc R(\fk a,\wtd I_2)$ when acting on $\mc H_k^\infty$.
	
	(b) (Naturality) If  $F\in\Hom_{\mc A}(\mc H_k,\mc H_{k'})$,  the following diagrams commute\footnote{Note that they also commute adjointly and hence strongly since we have $(F\boxdot G)^*=F^*\boxdot G^*$ for any morphisms $F,G$ in a $C^*$-tensor category.}   for any $\fk a\in\fk H_i(\wtd I)$.
	\begin{gather}
	\begin{CD}
	\mc H_k^\infty @>F>> \mc H_{k'}^\infty\\
	@V \mc L(\fk a,\wtd I)  VV @V \mc L(\fk a,\wtd I)  VV\\
	(\mc H_i\boxdot\mc H_k)^\infty @> \id_i\boxdot F>> (\mc H_i\boxdot\mc H_{k'})^\infty
	\end{CD}\qquad\qquad
	\begin{CD}
	\mc H_k^\infty @> \mc R(\fk a,\wtd I)  >> (\mc H_k\boxdot\mc H_i)^\infty\\
	@V F VV @V F\boxdot\id_i  VV\\
	\mc H_{k'}^\infty @>\mc R(\fk a,\wtd I) >> (\mc H_{k'}\boxdot\mc H_i)^\infty
	\end{CD}.
	\end{gather}
	
	(c) (Neutrality) Under the identifications $\mc H_i=\mc H_i\boxdot\mc H_0=\mc H_0\boxdot\mc H_i$, for any $\fk a\in\fk H_i(\wtd I)$ we have
	\begin{align}
	\mc L(\fk a,\wtd I)|_{\mc H_0^\infty}=\mc R(\fk a,\wtd I)|_{\mc H_0^\infty}.	
	\end{align}
	
	(d) (Reeh-Schlieder property) Under the identification $\mc H_i=\mc H_i\boxdot\mc H_0$, the set $\mc L(\fk H_i(\wtd I),\wtd I)\Omega$ spans a dense subspace of $\mc H_i$.
	
	(e) (Density of fusion products) The set $\mc L(\fk H_i(\wtd I),\wtd I)\mc H_k^\infty$ spans a dense subspace of $\mc H_i\boxdot\mc H_k$, and $\mc R(\fk H_i(\wtd I),\wtd I)\mc H_k^\infty$ spans a dense subspace of $\mc H_k\boxdot\mc H_i$.
	
(f) (Intertwining property) 	For any $\fk a\in\fk H_i(\wtd I)$ and $x\in\mc A^\infty(I')$, the following diagrams commute adjointly:
\begin{gather}
\begin{CD}
\mc H_k^\infty @>~\pi_{k,I'}(x)~>> \mc H_k^\infty\\
@V \mc L(\fk a,\wtd I)  VV @VV \mc L(\fk a,\wtd I) V\\
(\mc H_i\boxdot\mc H_k)^\infty @> \pi_{i\boxdot k,I'}(x)>> (\mc H_i\boxdot\mc H_k)^\infty
\end{CD}\qquad\qquad\qquad
\begin{CD}
\mc H_k^\infty @> \mc R(\fk a,\wtd I)  >> (\mc H_k\boxdot\mc H_i)^\infty\\
@V \pi_{k,I'}(x) VV @VV \pi_{k\boxdot i,I'}(x) V\\
\mc H_k^\infty @>\mc R(\fk a,\wtd I) >> (\mc H_k\boxdot\mc H_i)^\infty
\end{CD}
\end{gather}

	(g) (Weak locality) Assume that $\wtd I$ is anticlockwise to $\wtd J$. Then for  any $\fk a\in\fk H_i(\wtd I),\fk b\in\fk H_j(\wtd J)$, the following diagram   commutes adjointly.
	\begin{align}
	\begin{CD}
	\mc H_k^\infty @> \quad \mc R(\fk b,\wtd J)\quad   >> (\mc H_k\boxdot\mc H_j)^\infty\\
	@V \mc L(\fk a,\wtd I)   V  V @V \mc L(\fk a,\wtd I) VV\\
	(\mc H_i\boxdot\mc H_k)^\infty @> \quad \mc R(\fk b,\wtd J) \quad  >> (\mc H_i\boxdot\mc H_k\boxdot\mc H_j)^\infty
	\end{CD}
	\end{align}
	
	(h) (Braiding) There is a unitary linear map $\ss_{i,k}:\mc H_i\boxdot\mc H_k\rightarrow\mc H_k\boxdot \mc H_i$ (the \textbf{braid operator}) such that for any  $\fk a\in\fk H_i(\wtd I)$ and $\eta\in\mc H_k^\infty$,
	\begin{align}
	\ss_{i,k} \mc L(\fk a,\wtd I)\eta=\mc R(\fk a,\wtd I)\eta.
	\end{align}
	
	(i) (Rotation covariance) 	For any $\fk a\in\fk H_i(\wtd I)$ and $g=\varrho(t)$ where $t\in\mbb R$, there exists an element $g\fk a$ inside $\fk H_i(g\wtd I)$, such that for any $\mc H_l\in\Obj(\scr C)$ and $\eta\in\mc H_l^\infty$, the following two equivalent equations are true.
	\begin{gather}
	\mc L(g\fk a,g\wtd I)\eta=g\mc L(\fk a,\wtd I)g^{-1}\eta;\label{eq27}\\
	\mc R(g\fk a,g\wtd I)\eta=g\mc R(\fk a,\wtd I)g^{-1}\eta.\label{eq28}
	\end{gather}
If $g\mc H_k^\infty\subset\mc H_k^\infty$ for each $\mc H_k\in\Obj(\scr C)$ and $g\in\UPSU$, and if the statements in (i) are true for any $g\in\UPSU$, we say that $\scr E^w$ is \textbf{M\"obius covariant}.
\end{df}

The axiom of weak locality explains the adjective ``weak" in the notion of weak categorical extensions. However, as we will see Cor. \ref{lb35}, a weak categorical extension $\scr E^w$ actually satisfies strong locality. Moreover, if we let $\scr E=(\mc A,\Rep(\mc A),\boxtimes,\mc H)$ be a closed and vector-labeled categorical extension, then Thm. \ref{lb34} will imply that $\scr E^w$ can be embedded into the ``unbounded version of $\scr E$" (i.e., $\scr E$ together with the closed operators $\scr L,\scr R$ as described in Sec. \ref{lb11}) so that the latter can be viewed as the maximal extension of $\scr E^w$.

\begin{rem}
In Def. \ref{lb30}, note that for any $\fk a\in\fk H_i(\wtd I)$, $\mc L(\fk a,\wtd I)$ and $\mc R(\fk a,\wtd I)$ depend only on the vector $\xi=\mc L(\fk a,\wtd I)\Omega=\mc R(\fk a,\wtd I)\Omega$. Indeed, choose any $\wtd J$ clockwise to $\wtd I$. Then by weak locality, we know that the action of $\mc L(\fk a,\wtd I)$ on $\mc R(\fk H_j(J),\wtd J)\Omega$  is determined by $\xi$. By Reeh-Schlieder property, $\mc R(\fk H_j(J),\wtd J)\Omega$ spans a dense linear subspace of $\mc H_j$. By rotation covariance, this dense subspace is QRI. Thus it is a core for $\mc L(\fk a,\wtd I)$. Therefore the action of $\mc L(\fk a,\wtd I)$ (and hence of $\mc R(\fk a,\wtd I)$) on $\mc H_j^\infty$ is completely determined by $\fk a$. Moreover, if $\fk a_1,\fk a_2\in\fk H_i(\wtd I)$ and $c_1,c_2\in\mbb C$, we may add a formal linear combination $c_1\fk a_1+c_2\fk a_2$ to the set $\fk H_i(\wtd I)$, define
\begin{align*}
\mc L(c_1\fk a_1+c_2\fk a_2,\wtd I)=c_1\mc L(\fk a_1,\wtd I)+c_2\mc L(\fk a_2,\wtd I),
\end{align*}
and define $\mc R$ similarly. Then the axioms of a weak categorical extension are still satisfied. We may therefore assume that $\scr E^w$ is \textbf{vector-labeled}, which means that each $\fk H_i(\wtd I)$ is a (dense) subspace of $\mc H_i$, 
and that for any $\fk a\in\fk H_i(\wtd I)$, we have $\mc L(\fk a,\wtd I)\Omega=\fk a$.\footnote{In \cite{Gui21a}, we only assume $\fk H_i(\wtd I)$ to be a subset of $\mc H_i$.} We will write elements in $\fk H_i(\wtd I)$ using Greek letters $\xi,\eta,\dots$ in the future.
\end{rem}

In the above discussion, we have actually showed:

\begin{pp}\label{lb23}
Let $\scr E^w$ be vector-labeled. Assume that $\wtd J$ is clockwise to $\wtd I$. Then for any $\mc H_i,\mc H_j\in\Obj(\RepA)$ and any $\xi\in\fk H_i(\wtd I)$, the vector space $\fk H_j(\wtd J)$ is a core for $\mc L(\xi,\wtd I)|_{\mc H_j^\infty}$ and $\mc R(\xi,\wtd I)|_{\mc H_j^\infty}$.
\end{pp}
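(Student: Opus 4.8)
The plan is to obtain this proposition as a direct consequence of the localizability of $\mc L(\xi,\wtd I)$ and $\mc R(\xi,\wtd I)$, combined with the Reeh-Schlieder property, isotony, and rotation covariance recorded in Definition \ref{lb30}. Fix $\mc H_i,\mc H_j$ and $\xi\in\fk H_i(\wtd I)$ as in the statement. Since $\scr E^w$ is vector-labeled, every $\eta\in\fk H_j(\wtd J)$ equals $\mc L(\eta,\wtd J)\Omega$; as $\mc L(\eta,\wtd J)$ is smooth and $\Omega\in\mc H_0^\infty$, this shows $\fk H_j(\wtd J)$ is a linear subspace of $\mc H_j^\infty$, hence of the domain of both $\mc L(\xi,\wtd I)|_{\mc H_j^\infty}$ and $\mc R(\xi,\wtd I)|_{\mc H_j^\infty}$. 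By the very definition of localizability, it then suffices to prove that $\fk H_j(\wtd J)$ is a dense, quasi-rotation invariant subspace of $\mc H_j^\infty$; since both $\mc L(\xi,\wtd I)$ and $\mc R(\xi,\wtd I)$ are localizable, the core property for each follows simultaneously.

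Density is immediate: in the vector-labeled case the Reeh-Schlieder property (Definition \ref{lb30}(d)) reads $\Span\,\mc L(\fk H_j(\wtd J),\wtd J)\Omega=\Span\,\fk H_j(\wtd J)=\fk H_j(\wtd J)$, which is dense in $\mc H_j$. The substantive step is quasi-rotation invariance. First fix an arg-valued interval $\wtd J_0$ with $\ovl{J_0}\subset J$ and $\arg_{J_0}=\arg_J|_{J_0}$, so that $\wtd J_0\subset\wtd J$. Because $\ovl{J_0}$ is compact inside the open interval $J$ and $\varrho(t)\to\mathrm{id}$ as $t\to 0$, there is $\delta>0$ such that $\varrho(t)\wtd J_0\subset\wtd J$ for all $t\in(-\delta,\delta)$. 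Now put $\Dom_\delta:=\fk H_j(\wtd J_0)$; it is dense in $\mc H_j$ by Reeh-Schlieder applied to $\wtd J_0$, and it is contained in $\fk H_j(\wtd J)$ by isotony (Definition \ref{lb30}(a)). Finally, rotation covariance (Definition \ref{lb30}(i)), read in the vector-labeled form --- for $g=\varrho(t)$ the element $g\fk a$ produced by the axiom is literally the vector $\varrho(t)\eta$, since $\varrho(t)\Omega=\Omega$ and $\mc L(\eta,\wtd J_0)\Omega=\eta$ --- gives $\varrho(t)\eta\in\fk H_j(\varrho(t)\wtd J_0)$ for every $\eta\in\fk H_j(\wtd J_0)$ and $|t|<\delta$, whence $\varrho(t)\fk H_j(\wtd J_0)\subset\fk H_j(\varrho(t)\wtd J_0)\subset\fk H_j(\wtd J)$, the last inclusion again by isotony. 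Thus $\fk H_j(\wtd J)$ is quasi-rotation invariant with witness $\Dom_\delta$, and localizability of $\mc L(\xi,\wtd I)$ and $\mc R(\xi,\wtd I)$ completes the argument.

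The only point requiring any real care is the claim that $\varrho(t)\wtd J_0\subset\wtd J$ as arg-valued (not merely ordinary) intervals for small $t$: one must check that the argument function of $\varrho(t)\wtd J_0$ coincides with $\arg_J$ on $\varrho(t)J_0$, which holds because the rotation path $s\mapsto\varrho(s)\wtd J_0$, $s\in[0,t]$, stays inside $\wtd J$ throughout. Everything else is pure bookkeeping with the axioms of Definition \ref{lb30}. (If one prefers, one may establish the core property first for $\mc L(\xi,\wtd I)|_{\mc H_j^\infty}$ and then transfer it to $\mc R(\xi,\wtd I)|_{\mc H_j^\infty}$ through the bounded unitary braid operator of Definition \ref{lb30}(h), but treating the two operators in parallel via localizability is cleaner.)
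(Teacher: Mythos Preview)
Your proof is correct and follows essentially the same approach as the paper: the paper's argument (given in the paragraph immediately preceding the proposition) simply notes that $\fk H_j(\wtd J)=\mc R(\fk H_j(\wtd J),\wtd J)\Omega$ is dense by Reeh--Schlieder and QRI by rotation covariance, hence a core by localizability. Your version expands the QRI step by explicitly introducing the sub-interval $\wtd J_0\sjs\wtd J$ and verifying the arg-valued containment, which the paper leaves implicit; this is a welcome elaboration but not a different route.
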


\begin{co}\label{lb22}
Let $\scr E^w$ be vector-labeled. Assume that $\wtd J$ is clockwise to $\wtd I$, and choose $\mc H_i,\mc H_j\in\Obj(\RepA)$. Then  $\mc L(\fk H_i(\wtd I),\wtd I)\fk H_j(\wtd J)$ (resp. $\mc R(\fk H_i(\wtd I),\wtd I)\fk H_j(\wtd J)$) spans a dense subspace of $\mc H_i\boxdot\mc H_j$ (resp. $\mc H_j\boxdot\mc H_i$).
\end{co}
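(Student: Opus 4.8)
The plan is to deduce the statement from the density of fusion products axiom (e) in Definition \ref{lb30} together with the core property established in Proposition \ref{lb23}. Fix $\mc H_i,\mc H_j\in\Obj(\RepA)$ and the clockwise pair $\wtd J,\wtd I$, and let $\mc V\subset\mc H_i\boxdot\mc H_j$ be the closure of $\mathrm{span}\{\mc L(\xi,\wtd I)\eta:\xi\in\fk H_i(\wtd I),\ \eta\in\fk H_j(\wtd J)\}$. The goal is to show $\mc V=\mc H_i\boxdot\mc H_j$; the analogous statement for $\mc R$ will then follow by applying the braid operator.

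First I would fix $\xi\in\fk H_i(\wtd I)$. Since $\scr E^w$ is a weak categorical extension, $\mc L(\xi,\wtd I)$ acting on $\mc H_j$ is a smooth and localizable operator, hence a preclosed operator with domain exactly $\mc H_j^\infty$. By Proposition \ref{lb23}, $\fk H_j(\wtd J)$ is a core for $\mc L(\xi,\wtd I)|_{\mc H_j^\infty}$, so for every $\eta\in\mc H_j^\infty$ there exist $\eta_n\in\fk H_j(\wtd J)$ with $\eta_n\to\eta$ in $\mc H_j$ and $\mc L(\xi,\wtd I)\eta_n\to\mc L(\xi,\wtd I)\eta$ in $\mc H_i\boxdot\mc H_j$ (graph-norm approximation by a core, together with the fact that the closure of $\mc L(\xi,\wtd I)$ agrees with $\mc L(\xi,\wtd I)$ on its original domain $\mc H_j^\infty$; separability of the $\mc A$-modules lets us work with sequences). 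Consequently $\mc L(\xi,\wtd I)\eta\in\mc V$ for every $\eta\in\mc H_j^\infty$. Letting $\xi$ range over $\fk H_i(\wtd I)$ and taking linear combinations, and using that $\mc V$ is a closed subspace, I obtain $\mathrm{span}\,\mc L(\fk H_i(\wtd I),\wtd I)\mc H_j^\infty\subset\mc V$.

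By axiom (e) of Definition \ref{lb30}, $\mc L(\fk H_i(\wtd I),\wtd I)\mc H_j^\infty$ spans a dense subspace of $\mc H_i\boxdot\mc H_j$; combined with the previous inclusion this forces $\mc V=\mc H_i\boxdot\mc H_j$, which is the $\mc L$-part of the claim. For the $\mc R$-part I would invoke the braiding axiom (h): the braid operator $\ss_{i,j}:\mc H_i\boxdot\mc H_j\to\mc H_j\boxdot\mc H_i$ is unitary and satisfies $\ss_{i,j}\mc L(\xi,\wtd I)\eta=\mc R(\xi,\wtd I)\eta$ for $\xi\in\fk H_i(\wtd I)$, $\eta\in\mc H_j^\infty$, so it carries the dense span of $\mc L(\fk H_i(\wtd I),\wtd I)\fk H_j(\wtd J)$ onto the span of $\mc R(\fk H_i(\wtd I),\wtd I)\fk H_j(\wtd J)$, which is therefore dense in $\mc H_j\boxdot\mc H_i$. (Alternatively, one can rerun the two previous paragraphs with $\mc R$ in place of $\mc L$, using the $\mc R$ halves of Proposition \ref{lb23} and axiom (e).)

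There is no real difficulty here; the only subtlety is that $\mc L(\xi,\wtd I)$ is in general unbounded, so one cannot simply invoke ``a bounded operator maps a dense set onto a set dense in its image'' — the role of Proposition \ref{lb23} is precisely to supply the graph-norm density of $\fk H_j(\wtd J)$ that substitutes for this. Everything else is routine bookkeeping with closed linear subspaces and the unitary $\ss_{i,j}$.
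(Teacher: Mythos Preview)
Your proof is correct and follows exactly the paper's approach: the paper's one-line proof (``Use the above proposition and the density of fusion products'') refers to precisely the combination of Proposition \ref{lb23} and axiom (e) that you spell out in detail. Your handling of the $\mc R$-case via the braid operator $\ss_{i,j}$ is a perfectly good alternative to rerunning the argument.
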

\begin{proof}
Use the above proposition and the density of fusion products.
\end{proof}

Let us write the closure of $\mc L(\xi,\wtd I)|_{\mc H_j^\infty}$ as $\ovl{\mc L(\xi,\wtd I)}|_{\mc H_j}=\ovl{\mc L(\xi,\wtd I)|_{\mc H_j^\infty}}$.  We close this section with the following important observation.

\begin{pp}\label{lb27}
Let $\scr E^w$ be vector-labeled. Then $\fk H_i(\wtd I)\subset\mc H_i^w(I)$. Moreover, for any $\xi\in\fk H_i(\wtd I)$, we have
\begin{align}
\ovl{\mc L(\xi,\wtd I)}|_{\mc H_0}=\ovl{\mc R(\xi,\wtd I)}|_{\mc H_0}=\scr L(\xi,\wtd I)|_{\mc H_0}=\scr R(\xi,\wtd I)|_{\mc H_0}\label{eq25}
\end{align}
\end{pp}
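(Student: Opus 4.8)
The plan is as follows. Since $\scr E^w$ is vector-labeled, $\xi$ belongs to $\fk H_i(\wtd I)\subset\mc H_i(I)$; as $\mc H_i(I)\subset\mc H_i^\pr(I)$, we already get $\xi\in\mc H_i^\pr(I)$, and by the Convention following Theorem \ref{lb12} the closed operator $\scr L(\xi,\wtd I)|_{\mc H_0}=\scr R(\xi,\wtd I)|_{\mc H_0}$ is nothing but the bounded operator $L(\xi,\wtd I)|_{\mc H_0}=R(\xi,\wtd I)|_{\mc H_0}$ coming from the categorical extension $\scr E$. So the only real content of the proposition is that this bounded operator is smooth, and I would establish this by identifying it with $\ovl{\mc L(\xi,\wtd I)}|_{\mc H_0}$, which is smooth because the operators $\mc L(\fk a,\wtd I)$ of a weak categorical extension are smooth by definition (and the closure of a smooth operator is again smooth). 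Once the equality $\scr L(\xi,\wtd I)|_{\mc H_0}=\ovl{\mc L(\xi,\wtd I)}|_{\mc H_0}$ is in hand, the remaining identities in \eqref{eq25} come for free: $\ovl{\mc L(\xi,\wtd I)}|_{\mc H_0}=\ovl{\mc R(\xi,\wtd I)}|_{\mc H_0}$ by the neutrality axiom (c) of Definition \ref{lb30}, and $\scr L(\xi,\wtd I)|_{\mc H_0}=\scr R(\xi,\wtd I)|_{\mc H_0}$ was already recorded in Section \ref{lb11}.

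To carry out the identification, I would first check that the operators $L(\xi,\wtd I)|_{\mc H_0}$ and $\mc L(\xi,\wtd I)|_{\mc H_0}$ agree on the dense subspace $\mc H_0^\infty(I')=\mc A^\infty(I')\Omega$ of $\mc H_0^\infty$. For $x\in\mc A^\infty(I')$, the intertwining property (axiom (f) of Definition \ref{lb30}) applied with $\mc H_k=\mc H_0$ and evaluated at $\Omega\in\mc H_0^\infty$ gives $\mc L(\xi,\wtd I)(x\Omega)=\pi_{i,I'}(x)\mc L(\xi,\wtd I)\Omega=\pi_{i,I'}(x)\xi$, using $\mc L(\xi,\wtd I)\Omega=\xi$ (vector-labeled); and since $L(\xi,\wtd I)$ intertwines the action of $\mc A(I')$ and sends $\Omega$ to $\xi$ by the state-field correspondence, also $L(\xi,\wtd I)(x\Omega)=\pi_{i,I'}(x)\xi$. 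Next I would invoke localizability: $\mc H_0^\infty(I')$ is a dense QRI subspace of $\mc H_0^\infty$ — dense by Proposition \ref{lb8} (or by Corollary \ref{lb24} together with Reeh--Schlieder), QRI as recorded just before Definition \ref{lb30}, and contained in $\mc H_0^\infty$ because $\mc H_0^\infty(I')=\mc A^\infty(I')\Omega$ and each $x\in\mc A^\infty(I')$ acts smoothly on $\mc H_0$, hence preserves $\mc H_0^\infty\ni\Omega$. Thus $\mc H_0^\infty(I')$ is a core for the smooth localizable operator $\mc L(\xi,\wtd I)|_{\mc H_0}$, and therefore
\begin{align*}
\ovl{\mc L(\xi,\wtd I)}|_{\mc H_0}&=\ovl{\mc L(\xi,\wtd I)|_{\mc H_0^\infty(I')}}=\ovl{L(\xi,\wtd I)|_{\mc H_0^\infty(I')}}\\
&=L(\xi,\wtd I)|_{\mc H_0}=\scr L(\xi,\wtd I)|_{\mc H_0},
\end{align*}
the third equality because a bounded operator is the closure of its restriction to any dense subspace. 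Being the closure of a smooth operator, $\scr L(\xi,\wtd I)|_{\mc H_0}$ is then smooth, i.e. $\xi\in\mc H_i^w(I)$.

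I do not anticipate a genuine obstacle; the one place demanding care is the application of localizability, where one must be sure that $\mc H_0^\infty(I')$ is really a subspace of $\mc H_0^\infty$ (and not merely of $\mc H_0(I')$) so that the definition of ``localizable'' applies — this is precisely why one works with $\mc A^\infty(I')$ rather than with all of $\mc A(I')$. Everything else is bookkeeping with the axioms of Definition \ref{lb30} and the identifications $\mc H_i\boxdot\mc H_0=\mc H_0\boxdot\mc H_i=\mc H_i$.
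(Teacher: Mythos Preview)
Your argument rests on the reading $\fk H_i(\wtd I)\subset\mc H_i(I)$ from the definition of ``vector-labeled'' for $\scr E^w$, which lets you conclude that $\scr L(\xi,\wtd I)|_{\mc H_0}$ is the \emph{bounded} operator $L(\xi,\wtd I)|_{\mc H_0}$. That reading is almost certainly a typo: the intended definition (consistent with the footnote, the applications in Section~2, and the paper's own proof) is $\fk H_i(\wtd I)\subset\mc H_i$. Indeed, the paper's proof opens with ``we haven't showed that $\scr L(\xi,\wtd I)|_{\mc H_0}$ is preclosed'', which would be vacuous if $\xi\in\mc H_i(I)$; and in the weak vertex categorical local extension of Section~2.5, the vectors $\mc L(\fk a,\wtd I)\Omega$ arising from smeared intertwining operators are generally not in $\mc H_i(I)$.

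Without the boundedness shortcut, your proof has a gap. You correctly show that $\mc L(\xi,\wtd I)$ and $\scr L(\xi,\wtd I)$ agree on $\mc A^\infty(I')\Omega$, and that localizability makes this a core for $\ovl{\mc L(\xi,\wtd I)}|_{\mc H_0}$. But $\scr L(\xi,\wtd I)|_{\mc H_0}$ is, by convention, the closure of the operator with domain $\mc A(I')\Omega$, so you still need to know that $\scr L(\xi,\wtd I)|_{\mc A(I')\Omega}\subset\ovl{\mc L(\xi,\wtd I)}|_{\mc H_0}$ (which in particular gives preclosedness). The paper fills this by observing that the intertwining axiom, together with strong density of $\mc A^\infty(I')$ in $\mc A(I')$, forces $\ovl{\mc L(\xi,\wtd I)}$ to commute strongly with all of $\mc A(I')$, whence $\ovl{\mc L(\xi,\wtd I)}(y\Omega)=y\xi$ for every $y\in\mc A(I')$. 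One could alternatively argue directly: if $x_n\in\mc A^\infty(I')$ converges strongly to $y\in\mc A(I')$ (Corollary~\ref{lb24}), then $(x_n\Omega,x_n\xi)\to(y\Omega,y\xi)$ in the graph, so $(y\Omega,y\xi)$ lies in the closure. Either way, this step is missing from your write-up.
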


\begin{proof}
Choose $\xi\in\fk H_i(\wtd I)$. We temporarily let the domain of $\scr L(\xi,\wtd I)|_{\mc H_0}$ be $\mc A(I')\Omega$ since we haven't showed that $\scr L(\xi,\wtd I)|_{\mc H_0}$ is closable. By the intertwining property of $\scr E^w$, $\ovl{\mc L(\xi,\wtd I)}|_{\mc H_0}$ commutes strongly with the actions of $\mc A(I')$ (since $\mc A(I')$ is generated by $\mc A^\infty(I')$). Therefore, for any $y\in\mc A(I')$, we have $\ovl{\mc L(\xi,\wtd I)}y\Omega=y\ovl{\mc L(\xi,\wtd I)}\Omega=y\xi=R(y\Omega,\wtd I')\xi=\scr L(\xi,\wtd I)y\Omega$. This proves that $\scr L(\xi,\wtd I)|_{\mc H_0}\subset\ovl{\mc L(\xi,\wtd I)}|_{\mc H_0}$ and hence that $\scr L(\xi,\wtd I)|_{\mc H_0}$ is closable. We now assume $\scr L(\xi,\wtd I)$ to be closed by taking the closure. Since $\mc A^\infty(I')\Omega=\mc H_0^\infty(I)$ is dense and QRI, $\mc A^\infty(I')\Omega$ is a core for $\ovl{\mc L(\xi,\wtd I)}|_{\mc H_0}$. Therefore $\scr L(\xi,\wtd I)|_{\mc H_0}=\ovl{\mc L(\xi,\wtd I)}|_{\mc H_0}$. In particular, $\scr L(\xi,\wtd I)|_{\mc H_0}$ is smooth. Thus $\xi\in\mc H_i^w(I)$. A similar argument shows $\ovl{\mc R(\xi,\wtd I)}|_{\mc H_0}=\scr R(\xi,\wtd I)|_{\mc H_0}$.
\end{proof}

\begin{rem}
In section \ref{lb29} we defined vector-labeled and closed categorical extensions of $\mc A$. In general, one can define a (strong) categorical extension $\scr E=(\mc A,\scr C,\boxdot,\fk H)$  without assuming $\scr E$ to be vector-labeled or closed (see \cite{Gui21a} section 3.1). The definition is the very similar to definition \ref{lb30}, but with the following differences: (1) All the $\mc L$ and $\mc R$ operators are bounded operators and are written as $L$ and $R$ instead. (2) The smooth vector spaces $\mc H_i^\infty,\mc H_j^\infty$, etc. are replaced by the full Hilbert spaces $\mc H_i,\mc H_j$, etc. (3) Rotation covariance is not required. 

It was shown in \cite{Gui21a} section 3.5 that $\scr E$ can always be extended to a unique vector-labeled and closed categorical extension. By saying that $\scr E=(\mc A,\scr C,\boxdot,\fk H)$ is vector-labeled and closed, we mean that each $\fk H_i(\wtd I)$ equals the vector space $\mc H_i(I)$, and  that $L(\fk a,\wtd I)\Omega=\fk a$ for any $\fk a\in\mc H_i(I)$. 
\end{rem}

\subsection{From weak to strong categorical extensions}\label{lb70}

Let $\scr C$ be as in the last section.  Let $\wht {\scr C}$ be the $C^*$-category of all $\mc A$-modules $\mc H_i$ such that $\mc H_i$ is  unitarily equivalent to some object in $\scr C$. 

\begin{thm}\label{lb34}
Let $\scr E^w=(\mc A,\scr C,\boxdot,\fk H)$ be a vector-labeled weak categorical extension of $\mc A$ with braid operator $\ss$. Then $\wht{\scr C}$ is  closed under Connes fusion product $\boxtimes$, and there is a unique unitary natural \footnote{Naturality means that $\Psi_{i',j'}(F\boxdot G)=(F\boxtimes G)\Psi_{i,j}$ when $F\in\Hom_{\mc A}(\mc H_i,\mc H_{i'})$ and $G\in\Hom_{\mc A}(\mc H_j,\mc H_{j'})$.} isomorphism
\begin{align}
\Psi=\Psi_{i,j}:\mc H_i\boxdot\mc H_j\rightarrow\mc H_i\boxtimes\mc H_j\qquad(\forall \mc H_i,\mc H_j\in\Obj(\scr C)),
\end{align}
such that for any $\wtd I\in\Jtd,\mc H_i,\mc H_j\in\Obj(\scr C)$,
\begin{gather}
\Psi_{i,j}\ovl{\mc L(\xi,\wtd I)}|_{\mc H_j}=\scr L(\xi,\wtd I)|_{\mc H_j},\label{eq20}\\
\Psi_{j,i}\ovl{\mc R(\xi,\wtd I)}|_{\mc H_j}=\scr R(\xi,\wtd I)|_{\mc H_j}.\label{eq21}
\end{gather}

Moreover, $\ss$ is natural and satisfies the Hexagon axioms; $\Psi$ induces an equivalence of braided $C^*$-tensor categories $(\wht{\scr C},\boxtimes,\mathbb B)\simeq(\scr C,\boxdot,\ss)$, which means that for any $\mc H_i,\mc H_j,\mc H_k\in\Obj(\scr C)$, the following conditions are satisfied:\\
	(a) The following diagram commutes.
	\begin{gather}
	\begin{CD}
	\mc H_i\boxdot\mc H_k\boxdot\mc H_j @>~~\quad \Psi_{i\boxdot k,j} \quad~~>> (\mc H_i\boxdot \mc H_k)\boxtimes\mc H_j\\
	@V \Psi_{i,k\boxdot j} VV @V \Psi_{i,k}\boxtimes\id_j VV\\
	\mc H_i\boxtimes(\mc H_k\boxdot\mc H_j) @> \quad \id_i\boxtimes\Psi_{k,j} \quad>> \mc H_i\boxtimes\mc H_k\boxtimes\mc H_j
	\end{CD}\label{eq24}
	\end{gather}
	(b) The following two maps equal $\id_i:\mc H_i\rightarrow\mc H_i$.
	\begin{gather}
	\mc H_i\simeq \mc H_0\boxdot\mc H_i\xrightarrow{\Psi_{0,i}} \mc H_0\boxtimes\mc H_i\simeq\mc H_i,\\
	\mc H_i\simeq \mc H_i\boxdot\mc H_0\xrightarrow{\Psi_{i,0}} \mc H_i\boxtimes\mc H_0\simeq\mc H_i. 
	\end{gather}
	(c) The following diagram commutes.
	\begin{gather}
	\begin{CD}
	\mc H_i\boxdot\mc H_j @> \quad\ss_{i,j} \quad>> \mc H_j\boxdot\mc H_i\\
	@V \Psi_{i,j} VV @V \Psi_{j,i} VV\\
	\mc H_i\boxtimes\mc H_j @>\quad\mbb B_{i,j}\quad>> \mc H_j\boxtimes\mc H_i
	\end{CD}
	\end{gather}
\end{thm}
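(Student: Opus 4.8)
The plan is to construct $\Psi$ from the left field operators and then to reduce everything — isometry, surjectivity, covariance, and all of the structural diagrams — to the already-established properties of the fixed closed, vector-labeled categorical extension $\scr E$ of $\mc A$.

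\textbf{Construction and isometry.} Since $\scr E^w$ is vector-labeled, each $\xi\in\fk H_i(\wtd I)$ lies in $\mc H_i(I)$, so $\scr L(\xi,\wtd I)=L(\xi,\wtd I)$ is the bounded operator of $\scr E$. Fixing $\wtd I$ and a $\wtd J$ clockwise to it, I would define $\Psi_{i,j}$ on the span of the vectors $\mc L(\xi,\wtd I)\eta$ with $\xi\in\fk H_i(\wtd I),\eta\in\mc H_j^\infty$ — dense in $\mc H_i\boxdot\mc H_j$ by the density-of-fusion-products axiom of $\scr E^w$ — by $\mc L(\xi,\wtd I)\eta\mapsto L(\xi,\wtd I)\eta$. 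To see this is well defined and isometric I would compute $\langle\mc L(\xi_a,\wtd I)\eta_a\mid\mc L(\xi_b,\wtd I)\eta_b\rangle$: writing $\eta_b=\mc R(\eta_b,\wtd J)\Omega$ and moving $\mc R(\eta_b,\wtd J)$ across $\mc L(\xi_a,\wtd I)$ and then $\mc L(\xi_a,\wtd I)^\dagger$ across $\mc R(\eta_b,\wtd J)$ via weak locality, neutrality and Proposition~\ref{lb27} (which identifies $\mc L(\xi_a,\wtd I)|_{\mc H_0}$ with $\scr L(\xi_a,\wtd I)|_{\mc H_0}=L(\xi_a,\wtd I)|_{\mc H_0}$), the inner product collapses to $\langle\eta_a\mid\pi_{j,I}(x_{ab})\eta_b\rangle$ with $x_{ab}:=L(\xi_a,\wtd I)^*L(\xi_b,\wtd I)|_{\mc H_0}\in\mc A(I)$, independent of $j$. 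The same manipulation on the $\scr E$-side, now using the fusion relations of Proposition~\ref{lb10}, yields $\langle L(\xi_a,\wtd I)\eta_a\mid L(\xi_b,\wtd I)\eta_b\rangle=\langle\eta_a\mid\pi_{j,I}(x_{ab})\eta_b\rangle$ with the \emph{same} $x_{ab}$. Hence $\Psi_{i,j}$ extends to an isometry $\mc H_i\boxdot\mc H_j\to\mc H_i\boxtimes\mc H_j$; in particular $\ovl{\mc L(\xi,\wtd I)}|_{\mc H_j}$ is bounded, $\Psi_{i,j}\ovl{\mc L(\xi,\wtd I)}|_{\mc H_j}=L(\xi,\wtd I)|_{\mc H_j}$ (this is \eqref{eq20}), and \eqref{eq21} follows by applying the braid operators and the braiding axiom of $\scr E^w$.

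\textbf{Surjectivity, covariance, and closure of $\wht{\scr C}$.} If $v\perp\mathrm{Ran}\,\Psi_{i,j}$, then $L(\xi,\wtd I)^*v=0$ for all $\xi\in\fk H_i(\wtd I)$; since $L(\xi,\wtd I)\mc H_j\supseteq L(\xi,\wtd I)\mc H_j(J)=\{R(\eta',\wtd J)\xi:\eta'\in\mc H_j(J)\}$ by the identity $L(\xi,\wtd I)\eta'=R(\eta',\wtd J)\xi$, and $\fk H_i(\wtd I)$ is dense in $\mc H_i$, the density of fusion products of $\scr E$ forces $v=0$, so $\Psi_{i,j}$ is unitary. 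It intertwines $\mc A(I')$ by the intertwining property of $\scr E^w$ together with Proposition~\ref{lb18}; running the construction over all $\wtd I$ and comparing via isotony and the rotation covariance of both $\scr E^w$ and $\scr E$ shows the resulting maps coincide, so $\Psi_{i,j}$ intertwines every $\mc A(K)$ and is therefore a morphism in $\RepA$. Thus $\mc H_i\boxtimes\mc H_j\cong\mc H_i\boxdot\mc H_j\in\Obj(\scr C)$, i.e.\ $\wht{\scr C}$ is closed under $\boxtimes$. Functoriality of $\Psi$ follows from the functoriality axioms of $\scr E^w$ and~\eqref{eq14} evaluated on the dense set of $\mc L(\xi,\wtd I)\eta$'s, and uniqueness of $\Psi$ is immediate since any candidate must agree with it there. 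Diagram~(b) holds because for $\xi=x\Omega\in\fk H_0(\wtd I)$ both $\mc L(\xi,\wtd I)$ and $L(\xi,\wtd I)$ act as $\pi_{\cdot,I}(x)$; diagram~(c) holds because $\Psi_{j,i}\ss_{i,j}\mc L(\xi,\wtd I)\eta=\Psi_{j,i}\mc R(\xi,\wtd I)\eta=\scr R(\xi,\wtd I)\eta=\mbb B_{i,j}\scr L(\xi,\wtd I)\eta=\mbb B_{i,j}\Psi_{i,j}\mc L(\xi,\wtd I)\eta$ on the dense set, using the braiding axiom of $\scr E^w$, \eqref{eq21}, and Theorem~\ref{lb13}(d). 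Granting diagram~(a), the functoriality and Hexagon axioms for $\ss$ transport from those of $\mbb B$.

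\textbf{The associativity diagram~\eqref{eq24} --- the main obstacle.} This is where I expect the real work to lie. I would evaluate both routes of~\eqref{eq24} on the dense subspace spanned by $\mc L(\xi,\wtd I)\mc L(\zeta,\wtd I)\eta$ with $\xi\in\fk H_i(\wtd I),\zeta\in\fk H_k(\wtd I),\eta\in\mc H_j^\infty$. For the down--then--right route, applying~\eqref{eq20} twice together with the functoriality of the closed field operators with respect to the module isomorphism $\Psi_{k,j}$ (Theorem~\ref{lb13}(b)) turns it into $\scr L(\xi,\wtd I)\scr L(\zeta,\wtd I)\eta=L(\xi,\wtd I)L(\zeta,\wtd I)\eta=L(L(\xi,\wtd I)\zeta,\wtd I)\eta$, using the fusion relation~\eqref{eq9} of $\scr E$, where $L(\xi,\wtd I)\zeta=\Psi_{i,k}(\mc L(\xi,\wtd I)\zeta)$ again by~\eqref{eq20}. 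For the right--then--down route I need a fusion relation $\mc L(\xi,\wtd I)\mc L(\zeta,\wtd I)=\mc L(\mc L(\xi,\wtd I)\zeta,\wtd I)$ for $\scr E^w$: I would prove it by the same weak-locality-plus-Reeh--Schlieder argument as in the isometry step, \emph{after} harmlessly enlarging the spaces $\fk H_{i\boxdot k}(\wtd I)$ (as permitted by the discussion preceding Proposition~\ref{lb23}) so that composites such as $\mc L(\xi,\wtd I)\zeta$ are again legitimate charge vectors; then $\Psi_{i\boxdot k,j}$ sends $\mc L(\xi,\wtd I)\mc L(\zeta,\wtd I)\eta$ to $L(\Psi_{i,k}(\mc L(\xi,\wtd I)\zeta),\wtd I)\eta$, matching the other route. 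The delicate points are the careful bookkeeping of the three tensor factors (keeping track of which of $\mc L$/$L$ and which $\Psi$ acts on which of $\boxdot$ and $\boxtimes$, since these are a priori different), establishing the $\scr E^w$-fusion relation rigorously and checking that all vectors used lie in the relevant smooth domains; I expect this to be the longest and most error-prone part of the argument.
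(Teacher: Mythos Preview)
Your argument rests on a misreading in its opening sentence. You assert that since $\scr E^w$ is vector-labeled, each $\xi\in\fk H_i(\wtd I)$ lies in $\mc H_i(I)$, so that $\scr L(\xi,\wtd I)=L(\xi,\wtd I)$ is the \emph{bounded} field operator of $\scr E$. The paragraph defining ``vector-labeled'' does literally write $\mc H_i(I)$, but this must be read as $\mc H_i$: Proposition~\ref{lb27} establishes $\fk H_i(\wtd I)\subset\mc H_i^w(I)$ and its proof derives the \emph{preclosedness} of $\scr L(\xi,\wtd I)|_{\mc H_0}$ as a nontrivial conclusion (pointless if $\xi\in\mc H_i(I)$); Corollary~\ref{lb35} speaks of a diagram of \emph{preclosed} operators; the paper's own proof of~\eqref{eq20} uses a genuine core argument via Proposition~\ref{lb21}; and in the VOA application the operators $\mc L(\fk a,\wtd I)$ are products of unbounded smeared intertwining operators, so $\mc L(\fk a,\wtd I)\Omega\notin\mc H_i(I)$ in general. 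Your whole isometry computation --- reducing both inner products to $\langle\eta_a\mid\pi_{j,I}(x_{ab})\eta_b\rangle$ with $x_{ab}=L(\xi_a,\wtd I)^*L(\xi_b,\wtd I)|_{\mc H_0}\in\mc A(I)$ --- relies on this boundedness and collapses without it: the element $x_{ab}$ is simply not available.

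The paper's route has the same shape but uses an unbounded intermediary. Weak locality of $\scr E^w$ gives $\langle\mc L(\xi_1,\wtd I)\eta_1\mid\mc L(\xi_2,\wtd I)\eta_2\rangle=\langle\mc L(\xi_2,\wtd I)^*\xi_1\mid\mc R(\eta_1,\wtd J)^*\eta_2\rangle$, the right side living on $\mc H_0$; Proposition~\ref{lb27} turns this into $\langle\scr L(\xi_2,\wtd I)^*\xi_1\mid\scr R(\eta_1,\wtd J)^*\eta_2\rangle$. The step you are missing is Corollary~\ref{lb5}: the \emph{strong} locality of $\scr L,\scr R$ (Theorem~\ref{lb13}(c)) together with Lemma~\ref{lb14} converts this back to $\langle\scr L(\xi_1,\wtd I)\eta_1\mid\scr L(\xi_2,\wtd I)\eta_2\rangle$, with no bounded operator in sight. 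For diagram~\eqref{eq24} your plan --- evaluate on $\mc L(\xi,\wtd I)\mc L(\zeta,\wtd I)\eta$ and invoke an $\scr E^w$-fusion relation --- fails for the same reason: no such relation is available, and your sketched derivation of it again needs boundedness. The paper instead evaluates on $\mc L(\xi,\wtd I)\mc R(\eta,\wtd J)\chi$ (one left and one right operator, disjoint intervals) and uses the already-established~\eqref{eq20},~\eqref{eq21} together with Corollary~\ref{lb25} to send both routes to $\scr L(\xi,\wtd I)\scr R(\eta,\wtd J)\chi$.
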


\begin{proof}

Choose any $\wtd I$ and any clockwise $\wtd J$, and choose $\mc H_i,\mc H_j\in\Obj(\scr C)$. Then for any $\xi_1,\xi_2\in\fk H_i(\wtd I)$ and $\eta_1,\eta_2\in\fk H_j(\wtd J)$, we use the weak locality of $\scr E^w$ to compute
\begin{align}
&\bk{\mc L(\xi_1,\wtd I)\eta_1|\mc L(\xi_2,\wtd I)\eta_2}=\bk{\mc L(\xi_1,\wtd I)\mc R(\eta_1,\wtd J)\Omega|\mc L(\xi_2,\wtd I)\mc R(\eta_2,\wtd J)\Omega}\nonumber\\
=&\bk{\mc L(\xi_2,\wtd I)^\dagger\mc L(\xi_1,\wtd I)\Omega|\mc R(\eta_1,\wtd J)^\dagger\mc R(\eta_2,\wtd J)\Omega}=\bk{\mc L(\xi_2,\wtd I)^*\xi_1|\mc R(\eta_1,\wtd J)^*\eta_2}.\label{eq17}
\end{align}
In the last term of the above equations, $\mc L(\xi_2,\wtd I)$ and $\mc R(\eta_1,\wtd J)$ are mapping from $\mc H_0^\infty$. Thus, by proposition \ref{lb27},  \eqref{eq17} equals $\bk{\scr L(\xi_2,\wtd I)^*\xi_1|\scr R(\eta_1,\wtd J)^*\eta_2}$. By corollary \ref{lb5}, it also equals $\bk{\scr L(\xi_1,\wtd I)\eta_1|\scr L(\xi_2,\wtd I)\eta_2}$. (Note that by corollary \ref{lb75}, $\eta_1,\eta_2$ are in the domains of $\scr L(\xi_1,\wtd I)|_{\mc H_j}$ and $\scr L(\xi_2,\wtd I)|_{\mc H_j}$.) We conclude
\begin{align*}
\bk{\mc L(\xi_1,\wtd I)\eta_1|\mc L(\xi_2,\wtd I)\eta_2}=\bk{\scr L(\xi_1,\wtd I)\eta_1|\scr L(\xi_2,\wtd I)\eta_2}.
\end{align*}
This equation, together with corollary \ref{lb22}, shows that there is a well defined isometry
\begin{gather*}
\Psi_{i,j}^{\wtd I,\wtd J}:\mc H_i\boxdot\mc H_j\rightarrow\mc H_i\boxtimes\mc H_j,\qquad \mc L(\xi,\wtd I)\eta\mapsto\scr L(\xi,\wtd I)\eta.
\end{gather*}
By \eqref{eq18} and the weak locality of $\scr E^w$, $\Psi_{i,j}^{\wtd I,\wtd J}$ sends $\mc R(\eta,\wtd J)\xi=\mc L(\xi,\wtd I)\eta$ to $\scr R(\eta,\wtd J)\xi=\scr L(\xi,\wtd I)\eta$. Therefore, we have a collection of isometries $\{\Psi^{\wtd I,\wtd J}\}_{\wtd I,\wtd J}$ satisfying \begin{gather*}
\Psi_{i,j}^{\wtd I,\wtd J}\mc L(\xi,\wtd I)\eta=\scr L(\xi,\wtd I)\eta=\Psi_{i,j}^{\wtd I,\wtd J}\mc R(\eta,\wtd J)\xi=\scr R(\eta,\wtd J)\xi
\end{gather*}
for any $\mc H_i,\mc H_j\in\Obj(\scr C),\xi\in\fk H_i(\wtd I),\eta\in\fk H_j(\wtd J)$.

Choose another pair $(\wtd I_0,\wtd J_0)$ with $\wtd J_0$ clockwise to $\wtd I_0$. We shall show that $\Psi_{i,j}^{\wtd I,\wtd J}=\Psi_{i,j}^{\wtd I_0,\wtd J_0}$. This is clearly true when $\wtd I_0\subset\wtd I,\wtd J_0\subset\wtd J$. Thus it is also true when $(\wtd I_0,\wtd J_0)$ is close to $(\wtd I,\wtd J)$, in the sense that there is a pair $(\wtd I_1,\wtd J_1)$ such that $\wtd I_1\subset \wtd I,\wtd I_0$ and $\wtd J_1\subset\wtd J,\wtd J_0$. In general, we can find a chain of pairs $(\wtd I_1,\wtd J_1),\dots,(\wtd I_{n-1},\wtd J_{n-1}),(\wtd I_n,\wtd J_n)=(\wtd I,\wtd J)$ such that $(\wtd I_{m-1},\wtd J_{m-1})$ is close to $(\wtd I_m,\wtd J_m)$ for each $1\leq m\leq n$. This shows $\Psi_{i,j}^{\wtd I,\wtd J}=\Psi_{i,j}^{\wtd I_0,\wtd J_0}$. Thus $\Psi_{i,j}^{\wtd I,\wtd J}$ is independent of $\wtd I,\wtd J$, and we shall write it as $\Psi_{i,j}$ in the following.

Choose any $\wtd I\in\Jtd$. We now show that for any $\xi\in\fk H_i(\wtd I)$, \eqref{eq20} is true. Set $\wtd J=\wtd I'$. Then, from our definition of $\Psi_{i,j}$, the  two operators in \eqref{eq20} are clearly equal when acting on $\fk H_j(\wtd J)$. By proposition \ref{lb23}, $\fk H_j(\wtd J)$ is a core for $\Psi_{i,j}\ovl{\mc L(\xi,\wtd I)}$. Therefore $\Psi_{i,j}\ovl{\mc L(\xi,\wtd I)}|_{\mc H_j}\subset \scr L(\xi,\wtd I)|_{\mc H_j}$. Note that $\mc H_j^\infty(J)$ is included in the domains of both operators. By proposition \ref{lb21}, $\mc H_j^\infty(J)$ is a core for $\scr L(\xi,\wtd I)|_{\mc H_j}$. So we must have $\Psi_{i,j}\ovl{\mc L(\xi,\wtd I)}|_{\mc H_j}= \scr L(\xi,\wtd I)|_{\mc H_j}$.   Thus  \eqref{eq20} is proved. A similar argument proves \eqref{eq21}. Since $\scr L(\fk H_i(\wtd I),\wtd I)\mc H_j(J)$ equals $R(\mc H_j(J),\wtd J)\fk H_i(\wtd I)$, it is dense in $R(\mc H_j(J),\wtd J)\mc H_i$ which spans a dense subspace of $\mc H_i\boxtimes\mc H_j$. Therefore $\Psi_{i,j}$ must be surjective. This proves that $\Psi_{i,j}$ is unitary.

Next, we check that $\Psi_{i,j}$ is an isomorphism by checking that $\Psi_{i,j}$ commutes with the actions of $\mc A$. By corollary \ref{lb24}, it suffices to check that $\Psi_{i,j}\pi_{i\boxdot j,K}(x)=\pi_{i\boxtimes j,K}(x)\Psi_{i,j}$ for any $K\in\mc J$ and $x\in\mc A^\infty(K)$.  Choose $\wtd I,\wtd J\in\Jtd$ such that $\wtd J$ is clockwise to $\wtd I$ and that both $I$ and $J$ are disjoint from $K$. We need to show that for any $\xi\in\fk H_i(\wtd I)$ and $\eta\in\fk H_j(\wtd J)$,
\begin{align}
\Psi x\mc L(\xi,\wtd I)\eta=x\scr L(\xi,\wtd I)\eta.\label{eq23}
\end{align}
Set $\chi=x\Omega$. By the  intertwining property of $\scr E^w$, we have $x\mc L(\xi,\wtd I)\eta=\mc L(\xi,\wtd I)\mc R(\eta,\wtd J)x\Omega=\mc L(\xi,\wtd I)\mc R(\eta,\wtd J)\chi$. By proposition \ref{lb18}, we also have $x\scr L(\xi,\wtd I)\eta=\scr L(\xi,\wtd I)\scr R(\eta,\wtd J)\chi$. We now choose any $\xi_0\in\fk H_i(\wtd I),\eta_0\in\fk H_j(\wtd J)$, and compute
\begin{align}
&\bk{\Psi x\mc L(\xi,\wtd I)\eta|\scr L(\xi_0,\wtd I)\eta_0}=\bk{\Psi x\mc L(\xi,\wtd I)\eta|\Psi\mc L(\xi_0,\wtd I)\eta_0}=\bk{x\mc L(\xi,\wtd I)\eta|\mc L(\xi_0,\wtd I)\eta_0}\nonumber\\
=&\bk{\mc L(\xi,\wtd I)\mc R(\eta,\wtd J)\chi|\mc L(\xi_0,\wtd I)\eta_0}\xlongequal{\text{Weak locality of $\scr E^w$}}\bk{\mc L(\xi_0,\wtd I)^*\mc L(\xi,\wtd I)\chi|\mc R(\eta,\wtd J)^*\eta_0}.\label{eq22}
\end{align}
In the last term of \eqref{eq22}, the operators $\mc L(\xi_0,\wtd I)$, $\mc L(\xi,\wtd I)$, and $\mc R(\eta,\wtd J)$ are all acting on $\mc H_0^\infty$. Thus, by proposition \ref{lb27}, \eqref{eq22} equals
\begin{align*}
\bk{\scr L(\xi_0,\wtd I)^*\scr L(\xi,\wtd I)\chi|\scr R(\eta,\wtd J)^*\eta_0}=\bk{\scr L(\xi_0,\wtd I)^*\scr L(\xi,\wtd I)\chi|\scr R(\eta,\wtd J)^*\scr R(\eta_0,\wtd J)\Omega},
\end{align*}
which, by proposition \ref{lb26}, equals
\begin{align*}
\bk{\scr L(\xi,\wtd I)\scr R(\eta,\wtd J)\chi|\scr L(\xi_0,\wtd I)\scr R(\eta_0,\wtd J)\Omega}=\bk{x\scr L(\xi,\wtd I)\eta|\scr L(\xi_0,\wtd I)\eta_0}.
\end{align*}
This proves \eqref{eq23} when evaluated by $\scr L(\xi_0,\wtd I)\eta_0=\Psi\mc L(\xi_0,\wtd I)\eta_0$. By corollary \ref{lb22} and by the unitarity of $\Psi$, vectors of the form $\scr L(\xi_0,\wtd I)\eta_0$ span a dense subspace of $\mc H_i\boxtimes\mc H_j$. Therefore \eqref{eq23} is true.

Using \eqref{eq45} and the naturality of $\scr E^w$, it is easy to check that $\Psi(\id\boxdot G)=(\id\boxtimes G)\Psi$ and $\Psi(F\boxdot\id)=(F\boxtimes\id)\Psi$ if $F$ and $G$ are homomorphisms of $\mc A$-modules. This proves the naturality of $\Psi$.

It remains to check that $\Psi$ satisfies (a), (b), (c) of the theorem. This will imply that  $\ss$ is natural and satisfies Hexagon axioms since these are true for $\mbb B$. To begin with, note that (b) is true by \eqref{eq25}. (c) follows from the braiding property in theorem \ref{lb13} and the braiding axiom of $\scr E^w$. To prove (a), we  choose  $\wtd I$ and a clockwise $\wtd J$, and choose any $\mc H_i,\mc H_j,\mc H_k\in\Obj(\scr C)$, $\xi\in\fk H_i(\wtd I),\eta\in\fk H_j(\wtd J),\chi\in\mc H_k^\infty$. Since $\mc R(\eta,\wtd J)\chi$ (which is in $(\mc H_k\boxdot\mc H_j)^\infty$) is in the domain of $\mc L(\xi,\wtd I)$, by \eqref{eq20}, it is also in the domain of $\scr L(\xi,\wtd I)$. By \eqref{eq21} and that $(\id_i\boxtimes\Psi_{k,j})\scr L(\xi,\wtd I)|_{\mc H_k\boxdot\mc H_j}\subset\scr L(\xi,\wtd I)|_{\mc H_k\boxtimes\mc H_j}\cdot\Psi_{k,j}$, we conclude that  $\scr R(\eta,\wtd J)\chi=\Psi_{k,j}\mc R(\eta,\wtd J)\chi$ is in the domain of  $\scr L(\xi,\wtd I)$. A similar argument shows that $\scr L(\xi,\wtd I)\chi$ is in the domain of $\scr R(\eta,\wtd J)$. Then, using corollary \ref{lb25}, the weak locality of $\scr E^w$, and the naturality of $\mc L,\mc R,\scr L,\scr R$, we compute
\begin{align*}
&\mc L(\xi,\wtd I)\mc R(\eta,\wtd J)\chi\xrightarrow{\Psi_{i,k\boxdot j}} \scr L(\xi,\wtd I)\mc R(\eta,\wtd J)\chi\\
\xrightarrow{\id_i\boxtimes\Psi_{k,j}}&\scr L(\xi,\wtd I)\Psi_{k,j}\mc R(\eta,\wtd J)\chi=\scr L(\xi,\wtd I)\scr R(\eta,\wtd J)\chi ,
\end{align*}
and also
\begin{align*}
&\mc L(\xi,\wtd I)\mc R(\eta,\wtd J)\chi=\mc R(\eta,\wtd J)\mc L(\xi,\wtd I)\chi \xrightarrow{\Psi_{i\boxdot k,j}} \scr R(\eta,\wtd J)\mc L(\xi,\wtd I)\chi\\
\xrightarrow{\Psi_{i,k}\boxtimes\id_j}&\scr R(\eta,\wtd J)\Psi_{i,k}\mc L(\xi,\wtd I)\chi =  \scr R(\eta,\wtd J)\scr L(\xi,\wtd I)\chi=\scr L(\xi,\wtd I)\scr R(\eta,\wtd J)\chi.
\end{align*}
This proves \eqref{eq24} when acting on $\mc L(\fk H_i(\wtd I),\wtd I)\mc R(\fk H_j(\wtd J),\wtd J)\mc H_k^\infty$. By the density axiom of $\scr E^w$, $\mc R(\fk H_j(\wtd J),\wtd J)\mc H_k^\infty$ is a dense subspace of $(\mc H_k\boxdot\mc H_j)^\infty$, and $\mc L(\fk H_i(\wtd I),\wtd I)(\mc H_k\boxdot\mc \mc H_j)^\infty$ is a dense subspace of $\mc H_i\boxdot\mc H_k\boxdot\mc H_j$. By the rotation covariance of $\scr E^w$, $\mc R(\fk H_j(\wtd J),\wtd J)\mc H_k^\infty$ is QRI. Thus it is a core for $\mc L(\xi,\wtd I)$ for each $\xi\in\fk H_i(\wtd I)$. It follows that $\mc L(\fk H_i(\wtd I),\wtd I)\mc R(\fk H_j(\wtd J),\wtd J)\mc H_k^\infty$ is dense in $\mc L(\fk H_i(\wtd I),\wtd I)(\mc H_k\boxdot\mc \mc H_j)^\infty$. Hence it is also dense in $\mc H_i\boxdot\mc H_k\boxdot\mc H_j$. Therefore \eqref{eq24} is true. We've proved (a).
\end{proof}

\begin{co}\label{lb35}
Let $\scr E^w=(\mc A,\scr C,\boxdot,\fk H)$ be a  weak categorical extension of $\mc A$. Then for any $\mc H_i,\mc H_j,\mc H_k\in\Obj(\scr C),\fk a\in\fk H_i(\wtd I),\fk b\in\fk H_j(\wtd J)$, and any $\wtd I,\wtd J\in\Jtd$ with $\wtd J$ clockwise to $\wtd I$, the following diagram of closable operators commutes strongly.
\begin{align}
\begin{CD}
\mc H_k @> \quad \mc R(\fk b,\wtd J)\quad   >> \mc H_k\boxdot\mc H_j\\
@V \mc L(\fk a,\wtd I)   V  V @V \mc L(\fk a,\wtd I) VV\\
\mc H_i\boxdot\mc H_k @> \quad \mc R(\fk b,\wtd J) \quad  >> \mc H_i\boxdot\mc H_k\boxdot\mc H_j
\end{CD}\label{eq19}
\end{align}
\end{co}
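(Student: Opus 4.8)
The plan is to deduce the strong commutativity of \eqref{eq19} from the locality statement of Theorem \ref{lb13}(c) by transporting it through the unitary equivalence $\Psi$ of Theorem \ref{lb34}.

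First I would reduce to the case where $\scr E^w$ is vector-labeled. As observed right after Definition \ref{lb30}, the operators $\mc L(\fk a,\wtd I),\mc R(\fk a,\wtd I)$ depend only on the vector $\mc L(\fk a,\wtd I)\Omega=\mc R(\fk a,\wtd I)\Omega$, and replacing each $\fk H_i(\wtd I)$ by the (dense) subspace of $\mc H_i(I)$ consisting of such vectors produces a vector-labeled weak categorical extension with the same $\mc L,\mc R$ operators. Since \eqref{eq19} involves only these operators, it suffices to treat the vector-labeled case, writing $\xi=\mc L(\fk a,\wtd I)\Omega\in\fk H_i(\wtd I)$ and $\eta=\mc R(\fk b,\wtd J)\Omega\in\fk H_j(\wtd J)$ for the associated vectors.

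Next I would invoke Theorem \ref{lb34} to obtain the unitary functorial isomorphisms $\Psi_{i,j}:\mc H_i\boxdot\mc H_j\to\mc H_i\boxtimes\mc H_j$ satisfying \eqref{eq20} and \eqref{eq21}, together with the compatibility \eqref{eq24} with the associativity constraints. By Proposition \ref{lb27}, $\xi\in\mc H_i^w(I)\subset\mc H_i^\pr(I)$ and $\eta\in\mc H_j^w(J)\subset\mc H_j^\pr(J)$, so Theorem \ref{lb13}(c) applies and the diagram \eqref{eq48}, with $\scr L(\xi,\wtd I)$ and $\scr R(\eta,\wtd J)$, commutes strongly. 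I would then check that the family of unitaries $\big(\id_{\mc H_k},\ \Psi_{k,j},\ \Psi_{i,k},\ \Phi\big)$, where $\Phi:=(\id_i\otimes\Psi_{k,j})\Psi_{i,k\boxdot j}=(\Psi_{i,k}\otimes\id_j)\Psi_{i\boxdot k,j}$ is the unitary appearing in \eqref{eq24}, identifies diagram \eqref{eq19} with diagram \eqref{eq48}. At the corners $\mc H_k$, $\mc H_k\boxdot\mc H_j$, $\mc H_i\boxdot\mc H_k$ this is immediate from \eqref{eq20} and \eqref{eq21}. At the corner $\mc H_i\boxdot\mc H_k\boxdot\mc H_j$ one combines \eqref{eq20}, applied to the $\mc A$-module $\mc H_k\boxdot\mc H_j$, with the functoriality of $\scr L$ from Theorem \ref{lb13}(b) for the isomorphism $\Psi_{k,j}$, obtaining $\Phi\,\ovl{\mc L(\xi,\wtd I)}|_{\mc H_k\boxdot\mc H_j}=\scr L(\xi,\wtd I)|_{\mc H_k\boxtimes\mc H_j}\Psi_{k,j}$, and symmetrically \eqref{eq21} together with functoriality of $\scr R$ for $\Psi_{i,k}$ gives $\Phi\,\ovl{\mc R(\eta,\wtd J)}|_{\mc H_i\boxdot\mc H_k}=\scr R(\eta,\wtd J)|_{\mc H_i\boxtimes\mc H_k}\Psi_{i,k}$. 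Since strong commutativity of a diagram of preclosed operators is defined through the direct-sum construction $\mc H=\mc P\oplus\mc Q\oplus\mc R\oplus\mc S$ and is manifestly preserved under conjugation by the unitary $\id_{\mc H_k}\oplus\Psi_{k,j}\oplus\Psi_{i,k}\oplus\Phi$, the strong commutativity of \eqref{eq48} yields that of \eqref{eq19}.

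I expect the main obstacle to be the bookkeeping at the triple-product corner $\mc H_i\boxdot\mc H_k\boxdot\mc H_j$: one must verify that the two expressions for $\Phi$ in \eqref{eq24} genuinely intertwine the closures of the relevant $\mc L$ and $\mc R$ operators, being careful about domains and cores (using Propositions \ref{lb21} and \ref{lb23} to pass between $\fk H$, $\mc H^\infty(I)$, and the full operator domains). Once these corner identifications are pinned down, the remaining steps --- the reduction to vector-labeled extensions, the application of Theorem \ref{lb13}(c), and the transport of strong commutativity along a unitary --- are routine.
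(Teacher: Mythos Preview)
Your proposal is correct and follows essentially the same route as the paper. The paper organizes the verification via a $3\times 3$ grid of strongly commuting squares built from $\scr L(\xi,\wtd I)$, $\scr R(\eta,\wtd J)$, and the unitaries $\Psi_{i,k}^{-1}$, $\Psi_{k,j}^{-1}$, $\Psi_{i\boxdot k,j}^{-1}$, $\Psi_{i,k\boxdot j}^{-1}$, $\id_i\otimes\Psi_{k,j}^{-1}$, $\Psi_{i,k}^{-1}\otimes\id_j$, then defines $\Psi_{i,k,j}$ exactly as your $\Phi$ and observes that under these identifications diagram \eqref{eq19} becomes the upper-left square, which is \eqref{eq48}; your description of transporting strong commutativity along the block-diagonal unitary is the same argument phrased more compactly, and the bookkeeping you flag at the triple-product corner is precisely what the $3\times 3$ grid makes explicit.
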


\begin{proof}
The proof is similar to \cite{Gui21a} theorem 3.12. Assume without loss of generality that $\scr E^w$ is vector-labeled. Write $\xi=\fk a,\eta=\fk b$. Then by theorem \ref{lb34} and by the locality and the naturality in theorem \ref{lb13}, each of the following four diagrams commutes strongly.
\begin{align}
\begin{CD}
\mc H_k @> \quad \scr R(\eta,\wtd J)\quad   >> \mc H_k\boxtimes\mc H_j  @> ~~\quad\Psi^{-1}_{k,j}\quad~~ >> \mc H_k\boxdot\mc H_j \\
@V \scr L(\xi,\wtd I)   V  V @V \scr L(\xi,\wtd I) VV   @V  \scr L(\xi,\wtd I) VV\\
\mc H_i\boxtimes\mc H_k @> \quad \scr R(\eta,\wtd J) \quad  >> \mc H_i\boxtimes\mc H_k\boxtimes\mc H_j  @> \quad\id_i\boxtimes\Psi^{-1}_{k,j}\quad >>\mc H_i\boxtimes(\mc H_k\boxdot\mc H_j)\\
@ V \Psi^{-1}_{i,k} VV  @V \Psi^{-1}_{i,k}\boxtimes\id_j VV  @V \Psi^{-1}_{i,k\boxdot j}VV\\
\mc H_i\boxdot\mc H_k  @> \quad \scr R(\eta,\wtd J)\quad >> (\mc H_i\boxdot \mc H_k)\boxtimes\mc H_j  @> ~~\quad\Psi^{-1}_{i\boxdot k,j}\quad~~ >> \mc H_i\boxdot\mc H_k\boxdot\mc H_j.
\end{CD}\label{eq43}
\end{align}
Define a unitary isomorphism $\Psi_{i,k,j}:\mc H_i\boxdot\mc H_k\boxdot\mc H_j\rightarrow \mc H_i\boxtimes\mc H_k\boxtimes\mc H_j$ to be $\Psi_{i,k,j}=(\id_i\boxtimes\Psi_{k,j})\Psi_{i,k\boxdot j}=(\Psi_{i,k}\boxtimes\id_j)\Psi_{i\boxdot k,j}$. Identify $\mc H_i\boxdot\mc H_k,\mc H_k\boxdot\mc H_j,\mc H_i\boxdot\mc H_k\boxdot\mc H_j$ with $\mc H_i\boxtimes\mc H_k,\mc H_k\boxtimes\mc H_j,\mc H_i\boxtimes\mc H_k\boxtimes\mc H_j$ via $\Psi_{i,k},\Psi_{k,j},\Psi_{i,k,j}$ respectively. Then, after taking closures, \eqref{eq19} becomes the upper left diagram of \eqref{eq43}, which commutes strongly.
\end{proof}

\subsection{Weak left and right operators}\label{lb48}

Let $\scr C$ be as in section \ref{lb33}, and let $\scr E^w=(\mc A,\scr C,\boxdot,\fk H)$ be a weak categorical extension of $\mc A$.

\begin{df}\label{lb39}
	A \textbf{weak left operator} of $\scr E^w$ is a quadruple $(A,\fk x,\wtd I,\mc H_i)$, where $\fk x$ is a formal element, $\mc H_i\in\Obj(\scr C),\wtd I\in\Jtd$, and for any $\mc H_k\in\Obj(\scr C)$, there is a smooth and localizable operator $A(\fk x,\wtd I):\mc H_k^\infty\rightarrow(\mc H_i\boxdot\mc H_k)^\infty$ such that the following conditions are satisfied:\\
	(a) If $\mc H_k,\mc H_{k'}\in\Obj(\scr C)$, $G\in\Hom_{\mc A}(\mc H_k,\mc H_{k'})$, then  the following diagram commutes.
	\begin{gather}
	\begin{CD}
	\mc H_k^\infty @>G>> \mc H_{k'}^\infty\\
	@V A(\fk x,\wtd I)  VV @V A(\fk x,\wtd I)  VV\\
	(\mc H_i\boxdot\mc H_k)^\infty @> \id_i\boxdot G>> (\mc H_i\boxdot\mc H_{k'})^\infty
	\end{CD}
	\end{gather}
	(b) For any $\mc H_l,\mc H_k\in\Obj(\scr C)$,  $\wtd J\in\Jtd$  clockwise to $\wtd I$, and any $\fk b\in\fk H_l(\wtd J)$, the following diagram  commutes (non-necessarily adjointly).
	\begin{align}
	\begin{CD}
	\mc H_k^\infty @> \quad \mc R(\fk b,\wtd J)\quad   >> (\mc H_k\boxdot\mc H_l)^\infty\\
	@V A(\fk x,\wtd I)   V  V @V A(\fk x,\wtd I) VV\\
	(\mc H_i\boxdot\mc H_k)^\infty @> \quad \mc R(\fk b,\wtd J) \quad  >> (\mc H_i\boxdot\mc H_k\boxdot\mc H_l)^\infty
	\end{CD}
	\end{align}
	
	Similarly, a \textbf{weak right operator} of $\scr E^w$ is a quadruple $(B,\fk y,\wtd J,\mc H_j)$, where $\fk y$ is a formal element, $\mc H_j\in\Obj(\scr C),\wtd J\in\Jtd$, and for any $\mc H_k\in\Obj(\scr C)$, there is a smooth and localizable operator $B(\fk y,\wtd J):\mc H_k^\infty\rightarrow(\mc H_k\boxdot\mc H_j)^\infty$, such that the following conditions are satisfied:\\
	(a) If $\mc H_k,\mc H_{k'}\in\Obj(\scr C)$, $G\in\Hom_{\mc A}(\mc H_k,\mc H_{k'})$, then  the following diagram commutes.
	\begin{gather}
	\begin{CD}
	\mc H_k^\infty  @> B(\fk y,\wtd J)  >> (\mc H_k\boxdot\mc H_j)^\infty\\
	@V G VV @V G\boxdot\id_j  VV\\
	\mc H_{k'}^\infty @>B(\fk y,\wtd J) >> (\mc H_{k'}\boxdot\mc H_j)^\infty
	\end{CD}
	\end{gather}
	(b) For any $\mc H_l,\mc H_k\in\Obj(\scr C)$,  $\wtd I\in\Jtd$  anticlockwise to $\wtd J$, and any $\fk a\in\fk H_l(\wtd I)$, the following diagram  commutes.
	\begin{align}
	\begin{CD}
	\mc H_k^\infty @> \quad B(\fk y,\wtd J)\quad   >> (\mc H_k\boxdot\mc H_j)^\infty\\
	@V \mc L(\fk a,\wtd I)   V  V @V \mc L(\fk a,\wtd I) VV\\
(\mc H_l\boxdot\mc H_k)^\infty @> \quad B(\fk y,\wtd J) \quad  >> (\mc H_l\boxdot\mc H_k\boxdot\mc H_j)^\infty
	\end{CD}
	\end{align}
\end{df}

Recall the natural unitary isomorphism $\Psi=\Psi_{i,j}:\mc H_i\boxdot\mc H_j\rightarrow\mc H_i\boxtimes\mc H_j$ defined in theorem \ref{lb34}. We first state the main result of this section.

\begin{thm}\label{lb36}
Assume that $\scr C$ is rigid, i.e. any object in $\scr C$ has a dual object in $\scr C$. Assume also that $\scr E^w$ is M\"obius covariant. 

(a) If $(A,\fk x,\wtd I,\mc H_i)$ is a weak left operator of $\scr E^w$ and $\xi=A(\fk x,\wtd I)\Omega$, then $\xi\in\mc H_i^\pr(I)$, and for any $\mc H_k\in\Obj(\scr C)$ we have
\begin{align}
\Psi_{i,k}\ovl{A(\fk x,\wtd I)}|_{\mc H_k}=\scr L(\xi,\wtd I)|_{\mc H_k}.\label{eq29}
\end{align}

(b) If $(B,\fk y,\wtd J,\mc H_j)$ is a weak right operator of $\scr E^w$ and $\eta=B(\fk y,\wtd J)\Omega$, then $\eta\in\mc H_j^\pr(J)$, and for any $\mc H_k\in\Obj(\scr C)$ we have
\begin{align}
\Psi_{k,j}\ovl{B(\fk y,\wtd J)}|_{\mc H_k}=\scr R(\eta,\wtd J)|_{\mc H_k}.\label{eq30}
\end{align}
\end{thm}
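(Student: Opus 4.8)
The plan is to prove part (a); part (b) follows by an entirely symmetric argument (replacing $\mc L$ by $\mc R$, $\scr L$ by $\scr R$, anticlockwise by clockwise, and using $\Psi_{k,j}$ in place of $\Psi_{i,k}$). The strategy mirrors the proof of Theorem \ref{lb34}: identify the action of $A(\fk x,\wtd I)$ on a core via weak locality against right operators, show this forces $\scr L(\xi,\wtd I)|_{\mc H_0}$ to be preclosed, and then bootstrap to all $\mc H_k$ using the rigidity of $\scr C$.

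First I would analyze the case $\mc H_k=\mc H_0$. Fix $\wtd J$ clockwise to $\wtd I$ and, for $\eta\in\fk H_j(\wtd J)$ (with $\mc H_j\in\Obj(\scr C)$ arbitrary), use condition (b) of Definition \ref{lb39} together with $\mc R(\eta,\wtd J)\Omega=\eta$ and neutrality to compute $A(\fk x,\wtd I)\eta=A(\fk x,\wtd I)\mc R(\eta,\wtd J)\Omega=\mc R(\eta,\wtd J)A(\fk x,\wtd I)\Omega=\mc R(\eta,\wtd J)\xi$ (after identifying $\mc H_i\boxdot\mc H_0=\mc H_i$; here I read $A(\fk x,\wtd I)|_{\mc H_0}$ as mapping into $\mc H_i$). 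Comparing with $\scr R(\xi,\wtd I)\eta=\mc L(\eta,\bpr\wtd I)\xi$ from \eqref{eq7} — and using isotony plus the fact, already proved inside Theorem \ref{lb34}, that $\mc L(\eta,\bpr\wtd I)$ and $\mc R(\eta,\wtd J)$ agree on the appropriate domain via the braiding — I would identify $A(\fk x,\wtd I)|_{\mc H_0}$ on the QRI core $\fk H_j(\wtd J)\Omega$-span with $\scr R(\xi,\wtd I)|_{\mc H_0}=\scr L(\xi,\wtd I)|_{\mc H_0}$. Since $\overline{A(\fk x,\wtd I)}|_{\mc H_0}$ is a genuine closed operator and $\bigvee_{\mc H_j,\wtd J}\mc R(\fk H_j(\wtd J),\wtd J)\Omega$ is dense and QRI (Reeh–Schlieder plus rotation covariance), localizability of $A(\fk x,\wtd I)$ gives that this span is a core; combined with Proposition \ref{lb21}'s identification of cores for $\scr L$, one concludes $\scr L(\xi,\wtd I)|_{\mc H_0}$ is preclosed, i.e. $\xi\in\mc H_i^\pr(I)$, and \eqref{eq29} holds for $\mc H_k=\mc H_0$.

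Next I would upgrade to arbitrary $\mc H_k\in\Obj(\scr C)$. The cleanest route is to mimic the proof of Theorem \ref{lb12}: pick $\wtd J$ clockwise to $\wtd I$ and $\mu\in\mc H_k(J)$ with $R(\mu,\wtd J)$ unitary on every $\mc A$-module (Lemma \ref{lb6}), then show $\overline{A(\fk x,\wtd I)}R(\mu,\wtd J)|_{\mc H_0}=R(\mu,\wtd J)\overline{A(\fk x,\wtd I)}|_{\mc H_0}$ by evaluating both sides on $\mc A(I')\Omega$ and invoking condition (b) of Definition \ref{lb39} (for weak categorical extensions the relevant $R(\mu,\wtd J)$ should first be approximated by smooth $\mc R(\mu_n,\wtd J)$ via Proposition \ref{lb8}, then pass to the limit using smoothness and localizability). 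This exhibits $\overline{A(\fk x,\wtd I)}|_{\mc H_k}$ as unitarily equivalent to $\overline{A(\fk x,\wtd I)}|_{\mc H_0}=\scr L(\xi,\wtd I)|_{\mc H_0}$ through $R(\mu,\wtd J)$, simultaneously with the same unitary equivalence for $\scr L(\xi,\wtd I)|_{\mc H_k}$ (Theorem \ref{lb12}'s proof); hence the two agree on $\mc H_k$ and \eqref{eq29} follows. Finally I would check compatibility of the identification $\Psi_{i,k}$ on both sides: on $\fk H_j(\wtd J)$ the operator $\Psi_{i,k}\overline{A(\fk x,\wtd I)}$ agrees with $\scr L(\xi,\wtd I)$ by the $\mc H_0$ case together with functoriality (Definition \ref{lb39}(a) and \eqref{eq45}), $\fk H_j(\wtd J)$ is a core for the left side by localizability, $\mc H_k^\infty(J)$ is a core for the right side by Proposition \ref{lb21}, and both cores sit inside both domains, so the two closed operators coincide.

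The main obstacle I anticipate is the role of rigidity, which is assumed in the statement but enters only indirectly. In Theorem \ref{lb34} rigidity (or at least existence of duals) is what makes $\wht{\scr C}$ closed under $\boxtimes$ and the braid operator well behaved; here I expect it is needed to guarantee that the functorial unitary $\Psi_{i,k}$ exists for all $\mc H_k\in\Obj(\scr C)$ and intertwines the correct actions — so that the unitary-equivalence argument of the previous paragraph can be run inside $\scr C$ rather than merely in $\Rep(\mc A)$. M\"obius covariance of $\scr E^w$ is needed to reduce to a convenient $\wtd I$ (say $\wtd I=\wtd{\mbb S^1_+}$) in the braiding-type identities and to ensure rotation covariance extends appropriately, exactly as in the braiding proof within Theorem \ref{lb13}. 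The one genuinely delicate technical point is justifying the interchange $A(\fk x,\wtd I)\mc R(\eta,\wtd J)=\mc R(\eta,\wtd J)A(\fk x,\wtd I)$ on all of $\mc H_k^\infty$ rather than just on a core, and then taking closures without losing domain — this is handled by the standard smoothness/localizability package (Definition \ref{lb39}(b) gives the commutation on $\mc H_k^\infty$ directly, and localizability of $A$ plus QRI-ness of $\mc R(\fk H_j(\wtd J),\wtd J)\mc H_k^\infty$ from rotation covariance of $\scr E^w$ upgrades this to cores), so no new difficulty beyond careful bookkeeping arises.
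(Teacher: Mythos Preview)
Your argument for preclosedness of $\scr L(\xi,\wtd I)|_{\mc H_0}$ has a genuine gap, and this is precisely the place where rigidity and M\"obius covariance enter---not where you guess they do. You compute $A(\fk x,\wtd I)\eta=\mc R(\eta,\wtd J)\xi$ for $\eta\in\fk H_0(\wtd J)$ and then try to identify this with $\scr L(\xi,\wtd I)\eta$. But the (pre-closure) domain of $\scr L(\xi,\wtd I)|_{\mc H_0}$ is $\mc A(I')\Omega$, and vectors in $\fk H_0(\wtd J)$ need not lie there: they are only in $\mc H_0^w(J)$, i.e.\ give rise to \emph{unbounded} $\scr R$-operators. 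Conversely, to show $\scr L(\xi,\wtd I)|_{\mc A(I')\Omega}\subset\ovl{A(\fk x,\wtd I)}$ you would need $A(\fk x,\wtd I)y\Omega=y\xi$ for $y\in\mc A(I')$, i.e.\ an intertwining property with $\mc A(I')$. Definition~\ref{lb39} does \emph{not} assume this (contrast with axiom~(f) of Definition~\ref{lb30}, which is what makes Proposition~\ref{lb27} work for the $\mc L,\mc R$ of $\scr E^w$ itself). So neither inclusion between $\scr L(\xi,\wtd I)|_{\mc H_0}$ and $\ovl{A(\fk x,\wtd I)}|_{\mc H_0}$ is available by your route, and the preclosedness conclusion is unsupported.

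The paper's proof is structured oppositely: first it \emph{assumes} $\xi\in\mc H_i^\pr(I)$ and proves \eqref{eq29} directly on $\mc H_k$ (not via $\mc H_0$) by evaluating on the QRI core $\fk H_k(\wtd I')$, using $\Psi_{i,k}\mc R(\chi,\wtd I')\xi=\scr R(\chi,\wtd I')\xi$ from Theorem~\ref{lb34} and then the locality of Theorem~\ref{lb13} to turn this into $\scr L(\xi,\wtd I)\chi$; Proposition~\ref{lb21} then upgrades the inclusion to equality. The hard step, $\xi\in\mc H_i^\pr(I)$, is proved separately (Lemma~\ref{lb37}) via the categorical Bisognano--Wichmann theory of \cite{Gui19d}: rigidity supplies a dual $\mc H_{\ovl i}$ and the modular-type operator $S_{\wtd I}$ (whose domain membership forces preclosedness), and M\"obius covariance of $\scr E^w$ is used to show $\delta_{I'}(t)\fk H_{\ovl i}(\wtd I')=\fk H_{\ovl i}(\wtd I')$, so that by $\Delta_{I'}^{\im t}=\delta_{I'}(-2\pi t)$ and Proposition~\ref{lb38} this is a core for $S_{\wtd I}^*$; a direct computation of $\bk{S_{\wtd I}^*\nu|\xi}$ on that core then places $\xi$ in $\Dom(S_{\wtd I})$. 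Your guesses about the roles of the hypotheses are off: $\Psi_{i,k}$ already exists by Theorem~\ref{lb34} without rigidity, and M\"obius covariance is not used to normalize $\wtd I$ but to run the modular/dilation argument.
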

Since the proofs of (a) and (b) are similar, we shall only prove (a).
\begin{proof}
We first prove that if $\xi\in\mc H_i^\pr(I)$, then \eqref{eq29} is true. So let us assume $\xi=A(\fk x,\wtd I)\Omega$ is in $\mc H_i^\pr(I)$. Assume without loss of generality that $\scr E^w$ is vector-labeled. Since $\fk H_k(\wtd I')$ is a dense and QRI subspace of  $\mc H_k^\infty$, it is a core for $A(\fk x,\wtd I)|_{\mc H_k^\infty}$. For each $\chi\in\fk H_k(\wtd I')$, we compute:
\begin{align}
\Psi_{i,k} A(\fk x,\wtd I)\chi=\Psi_{i,k} A(\fk x,\wtd I)\mc R(\chi,\wtd I')\Omega=\Psi_{i,k} \mc R(\chi,\wtd I')A(\fk x,\wtd I)\Omega=\Psi_{i,k} \mc R(\chi,\wtd I')\xi.\label{eq31}
\end{align}
Since $\xi\in\mc H_i^\infty$, by theorem \ref{lb34} (especially equation \eqref{eq21}), $\xi$ is in the domain of $\scr R(\chi,\wtd I')|_{\mc H_i}$, and \eqref{eq31} equals $\scr R(\chi,\wtd I')\xi$. Since $\xi=\scr L(\xi,\wtd I)\Omega$, we know that $\Omega$ is in the domain of $\scr R(\chi,\wtd I')\scr L(\xi,\wtd I)|_{\mc H_0}$. Clearly $\Omega$ is in the domain of $\scr R(\chi,\wtd I')|_{\mc H_0}$. Therefore, by proposition \ref{lb3}-(a) and the locality in theorem \ref{lb13}, $\Omega$ is in the domain of $\scr L(\xi,\wtd I)\scr R(\chi,\wtd I')|_{\mc H_0}$ (equivalently, $\chi$ is in the domain of $\scr L(\xi,\wtd I)|_{\mc H_k}$), and \eqref{eq31} (which equals $\scr R(\chi,\wtd I')\xi$) equals $\scr L(\xi,\wtd I)\scr R(\chi,\wtd I')\Omega=\scr L(\xi,\wtd I)\chi$. We conclude that $\fk H_k(\wtd I')$ is in the domain of $\scr L(\xi,\wtd I)|_{\mc H_k}$, and \eqref{eq29} holds when acting on the core $\fk H_k(\wtd I')$ for $\ovl{A(\fk x,\wtd I)}|_{\mc H_k}$. In particular, we have $\Psi_{i,k}\ovl{A(\fk x,\wtd I)}|_{\mc H_k}\subset\scr L(\xi,\wtd I)|_{\mc H_k}$. On the other hand, by proposition \ref{lb21}, $\mc H_k^\infty(I')$ is a core for $\scr L(\xi,\wtd I)|_{\mc H_k}$. Therefore \eqref{eq29} is proved.

That $\xi\in\mc H_i^\pr(I)$ is proved in lemma \ref{lb37}.
\end{proof}

\begin{co}\label{lb41}
Assume that $\scr C$ is rigid and $\scr E^w$ is M\"obius covariant. Let $(A,\fk x,\wtd I,\mc H_i)$ and $(B,\fk y,\wtd J,\mc H_j)$ be respectively weak left and right operators of $\scr E^w$, where $\wtd I$ is anticlockwise to $\wtd J$, and $\mc H_i,\mc H_j\in\Obj(\scr C)$. Then these two operators commute strongly, in the sense that for any $\mc H_k\in\Obj(\scr C)$, the following diagram of closable operators commutes strongly.
\begin{align}
\begin{CD}
\mc H_k^\infty @> \quad B(\fk y,\wtd J)\quad   >> (\mc H_k\boxdot\mc H_j)^\infty\\
@V A(\fk x,\wtd I)   V  V @V A(\fk x,\wtd I) VV\\
(\mc H_i\boxdot\mc H_k)^\infty @> \quad B(\fk y,\wtd J) \quad  >> (\mc H_i\boxdot\mc H_k\boxdot\mc H_j)^\infty
\end{CD}
\end{align}
\end{co}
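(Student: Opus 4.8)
The plan is to obtain Corollary \ref{lb41} from the strong locality of the $\scr L,\scr R$ operators already established in Theorem \ref{lb13}(c), using Theorem \ref{lb36} as the bridge. First I would assume without loss of generality that $\scr E^w$ is vector-labeled, and set $\xi=A(\fk x,\wtd I)\Omega\in\mc H_i$ and $\eta=B(\fk y,\wtd J)\Omega\in\mc H_j$. Since $\scr C$ is rigid and $\scr E^w$ is M\"obius covariant, Theorem \ref{lb36}(a)--(b) applies: it gives $\xi\in\mc H_i^\pr(I)$ and $\eta\in\mc H_j^\pr(J)$, together with the identifications $\Psi_{i,k}\ovl{A(\fk x,\wtd I)}|_{\mc H_k}=\scr L(\xi,\wtd I)|_{\mc H_k}$ and $\Psi_{k,j}\ovl{B(\fk y,\wtd J)}|_{\mc H_k}=\scr R(\eta,\wtd J)|_{\mc H_k}$ for every $\mc H_k\in\Obj(\scr C)$.

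Because $\wtd I$ is anticlockwise to $\wtd J$, the intervals $I,J$ are disjoint with the correct orientation, so Theorem \ref{lb13}(c) says that the diagram of closed operators with $\scr R(\eta,\wtd J)$ horizontal and $\scr L(\xi,\wtd I)$ vertical commutes strongly on every $\mc H_k$. It then remains to transport this through the unitary isomorphisms $\Psi$, exactly as in the proof of Corollary \ref{lb35}. Concretely, I would form the three-by-three array of squares (as in the display \eqref{eq43}), introducing the unitary $\Psi_{i,k,j}\colon\mc H_i\boxdot\mc H_k\boxdot\mc H_j\to\mc H_i\boxtimes\mc H_k\boxtimes\mc H_j$ defined by $\Psi_{i,k,j}=(\id_i\otimes\Psi_{k,j})\Psi_{i,k\boxdot j}=(\Psi_{i,k}\otimes\id_j)\Psi_{i\boxdot k,j}$ (the two expressions agree by \eqref{eq24}), and identifying $\mc H_k\boxdot\mc H_j$, $\mc H_i\boxdot\mc H_k$, $\mc H_i\boxdot\mc H_k\boxdot\mc H_j$ with their Connes-fusion counterparts via $\Psi_{k,j}$, $\Psi_{i,k}$, $\Psi_{i,k,j}$. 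Under these identifications, and after passing to closures, the diagram in the statement becomes the locality diagram of Theorem \ref{lb13}(c); since strong commutativity of a diagram of preclosed operators is preserved under conjugating the four underlying Hilbert spaces by unitaries, we are done.

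The main obstacle is not analytic but bookkeeping: one must check that the mixed operators appearing in the diagram — e.g. $\ovl{A(\fk x,\wtd I)}$ acting on $\mc H_k\boxdot\mc H_j$, versus $\scr L(\xi,\wtd I)$ acting on $\mc H_k\boxtimes\mc H_j$ — are intertwined by the appropriate products of $\Psi$'s, and that all four squares of the cube commute strongly (the two ``$\Psi$-squares'' by functoriality of $\Psi$ from Theorem \ref{lb34}, the two ``operator-squares'' by the locality and functoriality in Theorem \ref{lb13}). All of the genuinely hard input — preclosedness of $\scr L(\xi,\wtd I),\scr R(\eta,\wtd J)$, their strong locality, and the identification of the weak operators with them — has already been carried out in Theorems \ref{lb13}, \ref{lb34}, \ref{lb36}, so no new estimates are required here.
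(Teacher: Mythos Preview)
Your proposal is correct and matches the paper's own proof essentially line for line: the paper's argument is the single sentence ``as in the proof of corollary \ref{lb35}, the strong commutativity of $A(\fk x,\wtd I)$ and $B(\fk y,\wtd J)$ follows from the functoriality of these two operators, theorem \ref{lb36}, and the strong commutativity of $\scr L(\xi,\wtd I)$ and $\scr R(\eta,\wtd J)$,'' which is exactly the strategy you spelled out.
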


\begin{proof}
As in the proof of corollary \ref{lb35}, the strong commutativity of $A(\fk x,\wtd I)$ and $B(\fk y,\wtd J)$ follows from the naturality of these two operators, theorem \ref{lb36}, and the strong commutativity of $\scr L(\xi,\wtd I)$ and $\scr R(\eta,\wtd J)$ (where $\xi=A(\fk x,\wtd I)\Omega$ and $\eta=B(\fk y,\wtd J)\Omega$).
\end{proof}

To prepare for the proof of $\xi\in\mc H_i^\pr(I)$, we recall the geometric modular theory of rigid categorical extensions proved in \cite{Gui21b}. Consider the one parameter rotation subgroup  $\rho(t)=\left( \begin{array}{cc}
e^{\frac{\im t}2} & 0 \\
0 &e^{\frac{-\im t}2}
\end{array} \right)$ and dilation subgroup $\delta(t)=\left( \begin{array}{cc}
\cosh\frac t2 & -\sinh\frac t2 \\
-\sinh\frac t2 &\cosh\frac t2
\end{array} \right)$ of $\PSU$. For each $I\in\mc J$, define $\delta_I(t)=g\delta(t)g^{-1}$ where $g\in\PSU$ and $g\mbb S^1_+=I$. Then $\delta_I$ is well defined, $\delta_I(t)(I)=I$, and $\delta(t)=\delta_{\mbb S^1_+}(t)$. We lift $\varrho$ and $\delta$ to one-parameter subgroups of $\UPSU$ and denote them by the same symbols $\varrho$ and $\delta_I$.

Let $\RepfA$ be the braided $C^*$-tensor subcategory of $\RepA$ of all dualizable $\mc A$-modules. Then for any $\mc H_i\in\Obj(\RepfA)$, one can find a dual object $\mc H_{\ovl i}\in\Obj(\RepfA)$ and $\ev_{i,\ovl i}\in\Hom_{\mc A}(\mc H_i\boxtimes\mc H_{\ovl i},\mc H_0),\ev_{\ovl i,i}\in\Hom_{\mc A}(\mc H_{\ovl i}\boxtimes\mc H_i,\mc H_0)$ satisfying the conjugate equations
\begin{gather*}
(\ev_{i,\ovl i}\boxtimes\id_i)(\id_i\boxtimes\coev_{\ovl i,i})=\id_i=(\id_i\boxtimes\ev_{\ovl i,i})(\coev_{i,\ovl i}\boxtimes\id_i),\\
(\ev_{\ovl i,i}\boxtimes\id_{\ovl i})(\id_{\ovl i}\boxtimes\coev_{i,\ovl i})=\id_{\ovl i}=(\id_{\ovl i}\boxtimes\ev_{i,\ovl i})(\coev_{\ovl i,i}\boxtimes\id_{\ovl i}),
\end{gather*}
where we set $\coev_{i,\ovl i}=\ev_{i,\ovl i}^*,\coev_{\ovl i,i}=\ev_{\ovl i,i}^*$. Moreover, by \cite{LR97} (see also \cite{Yam04,BDH14}),  we may assume that the $\ev$ and $\coev$ to be standard, which means that the following is also satisfied for any $F\in\End_{\mc A}(\mc H_i)$:
\begin{align}
\ev_{i,\ovl i}(F\boxtimes\id_{\ovl i})\coev_{i,\ovl i}=\ev_{\ovl i,i}(\id_{\ovl i}\boxtimes F)\coev_{\ovl i,i}.
\end{align}

Now, for each $\wtd I\in\Jtd$ and $\mc H_i\in\Obj(\RepfA)$, we define unbounded operators $S_{\wtd I},F_{\wtd I}:\mc H_i\rightarrow\mc H_{\ovl i}$ with common domain $\mc H_i(I)$  by setting
\begin{equation*}
S_{\wtd I}\xi=L(\xi,\wtd I)^*\coev_{i,\ovl i}\Omega,\qquad F_{\wtd I}\xi=R(\xi,\wtd I)^*\coev_{\ovl i,i}\Omega
\end{equation*}
for any $\xi\in\mc H_i(I)$. By \cite{Gui21b} section 5, $S_{\wtd I}$ and $F_{\wtd I}$ are closable, whose closures are denoted by the same symbols in the following. Then we have
\begin{gather}
F_{\wtd I}=\varrho(2\pi)S_{\wtd I},\\
F_{\wtd I'}=S_{\wtd I}^*.
\end{gather}
Thus $\Delta_{\wtd I}:=S_{\wtd I}^*S_{\wtd I}$ equals $F_{\wtd I}^*F_{\wtd I}$. Moreover, by \cite{Gui21b} section 6, $\Delta_{\wtd I}$ is independent of $\arg_I$ and hence can be written as $\Delta_I$, and
\begin{align}
\Delta_I^{\im t}=\delta_I(-2\pi t).
\end{align}
By \cite[Thm. 7.8]{Gui21b}, if $\mc H_i\in\Obj(\RepfA)$ and $\xi\in\mc H_i$, then the following three conditions are equivalent.

(a) $\coev_{i,\ovl i}\Omega$ is in the domain of $(\scr L(\xi,\wtd I)|_{\mc H_{\ovl i}})^*$.

(a') $\coev_{\ovl i,i}\Omega$ is in the domain of $(\scr R(\xi,\wtd I)|_{\mc H_{\ovl i}})^*$.

(b) $\xi$ is in the domain of $\Delta_I^{\frac 12}$ (which are also the domains of $S_{\wtd I}$ and  $F_{\wtd I}$).\\
Moreover, if any of them are true, then   $\xi\in\mc H_i^\pr(I)$, and
\begin{align}
S_{\wtd I}\xi=\scr L(\xi,\wtd I)^*\coev_{i,\ovl i}\Omega,\qquad F_{\wtd I}\xi=\scr R(\xi,\wtd I)^*\coev_{\ovl i,i}\Omega.\label{eq32}
\end{align} 

\begin{lm}\label{lb37}
In theorem \ref{lb36}, we have $\xi\in\mc H_i^\pr(I)$ and $\eta\in\mc H_j^\pr(J)$.
\end{lm}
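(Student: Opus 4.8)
We prove $\xi:=A(\fk x,\wtd I)\Omega\in\mc H_i^\pr(I)$; the assertion $\eta:=B(\fk y,\wtd J)\Omega\in\mc H_j^\pr(J)$ follows by the symmetric argument (interchanging $\mc L,\scr L$ with $\mc R,\scr R$, replacing $\wtd I$ by $\wtd J$, $\mc H_{\ovl i}$ by $\mc H_{\ovl j}$, and using condition (a$'$) in place of (a) below). Since $\scr C$ is rigid, $\mc H_i$ is dualizable, so the geometric modular theory recalled above applies to the pair $(\mc H_i,\mc H_{\ovl i})$, and by it it suffices to verify condition (a), namely that $\coev_{i,\ovl i}\Omega$ lies in the domain of $(\scr L(\xi,\wtd I)|_{\mc H_{\ovl i}})^*$. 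Let $F_{\wtd I'}:\mc H_{\ovl i}\to\mc H_i$ be the closed antilinear operator of the modular theory, so that $R(\eta,\wtd I')^*\coev_{i,\ovl i}\Omega=F_{\wtd I'}\eta$ for $\eta\in\mc H_{\ovl i}(I')$ (this follows from $F_{\wtd I'}=S_{\wtd I}^*$, the definition of $S_{\wtd I}$, and the locality relation $L(\mu,\wtd I)\eta=R(\eta,\wtd I')\mu$). Since $\scr L(\xi,\wtd I)\eta=R(\eta,\wtd I')\xi$ on $\mc H_{\ovl i}(I')$, we get
\begin{align*}
\bk{\scr L(\xi,\wtd I)\eta\mid\coev_{i,\ovl i}\Omega}=\bk{\xi\mid F_{\wtd I'}\eta}\qquad(\eta\in\mc H_{\ovl i}(I')),
\end{align*}
so, as $\mc H_{\ovl i}(I')$ is a core for $F_{\wtd I'}$, condition (a) is equivalent to $\xi\in\Dom(F_{\wtd I'}^\dagger)$.

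Assume, after the reductions preceding Proposition \ref{lb23}, that $\scr E^w$ is vector-labeled; note $\xi\in\mc H_i^\infty$. Applying property (b) of the weak left operator with $\wtd J=\wtd I'$ and $\chi=\Omega$, and using $\mc R(\eta,\wtd I')\Omega=\eta$ and $A(\fk x,\wtd I)\Omega=\xi$, we obtain $A(\fk x,\wtd I)\eta=\mc R(\eta,\wtd I')\xi$ for every $\eta\in\fk H_{\ovl i}(\wtd I')\subset\mc H_{\ovl i}(I')$. By Theorem \ref{lb34} (equation \eqref{eq21}), $\Psi_{i,\ovl i}\mc R(\eta,\wtd I')\xi=\scr R(\eta,\wtd I')\xi=R(\eta,\wtd I')\xi=\scr L(\xi,\wtd I)\eta$, hence
\begin{align*}
\Psi_{i,\ovl i}A(\fk x,\wtd I)\eta=\scr L(\xi,\wtd I)\eta\qquad(\eta\in\fk H_{\ovl i}(\wtd I')).
\end{align*}
The vector $\Theta:=\Psi_{i,\ovl i}^{-1}\coev_{i,\ovl i}\Omega$ is smooth (since $\coev_{i,\ovl i}\Omega$ is and $\Psi_{i,\ovl i}$ is a smooth unitary), so $A(\fk x,\wtd I)^\dagger\Theta\in\mc H_{\ovl i}^\infty$, and for each $\eta\in\fk H_{\ovl i}(\wtd I')$,
\begin{align*}
\bk{\xi\mid F_{\wtd I'}\eta}=\bk{\scr L(\xi,\wtd I)\eta\mid\coev_{i,\ovl i}\Omega}=\bk{A(\fk x,\wtd I)\eta\mid\Theta}=\bk{\eta\mid A(\fk x,\wtd I)^\dagger\Theta}.
\end{align*}
Thus $\eta\mapsto\bk{\xi\mid F_{\wtd I'}\eta}$ is bounded (by $\lVert A(\fk x,\wtd I)^\dagger\Theta\rVert$) on $\fk H_{\ovl i}(\wtd I')$.

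To conclude $\xi\in\Dom(F_{\wtd I'}^\dagger)$ it remains to upgrade this to boundedness on all of $\Dom(F_{\wtd I'})\supset\mc H_{\ovl i}(I')$, for which it is enough that $\fk H_{\ovl i}(\wtd I')$ be a core for the closed operator $F_{\wtd I'}$ (one then approximates any $\eta\in\Dom(F_{\wtd I'})$ by vectors in $\fk H_{\ovl i}(\wtd I')$ convergent in the graph norm, along which the functional stays bounded). This core statement is where M\"obius covariance of $\scr E^w$ is used: the dilation subgroup $\delta_{I'}$ fixes $\wtd I'$, so by property (i) of the weak categorical extension $\delta_{I'}(t)\fk H_{\ovl i}(\wtd I')\subset\fk H_{\ovl i}(\wtd I')$ for all $t\in\mbb R$; since $\fk H_{\ovl i}(\wtd I')\subset\mc H_{\ovl i}(I')\subset\Dom(F_{\wtd I'})$ is dense and invariant under the modular unitaries $\Delta_{I'}^{\im t}=\delta_{I'}(-2\pi t)$, a standard Tomita--Takesaki-type argument (compare the core results of \cite{Gui19d}) shows it is a core for $F_{\wtd I'}=S_{\wtd I}^*$. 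This verifies condition (a); hence $\xi\in\mc H_i^\pr(I)$, and, by \eqref{eq32}, $S_{\wtd I}\xi=\scr L(\xi,\wtd I)^*\coev_{i,\ovl i}\Omega$. I expect the main obstacle to be precisely this last core property of $\fk H_{\ovl i}(\wtd I')$ for the modular operator; the rest is bookkeeping with Theorem \ref{lb34}.
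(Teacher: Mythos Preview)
Your overall strategy is the same as the paper's: show $\xi\in\Dom(S_{\wtd I})$ by bounding the (anti)linear functional $\eta\mapsto\bk{F_{\wtd I'}\eta\mid\xi}$ on the subspace $\fk H_{\ovl i}(\wtd I')$, and then argue that this subspace is a core for $S_{\wtd I}^*=F_{\wtd I'}$ using M\"obius covariance of $\scr E^w$ (so that $\fk H_{\ovl i}(\wtd I')$ is $\Delta_{I'}^{\im t}$-invariant) together with Proposition~\ref{lb38}. Your identification of the ``main obstacle'' is exactly right, and that part of your argument is the same as the paper's.

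There is, however, a genuine gap in the middle computation. You invoke the inclusion $\fk H_{\ovl i}(\wtd I')\subset\mc H_{\ovl i}(I')$ to pass from $\scr R(\eta,\wtd I')\xi$ to the bounded $R(\eta,\wtd I')\xi$ and then to $\scr L(\xi,\wtd I)\eta$. The paper's text defining ``vector-labeled'' does literally write $\fk H_i(\wtd I)\subset\mc H_i(I)$, but this appears to be a slip for $\mc H_i$: Proposition~\ref{lb27} only yields $\fk H_i(\wtd I)\subset\mc H_i^w(I)$, the entire purpose of \emph{weak} categorical extensions is to accommodate unbounded smeared intertwining operators (so one cannot expect boundedness), and the paper's own proof of this lemma never uses the inclusion. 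Without it, $\eta\in\fk H_{\ovl i}(\wtd I')$ need not lie in the a-priori domain $\mc H_{\ovl i}(I')$ of $\scr L(\xi,\wtd I)$, so the expression $\scr L(\xi,\wtd I)\eta$ and hence your displayed chain $\bk{\xi\mid F_{\wtd I'}\eta}=\bk{\scr L(\xi,\wtd I)\eta\mid\coev_{i,\ovl i}\Omega}=\cdots$ breaks down for such $\eta$.

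The repair is short and is exactly what the paper does: bypass $\scr L(\xi,\wtd I)$ and use \eqref{eq32} directly for $\eta\in\fk H_{\ovl i}(\wtd I')\subset\mc H_{\ovl i}^w(I')$ (this is where one needs $\mc H_{\ovl i}\in\Obj(\scr C)$, which you should also record). Since $\scr R(\eta,\wtd I')|_{\mc H_i}=\Psi_{i,\ovl i}\,\ovl{\mc R(\eta,\wtd I')}|_{\mc H_i}$ is smooth by Theorem~\ref{lb34}, one has $F_{\wtd I'}\eta=\scr R(\eta,\wtd I')^*\coev_{i,\ovl i}\Omega$, and then
\[
\bk{S_{\wtd I}^*\eta\mid\xi}
=\bk{\scr R(\eta,\wtd I')^*\coev_{i,\ovl i}\Omega\mid\xi}
=\bk{\Theta\mid\mc R(\eta,\wtd I')A(\fk x,\wtd I)\Omega}
=\bk{\Theta\mid A(\fk x,\wtd I)\eta}
=\bk{A(\fk x,\wtd I)^\dagger\Theta\mid\eta},
\]
with $\Theta=\Psi_{i,\ovl i}^{-1}\coev_{i,\ovl i}\Omega$, after which your core argument finishes the proof unchanged.
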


\begin{proof}
Recall $\xi=A(\fk x,\wtd I)\Omega$. We prove $\xi\in\mc H_i^\pr(I)$ by proving that $\xi$ is in the domain of $S_{\wtd I}|_{\mc H_i}$. Assume without loss of generality that $\scr E^w$ is vector labeled. We also assume that $\mc H_{\ovl i}\in\Obj(\scr C)$. Choose any $\nu\in\fk H_{\ovl i}(\wtd I')$. Then, by proposition \ref{lb27}, $\nu\in\mc H_{\ovl i}^w(I')\subset\mc H_{\ovl i}^\pr(I')$. Moreover, since $\mc R(\nu,\wtd I')|_{\mc H_i^\infty}$ is smooth, by theorem \ref{lb34}, $\scr R(\nu,\wtd I')|_{\mc H_i}$ is smooth. So $\coev_{i,\ovl i}\Omega$ (which is a smooth vector due to Rem. \ref{lb102}) is in the domain of $(\scr R(\nu,\wtd I')|_{\mc H_i})^*$. Therefore $\nu$ is in the domain of $F_{\wtd I'}=S_{\wtd I}^*$.  We compute
\begin{align}
&\bk{S_{\wtd I}^*\nu|\xi}=\bk{F_{\wtd I'}\nu|\xi}\xlongequal{\eqref{eq32}}  \bk{\scr R(\nu,\wtd I')^*\coev_{i,\ovl i}\Omega|\xi}=\bk{\mc R(\nu,\wtd I')^*\Psi^*\coev_{i,\ovl i}\Omega|\xi}\nonumber\\
=&\bk{\Psi^*\coev_{i,\ovl i}\Omega|\mc R(\nu,\wtd I')\xi}=\bk{\Psi^*\coev_{i,\ovl i}\Omega|\mc R(\nu,\wtd I')A(\fk x,\wtd I)\Omega}\nonumber\\
=&\bk{\Psi^*\coev_{i,\ovl i}\Omega|A(\fk x,\wtd I)\mc R(\nu,\wtd I')\Omega}=\bk{A(\fk x,\wtd I)^*\Psi^*\coev_{i,\ovl i}\Omega|\nu},\label{eq33}
\end{align}
where we notice that $\Psi^*\coev_{i,\ovl i}\Omega$ is in the domain of $A(\fk x,\wtd I)^*$ because $A(\fk x,\wtd I)^*$ is smooth and (by Rem. \ref{lb102}) $\Psi^*\coev_{i,\ovl i}\Omega\in(\mc H_i\boxdot\mc H_{\ovl i})^\infty$. 

By the Reeh-Schlieder property of $\scr E^w$, $\fk H_{\ovl i}(\wtd I')$ is a dense subspace of $\mc H_{\ovl i}$. By the M\"obius covariance of $\scr E^w$, we have $\delta_{I'}(-2\pi t)\fk H_{\ovl i}(\wtd I')=\fk H_{\ovl i}(\wtd I')$ and hence $\Delta_{I'}^{\im t}\fk H_{\ovl i}(\wtd I')=\fk H_{\ovl i}(\wtd I')$ for any $t\in\mbb R$. Therefore, by proposition \ref{lb38}, $\fk H_{\ovl i}(\wtd I')$ is a core for $\Delta_{I'}^{\frac 12}|_{\mc H_{\ovl i}}$ and hence for $S_{\wtd I}^*|_{\mc H_{\ovl i}}=F_{\wtd I'}|_{\mc H_{\ovl i}}$. Thus, we conclude from \eqref{eq33} that $\xi\in\Dom(S_{\wtd I}|_{\mc H_i})$ and $S_{\wtd I}\xi=A(\fk x,\wtd I)^*\Psi^*\coev_{i,\ovl i}\Omega$. This proves $\xi\in\mc H_i^\pr(I)$.
\end{proof}

\subsection{Weak categorical partial extensions}\label{lb49}

In this section, we assume that $\scr C$ is equipped with a braid operator $\ss$ so that $(\scr C,\boxdot,\ss)$ is a braided $C^*$-tensor category. In other words, we have a natural $\ss$, where for each $\mc H_i,\mc H_j\in\Obj(\scr R)$, $\ss_{i,j}:\mc H_i\boxdot\mc H_j\rightarrow\mc H_j\boxdot\mc H_i$ is a unitary isomorphism, and $\ss$ satisfies the Hexagon axioms.  We also assume that $\scr C$  is semisimple, i.e., any module $\mc H_i\in\Obj(\scr C)$ is unitarily equivalent to a finite direct sum of irreducible $\mc A$-modules in $\scr C$. If $\mc F$ is a set of  $\mc A$-modules in $\scr C$, we say that $\mc F$ \textbf{generates} $\scr C$ if for any irreducible $\mc H_i\in\Obj(\scr C)$, there exist $\mc H_{i_1},\dots,\mc H_{i_n}\in\mc F$ such that $\mc H_i$ is equivalent to an (irreducible) $\mc A$-submodule of $\mc H_{i_1}\boxdot\mc H_{i_2}\boxdot\cdots\boxdot\mc H_{i_n}$.

\begin{df}\label{lb31}
	Assume that the braided $C^*$-tensor category $(\scr C,\boxdot,\ss)$ is semisimple and $\mc F$ is a generating set of  $\mc A$-modules in $\scr C$. Let $\fk H$ assign, to each $\wtd I\in\Jtd,\mc H_i\in\mc F$, a set $\fk H_i(\wtd I)$ such that $\fk H_i(\wtd I_1)\subset\fk H_i(\wtd I_2)$ whenever $\wtd I_1\subset\wtd I_2$. A \textbf{weak categorical partial extension} $\scr E^w_\loc=(\mc A,\mc F,\boxdot,\fk H)$ of $\mc A$ associates, to any $\mc H_i\in\mc F,\mc H_k\in\Obj(\scr C),\wtd I\in\Jtd,\fk a\in\fk H_i(\wtd I)$, smooth and localizable operators
	\begin{gather*}	
	\mc L(\fk a,\wtd I):\mc H_k\rightarrow\mc H_i\boxdot\mc H_k,\\
	\mc R(\fk a,\wtd I):\mc H_k\rightarrow\mc H_k\boxdot\mc H_i,
	\end{gather*}
	such for any $\mc H_i,\mc H_j\in\mc F$, $\mc H_k,\mc H_{k'}\in\Obj(\scr C)$, and  $\wtd I,\wtd J,\wtd I_1,\wtd I_2\in\Jtd$, the conditions in definition \ref{lb30} are satisfied. (The braid operator $\ss$ in the braiding axiom of definition \ref{lb30} is assumed to be the same as that of $\scr C$.) Moreover, we assume that for any $\mc H_{i_1},\dots,\mc H_{i_n}\in\mc F$, $\mc H_k\in\Obj(\scr C)$, $\fk a_1\in\mc H_{i_1},\dots,\fk a_n\in\mc H_{i_n}$, and $\wtd I\in\Jtd$, the smooth operator $\mc L(\fk a_n,\wtd I)\cdots\mc L(\fk a_1,\wtd I)|_{\mc H_k^\infty}$ is localizable. We say that $\scr E^w_{\loc}$ is \textbf{M\"obius covariant} if \eqref{eq27} (or equivalently, \eqref{eq28}) holds for any $\mc H_i\in\mc F,\mc H_l\in\Obj(\scr C),\wtd I\in\Jtd,\fk a\in\fk H_i(\wtd I),\eta\in\mc H_l^\infty,g\in\UPSU$.
\end{df}

\begin{df}
Let $\scr E^w_\loc$ be a categorical partial extension. If $\mc H_i,\mc H_j\in\mc F$, $\wtd I\in\Jtd$, and $\fk a_i\in\fk H_i(\wtd I),\fk a_j\in\fk H_j(\wtd I)$, we say that $\fk a_i$ is \textbf{anticlockwise} to $\fk a_j$ if there exists $\wtd I_1,\wtd I_2\subset\wtd I$ such that $\wtd I_2$ is anticlockwise to $\wtd I_1$, $\fk a_i\in\fk H_i(\wtd I_1)$, and $\fk a_j\in\fk H_j(\wtd I_2)$.

Given $\mc H_{i_1},\dots,\mc H_{i_n}\in\mc F$, $\wtd I\in\Jtd$, and $\fk a_1\in\fk H_{i_1},\dots \fk a_n\in\fk H_{i_n}$, we define $\fk H_{i_n,\dots,i_1}(\wtd I)$ to be the set of all $(\fk a_n,\dots,\fk a_i)\in\fk H_{i_n}(\wtd I)\times\cdots\times\fk H_{i_1}(\wtd I)$ such that $\fk a_m$ is anticlockwise to $\fk a_{m-1}$ for any $m=2,3,\dots,n$.
\end{df}

\begin{pp}\label{lb32}
For any $(\fk a_n,\dots,\fk a_i)\in \fk H_{i_n,\dots,i_1}(\wtd I)$,
\begin{align}
\mc R(\fk a_1,\wtd I)\cdots\mc R(\fk a_n,\wtd I)|_{\mc H_k^\infty}=\ss_{i_n\boxdot\cdots\boxdot i_1,k}\mc L(\fk a_n,\wtd I)\cdots\mc L(\fk a_1,\wtd I)|_{\mc H_k^\infty}.
\end{align}
\end{pp}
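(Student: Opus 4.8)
We argue by induction on $n$. For $n=1$ the assertion is exactly the Braiding axiom of $\scr E^w_\loc$, so suppose $n\ge 2$ and that the identity holds for $n-1$; fix $\mc H_k\in\Obj(\scr C)$ and note that every operator below maps smooth vectors to smooth vectors, so all the compositions are well defined on $\mc H_k^\infty$. Since $(\fk a_n,\dots,\fk a_1)\in\fk H_{i_n,\dots,i_1}(\wtd I)$ forces $(\fk a_{n-1},\dots,\fk a_1)\in\fk H_{i_{n-1},\dots,i_1}(\wtd I)$, the inductive hypothesis gives
\[
\mc R(\fk a_1,\wtd I)\cdots\mc R(\fk a_{n-1},\wtd I)\big|_{\mc H_k^\infty}=\ss_{i_{n-1}\boxdot\cdots\boxdot i_1,k}\,\mc L(\fk a_{n-1},\wtd I)\cdots\mc L(\fk a_1,\wtd I)\big|_{\mc H_k^\infty}.
\]

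The strategy is to strip off the innermost factor $\mc R(\fk a_n,\wtd I)$, reorganize, and then recognize the leftover braidings by the Hexagon axiom. First, by Braiding, $\mc R(\fk a_n,\wtd I)\eta=\ss_{i_n,k}\mc L(\fk a_n,\wtd I)\eta$ for $\eta\in\mc H_k^\infty$. Next I push the $\mc A$-module homomorphism $\ss_{i_n,k}\colon\mc H_{i_n}\boxdot\mc H_k\to\mc H_k\boxdot\mc H_{i_n}$ leftward through $\mc R(\fk a_{n-1},\wtd I),\dots,\mc R(\fk a_1,\wtd I)$ by iterating Functoriality, which converts it into $\ss_{i_n,k}\otimes\id_{i_{n-1}\boxdot\cdots\boxdot i_1}$; thus $\mc R(\fk a_1,\wtd I)\cdots\mc R(\fk a_n,\wtd I)\eta$ becomes $\big(\ss_{i_n,k}\otimes\id_{i_{n-1}\boxdot\cdots\boxdot i_1}\big)\,\mc R(\fk a_1,\wtd I)\cdots\mc R(\fk a_{n-1},\wtd I)\,\mc L(\fk a_n,\wtd I)\eta$.

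The crucial step is to slide $\mc L(\fk a_n,\wtd I)$ all the way to the left past $\mc R(\fk a_{n-1},\wtd I),\dots,\mc R(\fk a_1,\wtd I)$. Because $(\fk a_n,\dots,\fk a_1)\in\fk H_{i_n,\dots,i_1}(\wtd I)$ and the ``anticlockwise'' relation is transitive on the subintervals of the fixed $\wtd I$, for each $l<n$ one can choose $\wtd I_0,\wtd I_0'\subset\wtd I$ with $\wtd I_0$ anticlockwise to $\wtd I_0'$, $\fk a_n\in\fk H_{i_n}(\wtd I_0)$ and $\fk a_l\in\fk H_{i_l}(\wtd I_0')$; Weak locality together with Isotony then shows that $\mc L(\fk a_n,\wtd I)$ commutes with $\mc R(\fk a_l,\wtd I)$ on smooth vectors. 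Carrying this out for $l=n-1,\dots,1$ produces $\big(\ss_{i_n,k}\otimes\id_{i_{n-1}\boxdot\cdots\boxdot i_1}\big)\,\mc L(\fk a_n,\wtd I)\,\mc R(\fk a_1,\wtd I)\cdots\mc R(\fk a_{n-1},\wtd I)\eta$. Now I apply the inductive hypothesis to the inner block, then push the braiding $\ss_{i_{n-1}\boxdot\cdots\boxdot i_1,k}$ back out through $\mc L(\fk a_n,\wtd I)$ via Functoriality (it becomes $\id_{i_n}\otimes\ss_{i_{n-1}\boxdot\cdots\boxdot i_1,k}$), reaching
\[
\big(\ss_{i_n,k}\otimes\id_{i_{n-1}\boxdot\cdots\boxdot i_1}\big)\big(\id_{i_n}\otimes\ss_{i_{n-1}\boxdot\cdots\boxdot i_1,k}\big)\,\mc L(\fk a_n,\wtd I)\mc L(\fk a_{n-1},\wtd I)\cdots\mc L(\fk a_1,\wtd I)\eta .
\]
The Hexagon axiom for $(\scr C,\boxdot,\ss)$, applied with $X=\mc H_{i_n}$, $Y=\mc H_{i_{n-1}}\boxdot\cdots\boxdot\mc H_{i_1}$, $Z=\mc H_k$, identifies the prefactor with $\ss_{i_n\boxdot\cdots\boxdot i_1,k}$, which closes the induction.

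I expect the genuinely delicate point to be the third paragraph: Weak locality has to be invoked between $\mc L(\fk a_n,\wtd I)$ and \emph{every} $\mc R(\fk a_l,\wtd I)$ with $l<n$, not merely the adjacent one, so the argument hinges on transitivity of the anticlockwise order inside $\wtd I$ (and on Isotony, to pass from the small witnessing intervals back to $\wtd I$). One also has to be careful with the orientation conventions — it is $\mc L(\fk a_n,\wtd I)$, not $\mc R(\fk a_n,\wtd I)$, that is allowed to commute through the inner $\mc R$'s; under the opposite convention the argument is the mirror image, peeling $\mc R(\fk a_1,\wtd I)$ off the outside and sliding it rightward past the $\mc L$'s. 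Everything else is routine bookkeeping of tensor factors and repeated use of Functoriality and the Braiding axiom.
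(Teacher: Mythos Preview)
Your proof is correct and follows essentially the same approach as the paper: induction on $n$, with the inductive step assembled from the braiding axiom, functoriality (to push $\ss$'s through $\mc L$'s and $\mc R$'s), weak locality (to commute $\mc L(\fk a_n,\wtd I)$ past all the $\mc R(\fk a_l,\wtd I)$ with $l<n$), and the Hexagon identity $\ss_{i_n\boxdot l,k}=(\ss_{i_n,k}\otimes\id_l)(\id_{i_n}\otimes\ss_{l,k})$. The only cosmetic difference is that the paper starts from the $\mc L$ side and works toward the $\mc R$ side, whereas you start from the $\mc R$ side; the chain of equalities is literally the same read in reverse. Your explicit remark that weak locality must be invoked between $\fk a_n$ and \emph{every} $\fk a_l$ with $l<n$ (via transitivity of the anticlockwise relation inside $\wtd I$ together with isotony) is a point the paper leaves implicit.
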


\begin{proof}
We prove this by induction on $n$. The case $n=1$ is clear from the braiding axiom. Suppose that the above equation is true for $n-1$. Let $\mc H_l=\mc H_{i_{n-1}}\boxdot\cdots\boxdot\mc H_{i_1}$. Then, from the Hexagon axiom we have
\begin{align*}
\ss_{i_n\boxdot\cdots\boxdot i_1,k}=\ss_{i_n\boxdot l,k}=(\ss_{i_n,k}\boxdot\id_l)(\id_{i_n}\boxdot\ss_{l,k}).
\end{align*}
Then, for any $\chi\in\mc H_k^\infty$,
\begin{align*}
&\ss_{i_n\boxdot l,k}\mc L(\fk a_n,\wtd I)\cdots\mc L(\fk a_1,\wtd I)\chi\\
=&(\ss_{i_n,k}\boxdot\id_l)(\id_{i_n}\boxdot\ss_{l,k})\mc L(\fk a_n,\wtd I)\mc L(\fk a_{n-1},\wtd I)\cdots\mc L(\fk a_1,\wtd I)\chi\\
=& (\ss_{i_n,k}\boxdot\id_l)\mc L(\fk a_n,\wtd I)\ss_{l,k}\mc L(\fk a_{n-1},\wtd I)\cdots\mc L(\fk a_1,\wtd I)\chi\\
=&(\ss_{i_n,k}\boxdot\id_l)\mc L(\fk a_n,\wtd I)\mc R(\fk a_1,\wtd I)\cdots\mc R(\fk a_{n-1},\wtd I)\chi\\
=&(\ss_{i_n,k}\boxdot\id_l)\mc R(\fk a_1,\wtd I)\cdots\mc R(\fk a_{n-1},\wtd I)\mc L(\fk a_n,\wtd I)\chi\\
=&\mc R(\fk a_1,\wtd I)\cdots\mc R(\fk a_{n-1},\wtd I)\ss_{i_n,k}\mc L(\fk a_n,\wtd I)\chi\\
=&\mc R(\fk a_1,\wtd I)\cdots\mc R(\fk a_{n-1},\wtd I)\mc R(\fk a_n,\wtd I)\chi,
\end{align*}
where we have used the naturality of $\scr E^w_\loc$ in the third and sixth lines, induction in the fourth line,  weak locality in the fifth line, and the braiding axiom in the last line.
\end{proof}

\begin{thm}\label{lb40}
Any weak categorical partial extension $\scr E^w_\loc=(\mc A,\mc F,\boxdot,\fk H)$ can be extended to a weak categorical extension $\scr E^w=(\mc A,\scr C,\boxdot,\fk K)$. Moreover, $\scr E^w$ is M\"obius covariant if $\scr E^w_\loc$ is so.
\end{thm}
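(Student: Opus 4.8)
The strategy is to enlarge the charge index of the field operators from the generating set $\mc F$ to all of $\Obj(\scr C)$ by compression, using Proposition \ref{lb32} to handle products; note that in Definition \ref{lb31} the objects $\mc H_k$ being acted on already range over all of $\Obj(\scr C)$, so only the charge spaces must be enlarged. First I would record the elementary fact that if $T:\mc H\to\mc K$ is smooth and localizable with domain $\mc H^\infty$ and $u\in\Hom_{\mc A}(\mc K,\mc K')$, then $uT$ is again smooth and localizable (smoothness since homomorphisms are smooth; localizability since $\lVert uT\xi\rVert\le\lVert T\xi\rVert$ forces every core of $T$ to be a core of $\ovl{uT}$). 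Now fix $\mc H_i\in\Obj(\scr C)$, decompose $\mc H_i\simeq\bigoplus_\alpha\mc H_{i_\alpha}$ into irreducibles, and — since $\mc F$ generates $\scr C$ — for each $\alpha$ choose $\mc H_{i_{\alpha,1}},\dots,\mc H_{i_{\alpha,n_\alpha}}\in\mc F$, set $\mc H_{\vec\imath_\alpha}:=\mc H_{i_{\alpha,n_\alpha}}\boxdot\cdots\boxdot\mc H_{i_{\alpha,1}}$, and pick $u_\alpha\in\Hom_{\mc A}(\mc H_{\vec\imath_\alpha},\mc H_i)$ with $\sum_\alpha u_\alpha u_\alpha^*=\id_i$ realizing $\mc H_{i_\alpha}$ as a subobject of $\mc H_{\vec\imath_\alpha}$. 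For tuples $(\fk a^{(\alpha)}_{n_\alpha},\dots,\fk a^{(\alpha)}_1)\in\fk H_{i_{\alpha,n_\alpha},\dots,i_{\alpha,1}}(\wtd I)$, abbreviate $\mc L(\fk a^{(\alpha)},\wtd I):=\mc L(\fk a^{(\alpha)}_{n_\alpha},\wtd I)\cdots\mc L(\fk a^{(\alpha)}_1,\wtd I)$, a finite composite of $\mc F$-indexed operators of $\scr E^w_\loc$, which is smooth and localizable by the last clause of Definition \ref{lb31}, and whose braided companion $\mc R(\fk a^{(\alpha)}_1,\wtd I)\cdots\mc R(\fk a^{(\alpha)}_{n_\alpha},\wtd I)=:\mc R(\fk a^{(\alpha)},\wtd I)$ is provided by Proposition \ref{lb32}. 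I then define, on each $\mc H_k\in\Obj(\scr C)$,
\begin{align*}
\mc L(\fk b,\wtd I):=\sum_\alpha(u_\alpha\otimes\id_k)\mc L(\fk a^{(\alpha)},\wtd I),\qquad \mc R(\fk b,\wtd I):=\sum_\alpha(\id_k\otimes u_\alpha)\mc R(\fk a^{(\alpha)},\wtd I),
\end{align*}
where $\fk b:=\sum_\alpha u_\alpha\,\mc L(\fk a^{(\alpha)},\wtd I)\Omega\in\mc H_i$; by the fact above these are smooth and localizable, $\mc L(\fk b,\wtd I)\Omega=\mc R(\fk b,\wtd I)\Omega=\fk b$, and I let $\fk K_i(\wtd I)$ be the linear span of all such $\fk b$ (a subspace of $\mc H_i$, monotone in $\wtd I$).

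The first genuine point is well-definedness: $\mc L(\fk b,\wtd I)$ and $\mc R(\fk b,\wtd I)$ must depend only on $\fk b$, not on the decomposition of $\mc H_i$, the partial isometries $u_\alpha$, or the chosen tuples; this simultaneously forces $\mc L(\cdot,\wtd I)$ to be additive, so $\scr E^w$ is vector-labeled in the sense of Definition \ref{lb30}. Following \cite{Gui18} section 3.5, I would fix $\wtd I$ and a clockwise $\wtd J$ and show that any candidate operator $\Theta$ is pinned down on the subspace $\mc R(\fk K_j(\wtd J),\wtd J)\Omega\subset\mc H_j$ by the single vector $\Theta\Omega=\fk b$: using the intertwining property, weak locality, and Proposition \ref{lb32} inside $\scr E^w_\loc$ one slides each $\mc R(\cdot,\wtd J)$-factor across $\Theta$ and rewrites $\Theta\,\mc R(\fk c,\wtd J)\Omega$ as an expression built from $\fk b$, $\fk c$ and $\mc F$-operators alone. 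Since $\Theta$ is localizable and $\mc R(\fk K_j(\wtd J),\wtd J)\Omega$ is dense (Reeh--Schlieder, checked below) and QRI (rotation covariance), that subspace is a core for $\Theta$, so $\Theta$ is uniquely determined by $\fk b$. This is exactly the mechanism already used in the excerpt to show that $\mc L(\fk a,\wtd I)$ in a weak categorical extension depends only on $\mc L(\fk a,\wtd I)\Omega$.

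Next I would verify the nine axioms of Definition \ref{lb30} for $\scr E^w=(\mc A,\scr C,\boxdot,\fk K)$. Isotony, functoriality, neutrality, the intertwining property, braiding — which for the building blocks is precisely Proposition \ref{lb32}, and in general follows by compressing with $u_\alpha$ and using functoriality of $\ss$ — and rotation covariance all descend formally: each holds for every factor $\mc L(\fk a_m,\wtd I)$ of a composite by the corresponding axiom of $\scr E^w_\loc$, composing the relevant commuting diagrams gives it for the composite, and the compressions $u_\alpha\otimes\id$ are homomorphisms and hence preserve them. The Reeh--Schlieder property and the density of fusion products use that $\mc F$ generates $\scr C$: iterating the density axiom of $\scr E^w_\loc$ shows the vectors $\mc L(\fk a^{(\alpha)},\wtd I)\Omega$ span densely in $\mc H_{\vec\imath_\alpha}$ and the sets $\mc L(\fk a^{(\alpha)},\wtd I)\mc H_k^\infty$ span densely in $\mc H_{\vec\imath_\alpha}\boxdot\mc H_k$, after which $\sum_\alpha u_\alpha u_\alpha^*=\id$ transfers density to $\mc H_i$ and $\mc H_i\boxdot\mc H_k$. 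The remaining axiom, weak locality, is the one requiring real work: for disjoint $\wtd I$ anticlockwise to $\wtd J$ and $\fk b\in\fk K_i(\wtd I),\fk c\in\fk K_j(\wtd J)$ one must show $\mc L(\fk b,\wtd I)$ and $\mc R(\fk c,\wtd J)$ commute adjointly. Compressing with $u_\alpha$ on both sides (harmless by functoriality, using $(F\otimes G)^*=F^*\otimes G^*$) reduces this to the case $\fk b=\fk a_n\cdots\fk a_1$ and $\fk c=\fk e_1\cdots\fk e_m$ built from $\mc F$, and then one commutes each $\mc R(\fk e_\bullet,\wtd J)$ past each $\mc L(\fk a_\bullet,\wtd I)$ one factor at a time using the weak locality of $\scr E^w_\loc$ together with its functoriality axiom to carry along the spectator tensor factors. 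I expect this last step — keeping the anticlockwise orderings among the $\fk a_\bullet$ and among the $\fk e_\bullet$ consistent, and ensuring the adjoint relation (not only the forward commutation) survives the telescoping — to be the main obstacle of the proof.

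Finally, for the M\"obius covariance clause: if $\scr E^w_\loc$ satisfies \eqref{eq27}--\eqref{eq28} for all $g\in\UPSU$, so does each factor $\mc L(\fk a_m,\wtd I)$; conjugation by $g\in\UPSU$ then distributes over the factors of a composite (the transformed labels $g\fk a_m$ lying in $\fk H_{i_m}(g\wtd I)$ by hypothesis), and $g$ commutes with the $u_\alpha$ since homomorphisms of $\mc A$-modules commute with $\UPSU$. Hence $\mc L(g\fk b,g\wtd I)=g\,\mc L(\fk b,\wtd I)\,g^{-1}$ on every $\mc H_k$, with $g\fk b\in\fk K_i(g\wtd I)$, and similarly for $\mc R$, so $\scr E^w$ is M\"obius covariant. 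Together with the axiom verification this proves the theorem.
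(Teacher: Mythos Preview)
Your strategy is essentially the paper's: build the $\scr C$-indexed operators as products $\mc L(\fk a_n,\wtd I)\cdots\mc L(\fk a_1,\wtd I)$ of $\mc F$-indexed ones, then compress by a morphism into $\mc H_i$, and verify the axioms factor by factor using Proposition~\ref{lb32} for braiding and a matrix of commuting squares for weak locality. The paper's weak-locality argument is exactly your telescoping, organized as a $2\times 2$ block of diagrams: the upper-left block is the adjoint commutativity of the two raw products (obtained by iterating the weak locality of $\scr E^w_\loc$), and the remaining three blocks are instances of functoriality with bounded morphisms, for which adjoint commutativity is automatic since $(F\otimes G)^*=F^*\otimes G^*$.

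The one substantive difference is your insistence on making $\scr E^w$ vector-labeled from the outset, which forces you to take finite sums $\sum_\alpha(u_\alpha\otimes\id_k)\mc L(\fk a^{(\alpha)},\wtd I)$ over the irreducible components of $\mc H_i$. This introduces a genuine gap: your clean argument that $uT$ is localizable whenever $T$ is does \emph{not} extend to finite sums. If $\Dom_0$ is a dense QRI subspace, it is a core for each $T_\alpha$ separately, but to be a core for $\sum_\alpha u_\alpha T_\alpha$ you need a \emph{single} sequence $\xi_n\in\Dom_0$ with $\xi_n\to\xi$ and $T_\alpha\xi_n\to T_\alpha\xi$ for all $\alpha$ simultaneously, and nothing in Definition~\ref{lb31} provides that. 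The paper sidesteps this entirely by taking $\fk K_i(\wtd I)$ to be the \emph{formal} set $\coprod_n\coprod_{\mc H_{i_1},\dots,\mc H_{i_n}\in\mc F}\Hom_{\mc A}(\mc H_{i_n}\boxdot\cdots\boxdot\mc H_{i_1},\mc H_i)\times\fk H_{i_n,\dots,i_1}(\wtd I)$: one arbitrary morphism $G$ and one tuple per label, no sums, no well-definedness to check. Your partial-isometry decomposition and the ensuing uniqueness discussion are then unnecessary; the passage to a vector-labeled extension is handled separately by the general remark following Definition~\ref{lb30}, outside the proof of this theorem.
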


\begin{proof}
Our construction of $\scr E^w$ is similar to that in \cite{Gui21a} theorem 3.15. For each $\mc H_i\in\Obj(\scr C)$ and $\wtd I\in\Jtd$, we define
\begin{gather}
\fk K_i(\wtd I)_{n}=\coprod_{\mc H_{i_1},\dots,\mc H_{i_n}\in\mc F}\Hom_{\mc A}(\mc H_{i_n}\boxdot\cdots\boxdot\mc H_{i_1},\mc H_i)\times\fk H_{i_n,\dots,i_1}(\wtd I)
\end{gather}
for each $n=1,2,\dots$, and define
\begin{gather}
\fk K_i(\wtd I)=\coprod_{n\in\mbb Z_+}\fk K_i(\wtd I)_n.
\end{gather}
Then we have obvious inclusion $\fk K_i(\wtd I)\subset\fk K_i(\wtd J)$ when $\wtd I\subset\wtd J$. A general element  $\fk k\in\fk K_i(\wtd I)$ is of the form
\begin{align*}
\fk k=(G,\fk a_n,\dots,\fk a_1),
\end{align*}
where $G\in\Hom_{\mc A}(\mc H_{i_n}\boxdot\cdots\boxdot\mc H_{i_1},\mc H_i)$ and $(\fk a_n,\dots,\fk a_1)\in\fk H_{i_n,\dots,i_1}(\wtd I)$. We define for each $\mc H_k\in\Obj(\scr C)$  smooth operators $\mc L(\fk k,\wtd I):\mc H_k^\infty\rightarrow(\mc H_i\boxdot\mc H_k)^\infty$ and $\mc R(\fk k,\wtd I):\mc H_k^\infty\rightarrow(\mc H_k\boxdot\mc H_i)^\infty$ by setting for each $\chi\in\mc H_k^\infty$ that
\begin{gather}
\mc L(\fk k,\wtd I)\chi=(G\boxdot\id_k)\mc L(\fk a_n,\wtd I)\cdots\mc L(\fk a_1,\wtd I)\chi,\\
\mc R(\fk k,\wtd I)\chi=(\id_k\boxdot G)\mc R(\fk a_1,\wtd I)\cdots\mc R(\fk a_n,\wtd I)\chi.
\end{gather}
By the naturality of braiding, one has $\ss_{i,k}(G\boxdot\id_k)=(\id_k\boxdot G)\ss_{i_n\boxdot\cdots\boxdot i_1,k}$. This equation, together with proposition \ref{lb32}, shows that $\mc R(\fk k,\wtd I)\chi=\ss_{i,k}\mc L(\fk k,\wtd I)\chi$. 

It is now a routine check that $\mc L$ and $\mc R$ satisfy all the axioms of a weak categorical extension, and that $\scr E^w$ is M\"obius covariant if $\scr E^w_\loc$ is so. Note that the Neutrality follows from $\ss_{i,0}=\id_i$. To prove the Reeh-Schlieder property and the density of fusion products, first prove them when $\mc H_i$ is irreducible and let $G$ be a coisometry.  To check the weak locality, we choose $\wtd J$ clockwise to $\wtd I$, and $\fk k=(F,\fk a_m,\dots,\fk a_1),\fk l=(G,\fk b_n,\dots,\fk b_1)$ where $(\fk a_m,\dots,\fk a_1)\in \fk H_{i_m,\dots,i_1}(\wtd I),(\fk b_n,\dots,\fk b_1)\in \fk H_{j_n,\dots,j_1}(\wtd J)$, and $F\in\Hom_{\mc A}(\mc H_{\wht i},\mc H_i),G\in\Hom_{\mc A}(\mc H_{\wht j},\mc H_j)$ where $\mc H_{\wht i}=\mc H_{i_m}\boxdot\cdots\boxdot\mc H_{i_1}$ and $\mc H_{\wht j}=\mc H_{j_n}\boxdot\cdots\boxdot\mc H_{j_1}$. Let $A=\mc L(\fk a_m,\wtd I)\cdots,\mc L(\fk a_1,\wtd I)$ and $B=\mc R(\fk b_1,\wtd J)\cdots\mc R(\fk b_n,\wtd J)$. Then $A$ and $B$ clearly commute adjointly. So the adjoint commutativity of $\mc L(\fk k,\wtd I)=(F\boxdot\id)A$ and $\mc R(\fk l,\wtd J)=(\id\boxdot G)B$ follows from that of the following matrix of diagrams.
\begin{align*}
\begin{CD}
\mc H_k^\infty @>\quad B \quad >> (\mc H_k\boxdot\mc H_{\wht j})^\infty @> ~~~~~~  \id_k\boxdot G ~~~~~~  >> (\mc H_k\boxdot\mc H_j)^\infty\\
@V A  VV @V A VV @V A VV\\
(\mc H_{\wht i}\boxdot\mc H_k)^\infty @> \quad  B \quad  >> (\mc H_{\wht i}\boxdot\mc H_k\boxdot\mc H_{\wht j})^\infty   @> \quad \id_{\wht i}\boxdot\id_k\boxdot G\quad  >>(\mc H_{\wht i}\boxdot\mc H_k\boxdot\mc H_j)^\infty\\
@V F\boxdot\id_k  VV @V  F\boxdot\id_k\boxdot\id_{\wht j} VV @V  F\boxdot\id_k\boxdot\id_j  VV\\
(\mc H_i\boxdot\mc H_k)^\infty @> \quad B \quad  >>(\mc H_i\boxdot\mc H_k\boxdot\mc H_{\wht j})^\infty  @>~~~~  \id_i\boxdot\id_k\boxdot G~~~~  >>(\mc H_i\boxdot\mc H_k\boxdot\mc H_j)^\infty
\end{CD}
\end{align*} 
\end{proof}

\begin{df}
A quadruple $(A,\fk x,\wtd I,\mc H_i)$ (resp. $(B,\fk y,\wtd J,\mc H_j)$) is called a \textbf{weak left (resp. right) operator} of $\scr E^w_\loc=(\mc A,\mc F,\boxtimes,\fk H)$ if all the conditions in definition \ref{lb39} are satisfied, except that we assume $\mc H_l\in\mc F$.\footnote{Note that we do not assume $\mc H_i$ or $\mc H_j$ to be in $\mc F$.}
\end{df}

From the construction of $\scr E^w$ from $\scr E^w_\loc$, it is clear that we have:

\begin{thm}
Let $\scr E^w$ be the weak categorical extension defined in the proof of theorem \ref{lb40}. Then any weak left (resp. right) operator of $\scr E^w_{\loc}$ is also a weak left (resp. right) operator of $\scr E^w$.
\end{thm}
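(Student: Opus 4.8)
The plan is to notice that the only axiom of Definition \ref{lb39} that genuinely has to be upgraded when passing from $\scr E^w_\loc$ to $\scr E^w$ is the commutation axiom (b): smoothness, localizability, and the functoriality axiom (a) make no reference to $\mc F$ whatsoever, so they hold verbatim for $(A,\fk x,\wtd I,\mc H_i)$ as soon as it is a weak left operator of $\scr E^w_\loc$. Hence the entire content of the statement is to check, for every $\mc H_l\in\Obj(\scr C)$, every $\wtd J\in\Jtd$ clockwise to $\wtd I$, and every $\fk l\in\fk K_l(\wtd J)$, that $A(\fk x,\wtd I)$ commutes with $\mc R(\fk l,\wtd J)$ when both act on $\mc H_k^\infty$ for an arbitrary $\mc H_k\in\Obj(\scr C)$. (The weak right case is identical after interchanging $\mc L\leftrightarrow\mc R$ and the roles of the clockwise and anticlockwise positions, so I will only discuss the left case.)

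First I would unwind the definition of $\mc R(\fk l,\wtd J)$ from the construction of $\scr E^w$ in the proof of Theorem \ref{lb40}: writing $\fk l=(G,\fk b_n,\dots,\fk b_1)$ with $G\in\Hom_{\mc A}(\mc H_{l_n}\boxdot\cdots\boxdot\mc H_{l_1},\mc H_l)$ and $(\fk b_n,\dots,\fk b_1)\in\fk H_{l_n,\dots,l_1}(\wtd J)$, one has $\mc R(\fk l,\wtd J)=(\id\otimes G)\circ\mc R(\fk b_1,\wtd J)\circ\cdots\circ\mc R(\fk b_n,\wtd J)$ on $\mc H_k^\infty$, a composition in which each $\fk b_p$ belongs to $\fk H_{l_p}(\wtd J)$ with $\mc H_{l_p}\in\mc F$. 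The decisive observation is that each factor of this composition is of a type for which the defining axioms of a weak left operator of $\scr E^w_\loc$ directly apply: either it is a homomorphism of $\mc A$-modules $\id\otimes G$, handled by axiom (a), or it is an $\mc R$-operator of the \emph{local} extension attached to a module in $\mc F$ and an interval clockwise to $\wtd I$, handled by axiom (b) (note that any sub-interval of $\wtd J$ witnessing $\fk b_p$ is still clockwise to $\wtd I$, so axiom (b) is legitimately applicable).

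Then I would push $A(\fk x,\wtd I)$ through this composition one factor at a time. To move $A(\fk x,\wtd I)$ past $\mc R(\fk b_p,\wtd J)$ I invoke axiom (b) of Definition \ref{lb39} for $\scr E^w_\loc$, taking the object in the ``$k$-slot'' to be the iterated fusion product $\mc H_k\boxdot\mc H_{l_n}\boxdot\cdots\boxdot\mc H_{l_{p+1}}\in\Obj(\scr C)$ that has been accumulated at that stage; to move it past $\id\otimes G$ I invoke axiom (a) with the ``$k$-slot'' object $\mc H_k\boxdot\mc H_{l_n}\boxdot\cdots\boxdot\mc H_{l_1}$, turning it into $\id_i\otimes\id_k\otimes G$. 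Composing these $n+1$ elementary commutations yields $A(\fk x,\wtd I)\,\mc R(\fk l,\wtd J)=\mc R(\fk l,\wtd J)\,A(\fk x,\wtd I)$ on $\mc H_k^\infty$, which is precisely axiom (b) of Definition \ref{lb39} for $\scr E^w$; together with the costless transfer of the remaining axioms, this identifies $(A,\fk x,\wtd I,\mc H_i)$ as a weak left operator of $\scr E^w$. The only point requiring any care is the bookkeeping of which object occupies the ``$k$-slot'' at each stage, together with the trivial check that the relevant arg-valued intervals stay in the prescribed clockwise position; this is the sole, and very minor, obstacle.
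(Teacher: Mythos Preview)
Your proposal is correct and is precisely the argument the paper has in mind: the paper gives no explicit proof, stating only that the result is ``clear from the construction of $\scr E^w$ from $\scr E^w_\loc$,'' and your unpacking --- decomposing $\mc R(\fk l,\wtd J)=(\id\otimes G)\mc R(\fk b_1,\wtd J)\cdots\mc R(\fk b_n,\wtd J)$ and pushing $A(\fk x,\wtd I)$ through each factor via axiom (a) (for the morphism) or axiom (b) of $\scr E^w_\loc$ (for each $\mc R(\fk b_p,\wtd J)$ with $\mc H_{l_p}\in\mc F$) --- is exactly what makes this clear.
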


\begin{co}\label{lb62}
Corollary \ref{lb41} holds verbatim with $\scr E^w$ replaced by $\scr E^w_\loc$.
\end{co}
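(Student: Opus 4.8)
The plan is to deduce this directly from Corollary~\ref{lb41} by passing to the global extension of $\scr E^w_\loc$. As in Corollary~\ref{lb41} I assume in addition that $\scr C$ is rigid and that $\scr E^w_\loc$ is M\"obius covariant. First I would invoke Theorem~\ref{lb40} to extend $\scr E^w_\loc$ to a weak categorical extension $\scr E^w=(\mc A,\scr C,\boxdot,\fk K)$; by the same theorem $\scr E^w$ is M\"obius covariant, and it lives on the very same braided $C^*$-tensor category $(\scr C,\boxdot,\ss)$ --- in particular $\scr C$ is still rigid --- so the two standing hypotheses of Corollary~\ref{lb41} hold for $\scr E^w$.

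Next I would apply the theorem immediately preceding this corollary: every weak left operator $(A,\fk x,\wtd I,\mc H_i)$ and every weak right operator $(B,\fk y,\wtd J,\mc H_j)$ of $\scr E^w_\loc$ is automatically a weak left, resp.\ right, operator of $\scr E^w$. The smooth, localizable operators $A(\fk x,\wtd I):\mc H_k^\infty\to(\mc H_i\boxdot\mc H_k)^\infty$ and $B(\fk y,\wtd J):\mc H_k^\infty\to(\mc H_k\boxdot\mc H_j)^\infty$ attached to each $\mc H_k\in\Obj(\scr C)$ are literally unchanged under this reinterpretation, and $\boxdot$ is unchanged; hence the square of preclosed operators in the statement of Corollary~\ref{lb62} is verbatim the square of Corollary~\ref{lb41} for $\scr E^w$, together with the identical notion of strong commutativity (via the extension construction of Definition~\ref{lb30}).

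Finally, since $\wtd I$ is anticlockwise to $\wtd J$ and $\mc H_i,\mc H_j\in\Obj(\scr C)$, Corollary~\ref{lb41} applied to $\scr E^w$ gives that $A(\fk x,\wtd I)$ and $B(\fk y,\wtd J)$ commute strongly for every $\mc H_k\in\Obj(\scr C)$, which is exactly the assertion of Corollary~\ref{lb62}. There is essentially no obstacle here; the only point deserving a moment's care is the bookkeeping that both the notion of a weak left/right operator and the notion of strong commutativity of the associated diagram transfer unaltered from $\scr E^w_\loc$ to the extension $\scr E^w$, which is precisely the content of the preceding theorem combined with the observation that Theorem~\ref{lb40} alters neither $\boxdot$ nor the operators $A$ and $B$. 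A self-contained proof bypassing $\scr E^w$ would just replay the argument of Corollary~\ref{lb41}, hence of Theorem~\ref{lb36} and Lemma~\ref{lb37}, which already only uses the $\mc L,\mc R$ operators attached to modules in $\mc F$; routing through $\scr E^w$ is merely the cleanest packaging.
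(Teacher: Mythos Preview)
Your proof is correct and follows exactly the approach the paper intends: extend $\scr E^w_\loc$ to $\scr E^w$ via Theorem~\ref{lb40} (preserving M\"obius covariance and rigidity), invoke the immediately preceding theorem to transfer the weak left/right operators unchanged to $\scr E^w$, and apply Corollary~\ref{lb41}. The paper states Corollary~\ref{lb62} without proof precisely because this chain is immediate from the two preceding results.
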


\section{Application to vertex operator algebras}

We assume that the readers are familiar with the basics of unitary vertex operator algebras (VOAs) and their representations (modules) \cite{CKLW18,DL14}, and the intertwining operators of VOAs \cite{FHL93}. We follow basically the notations in \cite{Gui19a} chapter 1, in which all the relevant definitions and properties are presented.

Let $V$ be a unitary simple (equivalently, CFT-type \cite{CKLW18}) VOA $V$. Then the pre-Hilbert space $V$ (with inner product $\bk{\cdot|\cdot}$\footnote{All inner products are assumed to be non-degenerate.}) has orthogonal grading $V=\bigoplus_{n\in\mbb N}V(n)$ where $V(0)$ is spanned by the vacuum vector $\Omega$, and $\dim V(n)<+\infty$. For any $v\in V$, we write the vertex operator as $Y(v,z)=\sum_{n\in\mbb Z}Y(v)_nz^{-n-1}$ where $z\in\mbb C^\times=\mbb C-\{0\}$, and $Y(v)_n$ is a linear map on $V$, called the $n$-th mode of $Y(v,z)$. Let $\nu\in V(2)$ be the conformal vector. Then $\{L_n=Y(\nu)_{n+1}:n\in\mbb Z\}$ form a Virasoro algebra with central charge $c\geq0$. We have $L_0|_{V(n)}=n\id_{V(n)}$.

A $V$-module $W_i$ has grading $W_i=\bigoplus_{n\in\mbb R}W_i(n)$ where each $W_i(n)$ is finite-dimensional, and $\dim W_i(n)=0$ when $n$ is small enough. For each $v\in V$ we also have a vertex operator $Y_i(v,z)=\sum_{n\in\mbb Z}Y_i(v)_nz^{-n-1}$ where each $Y_i(v)_n\in\End(W_i)$. $\{L_n=Y_i(\nu)_{n+1}:n\in\mbb Z\}$ form a representation of the Virasoro algebra on $W_i$ with the same central charge $c$. We have $L_0|_{W_i(n)}=n\id_{W_i(n)}$ for each $n\in\mbb R$. A vector $w^{(i)}\in W_i$ is called \textbf{homogeneous} if $w^{(i)}\in W_i(s)$ for some $s\in\mbb R$. In this case we say that $s$ is the \textbf{conformal weight} of $w^{(i)}$. If moreover $L_1w^{(i)}=0$, then $w^{(i)}$ is called \textbf{quasi-primary}; if $L_nw^{(i)}=0$ for all $n>0$, then $w^{(i)}$ is called \textbf{primary}. When $W_i$ is a unitary $V$-module, the grading $W_i=\bigoplus_{n\in\mbb R}W_i(n)$ is orthogonal, $\dim W_i(n)=0$ when $n<0$, and $L_n^\dagger=L_{-n}$.\footnote{In this chapter, if $A$ is a linear   map from a pre-Hilbert space $M$ to another one $N$, then $A^\dagger$ (if exists) is the unique linear  map from $N$ to M satisfying $\bk {A\mu|\nu}=\bk{\mu|A^\dagger\nu}$  for any $\mu\in M,\nu\in N$.} $V$ itself is a $V$-module, called the vacuum $V$-module, and is sometimes denoted by $W_0$.

We assume that any irreducible $V$-module $W_i$ is unitarizable, i.e., the vector space $W_i$ admits a (unique up to scalar multiple) inner product under which $W_i$ becomes a unitary $V$-module. Such $V$ is called \textbf{strongly unitary}.

For any $V$-modules $W_i,W_j,W_k$, we let $\mc V{k\choose i~j}$ be the vector space of type ${W_k\choose W_iW_j}={k\choose i~j}$ intertwining operators of $V$.  Then for any $\mc Y\in\mc V{k\choose i~j}$  and any $w^{(i)}\in W_i$, we have $\mc Y(w^{(i)},z)=\sum_{s\in\mbb R}\mc Y(w^{(i)})_sz^{-s-1}$ where each $\mc Y(w^{(i)})_s$ is a linear map from $W_j$ to $W_k$. We say that $W_i,W_j,W_k$ are respectively the \textbf{charge space}, the \textbf{source space}, and the \textbf{target space} of $\mc Y$. We say that $\mc Y$ is a \textbf{unitary} (resp. \textbf{irreducible}) \textbf{intertwining operator} if all $W_i,W_j,W_k$ are unitary (resp. irreducible) $V$-modules. Note that $Y_i\in\mc V{i\choose 0~i}=\mc V{W_i\choose W_0W_i}$ and, in particular, $Y\in\mc V{0\choose 0~0}$.

\subsection{Polynomial energy bounds}\label{lb69}

Suppose that $\mc Y\in\mc V{k\choose i~j}$ is unitary. Let $w^{(i)}\in W_i$ be homogeneous. Given $a\geq 0$, we say that $\mc Y(w^{(i)},z)$\footnote{Here, $z$ is understood as a variable but not as a concrete complex number.} satisfies \textbf{$a$-th order energy bounds}, if there exist $M,b\geq0$, such that for any $s\in\mathbb R,w^{(j)}\in W_j$, 
\begin{align}
\lVert \mc Y(w^{(i)})_sw^{(j)} \lVert\leq M(1+|s|)^b\lVert (1+L_0)^aw^{(j)}\lVert.
\end{align}
$1$-st order energy bounds are called \textbf{linear energy bounds}. We say that $\mc Y(w^{(i)},z)$ is \textbf{energy-bounded} if satisfies $a$-th order energy bounds for some $a$, that $\mc Y$ is energy-bounded if $\mc Y(w^{(i)},z)$ is energy-bounded for any homogeneous $w^{(i)}\in W_i$, that a unitary $V$-module $W_i$ is energy-bounded if $Y_i$ is an energy-bounded intertwining operator, that $V$ is energy-bounded if the vacuum module $W_0$ is energy-bounded. We say that $V$ is \textbf{strongly energy-bounded} if (the vertex operator of) any irreducible unitary $V$-module is energy-bounded. We say that $V$ is \textbf{completely energy-bounded} if any irreducible unitary intertwining operator of $V$ is energy-bounded.

The following is \cite{Gui19a} corollary 3.7-(b).

\begin{pp}\label{lb53}
Let $\mc Y\in\mc V{k\choose i~j}$ be unitary. Assume that $W_i$ is irreducible, and $W_j,W_k$ are energy-bounded. If $\mc Y(w^{(i)},z)$ is energy-bounded for some non-zero homogeneous $w^{(i)}\in W_i$, then $\mc Y$ is an energy-bounded intertwining operator.
\end{pp}

We shall define smeared intertwining operators and recall some basic properties. See \cite{Gui19a} chapter 3 and \cite{Gui21a} section 4.4 for details. For each unitary $V$-module $W_i$ we let $\mc H_i$ be the Hilbert space completion of the pre-Hilbert space $W_i$. We set 
\begin{align*}
\mc H_i^\infty=\bigcap_{n\in\mbb Z_+}\Dom(\ovl{L_0}^n).
\end{align*}
This notation will coincide with the previous one defined for conformal net modules after $\mc H_i$ is equipped with a natural structure of conformal net module. For any $\wtd I=(I,\arg_I)\in\Jtd$ and $f\in C^\infty_c(I)$ , we call $\wtd f=(f,\arg_I)$ a (smooth) \textbf{arg-valued function} on $S^1$ with support inside $\wtd I$, and let $C^\infty_c(\wtd I)$ be the set of all such $\wtd f$.  If $\wtd I\subset\wtd J\in\Jtd$, then $C^\infty_c(\wtd I)$ is naturally a subspace of $C^\infty_c(\wtd J)$ by identifying each $(f,\arg I)\in C^\infty_c(\wtd I)$ with $(f,\arg J)$. Now, assume that $\mc Y(w^{(i)},z)$ is energy-bounded. $\wtd I=(I,\arg_I)\in\Jtd$, and $\wtd f=(f,\arg_I)\in C^\infty_c(\wtd I)$ , we  define the \textbf{smeared intertwining operator} $\mc Y(w^{(i)},\wtd f)$ 
\begin{align}
\mc Y(w^{(i)},\wtd f)=\int_{\arg_I(I)}\mc Y(w^{(i)},e^{i\theta})f(e^{i\theta})\cdot\frac{e^{i\theta}}{2\pi}d\theta
\end{align}
which indeed maps $W_j$ into $\mc H_k^\infty$.
Regarding $\mc Y(w^{(i)},\wtd f)$ as an unbounded operator from $\mc H_j$ to $\mc H_k$ with domain $W_j$, then $\mc Y(w^{(i)},\wtd f)$ is closable, and its closure  contains $\mc H_j^\infty$. Moreover, we have
\begin{gather*}
\ovl{\mc Y(w^{(i)},\wtd f)}\mc H_j^\infty\subset\mc H_k^\infty,\qquad \ovl{\mc Y(w^{(i)},\wtd f)}^*\mc H_k^\infty\subset\mc H_j^\infty,
\end{gather*}
i.e., $\ovl{\mc Y(w^{(i)},\wtd f)}$ is smooth. In the following, \emph{we will always denote by $\mc Y(w^{(i)},\wtd f)$ the restriction of the closed operator $\ovl{\mc Y(w^{(i)},\wtd f)}$ to the core $\mc H_j^\infty$.} Then the formal adjoint $\mc Y(w^{(i)},\wtd f)^\dagger$ exists, which is the restriction of $\mc Y(w^{(i)},\wtd f)^*$ to $\mc H_k^\infty$.

It was proved in \cite{Gui19a} lemma 3.8 that if $\mc Y(w^{(i)},z)$ satisfies $a$-th order energy bounds, then for any $p\geq0$, there exists $M\geq0$ such that for any $\wtd f$ and $\eta\in\mc H_j^\infty$,
\begin{align*}
\big\lVert (1+\ovl{L_0}^p) \mc Y(w^{(i)},\wtd f)\eta\big\lVert \leq M \big\lVert  (1+\ovl{L_0}^{(a+p)}) \eta\big\lVert.
\end{align*}
As a consequence, any product of (finitely many) energy-bounded smeared intertwining operators is bounded by a scalar multiple of $1+\ovl{L_0}^r$ for some $r\geq0$. By \cite{CKLW18} lemma 7.2, $\ovl{L_0}^r$ is localizable (if the action of rotation group is defined by $\varrho(t)=e^{\im t\ovl{L_0}}$). Thus we have:

\begin{pp}\label{lb45}
If $A$ is a product of smooth operators of the form $\mc Y(w^{(i)},\wtd f)$ where $\mc Y$ is a unitary intertwining operator and $\mc Y(w^{(i)},z)$ is energy-bounded, then $A$ is (smooth and) localizable.
\end{pp}

If $v\in V$ is homogeneous,  and $Y_i(v,z)$ is energy-bounded, then the smeared vertex operator $Y_i(v,\wtd f)$ is independent of the argument $\arg_I$. So we will write it as $Y_i(v,f)$ instead.

\subsection{M\"obius and conformal covariance}

We discuss the M\"obius and conformal covariance of smeared intertwining operators. First, by \cite{TL99}, for any unitary $V$-module $W_i$, the action of Virasoro algebra on $W_i$ can be integrated to a unitary representation of $\GAV$ on $\mc H_i$. More precisely, consider the Lie algebra $\Vect(\mbb S^1)$ of $\Diffp(\mbb S^1)$ and its universal cover $\scr G$, which is the Lie algebra of (global and smooth) vector fields on $\mbb S^1$. Let $\Vectc(\mbb S^1)$ be the complexification of $\Vect(\mbb S^1)$, which has a natural $*$-structure (involution) preserving the elements in $\im\Vect(\mbb S^1)$.  Let $e_n\in C^\infty(\mbb S^1)$ be  defined by $e_n(e^{\im\theta})=e^{\im n\theta}$ for any $n\in\mbb Z$. Define for each $n\in\mbb Z$,
\begin{align*}
l_n=-\im e_n\frac{\partial}{\partial\theta}\quad \in\Vectc(\mbb S^1).
\end{align*}
Then $l_n^*=l_{-n}$. If $f=\sum_{n\in\mbb Z}\wht f_ne_n\in C_c^\infty(\mbb S^1)$ (where each $\wht f_n\in\mbb C$), we define
\begin{align*}
T(f)=\sum_{n\in\mbb Z}\wht f_nl_{n-1}\quad\in\Vectc(\mbb S^1).
\end{align*}
Then $\im T(f)\in\Vect(\mbb S^1)$ if and only if $e_{-1}f$ is real. In that case, we can exponentiate $\im T(f)$ to an element $\exp(\im T(f))$ in $\scr G$. For instance, we have
\begin{gather*}
\varrho(t)=\exp(\im tl_0),\qquad \delta(t)=\exp(t\frac{l_1-l_{-1}}2).
\end{gather*}

By \cite{TL99} section 2, if $Y_i(v,z)$ satisfies linear energy bounds and $Y_i(f)^\dagger=Y_i(f)$, then $Y_i(v,f)$ is essentially self-adjoint (i.e. $\ovl{Y_i(f)}$ is self-adjoint). For the conformal vector $\nu$, the vertex operator $Y_i(\nu,z)$ satisfies linear energy bounds by \cite{BS90} section 2. Moreover, when $e_1f$ is real, $Y_i(\nu,f)^\dagger=Y_i(\nu,f)$. So $\ovl{Y_i(\nu,f)}$ is self-adjoint. By \cite{TL99} theorem 5.2.1, there exists a unique strongly continuous projective representation $U_i$ of $\scr G$ on $\mc H_i$ such that for any $f\in C^\infty(\mbb S^1)$ where $e_1f$ is real, $e^{\im\ovl{Y_i(\nu,f)}}$ is a representing element of $U_i(\exp(\im T(f)))$. We write $U_0$ as $U$.\footnote{After constructing a conformal net $\mc A_V$ from $V$, $U$ will be the projective representation making $\mc A_V$ conformal covariant.} Then, as in section \ref{lb29}, one can use  \eqref{eq35} to define  a central extension $\GAV$ of $\scr G$ associated $U:\scr G\curvearrowright\mc H_0$. The natural action of $\GAV$ on $\mc H_0$ is also denoted by $U$.  By (the proof of) \cite{Gui21a} proposition 4.9, we have:

\begin{pp}\label{lb46}
The continuous projective representation $U_i:\scr G\curvearrowright\mc H_i$ can be lifted uniquely to a  unitary representation  of $\GAV$ on $\mc H_i$, denoted also by $U_i$,  such that for any $g\in\GAV$, if there exist $\lambda\in\mbb C$ (where $|\lambda|=1$) and   $f\in C^\infty(\mbb S^1)$ (where $e_{-1}f$ is real) such that the image of $g$ in $\scr G$ is $\exp(\im T(f))$,  and that $U(g)=\lambda e^{\im \ovl{Y(\nu,f)}}$, then $U_i(g)=\lambda e^{\im\ovl{Y_i}(\nu,f)}$.
\end{pp}

Let $g\in\GAV$. If $\wtd I\in\Jtd$ and $\wtd f\in C_c^\infty(\wtd I)$, we define 
\begin{align}
\wtd f\circ g^{-1}=(f\circ g^{-1},\arg_{gI})\quad \in C_c^\infty(g\wtd I)
\end{align}
where $\arg_{gI}$ is defined by $g\wtd I=(gI,\arg_{gI})$.  Recall that $\GAV$ is acting on $\mbb S^1$. So $f\circ g^{-1}(z)=f(g^{-1}z)$ for any $z\in\mbb S^1$. Define $\partial_zg^{-1}$ to be a smooth function on $\mbb S^1$ by setting
\begin{align}
(\partial_zg^{-1})(e^{\im \theta})=\frac{\partial (g^{-1}e^{\im\theta})}{\im e^{\im\theta}\partial\theta}.
\end{align}
This function depends only on the image $\pi(g)$ of $g$ in $\Diffp(\mbb S^1)$. (In the case that $\pi(g^{-1})$ is analytic near $\mbb S^1$, $\partial_zg^{-1}$ is just the ordinary derivative of the analytic function $\pi(g^{-1})$.) We also define $\arg \big((\partial_zg^{-1})(e^{\im \theta})\big)$ as follows: Choose any map  $\gamma:[0,1]\rightarrow\GAV$ satisfying $\gamma(0)=1,\gamma(1)=g$ such that $\gamma$ descends to a (continuous) path in $\Diffp(\mbb S^1)$. Let  $\arg \big((\partial_z\gamma(0)^{-1})(e^{\im \theta})\big)$ be $0$ and let the argument of $(\partial_z\gamma(t)^{-1})(e^{\im \theta})$ change continuously as $t$ varies from $0$ to $1$. Then $\arg \big((\partial_zg^{-1})(e^{\im \theta})\big)$ is defined to be the argument of $(\partial_z\gamma(1)^{-1})(e^{\im \theta})$. Using this arg-value, one can define a smooth function $(\partial_zg^{-1})^\lambda$ for all $\lambda\in\mbb C$, where $(\partial_zg^{-1})^\lambda(e^{\im\theta})=\big((\partial_zg^{-1})(e^{\im \theta})\big)^\lambda$. We understand
\begin{align*}
(\partial_zg^{-1})^\lambda\cdot  (\wtd f\circ g^{-1})=((\partial_zg^{-1})^\lambda\cdot(f\circ g^{-1}),\arg_{gI})
\end{align*}
to be in $C_c^\infty(g\wtd I)$. 

The following can be proved in essentially the same way as \cite{CKLW18} proposition 6.4. Note that  by the fact that $\scr G$ is generated by elements of the form $g=\exp(\im T(f))$ (see for instance \cite{Gui21a} proposition 2.4), and that for any such $g$, $U_i(g)$ is smooth (cf. \cite{TL99} proposition 2.1), we conclude that 
\begin{align}\label{eq51}
\text{$U_i(g)$ is smooth for any $g\in\GAV$}.
\end{align}

\begin{pp}\label{lb44}
Let $\mc Y\in\mc V{k\choose i~j}$ be a unitary intertwining operator, let $w^{(i)}\in W_i$ be quasi-primary with (necessarily non-negative) conformal weight $d$, and assume that $\mc Y(w^{(i)},z)$ is energy-bounded.  Then, for any $g\in\UPSU$, the following holds when acting on $\mc H_j^\infty$.
\begin{align}
U_k(g)\mc Y(w^{(i)},\wtd f)U_j(g)^*=\mc Y(w^{(i)},(\partial_zg^{-1})^{(1-d)}\cdot (\wtd f\circ g^{-1})).\label{eq37}
\end{align}
If $w^{(i)}$ is primary, then \eqref{eq37} holds for any $g\in\GAV$. If $w^{(i)}$ is homogeneous but not assumed to be quasi-primary, then \eqref{eq37} holds for any $g$ inside the rotation subgroup. 
\end{pp}
In this article, the above proposition will be used only when $g$ is an element inside the rotation subgroup $\varrho$ or dilation subgroup $\delta$. (Note that these two subgroups generate $\UPSU$.) When $g=\varrho(t)$, this proposition is also proved in \cite{Gui19a} proposition 3.15. The calculations there can be easily adapted to prove the case $g=\delta(t)$.

\subsection{Strong locality and intertwining property}\label{lb67}

Assume that  $V$ is strongly energy-bounded. The following theorem follows from \cite{Gui19a} propositions 2.13 and 3.13. (See also the proof of \cite{Gui19a} proposition 3.16.)

\begin{thm}[Weak intertwining property]\label{lb63}
Let $\mc Y\in\mc V{k\choose i~j}$ be a unitary intertwining operator. Suppose that $w^{(i)}\in W_i$ is homogeneous and $\mc Y(w^{(i)},z)$ is energy-bounded. Then for any $\wtd I\in\Jtd$, $J\in\mc J$ disjoint from $\wtd I$, $\wtd f\in C_c^\infty(\wtd I)$, $g\in C_c^\infty(J)$, and homogeneous $v\in V$, the following diagram commutes adjointly.
\begin{align}\label{eq38}
\begin{CD}
\mc H_j^\infty @>~Y_j(v,g)~>> \mc H_j^\infty\\
@V \mc Y(w^{(i)},\wtd f)  VV @VV \mc Y(w^{(i)},\wtd f) V\\
\mc H_k^\infty @> Y_k(v,g)>> \mc H_k^\infty
\end{CD}
\end{align}	
\end{thm}

Let $E$ be a set of homogenous vectors in $V$. Given a homogeneous $w^{(i)}\in W_i$ such that $\mc Y(w^{(i)},z)$ is energy-bounded, we say that $\mc Y(w^{(i)},z)$ satisfies the \textbf{$E$-strong intertwining property}, if for any $\wtd I\in\Jtd$, $J\in\mc J$ disjoint from $\wtd I$, $f\in C_c^\infty(\wtd I)$, $g\in C_c^\infty(J)$, and  $v\in E$, diagram \ref{eq38} commutes strongly. It is called the  \textbf{strong intertwining property} if $E$ is the set of all homogeneous vectors of $V$. We say that $\mc Y$ satisfies the \textbf{strong intertwining property} if $\mc Y(w^{(i)},z)$ is so for any homogeneous $w^{(i)}\in W_i$. We say that $V$ is \textbf{$E$-strongly local} \cite{CKLW18} if  $Y(v,z)$ (the vertex operator for the vacuum module $V$) satisfies the strong intertwining property for any homogeneous $v\in E$; if moreover $E$ consists of all homogeneous vectors, we say that $V$ is \textbf{strongly local}.

We now review the construction of conformal nets from VOAs. We say that $E$ \textbf{generates} $V$ if the smallest subspace of $V$ containing $E$ and invariant under all $Y(v)_n$ (where $v\in E,n\in\mbb Z$) is $V$. We say that $E$ is \textbf{quasi-primary} if all the vectors in $E$ are quasi-primary. A generating and quasi-primary set of vectors always exists:

\begin{pp}[\cite{CKLW18} Prop.6.6]\label{lb56}
If $E$ is the set of all quasi-primary vectors in $V$, then $E$ generates $V$.
\end{pp}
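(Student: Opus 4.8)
The statement to prove is Proposition \ref{lb56}: if $E$ is the set of all quasi-primary vectors in $V$, then $E$ generates $V$.

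\medskip

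The plan is to show that the subspace $V_E\subseteq V$ generated by $E$ (i.e. the smallest subspace containing $E$ and stable under all modes $Y(v)_n$) is all of $V$, by using the action of the Virasoro algebra together with the fact that, over $\mathbb C$, every graded piece $V(n)$ decomposes into a sum of quasi-primary vectors and descendants of lower-weight quasi-primary vectors. First I would recall that, since $V$ is of CFT-type, $V(0)=\mathbb C\Omega$, and $\Omega$ is quasi-primary (indeed $L_n\Omega=0$ for $n\geq -1$), so $V(0)\subseteq V_E$. Then I would argue by induction on the conformal weight $n$: assume $V(0),\dots,V(n-1)\subseteq V_E$, and show $V(n)\subseteq V_E$. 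The key linear-algebra input is that $V(n) = V(n)^{\mathrm{qp}} \oplus \sum_{k\geq 1} L_{-k} V(n-k)$, where $V(n)^{\mathrm{qp}}$ is the space of quasi-primary vectors of weight $n$ — this is the standard fact (valid because the inner product is nondegenerate, or equivalently because $L_1$ is surjective onto $V(n-1)$ from $V(n)$ in the appropriate range, the Verma-module-type argument). The quasi-primary part is in $E\subseteq V_E$ by hypothesis. For the descendant part, since $\{L_m = Y(\nu)_{m+1}\}$ are modes of the vertex operator of the conformal vector $\nu$, and $\nu\in V(2)$ is itself quasi-primary (one checks $L_1\nu=0$), we have $\nu\in E\subseteq V_E$; hence for any $u\in V(n-k)\subseteq V_E$ (by induction), $L_{-k}u = Y(\nu)_{-k+1}u \in V_E$ because $V_E$ is stable under all modes of vectors already in $V_E$. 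Wait — I need $V_E$ to be stable under modes of $\nu$, which requires $\nu\in V_E$; this holds since $\nu$ is quasi-primary. So $L_{-k}V(n-k)\subseteq V_E$ for each $k\geq 1$, and therefore $V(n)\subseteq V_E$, completing the induction.

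\medskip

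More carefully, one must be slightly cautious about what "generated by $E$" means: $V_E$ is stable under $Y(v)_n$ for $v\in E$ and $n\in\mathbb Z$, but a priori one wants stability under $Y(v)_n$ for all $v\in V_E$. However, this is automatic: the standard fact that the modes of all vectors in a vertex algebra generated (as above) by a set $S$ of vectors coincide with those obtained by iterated "normal ordered products" of modes of vectors in $S$ shows that $V_E$ is itself a vertex subalgebra, hence stable under $Y(v)_n$ for all $v\in V_E$. (Alternatively, one simply observes the induction only ever needs stability under the modes of $\nu$ and the fact that $E\subseteq V_E$, both of which are given directly.) So the cleanest route is: first establish that $\nu$ is quasi-primary so $\nu\in E$, then run the induction using only $L_{-k}=Y(\nu)_{-k+1}$ applied to already-constructed vectors, which stays inside $V_E$ by the defining closure property of $V_E$ under modes of its elements once we know $\nu\in V_E$.

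\medskip

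The main obstacle — though it is really just a bookkeeping point rather than a deep difficulty — is verifying the decomposition $V(n) = V(n)^{\mathrm{qp}} \oplus \sum_{k\geq 1}L_{-k}V(n-k)$. This follows from representation theory of the Virasoro algebra: using the nondegenerate invariant bilinear form on $V$ (available since $V$ is unitary, hence in particular has a nondegenerate invariant form) and the adjoint relation $L_n^\dagger = L_{-n}$, the orthogonal complement of $\sum_{k\geq 1}L_{-k}V(n-k)$ inside $V(n)$ is exactly $\bigcap_{k\geq 1}\ker(L_k|_{V(n)})$, which for weight-$n$ vectors reduces to $\ker(L_1|_{V(n)})$ plus positivity-of-energy considerations — i.e. precisely the quasi-primary vectors (no negative-weight obstructions since $V(m)=0$ for $m<0$). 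I would cite the analogous statement in \cite{CKLW18} or \cite{FHL93} rather than reprove it. Once this is in hand the induction is immediate, and the proposition follows.
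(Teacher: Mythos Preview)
Your proof is correct and follows essentially the same route as the paper: both use that the conformal vector $\nu$ is quasi-primary (hence in $E$), so that Virasoro lowering modes $L_{-k}=Y(\nu)_{-k+1}$ preserve $V_E$, and then decompose each $V(n)$ into quasi-primaries plus descendants. The paper's version is slightly leaner: it uses only the $\mathfrak{sl}_2$-triple $(L_{-1},L_0,L_1)$ to write every homogeneous vector as (a sum of) $L_{-1}^n u$ with $u$ quasi-primary, so the decomposition $V(n)=V(n)^{\mathrm{qp}}\oplus L_{-1}V(n-1)$ suffices and the full Virasoro decomposition $\sum_{k\geq1}L_{-k}V(n-k)$ is not needed.
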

\begin{proof}
Use the fact that $Y(L_{-1}v)_n=-nY(v)_{n-1}$ (by the translation property) and that any homogeneous $v\in V$ equals a linear combination of vectors of the form $L_{-1}^nu$ where $n\in\mbb N$ and  $u\in V$ is quasi-primary.
\end{proof}

To construct a conformal net from $V$, we   let the vacuum Hilbert space be $\mc H_0$ (the completion of $V=W_0$), and let $\Omega\in V$ be the vacuum vector. For each $I\in\mc J$, define $\mc A_E(I)$ to be the von Neumann algebra on $\mc H_0$ generated by all $\ovl{Y(v,f)}$ where $v\in E$ and $f\in C_c^\infty(I)$. Let $\mc A_E$ be the family $I\in\mc J\mapsto \mc A_E(I)$. The following two theorems are due to \cite{CKLW18}.

\begin{thm}
Suppose that $E$ is quasi-primary and generates $V$, and $V$ is $E$-strongly local. Then $\mc A_E$ is a conformal net, where the continuous projective representation of $\Diffp(\mbb S^1)$ is defined by $U$.\footnote{More precisely, it is the restriction of $U$ from $\scr G$ to $\Diffp(\mbb S^1)$.}
\end{thm}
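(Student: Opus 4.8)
The plan is to verify the Carpi--Kawahigashi--Longo--Weiner axioms (a)--(e) of an irreducible conformal net for the family $\mc A_E$, using the analytic machinery for smeared vertex operators collected in Sections~\ref{lb69}--\ref{lb67}. First I would record the definition: $\mc A_E(I)$ is generated by the self-adjoint operators $\ovl{Y(v,f)}$ with $v\in E$, $f\in C_c^\infty(I)$ (after writing each $Y(v,f)$ as a linear combination of operators for which $Y(v,g)^\dagger=Y(v,g)$, using that $E$ is quasi-primary and that the formal adjoint of a smeared vertex operator is again a smeared vertex operator, cf.\ the linear-energy-bounds and $\GA$-covariance discussion). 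Isotony (axiom (a)) is immediate, since $f\in C_c^\infty(I_1)\subset C_c^\infty(I_2)$ when $I_1\subset I_2$, so the generating sets are nested. Locality (axiom (b)) is precisely the hypothesis that $V$ is $E$-strongly local: for disjoint $I_1,I_2$ and $v\in E$, $u\in E$, the operators $\ovl{Y(v,f_1)}$ and $\ovl{Y(u,f_2)}$ commute strongly, hence the generated von Neumann algebras commute; here one uses the bounding-projection / commutation-theorem argument (Lemma~\ref{lb59}-style reasoning) to pass from strong commutativity of the individual generators to commutativity of $\mc A_E(I_1)$ and $\mc A_E(I_2)$.

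Next I would address conformal covariance (axiom (c)). The representation $U=U_0$ of $\scr G$ on $\mc H_0$ integrating the Virasoro action is supplied by \cite{TL99}; restricting $U$ from $\scr G$ to $\Diffp(\mbb S^1)$ gives the required strongly continuous projective representation, which restricts further to an honest $\PSU$-representation because the M\"obius subgroup lifts (by \cite{Bar54}, as noted after Theorem~\ref{lb13}). The covariance relation $V\mc A_E(I)V^*=\mc A_E(gI)$ follows from Proposition~\ref{lb44} applied to vertex operators (the case $\mc Y_\alpha=Y$, $w^{(i)}=v\in E$ quasi-primary of weight $d$): conjugation by $U(g)$ sends $\ovl{Y(v,f)}$ to $\ovl{Y(v,(\partial_zg^{-1})^{1-d}\cdot(f\circ g^{-1}))}$, whose smearing function is again supported in $gI$, so $\mc A_E(I)$ is carried onto $\mc A_E(gI)$; for general $g\in\Diffp(\mbb S^1)$ one writes $g$ as a product of exponentials $\exp(\im T(h))$ (\cite{Gui18} Prop.~2.2) and uses that each $U(\exp(\im T(h)))=e^{\im\ovl{Y(\nu,h)}}$ is itself a limit of products of smeared vertex operators, so it lies in the relevant algebras; this also gives $U(g)\in\mc A_E(I)$ when $g\in\Diff_I(\mbb S^1)$, by localizing the vector field $h$ inside $I$. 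Positivity of energy (axiom (d)) is inherited from $V$: the rotation generator is $L_0$, which is positive since $\dim W_0(n)=0$ for $n<0$. For the vacuum axiom (e), $\Omega$ is $\PSU$-fixed because $L_{-1}\Omega=L_0\Omega=L_1\Omega=0$, uniqueness follows from simplicity of $V$ (i.e.\ $V(0)=\mbb C\Omega$), and cyclicity of $\Omega$ under $\bigvee_I\mc A_E(I)$ follows from the fact that $E$ generates $V$: repeated application of modes $Y(v)_n$ with $v\in E$ to $\Omega$ spans $V$, and these modes are recovered as limits of smeared operators $\ovl{Y(v,f_n)}$ with $f_n$ shrinking to a point, so $V\subset\bigvee_I\mc A_E(I)\Omega$, which is dense in $\mc H_0$.

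I expect the main obstacle to be the careful passage from analytic statements about the (generally unbounded, preclosed) smeared vertex operators to the von Neumann algebraic statements---specifically, justifying that strong locality of the generators $\ovl{Y(v,f)}$ implies commutativity of the generated von Neumann algebras $\mc A_E(I_1),\mc A_E(I_2)$, and that the operators $e^{\im\ovl{Y(\nu,h)}}$ and more generally $U(g)$ are genuinely affiliated with (or lie in) the appropriate $\mc A_E(I)$. These points are exactly where one invokes the bounding-projection technology and the essential self-adjointness results of \cite{TL99,BS90}; the verification is routine but delicate, and is the technical heart of \cite{CKLW18}, which is why the theorem is attributed there. All other axioms reduce quickly to standard VOA facts (finite-dimensional gradings, simplicity, the translation property $Y(L_{-1}v)_n=-nY(v)_{n-1}$) together with Propositions~\ref{lb56}, \ref{lb44}, \ref{lb45}, \ref{lb46}.
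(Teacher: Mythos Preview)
The paper does not supply its own proof of this theorem; it simply attributes the result to \cite{CKLW18} and moves on (the proof sketch that follows is for the \emph{next} theorem, Theorem~\ref{lb43}). Your outline is essentially the strategy of \cite{CKLW18}: verify isotony, locality, covariance, positive energy, and the vacuum axiom directly from the smeared-vertex-operator machinery. So in that sense your approach matches the cited source, not anything in the present paper.

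One genuine gap in your outline is the treatment of full diffeomorphism covariance and the inclusion $U(g)\in\mc A_E(I)$ for $g\in\Diff_I(\mbb S^1)$. You write that $e^{\im\ovl{Y(\nu,h)}}$ ``lies in the relevant algebras'' because it is a limit of smeared vertex operators, but $\nu$ need not belong to $E$, so $\ovl{Y(\nu,h)}$ is not \emph{a priori} affiliated with $\mc A_E(I)$. Proposition~\ref{lb44} only gives you \eqref{eq37} for quasi-primary vectors under M\"obius transformations, not general diffeomorphisms. In \cite{CKLW18} this is handled by first establishing the M\"obius covariant net (which already suffices for Haag duality and Bisognano--Wichmann), and then proving separately that the smeared stress-energy tensor $\ovl{Y(\nu,f)}$ is affiliated with $\mc A_E(I)$ via its strong commutation with the generators $\ovl{Y(v,g)}$, $v\in E$, $g\in C_c^\infty(I')$ (this is where linear energy bounds for $Y(\nu,z)$ from \cite{BS90} enter). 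Only after that does the full $\Diffp(\mbb S^1)$-covariance follow. Your sketch conflates these two steps; the rest of your outline (isotony, locality, positive energy, vacuum cyclicity via $E$ generating $V$) is correct.
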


\begin{thm}[\cite{CKLW18} Thm. 8.1]\label{lb43}
Suppose that $E$ is quasi-primary and generates $V$, and $V$ is $E$-strongly local. Then $V$ is strongly local, and $\mc A_E=\mc A_V$.
\end{thm}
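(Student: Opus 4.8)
The plan is to follow the strategy sketched in the Introduction: reduce everything to showing that for a single homogeneous $v\in V$, every $\wtd I\in\Jtd$ and every $f\in C_c^\infty(I)$, the closed smeared vertex operator $\ovl{Y(v,f)}$ is affiliated with $\mc A_E(I)$. Granting this, strong locality of $V$ is immediate: for disjoint $I,J\in\mc J$ (so $J\subset I'$), $\ovl{Y(u,f)}$ is affiliated with $\mc A_E(I)$ and $\ovl{Y(w,g)}$ with $\mc A_E(J)\subset\mc A_E(I')=\mc A_E(I)'$ by Haag duality for the conformal net $\mc A_E$ (which is a conformal net by the theorem stated just before this one), hence they commute strongly; and $\mc A_V(I)\subset\mc A_E(I)$, the reverse inclusion being trivial since $E\subset V$, so $\mc A_E=\mc A_V$. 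Now fix $v,\wtd I,f$. By the fact recorded in the proof of Proposition \ref{lb56}, every homogeneous vector is $L_{-1}^nu$ with $u$ quasi-primary, and $Y(L_{-1}w,f)=-Y(w,\partial_zf)$ with $\partial_zf\in C_c^\infty(I)$; so we may assume $v$ is quasi-primary. Put $\xi=Y(v,f)\Omega\in\mc H_0^\infty$ (independent of $\arg_I$). Since $\Omega$ is cyclic and separating for $\mc A_E(I')$ (Reeh--Schlieder), define the unbounded operator $\scr L(\xi,\wtd I)$ on $\mc H_0$ with domain $\mc A_E(I')\Omega$ by $\scr L(\xi,\wtd I)\,y\Omega=y\xi$; this is precisely the operator of \S\ref{lb11} built from the vacuum representation of the net $\mc A_E$, and it commutes with left multiplication by $\mc A_E(I')$, so once it is shown preclosed its closure is affiliated with $\mc A_E(I')'=\mc A_E(I)$.

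The principal difficulty is the preclosedness of $\scr L(\xi,\wtd I)$, i.e. $\xi\in\mc H_0^\pr(I)$. Here the key input is the Bisognano--Wichmann property of the conformal net $\mc A_E$: the modular operator $\Delta_I$ of $(\mc A_E(I),\Omega)$ satisfies $\Delta_I^{\im t}=U(\delta_I(-2\pi t))$, the dilation one-parameter group. By \cite{Gui19d} \S6, recalled in \S\ref{lb48} (applied to $\mc A_E$ and the self-dual vacuum module, where $\coev_{0,0}\Omega=\Omega$ and the criterion reduces to ordinary Tomita--Takesaki theory), it suffices to prove $\xi\in\Dom(\Delta_I^{1/2})$. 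Proposition \ref{lb44}, applied to the dilation subgroup $\delta_I\subset\UPSU$ (legitimate because $v$ is quasi-primary and $Y(v,z)$ is energy bounded --- $V$ being strongly energy bounded, so that $Y(v,z)$ satisfies polynomial energy bounds by Proposition \ref{lb53}), gives on $\mc H_0^\infty$
\begin{align*}
U(\delta_I(s))\,Y(v,\wtd f)\,U(\delta_I(s))^{-1}=Y\big(v,(\partial_z\delta_I(s)^{-1})^{(1-d)}\cdot(\wtd f\circ\delta_I(s)^{-1})\big),
\end{align*}
where $d\ge 0$ is the conformal weight of $v$; evaluating at the $\delta_I$-fixed vector $\Omega$ yields $U(\delta_I(s))\xi=Y(v,\wtd f_s)\Omega$ with $\wtd f_s\in C_c^\infty(\delta_I(s)\wtd I)=C_c^\infty(\wtd I)$, since $\delta_I$ preserves $I$. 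After reducing to $f\in C_c^\infty(I_0)$ with $I_0\sjs I$ (the general case following by additivity of $\mc A_E$ and a Kaplansky-type approximation, as in Proposition \ref{lb8}), a Paley--Wiener/Borchers-type argument --- using the polynomial energy bound $\lVert(1+\ovl{L_0}^{\,p})Y(v,\wtd f)\eta\rVert\le M\lVert(1+\ovl{L_0}^{\,a+p})\eta\rVert$ of \S\ref{lb69} for uniform control, together with the holomorphy of the matrix coefficients of $Y(v,z)$ --- shows that $s\mapsto U(\delta_I(s))\xi$ extends to an $\mc H_0$-valued function continuous on the relevant closed strip and holomorphic in its interior. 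Hence $\xi\in\Dom(\Delta_I^{1/2})$, so $\xi\in\mc H_0^\pr(I)$ and $\ovl{\scr L(\xi,\wtd I)}$ is a closed operator affiliated with $\mc A_E(I)$.

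It remains to identify $\ovl{\scr L(\xi,\wtd I)}$ with $\ovl{Y(v,f)}$. Let $\mc Q$ be the $*$-algebra of operators on $\mc H_0^\infty$ generated by all $Y(u,g)$ with $u\in E$ homogeneous and $g\in C_c^\infty(I')$; by Proposition \ref{lb45} these operators and their products are smooth and localizable, and $\mc Q\,\mc H_0^\infty\subset\mc H_0^\infty$. By the weak intertwining property (Theorem \ref{lb63}, with $\mc Y_\alpha=Y$ and $w^{(i)}=v$), $Y(v,f)$ commutes adjointly on $\mc H_0^\infty$ with each such $Y(u,g)$, hence with every $q\in\mc Q$. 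Since each $\ovl{Y(u,g)}$ is affiliated with $\mc A_E(I')$, approximating it by bounded elements of $\mc A_E(I')$ (with the resulting products controlled, on the smooth vectors $\Omega$ and $\xi$, by the energy bounds of \S\ref{lb69}) shows $\mc Q\Omega\subset\Dom(\ovl{\scr L(\xi,\wtd I)})$ with $\ovl{\scr L(\xi,\wtd I)}\,q\Omega=q\xi=qY(v,f)\Omega=Y(v,f)q\Omega$ for all $q\in\mc Q$; so $\ovl{\scr L(\xi,\wtd I)}$ and $\ovl{Y(v,f)}$ agree on $\mc Q\Omega$. Since $E$ generates $V$, $\mc Q\Omega$ is dense, and (after shrinking $I'$) it is QRI with $\varrho(t)\Omega=\Omega$; by localizability of $\ovl{Y(v,f)}$ (Proposition \ref{lb45}) it is therefore a core for $\ovl{Y(v,f)}$, giving $\ovl{Y(v,f)}\subset\ovl{\scr L(\xi,\wtd I)}$. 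A smoothing argument in the style of Proposition \ref{lb8}, together with Proposition \ref{lb21} (which provides a core $\mc H_0^\infty(I')$ of $\ovl{\scr L(\xi,\wtd I)}$ inside $\mc H_0^\infty$ on which the two operators still agree), yields the reverse inclusion, hence $\ovl{Y(v,f)}=\ovl{\scr L(\xi,\wtd I)}$. Thus $\ovl{Y(v,f)}$ is affiliated with $\mc A_E(I)$, and, as in the first paragraph, $V$ is strongly local and $\mc A_E=\mc A_V$. (This argument is the purely net-theoretic prototype of Corollaries \ref{lb41} and \ref{lb35}, and the main obstacle above --- establishing $\xi\in\Dom(\Delta_I^{1/2})$ by holomorphic continuation of the dilation orbit --- is exactly the point where the Bisognano--Wichmann property and the energy bounds are used.)
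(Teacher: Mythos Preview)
Your overall architecture matches the paper's: define $\scr L(\xi,\wtd I)$ with $\xi=Y(v,f)\Omega$, show it is preclosed, identify its closure with $\ovl{Y(v,f)}$, and conclude affiliation with $\mc A_E(I)$ via Haag duality. The identification step is essentially the same as the paper's (though your approximation of $\mc Q$ by bounded elements of $\mc A_E(I')$ is vaguer than the paper's route, which uses that each $\ovl B$ for $B\in\mc Q$ commutes strongly with $\mc A_E(I)$ and then invokes Proposition~\ref{lb3}(a)).

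The genuine gap is in your preclosedness step. You claim that $s\mapsto U(\delta_I(s))\xi$ extends holomorphically to the closed strip by a ``Paley--Wiener/Borchers-type argument'' combining energy bounds with holomorphy of matrix coefficients of $Y(v,z)$, but you do not actually carry this out, and it is far from clear how to. The complexified dilation $\delta_I(s+\im t)$ does not preserve $\mbb S^1$, so $f\circ\delta_I(s+\im t)^{-1}$ is not a function on the circle; thus $\wtd f_s$ has no obvious complexification as a test function. Holomorphy of $z\mapsto Y(v,z)$ is in the wrong variable, and polynomial energy bounds control growth but do not by themselves produce the required analytic continuation in the modular parameter. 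What you are trying to prove---that $\xi\in\Dom(\Delta_I^{1/2})$---is morally a consequence of $\ovl{Y(v,f)}$ being affiliated with $\mc A_E(I)$, which is exactly the goal; so without an independent VOA-side argument (which you do not supply), the step is circular.

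The paper avoids this by a duality trick you do not use. Rather than showing $\xi\in\Dom(S_I)$ directly, it shows that for every $\eta$ in a core of $S_I^*$ one has $\bk{S_I^*\eta|\xi}=\bk{A^*\Omega|\eta}$, which is a bounded functional of $\eta$. The core is $\mc Q\Omega$: each $B\in\mc Q$ is a polynomial in smeared $E$-fields in $I'$, hence $\ovl B$ is affiliated with $\mc A_E(I')$ (this is exactly where $E$-strong locality enters), so $\eta=B\Omega\in\Dom(S_{I'})=\Dom(S_I^*)$ with $S_I^*\eta=B^*\Omega$ by ordinary Tomita--Takesaki. The computation $\bk{S_I^*B\Omega|\xi}=\bk{B^*\Omega|A\Omega}=\bk{\Omega|BA\Omega}=\bk{\Omega|AB\Omega}=\bk{A^*\Omega|B\Omega}$ uses only \emph{weak} commutativity of $A$ with $B$ (Theorem~\ref{lb63}). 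That $\mc Q\Omega$ is a core for $S_I^*$ follows from Bisognano--Wichmann and $\delta_{I'}$-invariance of $\mc Q\Omega$, which holds because the vectors $u\in E$ are quasi-primary (Proposition~\ref{lb44}). Note that the M\"obius covariance is applied to the $E$-operators, not to $Y(v,f)$; this is why the paper needs no reduction of $v$ to quasi-primary, whereas you introduced that step only to feed your (incomplete) direct continuation argument.
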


\begin{proof}
We sketch a proof using the methods in section \ref{lb48}. Choose any $I\in\mc J$. Let $v\in V$ be homogeneous and $f\in C_c^\infty(I)$. We want to show that $A=\ovl{Y(v,f)}$ is affiliated with $\mc A_E(I)$. Set $\xi=A\Omega$, and define $A_\xi$ to be the unbounded operator on $\mc H_0$ with domain $\mc A_E(I')\Omega$ such that $A_\xi y\Omega=y\xi$ for any $y\in\mc A_E(I')$. 

We first show that $A_\xi$ is closable. The proof is similar to that of lemma \ref{lb37}. Let $S_I$ and $S_{I'}$ be the $S$  operators associated to $(\mc A_E(I),\Omega)$ and $(\mc A_E(I'),\Omega)$ respectively. Then, by Tomita-Takesaki theory, $S_{I'}=S_I^*$, and $A_\xi$ is closable if $\xi\in\Dom(S_I)$ (cf. \cite{Gui21b} proposition 7.1). Let $\mc Q$ be the algebra of operators with domain $\mc H_0^\infty$ generated by all $Y(u,g)$ and $Y(u,g)^\dagger$ where $u\in E$ and $g\in C_c^\infty(I')$. Then a standard argument using Schwarz reflection principle shows that $\mc Q\Omega$ is dense in $\mc H_0^\infty$. (See the first paragraph of the proof of \cite{CKLW18} theorem 8.1.) Moreover, since each $B\in\mc Q$ is smooth, $\Omega\in\Dom(B^*)$. Let $\eta=B\Omega$ and set $B_\eta:x\Omega\in\mc A_E(I)\Omega\mapsto x\eta$. Since  $\ovl B$ commutes strongly with all $x\in\mc A_E^\infty(I)$ and hence with all $x\in\mc A_E(I)$, we have $B_\eta\subset \ovl B$. (Indeed one has $B_\eta=\ovl B$; see the second paragraph or proposition \ref{lb27}.) So $B^*\subset B_\eta^*$, which implies $\Omega\in\Dom(B_\eta^*)$. By \cite{Gui21b} proposition 7.1, $\eta\in\Dom(S_{I'})=\Dom(S_I^*)$, and $S_I^*\eta=B_\eta^*\Omega$. Note that $\eta=B\Omega$ and $B^*\Omega=B_\eta^*\Omega$. Using the (weak) commutativity of $A$ and $B$ due to theorem \ref{lb63}, we compute
\begin{align*}
\bk{S_I^*B\Omega|\xi}=\bk{B^*\Omega|A\Omega}=\bk{\Omega|BA\Omega}=\bk{\Omega|AB\Omega}=\bk{A^*\Omega|B\Omega}.
\end{align*}
By the M\"obius covariance of smeared vertex operators, $\mc Q\Omega$ is invariant under the action of $\delta_{I'}$. Thus, by the Bisognano-Wichmann property, it is invariant under $\Delta_{I'}^{\im t}$ where $\Delta_{I'}=S_IS_I^*$. Thus $\mc Q\Omega$ is a core for $S_I^*$ by proposition \ref{lb38}. This shows that $\xi\in\Dom(S_I)$ and $S_I\xi=A^*\Omega$. So $A_\xi$ is closable.

We let $A_\xi$ be the closure of the original operator, and prove $A_\xi=A$ using the method in theorem \ref{lb36}. Note that each $B\in\mc Q$ has domain $\mc H_0^\infty$ and is a product of smeared vertex operators commuting strongly with elements in $\mc A_E(I)$. Therefore, for each $x\in\mc A_E^\infty(I)$, we have $xB\subset Bx,x^*B\subset Bx^*$. Thus $x$ commutes strongly with $\ovl B$. By corollary \ref{lb24}, $\mc A_E^\infty(I)$ is $*$-strongly dense in $\mc A_E(I)$. This shows that elements in $\mc A_E(I)$ commute strongly with $\ovl B$. Therefore,  $A_\xi$ commutes strongly with $\ovl B$. Since $\Omega$ is in the domains of $BA_\xi$ and $B$, by proposition \ref{lb3}-(a), $\Omega\in\Dom(A_\xi B)$ and $A_\xi B\Omega=BA_\xi\Omega=B\xi$. Since $AB\Omega=BA\Omega=B\xi$, we have $A_\xi B\Omega=AB\Omega$ for any $B\in\mc Q$. Therefore,  $A_\xi$ equals $A$ when acting on $\mc Q\Omega$. By rotation covariance, $\mc Q\Omega$ is QRI, and hence is a core for $A$. So $A\subset A_\xi$. Since $\mc A_E^\infty(I')\Omega$ is a core for $A_\xi$ (by the fact that $\mc A_E^\infty(I')$ is strongly dense in $\mc A_E(I')$), and since $\mc A_E^\infty(I')\Omega\subset\mc H_0^\infty$, we must have $A=A_\xi$. Thus $A$ is affiliated with $\mc A_E(I)$.
\end{proof}

We now discuss the relation between unitary $V$-modules and $\mc A_V$-modules. To begin with, suppose that $\mc M$ is a von Neumann algebra on a Hilbert space $\mc K$, and $(\pi,\mc K)$ is a normal representation of $\mc M$ on another Hilbert space $\mc H$. If $A$ is a closed operator on $\mc H$ affiliated with $\mc M$, then we can define $\pi(A)$ to be a closable operator on $\mc K$ affiliated with $\pi(\mc M)$ as follows. Let $A=UH$ be the polar decomposition of $A$ with $U$ unitary and $H$ positive. Let $\pi(H)$ be the generator of the one parameter group $\pi(e^{\im t H})$. Then we set $\pi(A)=\pi(U)\pi(H)$. $\pi(A)$ is described by the fact that $TA\subset\pi(A)T$ for any bounded linear operator $T:\mc H\rightarrow\mc K$ intertwining the actions of $\mc M$. We note that the ranges of all such $T$ span a dense subspace of $\mc K$; equivalently, the projection of $\mc H\oplus\mc K$ onto $\mc H$ has central support $1$ in the commutant of $\mc M\curvearrowright(\mc H\oplus\mc K)$. If $\mc M$ is type III and $\mc H,\mc K$ are separable, there exists a unitary $T$.

Following \cite{CWX}, when $V$ is strongly local, we say that a unitary energy-bounded $V$-module $W_i$ is \textbf{strongly integrable} if there exists a (necessarily unique) representation $\pi_i$ of $\mc A_V$ on $\mc H_i$, such that for any homogeneous $v\in V$ and any $I\in\mc J,f\in C_c^\infty(I)$,
\begin{align}
\pi_{i,I}(\ovl{Y(v,f)})=\ovl{Y_i(v,f)}.\label{eq36}
\end{align}
Of course, the vacuum module $W_0$ is strongly integrable. Clearly we have:

\begin{lm}\label{lb61}
If $V$ is strongly local and $W_i$ is strongly integrable, then for any homogeneous $v\in V$, $Y_i(v,z)$ satisfies the strong intertwining property.
\end{lm}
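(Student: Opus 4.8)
The plan is to reduce the asserted strong commutativity to the commutativity of the images of complementary local algebras of $\mc A$ under the representation $\pi_i$. Fix homogeneous $v,v'\in V$, an arg-valued interval $\wtd I$ with underlying interval $I$, an interval $J\in\mc J$ disjoint from $I$, and $f\in C_c^\infty(\wtd I)$, $g\in C_c^\infty(J)$. Since smeared vertex operators do not depend on the choice of argument, I write $Y_i(v,f)$ and $Y_i(v',g)$; the goal is to show that $\ovl{Y_i(v,f)}$ and $\ovl{Y_i(v',g)}$ commute strongly, which is exactly the strong intertwining property for $Y_i(v,z)$ (viewing $Y_i$ as a $\binom{i}{0\ i}$-intertwining operator with charge vector $v$).

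First I would recall that, by the very construction of $\mc A=\mc A_V$, the closed operator $\ovl{Y(v,f)}$ on $\mc H_0$ is one of the generators of $\mc A(I)$ and hence is affiliated with $\mc A(I)$, and likewise $\ovl{Y(v',g)}$ is affiliated with $\mc A(J)$. Applying the normal representation $\pi_i$ (in the sense of the discussion preceding the lemma, which sends an operator affiliated with a von Neumann algebra to one affiliated with its image) and using the strong integrability identity \eqref{eq36}, I get
\begin{align*}
\ovl{Y_i(v,f)}=\pi_{i,I}\big(\ovl{Y(v,f)}\big)\quad\text{is affiliated with }\pi_{i,I}(\mc A(I)),
\end{align*}
and similarly $\ovl{Y_i(v',g)}=\pi_{i,J}\big(\ovl{Y(v',g)}\big)$ is affiliated with $\pi_{i,J}(\mc A(J))$. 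Since $I$ and $J$ are disjoint open intervals, $J\subset I'$, so $\mc A(J)\subset\mc A(I')$ by isotony, and therefore $\ovl{Y_i(v',g)}$ is affiliated with $\pi_{i,I'}(\mc A(I'))$.

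The remaining point, and the only one carrying real content, is that $\pi_{i,I}(\mc A(I))$ and $\pi_{i,I'}(\mc A(I'))$ commute. This is a standard structural property of representations of conformal nets: by conformal covariance (automatic by \cite{Hen19}, as recalled in Section \ref{lb29}) every $\mc A$-module is a DHR sector, i.e.\ is, on each fixed interval, unitarily implemented by a transportable endomorphism localized in a small interval, for which commutation of the images of complementary local algebras is straightforward (it reduces, via a charge transporter, to the case where the endomorphism is localized inside $I$); see \cite{GL96,FRS89,FRS92}. Granting this, $\ovl{Y_i(v,f)}$ and $\ovl{Y_i(v',g)}$ are affiliated with the two commuting von Neumann algebras $\pi_{i,I}(\mc A(I))$ and $\pi_{i,I'}(\mc A(I'))$, so the von Neumann algebras they individually generate commute, which is precisely the statement that they commute strongly. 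This yields diagram \eqref{eq38} (with $\mc Y_\alpha=Y_i$) commuting strongly for every homogeneous $v\in V$ and $v'\in E$ the full set of homogeneous vectors, i.e.\ the strong intertwining property; note that no energy bounds beyond those already implicit in strong integrability are needed. The main obstacle is thus the clean justification of the commutativity used in the third step; everything else is bookkeeping with the definitions of affiliation and of strong commutativity.
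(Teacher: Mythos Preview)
Your proposal is correct and follows the same route as the paper: both operators are affiliated with $\pi_{i,I}(\mc A(I))$ and $\pi_{i,J}(\mc A(J))$ respectively (via strong integrability), and these von Neumann algebras commute. The paper's proof is a two-line version of exactly this.

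One remark: you treat the commutation of $\pi_{i,I}(\mc A(I))$ and $\pi_{i,I'}(\mc A(I'))$ as ``the only point carrying real content'' and justify it via DHR localizability and \cite{Hen19}. This works but is heavier than needed. For disjoint $I,J$ whose complement contains a nondegenerate arc, there is a single $K\in\mc J$ containing both, and then $\pi_{i,I}(\mc A(I))=\pi_{i,K}(\mc A(I))$ commutes with $\pi_{i,J}(\mc A(J))=\pi_{i,K}(\mc A(J))$ because $\pi_{i,K}$ is a $*$-representation and $\mc A(I),\mc A(J)$ already commute. The remaining boundary case $J=I'$ follows by additivity, approximating $I$ from inside. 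So the step you flag is routine for any compatible family of normal representations and does not require conformal covariance or transportable endomorphisms.
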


\begin{proof}
If $I,J\in\mc J$ are disjoint, and $f\in C_c^\infty(I),g\in C_c^\infty(J)$, then for any homogeneous $u,v\in V$, $\ovl{Y_i(v,f)}$ is affiliated with $\pi_{i,I}(\mc A_V(I))$ and $\ovl{Y_i(u,g)}$ is affiliated with $\pi_{i,I}(\mc A_V(J))$. Thus they commute strongly.
\end{proof}

The following was proved in \cite{Gui21a} proposition 4.9.

\begin{pp}
If $V$ is strongly local and $W_i$ is strongly integrable, then the unique strongly continuous unitary representation $U_i:\GAV\curvearrowright\mc H_i$ making $(\mc H_i,\pi_i)$ conformal covariant agrees with the one in proposition \ref{lb46}.
\end{pp}

We are also interested in a weaker notion: Suppose that $E$ is generating and quasi-primary, then $W_i$ is called \textbf{$E$-strongly integrable} if there exists a (necessarily unique) $\pi_i$ such that \eqref{eq36} holds only when $v\in E$. The following proposition can be proved in the same way as \cite{Gui19b} theorem 4.8.\footnote{The first paragraph of step 2 of the proof of \cite{Gui19b} theorem 4.8. checks that the linearly energy-bounded intertwining operator satisfies the strong intertwining property. Here we already assume the strong intertwining property. So that paragraph can be skipped when proving proposition \ref{lb42}.} The main idea is to use the phases of (the closures of) the smeared intertwining operators $\mc Y(w^{(i)},\wtd f)$ to construct many bounded linear operators $T_\alpha$ satisfying the commutativity relations in \cite{Gui19b} proposition 4.7 for any $v\in E$. 

\begin{pp}\label{lb42}
Let $E$ be a generating set of quasi-primary vectors in $V$. Suppose that $V$ is strongly local (equivalently, $E$-strongly local). Let $\mc Y\in\mc V{k\choose i~j}$ be a non-zero unitary intertwining operator, and assume that $W_i,W_j,W_k$ are irreducible. Suppose that $W_j$ is $E$-strongly integrable, and that there exists a non-zero homogeneous vector $w^{(i)}\in W_i$ such that $\mc Y(w^{(i)},z)$ is energy-bounded and satisfies the $E$-strong intertwining property. Then $W_k$ is $E$-strongly integrable.
\end{pp}

The following proposition is a variant of theorem \ref{lb43}.

\begin{pp}\label{lb47}
In proposition \ref{lb42}, if $W_j$ is strongly-integrable, then $\mc Y(w^{(i)},z)$ satisfies the strong intertwining property, and $W_k$ is strongly integrable. 
\end{pp}
\begin{proof}
Let $\pi_j,\pi_k$ be the $\mc A_V$-modules defined by the $E$-strongly integrable unitary $V$-modules $W_j,W_k$ respectively. Choose any $\wtd I,\wtd J\in\Jtd$ with $\wtd J$ clockwise to $\wtd I$, $\wtd f\in C_c^\infty(\wtd I)$, $g\in C_c^\infty(J)$, and homogeneous $v\in V$.  Let $A=\ovl{\mc Y(w^{(i)},f)}$, $B=\ovl{Y(v,g)}$, $B_j=\ovl{Y_j(v,g)}$, $B_k=\ovl{Y_k(v,g)}$.  By the fact that $\mc Y(w^{(i)},z)$ satisfies the $E$-strong intertwining property, and  that all $\ovl{Y(u,g)}$ (where $u\in E,g\in C_c^\infty(J)$) generate $\mc A_V(J)$ (cf. theorem \ref{lb43}), we have $\pi_{k,J}(y)A\subset A\pi_{j,J}(y)$ for any $y\in\mc A_V(J)$. In other words, the diagram
\begin{align}\label{eq39}
\begin{CD}
\mc H_j @>~\pi_{j,J}(y)~>> \mc H_j\\
@V A  VV @VV  A V\\
\mc H_k @> \pi_{k,J}(y)>> \mc H_k
\end{CD}
\end{align}
commutes strongly for any $y\in\mc A_V(J)$, and hence for any $y$ affiliated with $\mc A_V(J)$. In particular, this is true when we take $y$ to be $B$. 

Since $W_j$ is strongly integrable, we have $\pi_{j,J}(B)=B_j$. Therefore, to finish proving the proposition, it remains to check that $\pi_{k,J}(B)=B_k$. Choose any $\eta\in\mc H_j^\infty$. Since both $\pi_{j,J}(B)=B_j$ and $A$ are smooth, $\eta$ is in the domains of $A^*A$ and $B_j^*B_j$. By the strong commutativity of \eqref{eq39} (with $y=B$) and proposition \ref{lb3}-(b), $\eta$ is in the domains of $\pi_{k,J}(B)A$ and $AB_j$, and $\pi_{k,J}(B)A\eta=AB_j\eta$. Since $B_kA=AB_j$ when acting on $\mc H_j^\infty$ (by theorem \ref{lb63}), we conclude that $\pi_{k,J}(B)$ equals $B_k$ when acting on the subspace $\mc W$ of $\mc H_k^\infty$ spanned by vectors of the form $\mc Y(w^{(i)},\wtd f)\eta$ where $\wtd f\in C_c^\infty(\wtd I)$ and $\eta\in\mc H_j^\infty$. By step 1 of the proof of \cite{Gui19b} theorem 4.8 or the proof of \cite{Gui21a} proposition 4.12, $\mc W$ is a dense subspace of $\mc H_k^\infty$. By proposition \ref{lb44}, $\mc W$ is QRI. Therefore, by proposition \ref{lb45}, $\mc W$ is a core for $B_k$. We conclude $B_k\subset\pi_{k,J}(B)$. 

Now set $\eta=B\Omega$. Since $B$ commutes strongly with $\mc A_V(J')$, we have $\eta\in\mc H_0^\pr(J)$ and $\scr R(\eta,\wtd J)|_{\mc H_0}\subset B$. Note that $B$ is localizable by proposition \ref{lb45}. Since $\mc A_V^\infty(J')\Omega$ is a dense and QRI subspace of $\mc H_0^\infty$, it is a core for $B$. Since $\mc A_V^\infty(J')\Omega\subset \mc A_V(J')\Omega=\mc H_0(J')\subset\Dom(\scr R(\eta,\wtd J)|_{\mc H_0})$,  we must have $\scr R(\eta,\wtd J)|_{\mc H_0}=B$. It is easy to see that $\pi_{k,J}(\scr R(\eta,\wtd J)|_{\mc H_0})=\scr R(\eta,\wtd J)|_{\mc H_k}$. (For example,  choose $\mu\in\mc H_k(I)$ such that $L(\mu,\wtd I)|_{\mc H_0}$ is unitary. Then, use the fact that the representation $\pi_{k,J}$ of $\mc A_V(J)$ and the closed operator $\scr R(\eta,\wtd J)|_{\mc H_k}$ are unitarily equivalent  to $\pi_{0,J}$ and $\scr R(\eta,\wtd J)|_{\mc H_0}$ respectively under the unitary operator $L(\mu,\wtd I)|_{\mc H_0}$.) Thus $\pi_{k,J}(B)=\scr R(\eta,\wtd J)|_{\mc H_k}$, which, by proposition \ref{lb21}, has a core $\mc H_k^\infty(J')$. Since  $\mc H_k^\infty(J')$ is a subspace of $\mc H_k^\infty\subset\Dom(B_k)$, we conclude $\pi_{k,J}(B)=B_k$.
\end{proof}

\begin{rem}\label{lb50}
Proposition \ref{lb47} can be used in the following way. Assume $E\subset V$ is generating and quasi-primary. Given a unitary $W_k$, if we can find irreducible unitary $V$-modules $W_{k_0}=W_0,W_{k_1},\dots,W_{k_{n-1}},W_{k_n}=W_k$ such that for each $1\leq m\leq n$, $W_{k_{m-1}}$ and $W_{k_m}$ are connected by a non-zero unitary intertwining operator $\mc Y_m\in\mc V{k_m\choose {i_m}~k_{m-1}}$ where $W_{i_m}$ is irreducible, and if there exists a non-zero quasi-primary $w_m\in W_{i_m}$ such that $\mc Y_m(w_m,z)$ is energy-bounded and satisfies the $E$-strong intertwining property. Then each  $\mc Y_m(w_m,z)$ satisfies the strong intertwining property, and $W_k$ is strongly integrable.
\end{rem}


\begin{rem}\label{lb99}
Let $E$ be a homogeneous subset of $V$. Suppose that $\mc Y$ is a unitary type $W_k\choose W_iW_j$ intertwining operator. Let $W_j=\bigoplus_b W_{j,a},W_k=\bigoplus_c W_{k,b}$ be the (finite and orthogonal) decomposition of $W_j,W_k$ into irreducible unitary submodules. Let $P_a$ be the projection of $\mc H_j$ to $\mc H_{j,a}$, which automatically restricts to $\mc H_j^\infty\rightarrow\mc H_{j,a}^\infty$ and $W_j\rightarrow W_{j,a}$. Let $Q_b$ be the projection of $\mc H_k$ to $\mc H_{k,b}$, which automatically restricts to $\mc H_k^\infty\rightarrow\mc H_{k,b}^\infty$ and $W_k\rightarrow W_{k,b}$. Choose a homogeneous $w^{(i)}\in W_i$. Then $\mc Y(w^{(i)},z)$ is energy-bounded (resp. satisfies the $E$-strong intertwining property) if and only if $Q_b\mc Y(w^{(i)},z) P_a$ (where $Q_b\mc Y(\cdot,z) P_a$ is viewed as a type $W_{k,b}\choose W_iW_{j,a}$ intertwining operator) is so for each $a,b$.
\end{rem}

\begin{proof}
It is obvious that $\mc Y(w^{(i)},z)$ is energy-bounded iff each $Q_b\mc Y(w^{(i)},z) P_a$ is so. Now choose any $\wtd I\in\Jtd,\wtd f\in C_c^\infty(\wtd I),g\in C_c^\infty(I'),v\in E$. Note that $Q_b\mc Y(w^{(i)},\wtd f)P_a:\mc H_j^\infty\rightarrow\mc H_k^\infty$ can be viewed equivalently as a map $\mc H_{j,a}^\infty\rightarrow\mc H_{k,b}^\infty$, and that the strong intertwining property under the two viewpoints are equivalent. Thus, since $P_a$ commutes strongly with $Y_j(v,g)$ and $Q_b$ commutes strongly with $Y_k(v,g)$, by Lem. \ref{lb59}, the diagram \eqref{eq38} commutes strongly if and only if for each $a,b$, the same diagram \eqref{eq38}, but with $\mc Y(w^{(i)},\wtd f)$ replaced by $Q_b\mc Y(w^{(i)},\wtd f)P_a$, commutes strongly.
\end{proof}

\subsection{The rigid braided $C^*$-tensor categories $\RepuV$ and $\scr C$}\label{lb87}

We assume that $V$ is regular, or equivalently, rational and $C_2$-cofinite. For the definitions of these notions and the proof of equivalence, see \cite{ABD04}. In this case, there are only finitely many inequivalent irreducible $V$-modules (\cite{Hua05b} Cor. 6.6). Consequently, any $V$-module is a finite direct sum of irreducible ones. Moreover, any $\mc V{k\choose i~j}$ is finite dimensional (\cite{Hua05a} theorem 3.1 and remark 3.8). 

By \cite{HL95a,HL95b,HL95c,Hua95,Hua05a} or (under a slightly stronger condition) \cite{NT05}, the category $\Rep(V)$ of  $V$-modules form a  ribbon braided tensor category. A sketch of the construction of braided tensor structure can be found in \cite{Gui19a} section 2.4 or \cite{Gui21a} section 4.1.  We let $\boxdot$ denote the tensor (fusion) bifunctor, and let $\ss$ denote the braiding. $\boxdot$ is defined in such a way that for any $W_i,W_j,W_k\in\Obj(\Rep(V))$ there is a natural isomorphism of vector spaces
\begin{align}
\mc V{k\choose i~j}\simeq\Hom_V(W_i\boxdot W_j,W_k)\label{eq47}
\end{align}
Moreover, by \cite{Hua05b,Hua08a,Hua08b}, $\Rep(V)$ is a modular tensor category. 

Let $\RepuV$ be the $C^*$-category of unitary $V$-modules. Since $V$ is assumed to be strongly unitary, $\RepuV$ is equivalent to $\RepV$ as linear categories. If $W_i,W_j$ are unitary, then $W_i\boxdot W_j$ is a unitarizable $V$-module. One needs to choose a correct unitary $V$-module structure on each $W_i\boxdot W_j$  making $\RepuV$ a unitary (i.e. $C^*$-) tensor category. This amounts to finding the correct inner product on each $\mc V{k\choose i~j}$. This was achieved in \cite{Gui19a,Gui19b}. In these two papers, we defined a (necessarily non-degenerate and Hermitian) sesquilinear form $\Lambda$ on each $\mc V{k\choose i~j}$, called the \textbf{invariant sesquilinear form}. We say that a strongly unitary regular VOA $V$ is \textbf{completely unitary} if $\Lambda$ is positive on any vector space of intertwining operator of $V$. We proved in \cite{Gui19b} theorem 7.9 that \emph{if $V$ is completely unitary, then $\RepuV$ is a unitary modular tensor category}. In particular, $\RepuV$ is a rigid braided $C^*$-tensor category. 

As in section \ref{lb49}, we say that a set $\mc F^V$ of irreducible unitary $V$-modules  \textbf{generates} $\RepV$ if any irreducible $V$-module $W_i$ is equivalent to a submodule of a (finite) fusion product of irreducible $V$-modules in $\mc F^V$. By \eqref{eq47}, this is equivalent to saying that for any irreducible unitary $V$-module $W_k$, there is a chain of non-zero unitary irreducible intertwining operators $\mc Y_1,\mc Y_2,\dots,\mc Y_n$ with charge spaces in $\mc F^V$ such that $\mc Y_1$ has source space $W_0=V$, $\mc Y_n$ has target space $W_k$, and the source space of $\mc Y_m$ equals the target space of $\mc Y_{m-1}$ for any $m=2,3,\dots,n$.

Consider the following two conditions:

\begin{cond}\label{cd1}
We assume that $V$ is a regular, strongly unitary, strongly energy-bounded, and simple (equivalently, CFT-type) VOA. Assume that $E$ is a generating set of quasi-primary vectors in $V$ such that $V$ is $E$-strongly local. Assume that $\mc F^V$ is a set of irreducible unitary $V$-modules generating $\RepV$. Assume that for each $W_i\in \mc F^V$ there exists a non-zero homogeneous $w^{(i)}_0\in W_i$ such that for any irreducible unitary intertwining operator $\mc Y$ with charge space $W_i$, $\mc Y(w^{(i)}_0,z)$ is energy-bounded \footnote{It follows from proposition \ref{lb53} that $\mc Y$ is energy-bounded.} and satisfies the $E$-strong intertwining property.
\end{cond}

\begin{cond}\label{cd2}
We assume that condition \ref{cd1} is satisfied, and that for each $W_i\in\mc F^V$, the vector $w^{(i)}_0\in W_i$ in condition \ref{cd1} is quasi-primary.
\end{cond}

In the remaining part of this article,  we shall always assume  condition \ref{cd1}. The following is essentially proved in \cite{Gui19b}. (See \cite{Gui21a} remark 4.21 and theorem 4.22 for a detailed explanation.)

\begin{thm}\label{lb51}
Assume that condition \ref{cd1} is satisfied.  Then $V$ is completely unitary. Consequently, $\RepuV$ is a unitary modular tensor category.
\end{thm}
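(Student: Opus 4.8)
The plan is to reduce the complete unitarity of $V$ — i.e. positivity of the invariant sesquilinear form $\Lambda$ on every space of intertwining operators $\mc V{k\choose i~j}$ — to the case where the charge space belongs to the generating set $\mc F^V$, and then exploit the analytic input (energy bounds plus $E$-strong intertwining property for the distinguished vectors $w^{(i)}_0$) to compute $\Lambda$ via inner products of smeared intertwining operators, which are manifestly non-negative. Concretely, I would first recall from \cite{Gui19b} that $\Lambda$ is always non-degenerate and Hermitian, and that its positivity is equivalent to $\RepuV$ being a unitary modular tensor category (given that $V$ is already known to be regular and strongly unitary, so that $\RepV$ is a modular tensor category and $\RepuV\simeq\RepV$ as linear categories). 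So the whole task is the single positivity statement.

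The key reduction is multiplicativity of $\Lambda$ under composition/fusion of intertwining operators, established in \cite{Gui19b}: if $\Lambda$ is positive on all $\mc V{\bullet\choose i~\bullet}$ with $W_i$ ranging over a generating set, then it is positive on all of them, because an arbitrary irreducible intertwining operator can be written (up to the natural isomorphisms coming from $\boxdot$ and \eqref{eq47}) as a composite of ones whose charge spaces lie in $\mc F^V$, and $\Lambda$ of a composite is controlled by the $\Lambda$'s of the pieces. Thus I am reduced to showing: for each $W_i\in\mc F^V$ and each irreducible unitary $\mc Y_\alpha\in\mc V{k\choose i~j}$, $\Lambda(\mc Y_\alpha,\mc Y_\alpha)\geq 0$ (and more generally positivity of the Gram matrix of a basis). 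Here condition \ref{cd1} gives a non-zero homogeneous $w^{(i)}_0\in W_i$ such that $\mc Y_\alpha(w^{(i)}_0,z)$ is energy-bounded (hence, by Proposition \ref{lb53}, $\mc Y_\alpha$ itself is energy-bounded) and satisfies the $E$-strong intertwining property.

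Next I would use the energy bounds to form smeared intertwining operators $\mc Y_\alpha(w^{(i)}_0,\wtd f)$, which by Section \ref{lb69} are smooth preclosed operators $\mc H_j\to\mc H_k$ with $\mc H_j^\infty$ a core, and whose formal adjoints again smear $w^{(i)}_0$ (via the unitary structure). The invariant form $\Lambda(\mc Y_\alpha,\mc Y_\beta)$ can then be extracted from expressions of the shape $\langle \mc Y_\alpha(w^{(i)}_0,\wtd f)\eta \,|\, \mc Y_\beta(w^{(i)}_0,\wtd f)\eta\rangle$ for suitable $\eta\in\mc H_j^\infty$ and arg-valued functions $\wtd f$ — indeed this is exactly the content of the inner-product formulas (corollaries \ref{lb5}, \ref{lb26}) once one knows that these smeared operators are the $\scr L$/$\scr R$ operators of a categorical extension, or more directly the computation in \cite{Gui19b}. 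Since such a quantity is the squared norm of a vector in $\mc H_k$, it is non-negative; running this over a basis $\{\mc Y_{\alpha_r}\}$ of $\mc V{k\choose i~j}$ and a spanning family of test data shows the Gram matrix of $\Lambda$ is positive semidefinite, and non-degeneracy of $\Lambda$ upgrades this to positive definiteness. Combining with the reduction above gives complete unitarity, and then \cite{Gui19b} Theorem 7.9 yields that $\RepuV$ is a unitary modular tensor category.

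The main obstacle is the bookkeeping in the reduction step: one must check that the multiplicativity of $\Lambda$ holds with the correct normalizations and compatibility with the associativity and braiding isomorphisms of $\boxdot$, so that positivity genuinely propagates from the generators in $\mc F^V$ to all irreducible intertwining operators — including handling the case where $W_j$ or $W_k$ is itself only obtained after several fusions, which is where the $E$-strong intertwining property (rather than the full strong intertwining property) must be shown to suffice, via the chain argument of Remark \ref{lb50} and Proposition \ref{lb42}. The analytic half (energy bounds $\Rightarrow$ non-negative inner product formula) is comparatively routine given Section \ref{lb69} and the results already cited from \cite{Gui19b}.
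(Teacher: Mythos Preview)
Your proposal is essentially correct and follows the same route as the paper, which does not give a self-contained proof but defers to \cite{Gui19b} (with the adaptation to the $E$-strong hypothesis noted via \cite{Gui18} remark 4.21 and theorem 4.22). Your two-step outline---reduce to charge spaces in $\mc F^V$ via the multiplicative behavior of $\Lambda$ under fusion, then read off positivity for those from the inner-product/transport formulas coming from energy-bounded smeared intertwining operators satisfying the strong intertwining property---is exactly the skeleton of the argument in \cite{Gui19b}, and you correctly flag that the passage from $E$-strong to full strong intertwining (Proposition \ref{lb42}, Remark \ref{lb50}) is the one place where a little extra care is needed beyond the original reference.
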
 

Moreover, by proposition \ref{lb47} and  \ref{lb50}, we have:

\begin{thm}\label{lb55}
Assume that condition \ref{cd1} is satisfied. Then any unitary $V$-module $W_k$ is strongly integrable. Moreover, for any $w^{(i)}_0$ and $\mc Y$ in condition \ref{cd1}, $\mc Y(w^{(i)}_0,z)$ satisfies the strong intertwining property.
\end{thm}

Thus,  each $W_i\in\RepuV$ is associated with an $\mc A_V$-module $\mc H_i$. We define a $*$-functor\footnote{A functor $\fk F$ is called a $*$-functor if $\fk F(G^*)=\fk F(G)^*$ for any morphism $G$.} $\fk F:\RepuV\rightarrow \Rep(\mc A_V)$ sending each $W_i$ to $\mc H_i$. If $G\in\Hom_V(W_i,W_j)$, then $\fk F(G):\Hom_{\mc A_V}(\mc H_i,\mc H_j)$ is the closure of the densely defined continuous map $G$.

\begin{thm}[\cite{CWX}, \cite{Gui19b} Thm. 4.3]\label{lb54}
Assume that condition \ref{cd1} is satisfied. Then $\fk F$ is a fully faithful $*$-functor. Namely, for any unitary $V$-modules $W_i,W_j$, the linear map $\fk F:\Hom_V(W_i,W_j)\rightarrow\Hom_{\mc A_V}(\mc H_i,\mc H_j)$ is an isomorphism.
\end{thm}

Let $\scr C=\fk F(\RepuV)$. Then one can define the inverse $*$-functor $\fk F^{-1}$ as follows. Choose any $\mc H_i\in\Obj(\scr C)$. Then there exists a unitary $W_i$ such that $\fk F(W_i)=\mc H_i$. We let $\fk F^{-1}(\mc H_i)=W_i$. This is well defined. Indeed, if $W_{i'}$ is another unitary $V$-module such that $\fk F(W_{i'})=\mc H_i$, then $W_i$ and $W_{i'}$ are both dense subspaces of $\mc H_i$. By theorem \ref{lb54}, the morphism $\id_{\mc H_i}:\fk F(W_i)\rightarrow\fk F(W_{i'})$ arises from a morphism $G\in\Hom_V(W_i,W_{i'})$. Since $\id_{\mc H_i}=\fk F (G)$, and since $\fk F(G)$ is the closure of $G$, $G$ must be $\id_{W_i}$. This proves that $W_i=W_{i'}$ both as pre-Hilbert spaces and as unitary $V$-modules.

\begin{df}\label{lb101}
The above results show that the $*$-functor $\fk F:\RepuV\rightarrow\scr C$ is a $*$-isomorphism of $C^*$-categories with inverse $\fk F^{-1}$. Therefore, we can define a unique$C^*$-tensor category structure $(\scr C,\boxdot,\ss)$ such that $\fk F:\RepuV\rightarrow\scr C$ is a $*$-isomorphism of braided $C^*$-tensor categories. In particular, for any $\mc H_i,\mc H_j\in\Obj(\scr C)$, we define $\mc H_i\boxdot\mc H_j=\fk F(\fk F^{-1}(\mc H_i)\boxdot\fk F^{-1}(\mc H_j))$, and write $\mc H_i\boxdot\mc H_j$ as $\mc H_{ij}$ for short. We let $\mc H_0=\fk F(W_0)$ be the unit object. The structure isomorphisms (i.e. unitors, associators, braiding isomorphisms) of $\scr C$ are transported  from those of $\RepuV$ via $\fk F$.
\end{df}

\subsection{The weak vertex categorical partial extension $\scr E^w_\loc$}\label{lb68}

The most important results of this section are corollaries \ref{lb71} and \ref{lb65}.

$\scr C=\fk F(\RepuV)$ is a semi-simple $C^*$-subcategory of $\Rep(\mc A_V)$. We shall show that $\scr C$ is a braided $C^*$-tensor subcategory of $\Rep(\mc A_V)$ by constructing a weak categorical partial extension $\scr E^w_\loc=(\mc A_V,\mc F,\boxdot,\fk H)$ where $\mc F=\fk F(\mc F^V)$ clearly generates $\scr C$.

For each unitary $W_i,W_j$, we write $W_{ij}=W_i\boxdot W_j$. Thus, by our notations, $\mc H_{ij}$ is the associated $\mc A_V$-module, and $\mc H_{ij}^\infty$ is the dense subspace of smooth vectors. By \cite{Gui21a} section 4.2, we have $\mc L^V$ associates to each unitary $W_i,W_k$  an intertwining operator $\mc L^V$ of type ${ik\choose i~k}={W_i\boxdot W_k\choose W_i~W_k}$ such that for any unitary $W_l$ and any $\mc Y\in\mc V{l\choose i~k}$, there exists a unique $T\in\Hom_V(W_i\boxdot W_k,W_l)$ such that $\mc Y(w^{(i)},z)w^{(k)}=T\mc L^V(w^{(i)},z)w^{(k)}$ for any $w^{(i)}\in W_i,w^{(k)}\in W_k$. Moreover,  we define $\mc R^V$ to be a type ${ki\choose i~k}={W_k\boxdot W_i\choose W_i~W_k}$ intertwining operator by setting
\begin{align}
\mc R^V(w^{(i)},z)w^{(k)}=\ss_{i,k}\mc L^V(w^{(i)},z)w^{(k)},
\end{align}
We understand $\mc L^V(w^{(i)},z)$ and $\mc R^V(w^{(i)},z)$ (and their smeared intertwining operators) as categorical operators acting on any possible unitary $W_k$. We write them as $\mc L^V(w^{(i)},z)|_{W_k}$ and $\mc R^V(w^{(i)},z)|_{W_k}$ if we want to emphasize that they are acting on $W_k$. (Note that in \cite{Gui21a}, these two operators are written as $\mc L_i(w^{(i)},z)$ and $\mc R_i(w^{(i)},z)$.) For any $v\in W_0=V$ we have
\begin{align}
\mc L^V(v,z)|_{W_k}=\mc R^V(v,z)|_{W_k}=Y_k(v,z).
\end{align}
where we have identified $W_{0k}$ and $W_{k0}$ with $W_k$.

We recall some important properties of $\mc L^V$ and $\mc R^V$ proved in \cite{Gui21a} chapter 4.

\begin{pp}[\cite{Gui21a} Eq. (4.24), (4.25)]\label{lb57}
$\mc L^V$ and $\mc R^V$ are natural. More precisely, for any unitary $V$-modules $W_i,W_{i'},W_k,W_{k'}$, any $F\ \in\Hom_V(W_i,W_{i'}),G\in\Hom_V(W_k,W_{k'})$, and any $w^{(i)}\in W_i,w^{(k)}\in W_k$
\begin{gather}
(F\boxdot G)\mc L^V(w^{(i)},z)w^{(k)}=\mc L^V(Fw^{(i)},z)Gw^{(k)},\label{eq70}\\
(G\boxdot F)\mc R^V(w^{(i)},z)w^{(k)}=\mc R^V(Fw^{(i)},z)Gw^{(k)}.\label{eq74}
\end{gather}
\end{pp}

Note that by the semisimpleness of (unitary) $V$-modules, if $W_i$ is irreducible and $w^{(i)}\in W_i$ is homogeneous, then $\mc L^V(w^{(i)},z)$ is energy-bounded (when acting on any unitary $W_k$) if and only if  for any irreducible unitary intertwining operator $\mc Y$ with charge space $W_i$, $\mc Y(w^{(i)},z)$ is energy-bounded.

\begin{pp}[\cite{Gui21a} Prop. 4.12]\label{lb58}
Suppose that $W_i,W_j$ are unitary $V$-modules, $W_i$ is irreducible, $w^{(i)}\in W_i$ is a non-zero homogeneous vector, and $\mc L^V(w^{(i)},z)|_{W_j}$ is energy-bounded and satisfies the strong intertwining property.  Then for each  $\wtd I\in\Jtd$, vectors of the form $\mc L^V(w^{(i)},\wtd f)w^{(j)}$ (where $\wtd f\in C_c^\infty(\wtd I)$ and $w^{(j)}\in W_j$) span a dense subspace of $\mc H_{ij}^\infty$.
\end{pp}

Note that by proposition \ref{lb44}, the dense subspace in the above proposition is also QRI. The following weak locality of smeared $\mc L^V$ and $\mc R^V$  proved in \cite{Gui21a} theorem 4.8 is a generalization of theorem \ref{lb63}.

\begin{thm}[Weak braiding]\label{lb60}
Choose unitary $V$-modules $W_i,W_j,W_k$, homogeneous $w^{(i)}\in W_i,w^{(j)}\in W_j$, and disjoint $\wtd I,\wtd J\in \Jtd$ with $\wtd J$ is clockwise to $\wtd I$. Assume that  $\mc L^V(w^{(i)},z)|_{W_k},\mc L^V(w^{(i)},z)|_{W_{kj}},\mc R^V(w^{(j)},z)|_{W_k},\mc R^V(w^{(j)},z)|_{W_{ik}}$ are energy-bounded. Then for any $\wtd f\in C_c^\infty(\wtd I)$ and $\wtd g\in C_c^\infty(\wtd J)$, the following diagram commutes adjointly.
	\begin{align}\label{eq42}
	\begin{CD}
	\mc H^\infty_k @> \quad\mc R^V(w^{(j)},\wtd g)\quad >> \mc H^\infty_{kj}\\
	@V {\mc L^V(w^{(i)},\wtd f)} VV @VV {\mc L^V(w^{(i)},\wtd f)} V\\
	\mc H^\infty_{ik} @> \quad\mc R^V(w^{(j)},\wtd g)\quad>> \mc H^\infty_{ikj}
	\end{CD}
	\end{align}
\end{thm}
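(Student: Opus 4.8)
The plan is to deduce the adjoint commutativity of diagram \eqref{eq42} from the braiding relation for the formal-variable operators $\mc L^V(w^{(i)},z)$ and $\mc R^V(w^{(j)},\zeta)$ --- part of the braided tensor structure of $\Rep(V)$ established in \cite{Gui18} Chapter 4 (building on the convergence, associativity and commutativity results of \cite{HL95a,HL95b,HL95c,Hua95,Hua05a}) --- by a contour-deformation argument of exactly the type used to prove the weak locality of smeared vertex operators in \cite{CKLW18} and the weak intertwining property (Theorem \ref{lb63}) in \cite{Gui19b}.

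First I would reduce to matrix coefficients. Adjoint commutativity of \eqref{eq42} means two things: (i) $\mc L^V(w^{(i)},\wtd f)\mc R^V(w^{(j)},\wtd g)=\mc R^V(w^{(j)},\wtd g)\mc L^V(w^{(i)},\wtd f)$ on $\mc H_k^\infty$, and (ii) the same identity with $\mc L^V(w^{(i)},\wtd f)$ replaced by its formal adjoint. Since the algebraic module $W_k$ is a core and the graded dual of $W_{ikj}$ separates points of its algebraic completion, (i) is equivalent to the scalar identity $\bk{\eta|\mc L^V(w^{(i)},\wtd f)\mc R^V(w^{(j)},\wtd g)w^{(k)}}=\bk{\eta|\mc R^V(w^{(j)},\wtd g)\mc L^V(w^{(i)},\wtd f)w^{(k)}}$ for homogeneous $w^{(k)}\in W_k$ and $\eta$ ranging over the graded dual of the algebraic completion of $W_{ikj}$. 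Expanding each smeared operator into its Fourier modes and using the energy bounds estimate $\lVert(1+\ovl{L_0}^p)\mc Y_\alpha(w,\wtd f)\eta\rVert\leq M\lVert(1+\ovl{L_0}^{a+p})\eta\rVert$ recalled before Proposition \ref{lb45} --- applied to each of the four relevant restrictions $\mc L^V(w^{(i)},z)|_{W_k}$, $\mc L^V(w^{(i)},z)|_{W_{kj}}$, $\mc R^V(w^{(j)},z)|_{W_k}$, $\mc R^V(w^{(j)},z)|_{W_{ik}}$ --- all the series converge absolutely and may be interchanged with the integrations, so the two sides become $\iint_{I\times J}f(z)g(\zeta)\bk{\eta|\mc L^V(w^{(i)},z)\mc R^V(w^{(j)},\zeta)w^{(k)}}\frac{z\,d\theta_z}{2\pi}\frac{\zeta\,d\theta_\zeta}{2\pi}$ and the analogous integral with the two operators in the opposite order, with $z\in I$ and $\zeta\in J$ on the unit circle and arguments prescribed by $\arg_I,\arg_J$.

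The analytic input, from \cite{Gui18} Chapter 4, is that for fixed $\eta$ and homogeneous $w^{(k)}$ the series defining $\mc L^V(w^{(i)},z)\mc R^V(w^{(j)},\zeta)w^{(k)}$ converges absolutely for $|z|>|\zeta|>0$ and the one defining $\mc R^V(w^{(j)},\zeta)\mc L^V(w^{(i)},z)w^{(k)}$ for $|\zeta|>|z|>0$; their $\eta$-matrix coefficients are single-valued branches on these two simply connected regions of one multivalued analytic function on $\{(z,\zeta):z\zeta\neq0,\ z\neq\zeta\}$; and the braiding axiom (encoded by the braid operator of $\Rep(V)$) says precisely that analytic continuation from the first branch to the second is effected by moving $z$ clockwise around $\zeta$ through an angle less than $2\pi$ --- exactly the configuration forced by $\wtd I$ anticlockwise to $\wtd J$. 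Granting this, the main step is the contour manipulation: in the first integral deform the $z$-contour slightly outward and the $\zeta$-contour slightly inward (legitimate since the supports of $f$ and $g$ are disjoint closed arcs, so the diagonal $z=\zeta$ is never met and Cauchy's theorem applies), making the integrand the honest $|z|>|\zeta|$ branch; do the mirror-image deformation in the second integral to reach the $|\zeta|>|z|$ branch; then bring the contours back and slide the $z$-contour past the angular region of $J$ --- again possible because the arcs are disjoint and $\arg_I$ exceeds $\arg_J$ there --- so that the branch picked up matches the one just described and the two integrals agree. This proves (i). For (ii) I would use the adjoint formula for smeared intertwining operators (\cite{Gui19b} Chapter 3 and \cite{Gui18} Section 4.4): $\mc L^V(w^{(i)},\wtd f)^\dagger$ is again, up to the module involution $W_i\mapsto W_{\ovl i}$ and conjugation of $\wtd f$, a smeared intertwining operator of the same energy-bounded type acting between the appropriate modules, and since passing to clockwise complements of $\wtd I,\wtd J$ preserves the relative anticlockwise configuration, the argument just given applies verbatim and yields (ii).

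I expect the contour step to be the main obstacle: carefully justifying the deformations and keeping consistent the bookkeeping of $\arg_I$, $\arg_J$, of which branch of the genuinely multivalued correlation function appears, and of the relation ``$\wtd I$ anticlockwise to $\wtd J$'' throughout (here $V$ is rational but not holomorphic, so real powers of $z,\zeta$ and nontrivial monodromy occur, unlike in the vertex-operator case of \cite{CKLW18}), together with the routine but essential verification --- using all four energy-bound hypotheses --- that every interchange of summation, integration, and analytic continuation is legitimate and that all the smeared operators and their formal adjoints preserve the relevant smooth and algebraic subspaces.
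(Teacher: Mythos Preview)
The paper does not give its own proof of this theorem; it is stated with the attribution ``proved in \cite{Gui18} theorem 4.8'' and used as input. Your outline is essentially the argument carried out there: reduce to matrix coefficients, invoke the convergence and braiding of the correlation functions $\bk{\eta|\mc L^V(w^{(i)},z)\mc R^V(w^{(j)},\zeta)w^{(k)}}$ and $\bk{\eta|\mc R^V(w^{(j)},\zeta)\mc L^V(w^{(i)},z)w^{(k)}}$ supplied by the Huang--Lepowsky tensor product theory (as packaged in \cite{Gui18} Chapter 4), and then run the contour-deformation argument familiar from \cite{CKLW18} and \cite{Gui19b}, with the arg-value bookkeeping ensuring that ``$\wtd I$ anticlockwise to $\wtd J$'' selects the correct branch. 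Your identification of the genuine subtlety---tracking multivaluedness and arguments when the exponents are non-integral---is accurate, and your treatment of the adjoint half via the adjoint formula for smeared intertwining operators is the right move. So your proposal matches the intended proof.
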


We now construct the weak vertex categorical partial extension. Note that $\mc F:=\fk F(\mc F^V)$ generates the $C^*$-tensor category $\scr C=\fk F(\RepuV)$. Recall that $\mc H_i=\fk F(W_i)$.

\begin{thm}
Suppose that condition \ref{cd1} is satisfied. Then there exists a natural categorical partial extension $\scr E^w_\loc=(\mc A_V,\mc F,\boxdot,\fk H)$ of $\mc A_V$ obtained by smeared vertex operators and smeared intertwining operators. Moreover, if condition \ref{cd2} is satisfied, then $\scr E^w_\loc$ is M\"obius covariant.
\end{thm}

\begin{proof}
For each $W_i\in\mc F^V$, $w^{(i)}_0$ always denotes the non-zero and homogeneou vector mentioned condition \ref{cd1}. Then by theorem \ref{lb55}, for any irreducible unitary intertertwining operator $\mc Y$ with charge space $W_i\in\mc F^V$, $\mc Y(w^{(i)}_0,z)$ is energy-bounded and satisfies the strong intertwining property. 

Choose any $\mc H_i\in\mc F$ (equivalently, choose any $W_i\in\mc F^V$). For each $\wtd I\in\Jtd$, we define
\begin{align}
\fk H_i(\wtd I)_n=&C_c^\infty(\wtd I)\times C_c^\infty(I)^{\times n}\times E^{\times n}\nonumber\\
=&C_c^\infty(\wtd I)\times\underbrace{C_c^\infty(I)\times C_c^\infty(I)\times\cdots\times C_c^\infty(I)}_{n}\times\underbrace{E\times E\times\cdots\times E}_{n} 
\end{align}
for each $n=0,1,2,\dots$, and define 
\begin{align}
\fk H_i(\wtd I)=\coprod_{n\in\mbb N}\fk H_i(\wtd I)_n.
\end{align}
The inclusion $\fk H_i(\wtd I)\subset \fk H_i(\wtd J)$ (when $\wtd I\subset\wtd J$) is defined in an obvious way. For each
\begin{align*}
\fk a=(\wtd f;f_1,f_2,\dots,f_n;u_1,u_2,\dots,u_n)\in C_c^\infty(\wtd I)\times C_c^\infty(I)^{\times n}\times E ^{\times n},
\end{align*}
define smooth operators $\mc L(\fk a,\wtd I)$ and $\mc R(\fk a,\wtd I)$ which, for each $W_k\in\Obj(\RepuV)$, map from $\mc H_k^\infty$ to $\mc H_{ik}^\infty$ and $\mc H_{ik}^\infty$ respectively as
\begin{gather}
\mc L(\fk a,\wtd I)|_{\mc H_k^\infty}=\mc L^V(w^{(i)}_0,\wtd f)Y_k(u_1,f_1)\cdots Y_k(u_n,f_n),\label{eq40}\\
\mc R(\fk a,\wtd I)|_{\mc H_k^\infty}=\mc R^V(w^{(i)}_0,\wtd f)Y_k(u_1, f_1)\cdots Y_k(u_n,f_n).\label{eq41}
\end{gather}
Then, by the definition of $\mc R^V$, one clearly has $\mc R(\fk a,\wtd I)|_{\mc H_k^\infty}=\ss_{i,k}\mc L(\fk a,\wtd I)|_{\mc H_k^\infty}$. 

We now check that the axioms of  weak categorical partial extensions are satisfied. Isotony is obvious. Naturality follows from proposition \ref{lb57}. Braiding is proved. Since $\ss_{i,0}=\id_i$, one has the neutrality. Density of fusion products follows from proposition \ref{lb58}. The Reeh-Schlieder property follows from the fact that $E$ generates $V$, and that vectors of the form $Y(u_1,f_1)\cdots Y(u_n,f_n)\Omega$ (where $u_1,\dots,u_n\in E,f_1,\dots,f_n\in C_c^\infty(I)$) span a dense and clearly QRI subspace of $\mc H_0^\infty$. (The density of this subspace follows from a standard argument using Schwarz reflection principle; see the first paragraph of the proof of \cite{CKLW18} theorem 8.1.) Since each factor on the right hand sides of \eqref{eq40} and \eqref{eq41} satisfies the strong intertwining property (recall lemma \ref{lb61} and remark \ref{lb99}), the axiom of intertwining property for $\scr E^w_\loc$ can be proved using lemma \ref{lb59}. The weak locality follows from theorem \ref{lb60}. The rotation or M\"obius covariance follows from proposition \ref{lb44}.
(Note also \eqref{eq51}.)  
\end{proof}

Combine the above theorem with theorems \ref{lb40} and \ref{lb34} and corollary \ref{lb35}, we obtain the following corollaries.

\begin{co}[Isomorphism of tensor categories]\label{lb71}
If condition \ref{cd1} is satisfied, then $\RepuV$ is isomorphic to a braided $C^*$-tensor subcategory of $\Rep(\mc A_V)$ under the $*$-functor $\fk F$.
\end{co}

\begin{pp}\label{lb78}
If condition \ref{cd1} is satisfied, then for any $W_i,W_j\in\mc F^V$, $W_k\in\Obj(\RepuV)$, $\wtd J$ clockwise to $\wtd I$, and $\wtd f\in C_c^\infty(\wtd I),\wtd g\in C_c^\infty(\wtd J)$, the diagram \eqref{eq42} of closable operators commutes strongly if we set $w^{(i)}=w^{(i)}_0,w^{(j)}=w^{(j)}_0$.
\end{pp}

From the construction of $\scr E^w_\loc$, it is clear that we have:

\begin{pp}\label{lb64}
Suppose that condition \ref{cd1} is satisfied. Choose any $W_i\in\Obj(\RepuV)$ and any $w^{(i)}\in W_i$ such that $\mc Y(w^{(i)},z)$ is energy-bounded for any irreducible unitary intertwining operator $\mc Y$ with charge space $W_i$. Let $\wtd I\in\Jtd,f\in C_c^\infty(\wtd I)$,  set $\fk x=(w^{(i)},\wtd f)$, and set
\begin{align}
A(\fk x,\wtd I)=\mc L^V(w^{(i)},\wtd f),\qquad B(\fk x,\wtd I)=\mc R^V(w^{(i)},\wtd f).
\end{align}
Then $(A,\fk x,\wtd I,\mc H_i)$ and $(B,\fk x,\wtd I,\mc H_i)$ are respectively weak left and right operators of $\scr E^w_\loc$.
\end{pp}

By proposition \ref{lb64} and corollary \ref{lb62}, we have:

\begin{co}[Strong braiding]\label{lb65}
Suppose that condition \ref{cd2} is satisfied. Assume that  $W_i,W_j\in\Obj(\RepuV)$ satisfy the condition that any  unitary intertwining operator with charge space $W_i$ or $W_j$ is energy-bounded.  Then, for any $W_k\in\Obj(\RepuV)$, $\wtd J$ clockwise to $\wtd I$, $\wtd f\in C_c^\infty(\wtd I),\wtd g\in C_c^\infty(\wtd J)$, and homogeneous $w^{(i)}\in W_i,w^{(j)}\in W_j$, the diagram \eqref{eq42} of closable operators commutes strongly.
\end{co}

Note that the assumption on $W_i,W_j$ hold if we choose $W_i,W_j\in\mc F^V$.

\begin{rem}\label{lb72}
In the above corollary, if we take $W_j$ to be $W_0$, then we see that $\mc L^V$ satisfies the strong intertwining property. Since any unitary intertwining operator $\mc Y$ with charge space $W_i$ can be expressed as $\mc Y\mc L^V$ where $\mc Y$ is a morphism which is clearly smooth and rotation invariant, by lemma \ref{lb59}, $\mc Y$ satisfies the strong intertwining property. 
\end{rem}

By corollary \ref{lb65} and the above remark, we immediately have:

\begin{pp}\label{lb88}
Assume that condition \ref{cd2} is satisfied. Then  a unitary intertwining operator is energy-bounded and satisfies the strong intertwining property if its charge space is a (finite) direct sum of unitary $V$-modules in $\mc F^V$.
\end{pp}

Recall that we say that $V$ is completely energy-bounded if any  unitary irreducible intertwining operator of $V$ is energy-bounded. If $V$ is completely unitary and completely energy-bounded, we say that the unitary intertwining operators of $V$ satisfy \textbf{strong braiding}, if for any $W_i,W_j,W_k\in\Obj(\RepuV)$, $w^{(i)}\in W_i,w^{(j)}\in W_j$ being homogeneous, $\wtd J$ clockwise to $\wtd I$, and $\wtd f\in C_c^\infty(\wtd I),\wtd g\in C_c^\infty(\wtd J)$, the diagram \eqref{eq42} of closable operators commutes strongly. The following corollary follows immediately from corollary \ref{lb65}.

\begin{co}\label{lb80}
Assume that $V$ satisfies condition \ref{cd2} and is completely energy-bounded, then the unitary intertwining operators of $V$ satisfy strong braiding and (in particular) the strong intertwining property.
\end{co}

\subsection{Proving conditions \ref{cd1} and \ref{cd2}}

In this section, we give two useful methods of proving conditions \ref{cd1} and \ref{cd2}. The first one concerns sub-VOAs and coset VOAs, the second one is about tensor product VOAs.

\subsubsection{Compression principle}

The goal of this subsection is to show that compressing intertwining operators preserves the polynomial energy bounds and the strong intertwining property. Let $U$ be a unitary simple VOA with vertex operator $Y^U$, and let $V$ a unitary sub-VOA of $U$ with vertex operator $Y$. This means that $V$ is a graded subspace of $U$, that $U$ and $V$ share the same vacuum vector $\Omega$, that $Y^U(v_1,z)v_2=Y(v_1,z)v_2$ for all $v_1,v_2\in V$, and that there is $\nu\in V$ such that $V$ (with the inner product $\bk {\cdot|\cdot}$ inherited from $U$) is a unitary VOA with conformal vector $\nu$. Let $V^c$ be the set of all $u\in U$ satisfying $Y(v)_nu=0$ for any $n\in\mbb N$. Then by \cite{CKLW18} proposition 5.31, $V^c$ is a unitary VOA (the coset VOA) with conformal vector $\nu'$ such that $\nu+\nu'$ is the conformal vector of $U$. Moreover, let $Y'$ be the vertex operator of $V^c$. Then the linear map
\begin{align*}
V\otimes V^c\rightarrow U,\qquad v\otimes v'\mapsto Y(v)_{-1}Y(v')_{-1}\Omega
\end{align*}
is an isometry and a homomorphism of $V\otimes V^c$-modules. Thus one can regard $V\otimes V^c$ as a (conformal) sub-VOA of $U$ with the same conformal vector $\nu+\nu'=\nu\otimes\Omega+\Omega\otimes\nu'$. Note that we have the identification $v=v\otimes\Omega,v'=\Omega\otimes v'$ (where $v\in V,v'\in V^c$). We let $L_n=Y(\nu)_{n+1},L'_n=Y(\nu')_{n+1}$. Note also that $V$ and $V^c$ are clearly CFT-type. Thus they are simple.

Let $(W_{\mbb I},Y_{\mbb I})$ be a unitary $U$-module. Then $W_{\mbb I}$ restricts to a weak $V$-module. We say that $(W_i,Y_i)$  is  a graded irreducible $V$-submodule of $W_{\mbb I}$ if $W_i$ is a $L_0^U$-graded subspace of $W_{\mbb I}$ (equivalently, $W_i$ is $L_0^U$-invariant subspace),  $Y_i(v,z)w^{(i)}=Y_{\mbb I}(v,z)w^{(i)}$ for each $v\in V,w^{(i)}\in W_i$, and $(W_i, Y_i)$ is an irreducible (ordinary) $V$-module.

\begin{pp}\label{lb76}
If $W_i$ is a graded irreducible $V$-submodule of $W_{\mbb I}$, then there exists $\lambda\in\mbb R$ such that $L_0^U|_{W_i}=L_0|_{W_i}+\lambda\id_{W_i}$.
\end{pp}
\begin{proof}
Since the action of $L_0^U$ on $V\otimes V^c$ equals $L_0\otimes \id+\id\otimes L_0'$, it is clear that $L_0|_V=L_0^U|_V$. Therefore, when acting on $W_i$, we have for each $v\in V$ that
\begin{align*}
&[L_0^U, Y_i(v,z)]=[L_0^U, Y_{\mbb I}(v,z)]=Y_{\mbb I}(L_0^Uv,z)+z\partial_zY_{\mbb I}(v,z)\\
=&Y_i(L_0v,z)+z\partial_zY_i(v,z)=[L_0,Y_i(v,z)],
\end{align*}
which shows that $A:=L_0^U|_{W_i}-L_0|_{W_i}$ commutes with the action of $V$ on $W_i$. Since $W_i$ is irreducible, $A$ must be a constant.
\end{proof}

Recall that $\mc H_{\mbb I}$ is the Hilbert space completion of $W_{\mbb I}$, and $\mc H^\infty_{\mbb I}$ is the subspace of smooth vectors (defined by $L_0^U$). By proposition \ref{lb76}, $\mc H_i^\infty$ can be defined unambiguously using either $L_0$ or $L_0^U$. One can thus regard $\mc H_i^\infty$ as a subspace of $\mc H_i$ and also of $\mc H_{\mbb I}^\infty$. If $p_i$ is the projection of $\mc H_{\mbb I}$ onto $\mc H_i$, then, as each $e^{\im t\ovl{L^U_0}}$ leaves $\mc H_i$ invariant, $e^{\im t\ovl{L^U_0}}$ commutes (adjointly) with $p_i$, which shows that $L^U_0$ commutes strongly with $p_i$. Thus $p_i$ is smooth. In the following, we shall always consider $p_i$ as a densely defined continuous operator with domain $\mc H_{\mbb I}^\infty$.

Let $W_{\mbb J},W_{\mbb K}$ be unitary $U$-modules, and let $W_j$ and $W_k$ be respectively graded irreducible $V$-submodules of $W_{\mbb J},W_{\mbb K}$. Let $\mc V{k\choose i~j}$ and $\mc V^U{\mbb K\choose \mbb I~\mbb J}$ be the vector spaces of type $k\choose i~j$ intertwining operators of $V$ and type $\mbb K\choose \mbb I~\mbb J$ intertwining operators of $U$ respectively. Define  projections $p_j$ and $p_k$ in a similar way. If $\mc Y\in\mc V{k\choose i~j}$ and $\mc Y^U\in\mc V^U{\mbb K\choose \mbb I~\mbb J}$, we say that $\mc Y$ is a \textbf{compression} of $\mc Y^U$ if there exists $\lambda\in\mbb R$ such that for any $w^{(i)}\in W_i,w^{(j)}\in W_j,s\in\mbb R$,
\begin{align}
\mc Y(w^{(i)})_sw^{(j)}=p_k\mc Y^U(w^{(i)})_{s+\lambda}w^{(j)}.\label{eq50}
\end{align}
By proposition \ref{lb76}, it is clear that $\mc Y(w^{(i)},z)$ is ($L_0$-) energy-bounded if $\mc Y^U(w^{(i)},z)$ is ($L_0^U$-) energy-bounded. Thus we have:

\begin{pp}\label{lb90}
	Let $\mc Y$ be a compression of $\mc Y^U$. If $\mc Y^U$ is energy-bounded, then so is $\mc Y$.
\end{pp}

It is clear that $Y_i$  is a compression of $Y^U_{\mbb I}$ (the vertex operator of the $U$-module $W_{\mbb I}$). Thus, as a special case of the above proposition, we see that \emph{$W_i$ is energy-bounded if $W_{\mbb I}$ is so}.

We now show that the strong intertwining property is preserved by compressing intertwining operators. Let $\wtd f=(f,\arg_I)\in C_c^\infty(\wtd I)$. By \eqref{eq50}, we have the following natural identification
\begin{align}
\mc Y(w^{(i)},\wtd f)=p_k\mc Y^U(w^{(i)},\wtd f_\lambda)p_j
\end{align}
where $\wtd f_\lambda=(f_\lambda,\arg_I)$ and $f_\lambda(e^{\im t})=e^{\im\lambda t}f(e^{\im t})$.

\begin{pp}\label{lb91}
	Let $\mc Y\in\mc V{k\choose i~j}$ be a compression of  $\mc Y^U\in\mc V^U{\mbb K\choose \mbb I~\mbb J}$, and choose a homogeneous $w^{(i)}\in W_i$. Assume  $W_{\mbb I},W_{\mbb J},W_{\mbb K}$ are  energy-bounded, and that $\mc Y^U(w^{(i)},z)$ is energy-bounded and satisfies the strong intertwining property. Then the same is true for $\mc Y$.
\end{pp}

\begin{proof}
	Choose any $\wtd I\in\Jtd$ and $\wtd f\in C_c^\infty(\wtd I)$. Choose $J$ disjoint from $I$, and choose $g\in C_c^\infty(J)$. Assume that \eqref{eq50} is satisfied. Choose any homogeneous vector $v\in V$. Let $Y_{\mbb J}^U,Y_{\mbb K}^U$ be the vertex operators of $W_{\mbb J},W_{\mbb K}$ respectively. Then the diagram
	\begin{align}
	\begin{CD}
	\mc H_{\mbb J}^\infty @>~Y_{\mbb J}^U(v,g)~>> \mc H_{\mbb J}^\infty\\
	@V \mc Y^U(w^{(i)},\wtd f_\lambda)  VV @VV \mc Y^U(w^{(i)},\wtd f_\lambda) V\\
	\mc H_{\mbb K}^\infty @> Y^U_{\mbb K}(v,g)>> \mc H_{\mbb K}^\infty
	\end{CD}
	\end{align}	
	commutes strongly. Set $\mc H=\mc H_{\mbb J}\oplus \mc H_{\mbb J}\oplus \mc H_{\mbb K}\oplus \mc H_{\mbb K}$ with smooth subspace $\mc H^\infty=\mc H_{\mbb J}^\infty\oplus \mc H_{\mbb J}^\infty\oplus \mc H_{\mbb K}^\infty\oplus \mc H_{\mbb K}^\infty$. Let $R$  (resp. $S$) be  the extension of  $\mc Y^U(w^{(i)},\wtd f_\lambda)$ and $\mc Y^U(w^{(i)},\wtd f_\lambda)$ (resp. $Y_{\mbb J}^U(v,g)$ and $Y_{\mbb K}^U(v,g)$). (See definition \ref{lb77} for details.) Then the smooth closable operators $R$ and $S$ commute strongly. Define a projection $P$ on $\mc H$ (with domain $\mc H^\infty$) to be $\diag(p_j,p_j)$ on the subspace $\mc H_{\mbb J}^\infty\oplus \mc H_{\mbb J}^\infty$, and $\diag(p_k,p_k)$ on the subspace $\mc H_{\mbb K}^\infty\oplus \mc H_{\mbb K}^\infty$. Then $P$ commutes strongly with $L^U_0$ and with $S$. Thus, by lemma \ref{lb59}, $PRP$ commutes strongly with $S$. Obviously, $PRP$  commutes strongly with $P$. Thus, by lemma \ref{lb59} again, $PRP$ commutes strongly with $SP$. This is equivalent to the strong commutativity of diagram \eqref{eq38}.
\end{proof}

\subsubsection{Coset VOAs}

Assume that $V\subset U$ is as above. Assume also that  $U,V,V^c$ are all regular. The goal of this subsection is to show that the coset VOA $V^c$ satisfies condition \ref{cd2} if $U$ and $V$ do. We first need a preliminary result.

\begin{lm}\label{lb89}
Let $W_{\mbb I},W_{\mbb J},W_{\mbb K}$ be irreducible unitary $U$-modules satisfying $\dim\mc V^U{\mbb K\choose\mbb I~\mbb J}>0$. Let $W_k$ be a graded irreducible  $V$-submodule of $W_{\mbb K}$. Then $W_{\mbb I}$ and $W_{\mbb J}$ have graded irreducible $V$-submodules $W_i,W_j$ respectively satisfying $\dim\mc V{k\choose i~j}>0$.
\end{lm}

\begin{proof}
Choose any nonzero $\mc Y^U\in\mc V^U{\mbb K\choose \mbb I~\mbb J}$. Then, by the irreducibility of $W_{\mbb K}$, the restriction of $\mc Y^U$ to $W_{\mbb I},W_{\mbb J},W_k$ is non-zero. Since $V\otimes V^c$ is a regular conformal sub-VOA of $U$,   $W_{\mbb I},W_{\mbb J}$ are finite sums of (graded) irreducible $V\otimes V^c$-modules which are tensor products of irreducible $V$-modules and $V^c$-modules. So $W_{\mbb I},W_{\mbb J}$ are direct sums of graded irreducible $V$-submodules. Thus, there must exist graded irreducible $V$-submodules $W_i,W_j$ such that the restriction $\wtd{\mc Y}$ of $\mc Y^U$ to $W_i,W_j,W_k$ is non-zero. $\wtd{\mc Y}$ satisfies Jacobi identity, and by proposition \ref{lb76}, we can find $\lambda\in\mbb R$ such that $\mc Y=z^\lambda\wtd{\mc Y}$ also satisfies $z\partial_z\mc Y(w^{(i)},z)=[L_0,\mc Y(w^{(i)},z)]+\mc Y(L_0w^{(i)},z)$ for any $w^{(i)}\in W_i$, which is equivalent to the $L_{-1}$-derivative property. So $\mc Y$ is a non-zero element of $\mc V{k\choose i~j}$.
\end{proof}

\begin{thm}\label{lb92}
Assume that $U$ satisfies condition \ref{cd2}, and $V$ is  unitary sub-VOA of $U$. Assume that both $V$ and its commutant $V^c$ are regular, and that $V^{cc}=V$. Then $V$ and $V^c$ satisfy condition \ref{cd2}.
\end{thm}
Note that the condition $V^{cc}=V$ is equivalent to that $V$ is a coset of another unitary sub-VOA, i.e. there is a unitary sub-VOA $\wtd V\subset U$ such that $V=\wtd V^c$.

\begin{proof}
By \cite{KM15} theorem 2, any irreducible $V$-module is a graded irreducible $V$-submodule of an irreducible $U$-module which is unitarizable and energy-bounded. So $V$ is strongly unitary and strongly energy-bounded. Since $U$ is strongly local, so is $V$. Let $\mc F^U$ be the set of irreducible unitary $U$-modules generating $\Rep(U)$ as in condition \ref{cd1}. Then, by proposition \ref{lb88}, any unitary intertwining operator of $U$ whose charge space is in $\mc F^U$ is energy-bounded and satisfies the strong intertwining property. Let $\mc F^V$ be the set of all graded irreducible (unitary) $V$-submodules of the $U$-modules in $\mc F^U$. Then $\mc F^V$ generates $\Rep(V)$ by lemma \ref{lb89}. Let $\mc Y$ be an irreducible unitary intertwining operator of $V$ whose charge space is in $\mc F^V$. Then, by \cite{Gui20a} theorem 4.2, $\mc Y$ is a sum of  compressions of  irreducible unitary intertwining operators of $U$ whose charge spaces are in $\mc F^U$. By propositions \ref{lb90} and  \ref{lb91}, each compression satisfies polynomial energy bounds and  the strong intertwining property. By lemma \ref{lb59}, the same is true for $\mc Y$. This proves that $V$ satisfies condition \ref{cd2}. Since $V^c$ and $V$ satisfy similar assumptions,  $V^c$ also satisfies condition \ref{cd2}.
\end{proof}

The following is the main result of this section.

\begin{thm}[Condition \ref{cd2} for coset VOAs]\label{lb93}
Assume that $U$ satisfies condition \ref{cd2}, and $V$ is  unitary sub-VOA of $U$. Assume that both $V$ and its commutant $V^c$ are regular, and that $V$ is completely unitary (which holds when $V$ satisfies condition \ref{cd2}), then $V^c$ satisfies condition \ref{cd2}.
\end{thm}

\begin{proof}
Note that $V^c=V^{ccc}$. To apply theorem \ref{lb92}, it remains to check that $V^{cc}$ is regular. Note that $V^{cc}$ is a unitary conformal extension of $V$. Since $V$ is completely unitary, $\Repu(V)$ is a unitary modular tensor category. So $\dim_{\Rep(V)}(V^{cc})>0$. Thus, by the proof of \cite{McR19} theorem 4.14, $V^{cc}$ is rational and $C_2$-cofinite, i.e. regular.
\end{proof}

\begin{co}\label{lb79}
Assume that $U$ and $V$ satisfy the assumptions in theorem \ref{lb93}, and that $U$ is completely energy-bounded. Then  $V^c$ is completely energy-bounded, and the unitary intertwining operators of $V^c$  satisfy strong braiding and (in particular) the strong intertwining property.
\end{co}

\begin{proof}
From theorem \ref{lb93} we know that $V^c$ satisfies condition \ref{cd2}.  By \cite{Gui20a}, any irreducible intertwining operator $\mc Y$ of $V$ is a sum of those that are compressions of intertwining operators of $U$. Therefore, as $U$ is completely energy-bounded, so is $V^c$. By corollary \ref{lb80},   strong braiding holds for the unitary intertwining operators $V^c$.
\end{proof}

\subsubsection{Tensor product VOAs}

\begin{thm}\label{lb83}
Suppose that both $V,V'$ satisfy condition \ref{cd1} or \ref{cd2}, then so is $V\otimes V'$. Moreover, if $V$ and $V'$ are completely energy-bounded, then so is $V\otimes V'$, and any unitary intertwining operator of $V\otimes V'$ satisfy strong braiding and (in particular) the strong intertwining property.
\end{thm}

\begin{proof}
Assume that $V,V'$ satisfy condition \ref{cd1}. It is clear that $V\otimes V'$ is CFT-type, unitary, and regular \cite{DLM97}. By \cite{FHL93}, any irreducible $V\otimes V'$-module is of the form $W_i\otimes W_{i'}$ where $W_i\in\Obj(\RepV),W_{i'}\in\Obj(\Rep(V'))$ are irreducible. Thus $V\otimes V'$ is strongly unitary and strongly energy-bounded. Let $E,\mc F^V$ be as in condition \ref{cd1} for $V$, and let $E',\mc F^{V'}$ be those generating sets of $V'$. Set $\wtd E=E\otimes \Omega\cup\Omega\otimes E'$. Then $E$ is a generating set of quasi-primary vectors. Let $\mc F^{V\otimes V'}$ be the set of all $W_i\otimes V'$ and $V\otimes W_{i'}$ (where $W_i\in\mc F^V,W_{i'}\in \mc F^{V'}$). Since $\Rep(V\otimes V')\simeq \Rep(V)\boxtimes \Rep(V')$ (cf. \cite{ADL05} theorem 2.10), $\mc F^{V\otimes V'}$ is a generating set of irreducible unitary $V\otimes V'$-modules. Any irreducible unitary intertwining operator $\wtd{\mc Y}$ of $V\otimes V'$ with charge space $W_i\otimes V'$ is of the form $\mc Y\otimes Y'$ where $\mc Y$ is an irreducible unitary intertwining operator of $V$, and $Y'$ is the vertex operator of $V'$ on suitable modules. As the homogeneous vector $w^{(i)}_0\in W_i$ in condition \ref{cd1} is chosen such that $\mc Y(w^{(i)}_0,z)$ satisfies the $E$-strong intertwining property, it is clear that $\wtd {\mc Y}(w^{(i)}_0\otimes\Omega,z)$, which equals $\mc Y(w^{(i)}_0,z)\otimes\id$, satisfies the $\wtd E$-strong intertwining property. Similarly, if $\wtd{\mc Y}$ has charge space $V\otimes W_{i'}$, then $\wtd{\mc Y}(\Omega\otimes w^{(i')}_0,z)$ also satisfies the $\wtd E$-strong intertwining property. Thus $V\otimes V'$ satisfies condition \ref{cd1}, and if both $V$ and $V'$ satisfy condition \ref{cd2}, so is $V\otimes V'$.

Now assume that $V$ and $V'$ are completely energy-bounded. By the arguments in \cite{CKLW18} section 6 or \cite{Gui19a} proposition 3.5, tensor products of energy-bounded unitary intertwining operators are energy-bounded. Thus $V\otimes V'$ is completely energy-bounded. Therefore, by corollary \ref{lb80}, the unitary intertwining operators of $V\otimes V'$ satisfy strong braiding since they are energy-bounded.
\end{proof}

\subsection{Examples}\label{lb74}

In \cite{Gui21a}, we  proved the equivalence of braided $C^*$-tensor categories and the strong braiding for the following examples:  $c<1$ unitary Virasoro VOAs, Heisenberg VOAs\footnote{If $V$ is a  Heisenberg VOA then $V$ is not regular. Nevertheless, if we take $\RepuV$ to be the category of semisimple unitary $V$-modules, then the results in the last section still hold for $V$.}, even lattice VOAs (except showing the essential surjectivity), and unitary affine VOAs of type $A,C,G_2$. The goal of this subsection is  to  prove theorem \ref{lb94} in the introduction, which greatly expands above list. We will see that all the examples in theorem \ref{lb94} (including their tensor products and regular cosets) satisfy condition \ref{cd2}, and some of them are  completely energy-bounded.  We first give criteria on the complete rationality of $\mc A_V$ and the essential surjectivity of $\fk F$.

\subsubsection{Complete rationality and numbers of irreducibles}

Recall that if $U$ is a unitary VOA, a conformal unitary sub-VOA $V$ has the same conformal vector as that of $U$. We let $\mc A_U,\mc A_V$ be the conformal nets of $U,V$ respectively.

\begin{thm}[Compare \cite{Ten24} Cor.4.24]\label{lb84}
	Let $U$ be a unitary VOA and $V$ a unitary conformal sub-VOA of $U$. Suppose that $U$ (and hence $V$) is strongly local, and   $V$ satisfies condition \ref{cd1}. Then $\mc A_U$ is completely rational if and only if $\mc A_V$ is.
\end{thm}

\begin{proof}
	By corollary \ref{lb71}, $\RepuV$ is a braided $C^*$-tensor subcategory of $\Rep(\mc A_V)$. Let $\mc H_U$ be the vacuum $\mc A_U$-module, which can also be regarded as an $\mc A_V$-module. By the rigidity of $\RepuV$, $U$ is a dualizable $V$-module. Thus $\mc H_U$ is a dualizable $\mc A_V$-module, which is equivalent to that $[\mc A_U:\mc A_V]<+\infty$. The claim of our theorem now follows from \cite{Lon03} theorem 24.
\end{proof}

Recall that $\Repf(\mc A)$ for a conformal net $\mc A$ is the braided $C^*$-tensor category of dualizable representations of $\mc A$.

\begin{thm}\label{lb85}
	Let $U$ be a unitary VOA and $V$ a unitary conformal sub-VOA of $U$. Suppose that $U$ (and hence $V$) is strongly local,  both $U$ and $V$ satisfy condition \ref{cd1}, and one of $\mc A_U$ and $\mc A_V$ is completely rational. Then $\RepuV$ and $\Repf(\mc A_V)$ have the same number of irreducibles if and only if $\Repu(U)$ and $\Repf(\mc A_U)$ do.
\end{thm}

\begin{proof}
	Note that both $\Repu(V)$ and $\Repu(U)$ are unitary modular tensor categories by \cite{Hua08b} and theorem \ref{lb51}. So are $\Repf(\mc A_V),\Repf(\mc A_U)$ by \cite{KLM01} and the complete rationality of $\mc A_V$ and $\mc A_U$. Write $\mc C_V=\Repu(V),\mc D_V=\Repf(\mc A_V),\mc C_U=\Repu(U),\mc D_U=\Repf(\mc A_U)$. Then by corollary \ref{lb71}, $\mc C_V$ and $\mc C_U$ are full braided $C^*$-tensor subcategories of $\mc D_V,\mc D_U$ respectively. Recall that for a unitary modular tensor category $\mc C$ one can define its global dimension $D(\mc C)>0$ whose square is the square sum of the (positive) quantum dimensions of all irreducibles of $\mc C$ (cf. \cite[(3.1.22)]{BK01}). It then follows that $D(\mc C_V)\leq D(\mc D_V)$, and that $D(\mc C_V)=D(\mc D_V)$ if and only if $\mc C_V$ and $\mc D_V$ have the same number of irreducibles. The same can be said about $\mc C_U$ and $\mc D_U$. We shall prove that $D(\mc C_V)=D(\mc D_V)$ if and only if $D(\mc C_U)=D(\mc D_U)$.
	
	By \cite{KLM01}, $D(\mc D_V)^2$ and $D(\mc D_U)^2$ are respectively the $\mu$-indexes of $\mc A_V$ and $\mc A_U$. By \cite{Lon03} Lemma 22, we have
	\begin{align*}
	D(\mc D_V)/D(\mc D_U)=[\mc A_U:\mc A_V]
	\end{align*} 
	where the right hand side is the index of the subnet $\mc A_V\subset\mc A_U$. Let $\mc H_U$ be the vacuum $\mc A_U$-module, which can also be regarded as an $\mc A_V$-module. Then $\dim_{\mc D_V}(\mc H_U)$, the quantum dimension  of $\mc H_U$  in $\mc D_V$, equals $[\mc A_U:\mc A_V]$. By Cor. \ref{lb71}, we have $\dim_{\mc D_V}(\mc H_U)=\dim_{\mc C_V}(U)$. Thus
	\begin{align*}
	D(\mc D_V)/D(\mc D_U)=\dim_{\mc C_V}(U).
	\end{align*}
	
By \cite{HKL15}, $V\subset U$ is described by a commutative algebra $A$ in $\mc C_V$. If we let $\Rep^0(A)$ be the ribbon category  of local $A$ modules (cf. \cite{KO02}), then $\Rep^0(A)\simeq \mc C_U$ by \cite{CKR17}. Thus, $D(\Rep^0(A))=D(\mc C_U)$. Moreover, if we also regard $U$ as an $A$-module, then $\dim_{\mc C_V}(U)=\dim_{\Rep^0(A)}(U)$. By \cite{KO02} theorem 4.5,
\begin{align*}
D(\mc C_V)/D(\Rep^0(A))=\dim_{\Rep^0(A)}(U),
\end{align*}
which implies
\begin{align*}
D(\mc C_V)/D(\mc C_U)=\dim_{\mc C_V}(U).
\end{align*}
Thus $D(\mc D_V)/D(\mc D_U)=D(\mc C_V)/D(\mc C_U)$, which finishes the proof.
\end{proof}

Next, we prove theorem \ref{lb94} for the four classes of VOAs mentioned in that theorem. For some examples, the strong braiding of all intertwining operators is also discussed.

\subsubsection{WZW models and related coset VOAs}

Let $\gk$ be a finite dimensional (unitary) complex simple Lie algebra with dual Coxeter number $h^\vee$. Let  $l\in\mbb Z_+$, and set $l'=\frac{l+h^\vee}{l+h^\vee+1}-h^\vee$. Denote by $L_l(\gk)$ the level $l$ unitary affine VOA and $\mc W_{l'}(\gk)$ the discrete series $W$-algebra (see \cite{ACL19} for the definition), both of which are simple.   $L_l(\gk)$ is naturally a unitary VOA, and any $L_l(\gk)$-module is unitarizable, i.e., $L_l(\gk)$ is strongly unitary. (Cf. \cite{Kac90,DL14,CKLW18,Gui19c}.) Moreover, $L_l(\gk)$ and $\mc W_{l'}(\gk)$ are regular \cite{DLM97,Ara15a,Ara15b}.

\begin{thm}\label{lb81}
Let $V$ be either $L_l(\gk)$  of any type, or $\mc W_{l'}(\gk)$ of type $ADE$. Then $V$ satisfies condition \ref{cd2} and (a) (b) (c) of theorem \ref{lb94}.
\end{thm}

\begin{thm}[Strong braiding]\label{lb82}
If $\gk$ is of type $ADE$, then $L_l(\gk)$ and $\mc W_{l'}(\gk)$ are completely energy-bounded, and the unitary intertwining operators of $L_l(\gk)$ and $\mc W_{l'}(\gk)$ satisfy  strong braiding and (in particular) the strong intertwining property. 
\end{thm}

We prove these two theorems simultaneously.

\begin{proof}
We first discuss affine VOAs. Let $V=L_l(\gk)$ and $E=V(1)$. Then $E$ generates $V$, and any vector in $E$ is quasi-primary. Moreover, the argument in \cite{BS90} shows that for any $u\in E$ and $W_i\in\Obj(\RepuV)$, $Y_i(u,z)$ satisfies linear energy bounds. Thus, by the proof of \cite{CKLW18} proposition 6.1, $V$ is strongly energy-bounded.

Suppose that $\gk$ is not of type $E$. By the results in \cite{Was98,TL04,Gui19c,Gui20b},  one can find $\mc F^V$ such that $V$  satisfies  condition \ref{cd2}, except possibly that $\mc Y(w^{(i)}_0,z)$ satisfies the $E$-strong intertwining property.  (For type $ABCDG$, these results are summarized in \cite{Gui19c} theorems 3.1, 3.3, 4.2, 4.4, 5.7.) However, since $Y_k(u,z)$ is linearly energy-bounded for each unitary $W_k$ and $u\in E$, by \cite{Gui19a} proposition 3.16 which uses a result in \cite{TL99}, $\mc Y(w^{(i)}_0,z)$ satisfies the $E$-strong intertwining property. Thus condition \ref{cd2} is satisfied. By theorem \ref{lb51} and corollaries \ref{lb71} and \ref{lb65}, we have (a) and (b)  except the essential surjectivity of $\fk F$. That $\fk F$ is essentially surjective follows from \cite{Hen19} section 3.2. Moreover, \cite{Hen19} shows that any irreducible representation of $\mc A_V$ arises from a unitary $V$-module which is dualizable. Thus $\Rep(\mc A_V)$ has only finitely many irreducibles, and all of them are dualizable. Therefore, by \cite{LX04} theorem 4.9 and the split property of conformal nets (see \cite{MTW18}), $\mc A_V$ is completely rational. This proves the first theorem for $L_l(\gk)$ not of type $E$.

Now suppose that $\gk$ is of type $ADE$. We prove the two theorems for $L_l(\gk)$ and $\mc W_{l'}(\gk)$ by induction on $l$. As argued in the former paragraph, the essential surjectivity and the complete rationality of $L_l(\gk)$ will follow from (a), (b), and \cite{LX04,Hen19,MTW18}. The proof of these two results for $\mc W_{l'}(\gk)$ is left to the next paragraph. Here, we prove (a), (b) (except the essential surjectivity of $\fk F$), and the second theorem. When $l=1$, by  Frenkel-Kac construction \cite{FK80}, $V$ is an even lattice VOA. So everything follows from the results in \cite{Gui21a} section 5.3.  Suppose level $l$ has been proved. We now consider the case $l+1$. Let $V=L_{l+1}(\gk),U=L_l(\gk)\otimes L_1(\gk)$, and let $V\subset U$ be the diagonal embedding. Then by \cite{ACL19}, we have $V^c=\mc W_{l'}(\gk)$ (for some number $l'$) and $V^{cc}=V$. By \cite{Ara15a,Ara15b}, $\mc W_{l'}(\gk)$ is regular. By induction and theorem \ref{lb83}, any  unitary intertwining operator of $U$ is energy-bounded and satisfies the strong intertwining property. Thus  $V^c$ and  $V=V^{cc}$ satisfy the assumptions and therefore the consequences of theorem \ref{lb92}. This proves the case of level $l+1$ by corollaries \ref{lb71},  \ref{lb79}.

By theorem \ref{lb83}, $V\otimes V^c$ satisfies condition \ref{cd2}. Since $\mc A_U$ has been proved completely rational, so is $\mc A_{V\otimes V^c}\simeq\mc A_V\otimes \mc A_{V^c}$ by theorem \ref{lb84}. Therefore $\mc A_{V^c}$ is completely rational (\cite{Lon03} lemma 25). This proves (c) for $\mc W_{l'}(\gk)$. By theorem \ref{lb85}, $\Repu(V\otimes V^c)\simeq \Repu(V)\boxtimes\Repu(V^c)$ and  $\Repf(\mc A_{V\otimes V^c})\simeq\Repf(\mc A_V)\boxtimes \Repf(\mc A_{V^c})$ have the same number of irreducibles. So the same is true for $\Repu(V^c)$ and $\Repf(\mc A_{V^c})$. Therefore $\fk F:\Repu(V^c)\rightarrow\Repf(\mc A_{V^c})$ must be essentially surjective.
\end{proof}

\begin{rem}
We give a historical remark on theorem \ref{lb81}. In the celebrated paper  \cite{Was98}, Wassermann proved for type $A$ affine VOAs the strong integrability of modules and hence the existence of the fully faithful $*$-functor $\fk F$ (preserving the linear structures). More importantly, he proved the breakthrough result that $\fk F$ induces an injective homomorphism of Grothendieck rings $\fk F:\Gr(\Repu(V))\rightarrow\Gr(\Repf(\mc A_V))$. In other words, for type $A$ WZW models, the conformal net fusion rules  agree with the VOA fusion rules.  Some key ideas about complete unitarity also appear in that paper. 

Wassermann's results were later generalized by Toledano-Laredo to type $D$ affine VOAs \cite{TL04}, and by Loke to unitary Virasoro VOAs (equivalent to $\mc W_{l'}(\fk{sl}_2)$) \cite{Loke94}. On the other hand, Xu showed in \cite{Xu00a} that when $V$ is a type $A$ affine VOA, $\mc A_V$ is completely rational. Xu's calculation of the principle graphs of  multi-interval subfactors also shows that $\fk F$ is essentially surjective. For a general type $A$ discrete series $W$-algebra $V=\mc W_{l'}(\fk{sl}_n)$, Xu calculated in \cite{Xu00b} the Grothendieck ring $\Gr(\Repf(\mc A_V))$ which is equivalent to $\Gr(\RepuV)$. The complete rationality of $\mc A_V$ was shown in \cite{Xu01,Lon03}. The strong integrability of all $V$-modules (which implies that the equivalence $\Gr(\RepuV)\simeq\Gr(\Repf(\mc A_V))$ is realized by $\fk F$) was proved in \cite{CWX}.

A systematic study of complete unitarity was initiated by the author in \cite{Gui19a,Gui19b}. In these two papers, we gave general criteria which, together with the analysis of energy bounds in \cite{Was98,TL04,Gui19c}, enable us to prove the complete unitarity of all (unitary) affine VOAs except type $EF$. The remaining two types, along with discrete series $W$-algebras of type $AE$, were proved completely unitary by Tener in \cite{Ten24} under the framework of bounded localized vertex/intertwining operators (which is very different from our approach). The complete unitarity of type $D$ $W$-algebras follows from our theorem \ref{lb81}. In an upcoming paper \cite{CCP}, Carpi-Ciamprone-Pinzari will give a categorical proof of the complete unitarity of affine VOAs.

In \cite{Gui21a}, we proved for type $ACG$ affine VOAs that $\fk F$ preserves the braided $C^*$-tensor structures. Henriques' result on the irreducibles of loop group conformal nets \cite{Hen19} implies that $\fk F$ is essentially surjective. Thus, for affine VOAs of those types, the equivalence of unitary modular tensor categories $\RepuV\simeq\Repf(\mc A_V)$ was proved. The complete rationality of those types is a consequence of the equivalence of tensor categories and  \cite{LX04,MTW18} (as in the proof of theorem \ref{lb81}). In the case of affine type $A$, the equivalence also follows from \cite{CCP}. A complete and uniform proof of the complete rationality of all WZW models first appeared in \cite{Ten24}. The methods in that paper also imply the complete rationality of the conformal nets associated to parafermion VOAs (mentioned below) and type $ADE$ discrete series $W$-algebras. Theorem \ref{lb81} gives the first complete proof of the equivalence $\RepuV\simeq\Repf(\mc A_V)$ for  affine VOAs of all types,  discrete series $W$-algebras of type $ADE$, and more.
\end{rem}

Let $L_l(\gk)$ be any unitary affine VOA. Let $\fk h$ be the Cartan subalgebra of $\gk$. Then the (unitary) Heisenberg VOA $L(\hk)$ is a unitary sub-VOA of $L_l(\gk)$. The commutant of $L(\hk)$ in $L_l(\gk)$, which we denote by $K_l(\gk)$, is called a parafermion VOA.  $K_l(\gk)$ is regular \cite{DW11a,ALY14,DR17}, and is also the commutant in $L_l(\gk)$ of $V_{\sqrt l\Lambda_\gk}$ where $\Lambda_\gk$ is the even lattice generated by the long roots of $\gk$ \cite{DW11b}. We have seen that $L_l(\gk)$ satisfies condition \ref{cd2}. By \cite{Gui21a}, $V_{\sqrt l\Lambda_\gk}$ also satisfies condition \ref{cd2}. So does $K_l(\gk)$ by theorem \ref{lb93}. So (a) (b) (c) of theorem \ref{lb94} hold for $K_l(\gk)$. In particular, the argument in the last paragraph  of the proof of theorem \ref{lb81} (and theorem \ref{lb82}) proves the complete rationality and the essential surjectivity for $K_l(\gk)$. In the cases that $\gk$ is of type $ADE$, the complete energy bounds and the strong braiding hold by corollary \ref{lb79}. This proves the following theorem.

\begin{thm}
$K_l(\gk)$ satisfies condition \ref{cd2} and (a) (b) (c) of theorem \ref{lb94}. Moreover, if $\gk$ is of type $ADE$, then $K_l(\gk)$ is completely energy-bounded, and the unitary intertwining operators of $V_\Lambda$ satisfy strong braiding and (in particular) the strong intertwining property. 
\end{thm}

\subsubsection{Lattice VOAs}

Let $\Lambda\subset \mbb R^n$ be a rank $n$ even lattice. This means that $\Lambda\simeq\mbb Z^n$ as abelian groups, and that $\lVert\alpha\lVert^2:=(\alpha|\alpha)$ is an even number for any $\alpha\in\Lambda$, where  $(\cdot|\cdot)$ is the inner product inherited from that of $\mbb R^n$. Then the lattice VOA $V_\Lambda$ is simple, unitary \cite{DL14}, regular \cite{DLM97}, and satisfy condition \ref{cd2} \cite{Gui21a}. Moreover, it was shown in \cite{Gui21a} that $V_\Lambda$ is completely energy-bounded, and the unitary intertwining operators satisfy strong braiding. Thus, it remains to prove the complete rationality of $\mc A_{V_\Lambda}$ and the essential surjectivity of $\fk F$.

\begin{lm}\label{lb97}
Let $n=\rank(\Lambda)$. Then for any even sublattice $L\subset\Lambda$ of rank $k<n$, there is a rank $n-k$ sublattice $L'\subset\Lambda$ orthogonal to $L$.
\end{lm}

\begin{proof}
Let $e_1,\dots,e_n$ be a basis of $\Lambda$, and $f_1,\dots,f_k$ a basis of $L$. Then each  $(e_i|f_j)$ is an integer. Consider all $(k_1,\dots,k_n)\in\mbb Q^n$ satisfying $\sum_{i=1}^nk_i(e_i|f_j)=0$ for any $j$. By linear algebra, the ($\mbb Q$-linear) solution space of this system of linear equations for $(k_1,\dots,k_n)$ is spanned by at least $n-k$ linearly independent vectors. Thus it contains $n-k$ linearly independent vectors with integral components. Let $L'$ be the sublattice generated by the vectors of the form $\sum_{i=1}^nk_ie_i$ where $(k_1,\dots,k_n)\in\mbb Z^n$ is one of those $n-k$  vectors. Then $L'$ has rank $n-k$ and is orthogonal to $L$. 
\end{proof}

We say that a rank $n$ even lattice is \textbf{orthogonal} if it is generated by $n$ mutually orthogonal vectors.

\begin{pp}\label{lb98}
Let $n=\rank(\Lambda)$. Then $\Lambda$ has a rank $n$ orthogonal sublattice.
\end{pp}

\begin{proof}
By the above lemma, each non-zero vector in $\Lambda$ is orthogonal to a rank $n-1$ sublattice. Thus the proposition follows easily by induction on $n$.
\end{proof}

\begin{thm}\label{lb96}
Let $\Lambda$ be any even lattice. Then $V_\Lambda$ satisfies condition \ref{cd2} and (a) (b) (c) of theorem \ref{lb94}. Moreover, $V_\Lambda$ is completely energy-bounded, and the unitary intertwining operators of $V_\Lambda$ satisfy strong braiding and (in particular) the strong intertwining property.
\end{thm}

\begin{proof}
Let $V=V_\Lambda$. We shall only show that $\mc A_V$ is completely rational and $\fk F:\Repu(V)\rightarrow\Repf(\mc A_V)$ is essentially surjective, as the other results are already known. Let $n=\rank(\Lambda)$. If $n=1$, as explained in \cite{CKLW18} example 8.8,  the classification results in \cite{BMT88} show that $\mc A_V$ is isomorphic to the corresponding  lattice conformal net $\mc B_\Lambda$ constructed in \cite{DX06}. Moreover, \cite{DX06} classified all the irreducibles  and proved the complete rationality of any such net.  Thus, it is easy to see that $\fk F$ is essentially surjective by comparing the numbers of irreducibles of both categories. This proves the rank $1$ case. A different proof is given in lemma \ref{lb95}.

Now we treat the general case. By the above proposition, $\Lambda$ has an orthogonal sublattice $\wtd \Lambda$. Thus $V_{\wtd\Lambda}$ is a conformal unitary subalgebra of $V=V_\Lambda$. (That this subalgebra is conformal follows from the fact that the central charge of a lattice VOA equals the rank of the lattice.) Since $\wtd\Lambda$ is orthogonal, it is equivalent to $L_1\times L_2\times\cdots\times L_n$ where $L_1,L_2,\dots,L_n$ are rank $1$ even lattices. Since the complete rationality and the essential surjectivity hold for $V_{L_1},V_{L_2},\dots,V_{L_n}$, they hold for their tensor product $V_{L_1}\otimes V_{L_2}\otimes\cdots\otimes V_{L_n}$ which is equivalent to $V_{\wtd \Lambda}$. Thus, by theorems \ref{lb84} and \ref{lb85}, they also hold for $V$.
\end{proof}

\begin{lm}\label{lb95}
Theorem \ref{lb96} holds when $\rank(\Lambda)=1$.
\end{lm}

\begin{proof}
We have $\Lambda\simeq \sqrt {2k}\cdot \mbb Z$ for some $k\in\mbb Z_+$. We prove the lemma by induction on $k$. If $k=1$, then, by Frenkel-Kac construction \cite{FK80}, $V_\Lambda\simeq V^1_{\fk{sl}_2}$. Thus the lemma follows from theorem \ref{lb81}. Now assume that the lemma has been proved for $\sqrt {2k}\cdot \mbb Z$. We shall prove this for $\Lambda=\sqrt {2k+2}\cdot\mbb Z$. Let $\Gamma=\sqrt {2k}\cdot\mbb Z\times \sqrt 2\cdot \mbb Z$. Then one can regard $\Lambda$ as a sublattice of $\Gamma$ generated by the vector $(\sqrt{2k},\sqrt 2)$. By lemma \ref{lb97}, $\Gamma$ has a rank $1$ sublattice $L$ orthogonal to $\Lambda$. Thus  $V_\Lambda\otimes V_L$, which is equivalent to $V_{\Lambda\times L}$, is a conformal unitary sub-VOA of $V_\Gamma$. Since $V_\Gamma\simeq V_{\sqrt{2k}\cdot\mbb Z}\otimes V_{\sqrt 2\cdot\mbb Z}$, the complete rationality and the essential surjectivity hold for $V_\Gamma$. Thus they also hold for $V_\Lambda\otimes V_L$ and hence for $V_\Lambda$ by theorems \ref{lb84} and \ref{lb85}.
\end{proof}

The method of proving theorem \ref{lb96} allows us to prove the following result conjectured in \cite{CKLW18} section 8.

\begin{thm}
For any even lattice $\Lambda$, the conformal net $\mc A_{V_\Lambda}$ constructed via smeared vertex operators of $V_\Lambda$ is isomorphic to the Dong-Xu lattice conformal  net $\mc B_\Lambda$ constructed in \cite{DX06}.
\end{thm}

\begin{proof}
Write $\mc A_{V_\Lambda}$ as $\mc A_\Lambda$ for simplicity. By \cite{BMT88}, when $\rank(\Lambda)=1$ we have an isomorphism $\mc A_\Lambda\simeq \mc B_\Lambda$. Thus this is also true when $\Lambda$ is a product of rank $1$ even lattices, i.e., $\Lambda$ is orthogonal. In general, by proposition \ref{lb98}, $\Lambda$ has an orthogonal sublattice $L$ with the same rank. Let us identify $\mc A_L$ with $\mc B_L$. Then both $\mc A_\Lambda$ and $\mc B_\Lambda$ are finite-index irreducible extensions of $\mc A_L$, which are in turn determined by  commutative irreducible Q-systems $Q_1,Q_2$ respectively. $Q_1$ and $Q_2$ are equivalent as objects in $\Repf(\mc A_L)$. Indeed, the irreducibles in $\Repf(\mc A_L)$ are simple currents, and their fusion rules are equivalent to the abelian group $L^\circ/L$ where $L^\circ=\{\beta\in\mbb RL:(\beta|L)\subset\mbb Z \}$ is the dual lattice of $L$ (cf. \cite{DX06}). Then $Q_1,Q_2$ as objects are both equivalent to the subgroup $\Lambda/L$ of $L^\circ/L$. Thus,  by \cite{KL06} remark 4.4, $Q_1,Q_2$ are unitarily equivalent as Q-systems. This proves $\mc A_\Lambda\simeq\mc B_\Lambda$.
\end{proof}

We remark that the equivalence $\Repu(V_\Lambda)\simeq \Repf(\mc B_\Lambda)$ was proved in \cite{Bis18} section 3.3. This result, together with the above theorem, gives another proof of the equivalence $\Repu(V_\Lambda)\simeq \Repf(\mc A_{V_\Lambda})$.

By the results proved so far, the claims for $U$ and $V$ in theorem \ref{lb94} hold. Similar to the proof for $W_{l'}(\gk)$ and $K_l(\gk)$, we may use corollary \ref{lb71} and theorems \ref{lb93}, \ref{lb84}, \ref{lb85} to show that the commutant $V^c$ also satisfies condition \ref{cd2} and (hence) (a) (b) (c). The proof of theorem \ref{lb94} is now complete.

\appendix

\section{Strongly commuting closed operators}\label{lb9}

In this section, we prove some useful properties for strongly commuting closed operators. We fix a Hilbert space $\mc H$. Recall that if $A$ is a closed operator (with dense domain) on $\mc H$ with polar decomposition $A=UH$ ($H$ is positive and $U$ is a partial isometry)  then we have left and right bounding projections $\{p_s\},\{q_s\}$ defined by  $q_s=\chi_{[0,s]}(H)$ and $p_s=Uq_sU^*$. Note that $\ovl{p_sA}=Aq_s$ is bounded, and $\ovl{p_sA^*A}=p_s A^*Ap_s$ is bounded and positive. The von Neumann algebra generated by $A$ is the one generated by $U$ and all $\{p_s\}$ (equivalently, all $\{q_s\}$). Two closable operators $A,B$ are said to \textbf{commute strongly} if the von Neumann algebras generated by $\ovl{A}$ and by $\ovl{B}$ commute. If $A$ and $B$ are bounded, then they commute strongly if and only if $[A,B]=[A^*,B]=0$, namely, $A$ commutes adjointly with $B$.

In the case that one of the two strongly commuting closed operators is bounded, we have the following well known properties:

\begin{pp}\label{lb15}
	Let $A$ be closed and $x$ be bounded on $\mc H$. The following are equivalent:
	
	(a) $A$ commutes strongly with $x$.
	
	(b) $xA\subset Ax$ and $x^*A\subset Ax^*$.
	
	(c) There is a core $\Dom$ for $A$ such that $x\cdot A|_\Dom\subset A\cdot x$ and $x^*\cdot A|_\Dom\subset A\cdot x^*$. 
\end{pp}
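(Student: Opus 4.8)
The plan is to prove the cycle of implications $(a)\Rightarrow(b)\Rightarrow(c)\Rightarrow(a)$, where $(b)\Rightarrow(c)$ is trivial (take $\Dom=\Dom(A)$, noting that $xA\subset Ax$ already says $x\,A|_{\Dom(A)}\subset Ax$). The substance is in $(a)\Rightarrow(b)$ and $(c)\Rightarrow(a)$. Throughout I would use the polar decomposition $A=UH$ together with the spectral resolution $H=\int_0^\infty t\,dq_t$, the bounding projections $q_s=\chi_{[0,s]}(H)$, $p_s=Uq_sU^*$, and the fact (recalled at the start of section \ref{lb9}) that the von Neumann algebra $\scr M(A)$ generated by $A$ is the one generated by $U$ and $\{q_s:s\geq 0\}$, and that $\ovl{p_sA}=Aq_s$ is bounded.

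For $(a)\Rightarrow(b)$: assume $x\in\scr M(A)'$. Since $x$ commutes with every $q_s$ and with $U$, it commutes with $p_s=Uq_sU^*$ as well, and it commutes with the bounded operator $Aq_s=\ovl{p_sA}$. Now take $\eta\in\Dom(A)$. Then $q_s\eta\to\eta$ and $Aq_s\eta=HUq_s\eta\to A\eta$ as $s\to\infty$ (this is the standard approximation property of bounding projections, already invoked in section \ref{lb11}). Applying $x$: $x\eta=\lim x q_s\eta=\lim q_s x\eta$ shows nothing new, but $A q_s(x\eta)=q_s'$-type manipulations give $A(q_s x\eta)=(Aq_s)x\eta=x(Aq_s)\eta=x(Aq_s\eta)\to xA\eta$; since also $q_sx\eta\to x\eta$ and $A$ is closed, we conclude $x\eta\in\Dom(A)$ and $Ax\eta=xA\eta$, i.e. $xA\subset Ax$. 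The same argument with $x^*$ in place of $x$ (legitimate since $\scr M(A)'$ is a $*$-algebra) gives $x^*A\subset Ax^*$. This establishes (b).

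For $(c)\Rightarrow(a)$: suppose $\Dom$ is a core for $A$ with $x\,A|_{\Dom}\subset Ax$ and $x^*\,A|_{\Dom}\subset Ax^*$. The goal is $x\in\scr M(A)'$, i.e. $x$ commutes with $U$ and all $q_s$, equivalently $x$ commutes strongly with $A$ in the sense that both $xA\subset Ax$ and $x^*A\subset Ax^*$ hold on the full domain (by the equivalence with (b), which we will have proved). So it suffices to upgrade the core-level inclusions to full-domain inclusions. Take $\eta\in\Dom(A)$; choose $\eta_n\in\Dom$ with $\eta_n\to\eta$, $A\eta_n\to A\eta$. Then $x\eta_n\to x\eta$ and $Ax\eta_n=xA\eta_n\to xA\eta$; closedness of $A$ gives $x\eta\in\Dom(A)$ and $Ax\eta=xA\eta$. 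Likewise for $x^*$. Hence $xA\subset Ax$ and $x^*A\subset Ax^*$ on all of $\Dom(A)$. It remains to deduce strong commutation from these two inclusions: from $xA\subset Ax$ and $x^*A\subset Ax^*$ one gets (passing to adjoints and using $A^{**}=A$) that $x$ and $x^*$ also leave $\Dom(A^*)$ invariant with $xA^*\subset A^*x$, $x^*A^*\subset A^*x^*$; then $x$ commutes with $A^*A$ and with $AA^*$ in the strong (resolvent) sense, hence with the spectral projections $q_s=\chi_{[0,s^2]}(A^*A)$ of $H^2=A^*A$ and with $p_s=\chi_{[0,s^2]}(AA^*)$; and finally $x$ commutes with the partial isometry $U$ in the polar decomposition, since $U$ is the strong limit of $A(H+\tfrac1n)^{-1}$ on the closure of the range of $H$, or more cleanly because $xA\subset Ax$ together with $xq_s=q_sx$ forces $xU=Ux$ on $\ovl{\operatorname{ran}H}$ and $xU=0=Ux$ on $\ker H$. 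Therefore $x\in\scr M(A)'$, which is (a).

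The main obstacle is the last step of $(c)\Rightarrow(a)$: converting the algebraic inclusions $xA\subset Ax$, $x^*A\subset Ax^*$ into honest commutation with the spectral/polar data of $A$. The cleanest route is to observe that these inclusions imply $x$ commutes with the bounded operators $(A^*A+1)^{-1}$ and $U$ — the former by a standard resolvent argument (if $y(A^*A+1)\subset(A^*A+1)y$ on a core then $y$ commutes with $(A^*A+1)^{-1}$), the latter by tracking the polar decomposition through the inclusion — and then invoking that $\scr M(A)$ is generated by $U$ together with the bounded functions of $A^*A$. I would cite a standard reference (e.g. the treatment of strong commutativity in terms of bounded transforms) rather than reprove this folklore fact in detail.
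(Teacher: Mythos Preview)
Your proposal is correct, and the cycle $(a)\Rightarrow(b)\Rightarrow(c)\Rightarrow(a)$ with the spectral-calculus argument for the nontrivial directions goes through as you outline. The only place that needs a little care is the passage from $xA\subset Ax$, $x^*A\subset Ax^*$ to $xA^*\subset A^*x$, $x^*A^*\subset A^*x^*$ (this follows by taking adjoints, using $(xT)^*=T^*x^*$ for bounded $x$ and $(Tx)^*\supset x^*T^*$), after which your resolvent argument gives commutation with the spectral projections of $A^*A$, and your strong-limit description of $U$ handles the partial isometry.

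The paper's route is different and shorter: it observes that $(b)\Leftrightarrow(c)$ is routine, and that $(a)\Leftrightarrow(b)$ is immediate when $x$ is \emph{unitary} (since then $xA\subset Ax$ and $x^*A\subset Ax^*$ combine to $xAx^*=A$, which is exactly the affiliation condition $A\,\eta\,\scr M(A)'$ expressed for a single unitary), and then appeals to linearity for the general case. Spelled out, the set of bounded $x$ satisfying (b) is a strongly-$*$-closed $*$-algebra $N$; every unitary in $N$ conjugates $A$ to itself, so $A$ is affiliated with $N'$, whence $U$ and all $q_s$ lie in $N'$ and $N\subset\scr M(A)'$. Conversely $\scr M(A)'$ is spanned by its unitaries, and (b) is linear in $x$. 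Your approach trades this abstract affiliation argument for an explicit resolvent computation; it is longer but perhaps more transparent if one has not internalized the unitary characterization of affiliation.
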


\begin{proof}
	That (b)$\Longleftrightarrow$(c) is a routine check.  (a) is clearly equivalent to (b) when $x$ is unitary. The general case follows from linearity. See for example \cite{Gui19a} section B.1 for more details.
\end{proof}

\begin{pp}\label{lb16}
	Let $A$ be closed and $x$ be bounded on $\mc H$.
	
	(a) If $\Dom(Ax)$ is a dense subspace of $\mc H$, then $Ax$ is closed.
	
	(b) If $A$ commutes strongly with $x$, then $\Dom(Ax)$ is dense.
\end{pp}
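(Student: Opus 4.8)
The plan is to prove the two parts separately; both are short and follow directly from the definitions recalled above together with the polar decomposition of a closed operator.

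For part (a), I would check that the graph of $Ax$ is closed. Suppose $v_n\in\Dom(Ax)$ with $v_n\to v$ and $(Ax)v_n\to w$. Since $x$ is bounded, $xv_n\to xv$; and since $A(xv_n)=(Ax)v_n\to w$ with $A$ closed, we get $xv\in\Dom(A)$ and $A(xv)=w$, i.e. $v\in\Dom(Ax)$ and $(Ax)v=w$. Thus $Ax$ has closed graph, and since by hypothesis $\Dom(Ax)$ is dense, $Ax$ is a closed operator in the sense of this paper. (The density hypothesis is used only to upgrade ``closed graph'' to ``closed operator''.)

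For part (b), I would use the right bounding projections $\{q_s\}$ attached to the polar decomposition $A=UH$. By construction $q_s=\chi_{[0,s]}(H)$, so $q_s\mc H\subset\Dom(H)=\Dom(A)$, and $\{q_s\}$ is an increasing family of projections with $q_s\to\id$ strongly as $s\to+\infty$ (the supremum being $\chi_{[0,+\infty)}(H)=\id$ since $H\geq 0$). Because $A$ commutes strongly with $x$ and, as recalled in the appendix, the von Neumann algebra generated by $A$ is generated by $U$ and the projections $\{q_s\}$, we get $q_sx=xq_s$ for every $s$. Hence for any $v\in\mc H$ we have $x(q_sv)=q_s(xv)\in q_s\mc H\subset\Dom(A)$, so $q_sv\in\Dom(Ax)$; and $q_sv\to v$. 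Therefore $\Dom(Ax)$ is dense.

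The only point requiring care is invoking the correct description of the von Neumann algebra generated by $A$ — namely that it contains the bounding projections $q_s$ — so that strong commutativity with the bounded operator $x$ literally yields $[q_s,x]=0$, and recording that $\Dom(A)=\Dom(H)$ so that $q_s\mc H\subset\Dom(A)$. No deeper obstacle is expected.
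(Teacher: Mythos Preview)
Your proof of (a) is essentially identical to the paper's.

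Your proof of (b) is correct, but the paper takes a shorter route. Instead of invoking the bounding projections, the paper simply uses the characterization of strong commutativity from Proposition~\ref{lb15}: if $A$ commutes strongly with $x$, then $xA\subset Ax$, hence $\Dom(A)=\Dom(xA)\subset\Dom(Ax)$, and $\Dom(A)$ is dense. Your argument via $q_s$ works perfectly well and is in the same spirit as the spectral approximations used elsewhere in the appendix, but it is more machinery than needed here; the paper's one-line argument extracts the density directly from the inclusion $xA\subset Ax$, which is the most elementary consequence of strong commutativity in the bounded-versus-closed setting.
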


\begin{proof}
	Choose any $\xi\in\mc H$ and a sequence of vectors $\xi_n\in\Dom(Ax)$ such that  $\xi_n\rightarrow\xi$ and $Ax\xi_n\rightarrow\eta$. Then $x\xi_n\in\Dom(A)$ and $x\xi_n\rightarrow x\xi$. Since $A$ is closed, we must have $x\xi\in\Dom(A)$ and $Ax\xi=\eta$. This proves (a). (b) is true since $Ax\supset xA$ and $\Dom(xA)=\Dom(A)$ is dense.
\end{proof}

We prove the following result as an interesting application of the above propositions. The special case $H=K$ is well-known, but we will use this result in the case that  $H=\log\Delta$ and $K=\Delta^{\pm\frac 12}$ (where $\Delta$ is the modular operator). 

\begin{pp}\label{lb38}
	Let $H,K$ be  self-adjoint closed operators on $\mc H$, and assume that $K$ is  affiliated with the (abelian) von Neumann algebra generated by $H$. Suppose that $\scr D_0$ is a dense subspace of $\mc H$, that $\scr D_0\subset\Dom(K)$, and that $e^{\im tH}\scr D_0\subset\scr D_0$ for any $t\in\mbb R$. Then $\scr D_0$ is a core for $K$.
\end{pp}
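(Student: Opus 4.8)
The plan is to reduce the statement to the classical fact that a dense, invariant-under-$e^{\im tH}$ subspace is a core for $H$ itself, and then transfer this to $K$ via functional calculus. First I would introduce the bounded "resolvent-type" cutoffs: for $n\in\mbb Z_+$ let $q_n=\chi_{[-n,n]}(H)$, which lies in the abelian von Neumann algebra $\mc M$ generated by $H$. Since $K$ is affiliated with $\mc M$, the operator $Kq_n$ is bounded (indeed $K q_n = q_n K q_n$ is bounded and normal), and $q_n\to\id$ strongly; moreover a vector $\xi$ lies in $\Dom(K)$ iff $\sup_n\lVert Kq_n\xi\rVert<\infty$, in which case $Kq_n\xi\to K\xi$. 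Thus, to show $\scr D_0$ is a core for $K$, it suffices to show: for every $\xi\in\Dom(K)$ there exist $\xi^{(m)}\in\scr D_0$ with $\xi^{(m)}\to\xi$ and $K\xi^{(m)}\to K\xi$.

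The key step is a smoothing/mollification in the variable $t$ of the one-parameter group $e^{\im tH}$. Given $\xi\in\Dom(K)$, pick first a sequence $\zeta_k\in\scr D_0$ with $\zeta_k\to\xi$; the problem is that $K\zeta_k$ need not converge. To fix this, for $h\in C_c^\infty(\mbb R)$ with $\int h=1$ set $\zeta_{k,h}=\int_{\mbb R}h(t)e^{\im tH}\zeta_k\,dt$. Two facts drive the argument. (i) Since $e^{\im tH}\scr D_0\subset\scr D_0$ and $\scr D_0$ is a linear space, each $e^{\im tH}\zeta_k$ lies in $\scr D_0$; but $\zeta_{k,h}$ is only a limit of Riemann sums of elements of $\scr D_0$, so one argues instead that $\zeta_{k,h}$ lies in the closure of $\scr D_0$ and keeps track of approximants inside $\scr D_0$. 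More cleanly: the invariance hypothesis gives that $\scr D_0$ is $e^{\im tH}$-invariant, hence by the standard lemma (cf. the proof that $\mc H_i^\infty(I)$ is a core, or \cite{CKLW18} Lemma 7.2) $\scr D_0$ is a core for $H$, and in fact $\zeta_{k,h}$ can be approximated in graph norm of $H$ by elements of $\scr D_0$. (ii) Because $K$ is a function of $H$, it commutes strongly with every $e^{\im tH}$, so $K e^{\im tH}\eta = e^{\im tH}K\eta$ for $\eta\in\Dom(K)$, and hence $\zeta_{k,h}\in\Dom(K)$ with $K\zeta_{k,h}=\int h(t)e^{\im tH}K\zeta_k\,dt=(K\zeta_k)_h$, a genuine Bochner integral of $K\zeta_k$.

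Now choose a double sequence: let $h$ run through an approximate identity, so $\zeta_{k,h}\to\zeta_k$ and $(K\zeta_k)_h\to K\zeta_k$ as $h\to\delta_0$; and I must control $\lim_k$. The honest route is to work with a fixed cutoff $q_n$: for $\xi\in\Dom(K)$, set $\xi_n=q_n\xi\in\Dom(K)$ with $K\xi_n=Kq_n\xi$ bounded, $\xi_n\to\xi$, $K\xi_n\to K\xi$. It therefore suffices to approximate each $\xi_n$ in graph norm by vectors in $\scr D_0$. Pick $\zeta_k\in\scr D_0$ with $\zeta_k\to\xi_n$; then $q_m\zeta_k\to q_m\xi_n$ and $Kq_m\zeta_k\to Kq_m\xi_n$ for each fixed $m$, uniformly enough that a diagonal choice gives graph-norm convergence to $\xi_n$ provided $q_m\zeta_k$ can be pushed back into $\scr D_0$. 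This last point — that $q_m$ (a bounded spectral projection of $H$) approximately preserves $\scr D_0$ — is exactly where the mollification $\eta\mapsto\eta_h=\int h(t)e^{\im tH}\eta\,dt$ enters: replacing the sharp cutoff $q_m$ by the smooth Fourier-multiplier $\int h(t)e^{\im tH}\,dt$ with $\widehat h$ a bump, such operators do map the $e^{\im tH}$-core $\scr D_0$ into itself up to arbitrarily small graph-norm error, and they also approximate $\id$ and commute with $K$. Assembling the three approximations (sharp cutoff $\to$ smooth cutoff, smooth cutoff applied to $\scr D_0$-vectors, and $\zeta_k\to\xi_n$), a standard diagonal argument yields, for every $\xi\in\Dom(K)$, a sequence in $\scr D_0$ converging to $\xi$ in the graph norm of $K$.

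The main obstacle is purely the bookkeeping of this triple approximation — ensuring the mollified vectors genuinely lie in $\scr D_0$ (not merely its closure) and that the graph-norm errors for $K$ are simultaneously controllable. Once one commits to smooth Fourier multipliers $\int h(t)e^{\im tH}\,dt$ in place of spectral projections, every step is routine: invariance of $\scr D_0$ gives membership, strong commutativity $K=f(H)$ with $e^{\im tH}$ gives the intertwining $K\eta_h=(K\eta)_h$, and the approximate-identity property gives convergence. I expect no genuine difficulty beyond organizing these estimates; the statement is "soft" given the hypotheses.
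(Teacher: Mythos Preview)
There is a genuine gap in your argument. The claim that $Kq_n$ is bounded for $q_n=\chi_{[-n,n]}(H)$ is false in general. Affiliation of $K$ with $W^*(H)$ only gives $K=f(H)$ for some Borel function $f$, and nothing forces $f$ to be bounded on $[-n,n]$ (consider $H$ multiplication by $x$ on $L^2([0,1])$ and $K=f(H)$ with $f(x)=1/x$; then $q_1=\id$ and $Kq_1=K$ is unbounded). The same objection kills the later smooth-cutoff variant: $K\phi(H)=(f\phi)(H)$ need not be bounded even for $\phi\in C_c^\infty$. Without this boundedness, your control of $K\zeta_{k,h}$ evaporates: you know $K\zeta_{k,h}=(K\zeta_k)_h$, but $K\zeta_k$ is an uncontrolled sequence, and applying $\phi(H)$ to $\zeta_k$ does nothing to tame $K$ on it. The ``triple approximation'' therefore cannot close.

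The paper's proof sidesteps this by using spectral projections $p_t=\chi_{[0,t]}(|K|)$ of $K$ rather than of $H$, so that $Kp_t$ is genuinely bounded. The key step---and the one you are missing---is to show that $A:=\ovl{K|_{\scr D_0}}$ commutes \emph{strongly} with $K$. This follows because the invariance $e^{\im tH}\scr D_0\subset\scr D_0$ together with $e^{\im tH}K\subset Ke^{\im tH}$ gives $e^{\im tH}A_0\subset A_0e^{\im tH}$ on the core $\scr D_0$, hence $A$ commutes strongly with each $e^{\im tH}$; since $K$ is affiliated with $W^*(\{e^{\im tH}\})$, strong commutation of $A$ with $K$ follows. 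Once $p_t$ commutes strongly with $A$, the operator $Ap_t$ is closed with dense domain and contained in the bounded operator $Kp_t$, so $Ap_t$ is everywhere-defined bounded; thus $p_t\mc H\subset\Dom(A)$, and letting $t\to\infty$ on any $\xi\in\Dom(K)$ finishes. Your mollification machinery is not needed: the strong-commutation argument does all the work.
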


\begin{proof}
	Let $A_0=K|_{\scr D_0}$ and $A=\ovl{A_0}$. Then $A\subset K$. We shall show $\Dom(K)\subset\Dom(A)$. Since $K$ commutes strongly with $H$, it also commutes strongly with each $e^{\im tH}$. Since  $e^{\im tH}\scr D_0\subset\scr D_0$ and $e^{\im tH}K\subset Ke^{\im tH}$, we actually have $e^{\im tH}A_0\subset A_0e^{\im tH}$. Therefore $A$ commutes strongly with each $e^{\im tH}$, and hence with $K$.
	
	Let $\{p_t\}$ be the left (and also right) bounding projections of $K$. Then they commute strongly with $A$. Thus, by proposition \ref{lb16}, $Ap_t$ has dense domain and is a closed operator. Since $Ap_t\subset Kp_t$ and $Kp_t$ is continuous, $Ap_t$ is  continuous and closed, i.e. $Ap_t$ is an (everywhere defined) bounded operator. Therefore $\Dom(Ap_t)=\mc H$; equivalently, $p_t\mc H\subset\Dom(A)$. Choose any $\xi\in\Dom(K)$. Then $p_t\xi\in\Dom(A)$, and as $t\rightarrow+\infty$, we have $p_t\xi\rightarrow\xi$ and  $Ap_t\xi=Kp_t\xi=p_tK\xi\rightarrow K\xi$. Thus $\xi\in\Dom(A)$ and $A\xi=K\xi$.
\end{proof}

We now study the case where neither of the two strongly commuting operators are assumed to be bounded.

\begin{lm}\label{lb4}
	Let $A$ and $B$ be strongly commuting closed operators on $\mc H$. Let $\{p_s\},\{q_s\}$ (resp. $\{e_t\},\{f_t\}$) be the left and the right bounding projections of $A$ (resp. $B$). Then for any $\xi\in\Dom(A^*A)\cap\Dom(B^*B)$, the following limits exist and are equal:
	\begin{align}
	\lim_{s,t\rightarrow+\infty}Aq_sBf_t\xi=\lim_{s,t\rightarrow+\infty}Bf_tAq_s\xi\label{eq3}
	\end{align}
\end{lm}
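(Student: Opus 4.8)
The plan is to reduce everything to the two commuting families of bounded projections and the commutation relations they enjoy with $A$ and $B$. Recall that strong commutativity of $A$ and $B$ means the von Neumann algebras they generate commute; in particular each $p_s,q_s$ commutes with each $e_t,f_t$, and $q_s$ (resp. $f_t$) commutes with the closed operator $B$ (resp. $A$) in the strong sense of Proposition \ref{lb15}. The key structural facts I would use are: $\ovl{p_sA}=Aq_s$ is bounded (likewise $\ovl{e_tB}=Bf_t$), $q_s\to\id$ and $f_t\to\id$ strongly as $s,t\to+\infty$, and for $\xi\in\Dom(A^*A)$ one has $Aq_s\xi=p_sA\xi\to A\xi$ (and the analogue for $B$). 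Also $q_s$ maps $\Dom(A^*A)$ into itself and commutes with $A^*A$, and similarly $f_t$ with $B^*B$.

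First I would fix $\xi\in\Dom(A^*A)\cap\Dom(B^*B)$ and analyze $Bf_tAq_s\xi$. Since $f_t$ commutes strongly with $A$ and $\xi\in\Dom(A)$, we have $f_tAq_s\xi=Aq_sf_t\xi$, so $Aq_sf_t\xi$ lies in $\Dom(B)$ and $Bf_tAq_s\xi=BAq_sf_t\xi$. Now hold $s$ fixed and let $t\to+\infty$: because $Aq_s$ is bounded, $Aq_sf_t\xi\to Aq_s\xi$; moreover $q_s$ commutes strongly with $B$, so $f_t\xi\mapsto Bf_tq_s\cdot(\text{stuff})$ — more carefully, $BAq_sf_t\xi = Bq_sAf_t\xi$ using that $q_s$ commutes with $A$, $= q_s B A f_t \xi$ using that $q_s$ commutes strongly with $B$; and $Af_t\xi = f_t A\xi$, so this is $q_s B f_t A\xi = q_s \ovl{e_t B}A\xi = q_s e_t B A\xi \to q_s B A\xi$ as $t\to\infty$ since $\xi\in\Dom(B)$, $A\xi\in\Dom(B)$... wait, one must check $A\xi\in\Dom(B)$: indeed from $q_s\xi\to\xi$, $q_s\xi\in\Dom(A^*A)\cap\Dom(B^*B)$, and on the bounded truncations $Bf_tAq_s\xi = Af_tBq_s\xi$ using all the strong commutations, which converges as $t\to\infty$ to $ABq_s\xi$ since $Af_t(\cdot)=f_tA(\cdot)$ and $f_t\to\id$. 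So I would organize the computation as: $Bf_tAq_s\xi = f_t e_s{}'\cdots$ — the cleanest route is to show $Bf_tAq_s\xi = Aq_sBf_t\xi$ directly by pushing $f_t$ past $A$ and $q_s$ past $B$, giving the equality in \eqref{eq3} at the level of the truncated (bounded-operator) expressions for every fixed $s,t$; this is just repeated application of Proposition \ref{lb15}(b).

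Once $Aq_sBf_t\xi = Bf_tAq_s\xi$ for all $s,t$, it remains to show the common value converges as $s,t\to+\infty$. Writing $Aq_sBf_t\xi = Aq_s f_t B\xi$ (strong commutation of $f_t$ and... no: $Bf_t\xi$, then apply $Aq_s$; since $Bf_t\xi\in\Dom(A)$ and we want $Aq_s Bf_t\xi$, use that $f_t$ commutes with $A$ only after moving it, so better: $Aq_sBf_t\xi = \ovl{p_sA}\,Bf_t\xi = p_s ABf_t\xi$ — needs $Bf_t\xi\in\Dom(A)$, true since $f_t$ commutes strongly with $A$ and $\xi\in\Dom(A)$ so $Bf_t\xi = f_t B\xi$? no, $\xi\in\Dom(B)$ gives $Bf_t\xi=f_tB\xi\in f_t\Dom$... $f_tB\xi\in\Dom(A)$ iff $B\xi\in\Dom(A)$ since $f_t$ preserves $\Dom(A)$). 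The honest obstacle here is precisely establishing that $\xi\in\Dom(A^*A)\cap\Dom(B^*B)$ forces $B\xi\in\Dom(A)$ and $A\xi\in\Dom(B)$ with $AB\xi=BA\xi$; this is the standard fact that for strongly commuting closed operators the "algebraic core" behaves well, and I expect it to follow by the truncation argument itself: $Aq_sBf_t\xi$ is a Cauchy net in both indices. Concretely, $\|Aq_sBf_t\xi - Aq_{s'}Bf_t\xi\| = \|(p_s-p_{s'})ABf_t\xi\|$ and $ABf_t\xi = Af_tB\xi = f_tAB\xi$ provided $B\xi\in\Dom(A)$... Since this circularity is exactly the point, I would instead argue: for fixed $t$, $\{Aq_sBf_t\xi\}_s$ converges because $Bf_t\xi = f_tB\xi$ lies in $\Dom(A^*A)$ (as $f_t$ preserves it — $f_t$ commutes strongly with $A$, hence with $A^*A$, and $B\xi\in\Dom(A^*A)$? that needs $\xi\in\Dom(BA^*AB)$...).

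The cleanest rigorous path, which I would actually write, avoids all this: observe $Aq_sBf_t\xi = q_s A B f_t\xi$ is false in general, so instead use the bounded operators $x_s := Aq_s = \ovl{p_sA}$ and $y_t := Bf_t = \ovl{e_tB}$, which commute with each other (both lie in the commuting von Neumann algebras after noting $Aq_s$ is in the vN algebra of $A$ and $Bf_t$ in that of $B$). Then $Aq_sBf_t\xi$ should be rewritten by first showing $q_sBf_t\xi = q_s f_t B\xi = f_t q_s B\xi$, and $q_sB\xi = Bq_s\xi$ — here $q_s$ commutes strongly with $B$, and $\xi\in\Dom(B)$ gives $q_s\xi\in\Dom(B)$ with $Bq_s\xi = q_sB\xi$; then $Aq_sBf_t\xi = Af_t q_s B\xi = f_t A q_s B\xi = f_t p_s A B q_s... $ — no, $q_sB\xi\in\Dom(A)$? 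Yes: $q_s$ maps $\mc H$ into $\Dom(A)$? No, $q_s$ maps into $\Dom(A)$ only as $q_s\mc H\subset\Dom(A)$ — indeed $Aq_s$ is everywhere defined bounded, so $q_s\mc H\subset\Dom(A)$! That is the resolution. So $q_sB\xi\in\Dom(A)$ automatically, and likewise $f_t\mc H\subset\Dom(B)$. Hence $Aq_sBf_t\xi = Aq_s f_t B\xi = f_t Aq_s B\xi = f_t Bq_s... $ wait I need $Aq_sB\xi$ — does $B\xi$ make sense? $\xi\in\Dom(B)$, yes, and $Aq_s(B\xi)$ is defined since $q_s(B\xi)\in\Dom(A)$. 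So $Aq_sBf_t\xi = Aq_sf_tB\xi = f_tAq_sB\xi$ (as $f_t$ commutes strongly with $A$ and $q_sB\xi\in\Dom(A)$... actually $f_t$ commuting with $Aq_s$ bounded just needs $[f_t,Aq_s]=0$, true). Then as $s\to\infty$: $Aq_sB\xi = p_sAB\xi$ requires $B\xi\in\Dom(A)$ — still the same gap. I therefore concede that the genuine content is: for $\xi\in\Dom(A^*A)\cap\Dom(B^*B)$, $\lim_s Aq_sB\xi$ exists, equivalently $B\xi\in\Dom(A)$; this I would prove via $\|Aq_sB\xi\|^2 = \langle q_sB\xi, A^*Aq_sB\xi\rangle = \langle B\xi, q_sA^*Aq_sB\xi\rangle$ and $q_sA^*Aq_s = A^*Ap_s'$-type spectral estimates combined with $q_s$ commuting strongly with $B$ to move it onto $\xi$, landing in $\Dom(A^*A)\cap\Dom(B^*B)$. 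Summarizing the write-up: (1) record that $q_s\mc H\subset\Dom(A)$, $f_t\mc H\subset\Dom(B)$, and all four projection families mutually commute and commute strongly with $A,B$ appropriately; (2) deduce $Aq_sBf_t\xi = Bf_tAq_s\xi$ for all $s,t$ by Proposition \ref{lb15}; (3) prove convergence in $s,t$ using the spectral/bounding-projection estimates and the hypothesis $\xi\in\Dom(A^*A)\cap\Dom(B^*B)$. The main obstacle is Step (3) — making the interchange of limits rigorous and identifying the common limit — and I expect it to be handled by a standard $\varepsilon$-argument with $\|(\id-q_s)A\xi\|\to0$, $\|(\id-f_t)B\xi\|\to0$, together with uniform bounds $\|Aq_s\|$, $\|Bf_t\|$ on the relevant vectors coming from the positivity of $A^*A$ and $B^*B$ on $\xi$.
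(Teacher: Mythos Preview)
Your Step (2) is essentially right once you stop circling: the bounded operators $Aq_s=\ovl{p_sA}$ and $Bf_t=\ovl{e_tB}$ lie in the von Neumann algebras generated by $A$ and by $B$ respectively, and those algebras commute. So $[Aq_s,Bf_t]=0$ as bounded operators, and the equality $Aq_sBf_t\xi=Bf_tAq_s\xi$ for each fixed $s,t$ is immediate. All the moving of $f_t$ past $A$ and $q_s$ past $B$ is unnecessary.

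Step (3) has a genuine gap. You correctly diagnose that trying to first prove $B\xi\in\Dom(A)$ is circular: that statement is in fact a \emph{consequence} of this lemma (it is exactly Proposition \ref{lb3}(b)), not an input. Your fallback suggestion --- an $\varepsilon$-argument using ``uniform bounds $\lVert Aq_s\rVert$, $\lVert Bf_t\rVert$'' --- does not work either, because $\lVert Aq_s\rVert$ grows like $s$ and is not uniformly bounded, so you cannot control $\lVert Aq_s(Bf_t\xi-B\xi)\rVert$ this way.

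The paper avoids all of this by proving the net is Cauchy directly via inner products. For $s_1,s_2,t_1,t_2>0$, set $s=\min\{s_1,s_2\}$, $t=\min\{t_1,t_2\}$. Using only the commutation $[Aq_{s_i},Bf_{t_j}]=0$ and the identities $q_{s_2}(Aq_{s_1})^*(Aq_{s_1})=q_sA^*A$ on $\Dom(A^*A)$ (and the analogue for $B$), one computes
\[
\bk{Aq_{s_1}Bf_{t_1}\xi\,\big|\,Aq_{s_2}Bf_{t_2}\xi}=\bk{q_sA^*A\xi\,\big|\,f_tB^*B\xi},
\]
which converges to $\bk{A^*A\xi|B^*B\xi}$ as $\min\{s_1,s_2,t_1,t_2\}\to\infty$. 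This is precisely where the hypothesis $\xi\in\Dom(A^*A)\cap\Dom(B^*B)$ is used, and it gives $\lVert Aq_{s_1}Bf_{t_1}\xi-Aq_{s_2}Bf_{t_2}\xi\rVert^2\to 0$ without ever needing to know the limit lies in $\Dom(A)$ or $\Dom(B)$.
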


\begin{proof}
	Since the bounded operators $Aq_s$ and $Bf_t$ are in the von Neumann algebras generated by $A$ and by $B$ respectively, we have $[Aq_s,Bf_t]=[(Aq_s)^*,Bf_t]=0$. 	For any $s_1,s_2,t_1,t_2>0$, we set $s=\min\{s_1,s_2\},t=\min\{t_1,t_2\}$, and compute
	\begin{align*}
	&\bk{Aq_{s_1}Bf_{t_1}\xi|Aq_{s_2}Bf_{t_2}\xi}=\bk{q_{s_2}A^*Aq_{s_1}Bf_{t_1}\xi|Bf_{t_2}\xi}=\bk{Bf_{t_1}q_{s_2}A^*Aq_{s_1}\xi|Bf_{t_2}\xi}\\
	=&\bk{q_{s_2}A^*Aq_{s_1}\xi|f_{t_1}B^*Bf_{t_2}\xi}=\bk{q_sA^*A\xi|f_tB^*B\xi}
	\end{align*}
	which converges to $\bk{A^*A\xi|B^*B\xi}$ when $\min\{s_1,s_2,t_1,t_2\}\rightarrow+\infty$. Therefore $\lVert Aq_{s_1}Bf_{t_1}\xi- Aq_{s_2}Bf_{t_2}\xi\lVert^2$ converges to $0$ as $\min\{s_1,s_2,t_1,t_2\}\rightarrow+\infty$. This proves the existence of limits.
\end{proof}

As shown below, strong commutativity implies weak commutativity.

\begin{pp}\label{lb3}
	Let $A$ and $B$ be strongly commuting closed operators on $\mc H$. Let $\xi\in\mc H$.
	
	(a) If $\xi\in\Dom(AB)\cap\Dom(A)$, then $\xi\in\Dom(BA)$, and $AB\xi=BA\xi$.
	
	(b) If $\xi\in\Dom(A^*A)\cap\Dom(B^*B)$, then $\xi\in\Dom(AB)\cap\Dom(BA)$, and $AB\xi=BA\xi$ are equal to the limit \eqref{eq3}.
\end{pp}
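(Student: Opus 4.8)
The plan is to reduce everything to the bounded bounding projections $\{p_s\},\{q_s\}$ of $A$ and $\{e_t\},\{f_t\}$ of $B$, which (by strong commutativity) all commute with one another and with $A,B$ in the appropriate sense, and then pass to the limit as $s,t\to+\infty$.

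For part (b): assume $\xi\in\Dom(A^*A)\cap\Dom(B^*B)$. By Lemma \ref{lb4} the limit $\lim_{s,t\to+\infty} Aq_s B f_t\xi = \lim_{s,t\to+\infty} B f_t A q_s\xi$ exists; call it $\zeta$. First I would send $t\to+\infty$ with $s$ fixed: since $f_t\xi\to\xi$ and $B^*Bf_t\xi = f_t B^*B\xi\to B^*B\xi$, standard spectral-calculus arguments give $f_t\xi\to\xi$ in the graph norm of $B$, so $\xi\in\Dom(B)$ and $B f_t\xi\to B\xi$; as $Aq_s$ is bounded, $Aq_s Bf_t\xi\to Aq_s B\xi$. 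Hence $Aq_s B\xi\to\zeta$ as $s\to+\infty$. Now $q_s B\xi\to B\xi$ and, since $Aq_s B\xi$ converges, the same graph-norm argument for $A$ shows $B\xi\in\Dom(A)$ with $AB\xi=\zeta$; thus $\xi\in\Dom(AB)$. By the symmetric argument (send $s\to+\infty$ first, using $A^*Aq_s\xi = q_s A^*A\xi\to A^*A\xi$), we get $\xi\in\Dom(BA)$ with $BA\xi=\zeta$. This proves $AB\xi=BA\xi=\zeta$.

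For part (a): assume $\xi\in\Dom(AB)\cap\Dom(A)$. The idea is to test against the right bounding projections of $B$: $f_t\xi\in\Dom(B)$ and, because $\{f_t\}$ is in the von Neumann algebra generated by $B$ while $A$ commutes strongly with $B$, $A$ commutes strongly with each bounded $Bf_t$ and with each $f_t$; so $Af_t\xi = f_t A\xi\to A\xi$, and $Bf_t\xi = f_t B\xi$... wait, here one must be careful since $\xi\in\Dom(AB)$ means $B\xi\in\Dom(A)$. Using $f_t B\xi\to B\xi$ in the graph norm of $A$ (as above, since $Af_tB\xi = f_t AB\xi\to AB\xi$), we get $Bf_t\xi = f_t B\xi\to B\xi$ with $ABf_t\xi\to AB\xi$. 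On the other hand $Bf_t\xi\in\Dom(A)$ and $A Bf_t\xi = B A f_t\xi$ (strong commutativity applied to the pair where one vector lies in $\Dom(B)$, using Proposition \ref{lb15}-type commutation $A f_t = f_t A$ and that $f_t$ maps into $\Dom(B)$), hence $BAf_t\xi = ABf_t\xi\to AB\xi$. Since also $Af_t\xi\to A\xi$ and $B$ is closed, we conclude $A\xi\in\Dom(B)$ and $BA\xi = AB\xi$, i.e. $\xi\in\Dom(BA)$.

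The main obstacle is the careful bookkeeping in part (a): one does not a priori know $\xi\in\Dom(B)$, so the cutoffs must be chosen on the $B$-side ($f_t$) and one must repeatedly invoke the elementary fact (Proposition \ref{lb15}) that a closed operator commuting strongly with a bounded operator $x$ satisfies $xA\subset Ax$, together with closedness of $A$ and $B$ to upgrade norm convergence to graph-norm convergence. Everything else is a routine spectral-theory limiting argument, and the existence of the common limit is already supplied by Lemma \ref{lb4}.
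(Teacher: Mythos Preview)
Your approach is essentially the same as the paper's: reduce to bounded cutoffs via the bounding projections, use strong commutativity in its bounded form (Proposition \ref{lb15}), and invoke closedness to pass to the limit. Part (b) is correct and matches the paper's argument (you take iterated limits; the paper phrases the same thing as an $\varepsilon$-argument).

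In part (a) there is a slip: you write $Bf_t\xi = f_t B\xi$, but this is false unless $B$ is normal. The correct identity is $Bf_t\xi = e_t B\xi$ with $e_t$ the \emph{left} bounding projection of $B$, and then $ABf_t\xi = A e_t B\xi = e_t AB\xi \to AB\xi$ since $e_t$ (being in the von Neumann algebra of $B$) commutes strongly with $A$. With this correction your argument goes through. The paper sidesteps the issue by cutting off on the $A$-side instead: since $Aq_s$ is bounded and commutes strongly with $B$, one has $Aq_s B \subset B Aq_s$; hence $Aq_s\xi \in \Dom(B)$ (using $\xi\in\Dom(B)$, which comes from $\xi\in\Dom(AB)$), and then $Aq_s\xi = p_s A\xi \to A\xi$ while $B Aq_s\xi = Aq_s B\xi = p_s AB\xi \to AB\xi$, so $A\xi \in \Dom(B)$ by closedness of $B$. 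This choice is slightly cleaner given the asymmetric hypothesis $\xi \in \Dom(AB) \cap \Dom(A)$.
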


\begin{proof}
	Let $\{p_s\},\{q_s\},\{e_t\},\{f_t\}$ be as in lemma \ref{lb4}. 
	
	(a) Choose any $\xi\in\Dom(AB)\cap\Dom(A)$. Since $Aq_s$ is bounded and commutes strongly with $B$, by proposition \ref{lb15} we have $Aq_sB\subset BAq_s$. In particular, $Aq_s\Dom(B)\subset\Dom(B)$. Therefore $Aq_s\xi\in\Dom(B)$. As $s\rightarrow+\infty$, we have $Aq_s\xi=p_sA\xi\rightarrow A\xi$, and $BAq_s\xi=Aq_sB\xi=p_sAB\xi\rightarrow AB\xi$. Thus, by the closedness of $B$, we have $A\xi\in\Dom(B)$ and $BA\xi=AB\xi$.
	
	(b) Assume $\xi\in\Dom(A^*A)\cap\Dom(B^*B)$. Let $\eta$ be the limit \eqref{eq3}. Choose any $\varepsilon>0$. Then there exists $N>0$ such that whenever $t,s>N$, we have  $\lVert \eta-Aq_sBf_t\xi\lVert<\varepsilon$. Choose any $s>N$. Since $Aq_s$ is bounded and $\lim_{t\rightarrow+\infty}Bf_t\xi=\lim_{t\rightarrow+\infty}e_tB\xi=B\xi$, there exists $t>N$ such that $\lVert Aq_sBf_t\xi-Aq_sB\xi\lVert<\varepsilon$. Therefore $\lVert \eta-Aq_sB\xi\lVert<2\varepsilon$. Since this is true for arbitrary $s>N$, we have proved that $\lim_{s\rightarrow+\infty}Aq_sB\xi=\eta$. Clearly $q_sB\xi\in\Dom(A)$. Since $\lim_{s\rightarrow+\infty}q_sB\xi=B\xi$, we conclude that $B\xi\in\Dom(A)$,  and $\eta=AB\xi$. Similar arguments show $\xi\in\Dom(BA)$ and $\eta=BA\xi$.
\end{proof}

\begin{lm}\label{lb14}
	Let $A_1,A_2,B_1,B_2$ be closed operators on $\mc H$, and let $\xi_1,\xi_2\in\mc H$. Assume that for each $i,j\in\{1,2\}$, $A_i$ commutes strongly with $B_j$, and  $\xi_i$ is inside the domains of $A_i^*A_i,B_i^*B_i$. Then  $\xi_i\in\Dom(A_iB_i)$ and  $A_iB_i\xi_i=B_iA_i\xi_i$ for any $i\in\{1,2\}$, and
	\begin{align}
	\bk{A_1B_1\xi_1|A_2B_2\xi_2}=\left\{ \begin{array}{ll}
	\bk{A_2^*A_1\xi_1|B_1^*B_2\xi_2}\ & \text{if }\xi_1\in\Dom(A_2^*A_1),\xi_2\in\Dom(B_1^*B_2)\\
	~&~\\
	\bk{B_2^*B_1\xi_1|A_1^*A_2\xi_2} & \text{if }\xi_1\in\Dom(B_2^*B_1),\xi_2\in\Dom(A_1^*A_2)
	\end{array}\right..\label{eq1}
	\end{align}
\end{lm}

\begin{proof}
	Assume that for each $i=1,2$, $\xi_i$ is inside the domains of $A_i^*A_i,B_i^*B_i$. Assume also that $\xi_1\in\Dom(A_2^*A_1),\xi_2\in\Dom(B_1^*B_2)$. Then $\xi_i\in\Dom(A_iB_i)$ and $A_iB_i\xi_i=B_iA_i\xi_i$ by proposition \ref{lb3}. Let $\{p^1_t\},\{p^2_t\},\{e^1_t\},\{e^2_t\}$ be respectively the left bounding projections of $A_1,A_2,B_1,B_2$. Similarly, let $\{q^1_t\},\{q^2_t\},\{f^1_t\},\{f^2_t\}$ be respectively the right bounding projections of $A_1,A_2,B_1,B_2$. For any $s_1,s_2,t_1,t_2>0$, we set
	\begin{gather*}
	c_1=\bk{A_1B_1\xi_1|A_2B_2\xi_2},\qquad c_2=\bk{A_2^*A_1\xi_1|B_1^*B_2\xi_2},\\
	c_3=\bk{A_1q^1_{s_1}B_1f^1_{t_1}\xi_1|A_2q^2_{s_2}B_2f^2_{t_2}\xi_2},\qquad c_4=\bk{q^2_{s_2}A_2^*A_1\xi_1|f^1_{t_1}B_1^*B_2\xi_2}.
	\end{gather*}
	Choose any $\varepsilon>0$. Then by proposition \ref{lb3}-(b), there exists $N>0$ such that whenever $s_1,s_2,t_1,t_2>N$, we have $|c_1-c_3|<\varepsilon$. Using the strong commutativity (equivalently, adjoint commutativity for bounded operators) of $A_iq^i_{s_i}$ and $B_jf^j_{t_j}$, one has
	\begin{align*}
	c_3=\bk{\ovl{q^2_{s_2}A_2^*}A_1q^1_{s_1}\xi_1|\ovl{f^1_{t_1}B_1^*}B_2f^2_{t_2}\xi_2}=\bk{\ovl{q^2_{s_2}A_2^*}p^1_{s_1}A_1\xi_1|\ovl{f^1_{t_1}B_1^*}e^2_{t_2}B_2\xi_2}.
	\end{align*}
	Choose $s_2,t_1>N$ such that $|c_4-c_2|<\varepsilon$. Since $\ovl{q^2_{s_2}A^*}$ and $\ovl{f^1_{t_1}B_1^*}$ are bounded, we can find $s_1,t_2>N$ such that $|c_3-c_4|<\varepsilon$. Therefore $|c_1-c_2|<3\varepsilon$. Since $\varepsilon$ is arbitrary, we conclude $c_1=c_2$. This proves the first of $\eqref{eq1}$. The second one is proved similarly.
\end{proof}

The following was proved in \cite{Gui21a} lemma 4.17.

\begin{lm}\label{lb59}
	Let $P(z_1,\cdots,z_m)$ and $Q(\zeta_1,\cdots,\zeta_n)$ be polynomials of $z_1,\dots,z_m$ and $\zeta_1,\dots,\zeta_n$ respectively. Let $D$ be a self-adjoint positive operator on $\mc H$, and set $\mc H^\infty=\bigcap_{n\in\mathbb Z_{\geq0}}\Dom(D^n)$. Choose  closable operators $A_1,\dots,A_m$ and $B_1,\dots,B_n$   on 
	$\mc H$ with common invariant (dense) domain $\mc H^\infty$. Assume that there exists $\varepsilon>0$ such that   $e^{\im tD}A_re^{-\im tD}$ commutes strongly with $B_s$ for any $r=1,\dots,m,s=1,\dots,n$, and  $t\in(-\varepsilon,\varepsilon)$.  Assume also that the unbounded operators $A=P(A_1,\cdots.A_m),B=Q(B_1,\cdots,B_n)$ (with common domain $\mc H^\infty$) are closable. Then $A$ commutes strongly with $B$.
\end{lm}


\newpage

\noindent {\small \sc Yau Mathematical Sciences Center, Tsinghua University, Beijing, China.}

\noindent {\textit{E-mail}}: binguimath@gmail.com\qquad bingui@tsinghua.edu.cn
\end{document}